\newtheorem{defn}{Definition}[section]
\newtheorem{theo}{Theorem}[section]
\newtheorem{lem}{Lemma}[section]
\newtheorem{prop}{Proposition}[section]
\newtheorem{cor}{Corollary}[prop]
\newcommand{\set}[2]{\{#1 ~|~#2\}}
\newcommand{\Con}[1]{\textbf{Cn}(#1)}
\newcommand{\card}[1]{\textit{card}(#1)}
\newcommand{\PS}{\mathbb{P}_{\mathcal{S}}}
\newcommand{\QS}{\mathbb{Q}_{\mathcal{S}}}
\newcommand{\TS}{\mathbb{T}_{\mathcal{S}}}
\newcommand{\Lan}{\mathcal{L}}
\newcommand{\Var}{\mathcal{V}}
\newcommand{\Mvar}{\textit{\textbf{Mv}}_{\mathcal{L}}}
\newcommand{\Cons}{\mathcal{C}}
\newcommand{\ConS}[1]{\textbf{Cn}_{\mathcal{S}}(#1)}
\newcommand{\Func}{\mathcal{F}}
\newcommand{\Sub}[1]{\mathbf{Sub}(#1)}
\newcommand{\FOstar}{\textbf{FO}_{\Lan}^{\ast}}
\newcommand{\p}[1]{\textit{\textbf{p}}_{#1}}
\newcommand{\one}{\mathbf{1}}
\newcommand{\zero}{\mathbf{0}}
\newcommand{\tb}{\bm{\tau}}
\newcommand{\im}{\bm{\iota}}
\newcommand{\Forms}{\textit{\textbf{Fm}}}
\newcommand{\Mform}{\textit{\textbf{Mf}}_{\mathcal{L}}}
\newcommand{\FormAl}{\mathfrak{F}_{\mathcal{L}}}
\newcommand{\MformAl}{\mathfrak{M}_{\mathcal{L}}}
\newcommand{\alg}[1]{\textsf{\textbf{#1}}}
\newcommand{\mat}[1]{\textsf{\textbf{#1}}}
\newcommand{\Lin}{\textsf{\textbf{Lin}}}
\newcommand{\fA}{\mathfrak{A}}
\newcommand{\booleTwo}{\textsf{\textbf{B}}_{2}}
\newcommand{\lukasThree}{\textsf{\textbf{\L}}_{3}}
\newcommand{\godelThree}{\textsf{\textbf{G}}_{3}}
\newcommand{\theory}{\Sigma_{\mathcal{S}}}
\newcommand{\theoryC}{\Sigma_{\mathcal{S}}^{c}}
\newcommand{\LinS}{\textsf{\textbf{Lin}}_{\mathcal{S}}}
\newcommand{\valA}{V_{\textsf{\textbf{A}}}}
\newcommand{\ThmS}{\bm{T}_{\mathcal{S}}}
\newcommand{\LT}{\textbf{LT}_{\mathcal{S}}}
\newcommand{\LTCl}{\textbf{LT}_{\textsf{Cl}}}
\newcommand{\LTInt}{\textbf{LT}_{\textsf{Int}}}
\newcommand{\Top}{\mathcal{O}}
\newcommand{\Cl}{\textsf{Cl}}
\newcommand{\Int}{\textsf{Int}}
\newcommand{\LC}{\textsf{LC}}
\newcommand{\Pos}{\textsf{P}}
\newcommand{\Su}{\mathbf{S}}
\newcommand{\Pred}{\mathbf{P_{r}}}
\date{}
\title{{\Huge \textbf{Lindenbaum Method}}\\
\vspace{1cm}
Tutorial --- UNILOG 2018 }
\author{\LARGE Alex Citkin and Alexei Muravitsky}
\begin{document}
	\maketitle

\tableofcontents	
\chapter{Preliminaries}

\section{Preliminaries from topology}\label{section:topology}
Classic books~\cite{kelley1975} and~\cite{bourbaki1998} are main references in this section.

A \textit{\textbf{topological space}} (or simply \textit{\textbf{space}}) is a pair $(X,\Top)$ (perhaps both $X$ and $\Top$ with subscripts), where $\Top\subseteq\mathcal{P}(X)$ and such that 
\[
\begin{array}{cl}
1^{\circ} &\emptyset\in\Top~\text{and}~X\in\Top;\\\\
2^{\circ} &\text{for any $U_1,\ldots,U_n\in\Top$, $U_1\cap\ldots\cap U_n\in\Top$};\\\\
3^{\circ} &\text{for any $\lbrace U_i\rbrace_{i\in I}\subseteq\Top$, $\bigcup\lbrace U_i\rbrace_{i\in I}\in\Top$}.
\end{array}
\]

In a topological space $(X,\Top)$, $X$ is called the carrier of the space and $\Top$ is the topology of the space. The elements of $\Top$ are called the \textit{\textbf{open sets}} of the space. The complement of an open set is called \textit{\textbf{closed}}. The topology $\Top$ is called \textit{\textbf{discrete}} if $\Top=\mathcal{P}(X)$. In our applications, we will be assuming that $X\neq\emptyset$.
If $X$ is a finite set, any space $(X,\Top)$ is called \textit{\textbf{finite}.}

It is customary to apply the term \textit{space} (or \textit{topological space}) to $X$, meaning that $X$ is endowed with a family $\Top$ of subsets of $X$ satisfying the conditions $1^{\circ}$--$3^{\circ}$. It must be clear that $X$ can be endowed with different families of open sets. Slightly abusing notation, we write $X=(X,\Top)$.

A family $\lbrace U_i\rbrace_{i\in I}\subseteq\Top$ is called an \textit{\textbf{open cover}} of a space $X=(X,\Top)$ if $\bigcup\lbrace U_i\rbrace_{i\in I}=X$. Any
subfamily $\lbrace U_i\rbrace_{i\in I_0}$ with $I_0\subseteq I$ is called a \textit{\textbf{subcover}} of the cover $\lbrace U_i\rbrace_{i\in I}$ if $\bigcup\lbrace U_i\rbrace_{i\in I_0}=X$. A subcover is called \textit{\textbf{finite}} if $I_0$ is a nonempty finite set.

A topological space is called \textit{\textbf{compact}} if each open cover has a finite subcover.

The following observation is obvious.
\begin{prop}\label{P:finite-discrete=compact}
	Any finite discrete topological space is compact.
\end{prop}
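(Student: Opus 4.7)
The proposition follows from a direct cardinality argument, and in fact the discreteness assumption will turn out to be superfluous (finiteness of $X$ alone suffices). The plan is to take an arbitrary open cover $\{U_i\}_{i \in I}$ of the finite space $X = (X, \Top)$ and explicitly construct a finite subcover by selecting, for each point of $X$, one index witnessing its membership in the cover.

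More precisely, I would first fix the cover $\{U_i\}_{i \in I}$ and use the covering property $\bigcup_{i \in I} U_i = X$ to assert that for every $x \in X$ there exists at least one index $i_x \in I$ with $x \in U_{i_x}$; here I would invoke the axiom of choice (or a finite choice, since $X$ is finite) to produce a function $x \mapsto i_x$. The set $I_0 = \{i_x \mid x \in X\}$ is then a subset of $I$, and since $X$ is finite, $I_0$ is finite as well.

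Next I would verify that $\{U_i\}_{i \in I_0}$ is indeed a subcover: every $x \in X$ lies in $U_{i_x}$ by construction, and $U_{i_x}$ appears in $\{U_i\}_{i \in I_0}$, so $\bigcup_{i \in I_0} U_i = X$. Finiteness of $I_0$ (and nonemptiness, assuming $X \neq \emptyset$ as stipulated in the preliminaries) is exactly the definition of a finite subcover, completing the argument.

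There is no real obstacle here; the only subtlety worth flagging is that the discreteness hypothesis is not used in the above reasoning, so one might either remark on this explicitly or lean on discreteness to avoid the choice function entirely by noting that each singleton $\{x\} \in \Top$ is itself open, and writing the cover as the finite union $\bigcup_{x \in X} \{x\}$, which can be refined termwise against the given cover. Either presentation suffices, and I would choose whichever fits the expository tone of the surrounding text.
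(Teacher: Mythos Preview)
Your argument is correct; the paper itself gives no proof of this proposition, merely introducing it with ``The following observation is obvious.'' Your direct extraction of one covering set per point is exactly the obvious argument the paper leaves implicit, and your remark that discreteness is unnecessary is a fair observation.
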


Let $X$ be a nonempty set. Suppose we have a family $\text{B}_{0}:=\lbrace Y_i\rbrace_{i\in I}\subseteq\mathcal{P}(X)$. Assume that we want to define a topology $\Top$ on $X$ such that sets $Y_i$ are open in $X$ endowed with this topology. 

First we form all intersections $Y_{i_1}\cap\ldots\cap Y_{i_n}$ collecting them in a set $\text{B}$. Then, we define all unions $\bigcup_{j\in J}\set{Z_j}{Z_{j}\in\text{B}}$ collecting them in a set $\Top$, to which we add $\emptyset$ and $X$ (if they are not yet there). The sets of the resulting set $\Top$ we announce open and define the space $X:=(X,\Top)$. It is not difficult to check that the conditions $1^{\circ}$--$3^{\circ}$ are satisfied. The family $\text{B}_{0}$ is called a \textit{\textbf{subbase}} of the space $X$, and the set $\text{B}$ is a \textit{\textbf{base} of $X$}. It is obvious that all sets of the subbase $\text{B}_{0}$ are open in $X$. \\

Now we employ this way of introducing topology on a set to define the cartesian product of topological spaces. Assume that we have a collection $\lbrace X_i\rbrace_{i\in I}$ of topological spaces $X_{i}:=(X_{i},\Top_i)$, where $i\in I$. First, we define the cartesian product of the sets $X_i$; that is
\[
\text{P}:=\prod_{i\in I}X_i.
\]
An arbitrary element \textbf{x} of \text{P} will be denoted by $(x_i)_{i\in I}$; that is $\textbf{x}:=(x_i)_{i\in I}$. Given an $i\in I$, a \textit{\textbf{projection}} $\rho_{i}:\text{P}\longrightarrow X_i$ is defined by the mapping $\rho_{i}:\textbf{x}\mapsto x_i$. Now we define:
\[
\text{B}_{0}:=\set{\rho_{i}^{-1}(U)}{U\in\Top_i~\text{and}~i\in I}.
\]

Finally, $\text{B}_{0}$ is employed as a subbase to define a \textit{\textbf{product topology}} (aka \textit{\textbf{Tychonoff topology)}} on \text{P}. Denoting the product topology on \text{P} by $\Top$, we call the topological space $(\text{P},\Top)$ a \textit{\textbf{cartesian product}} of the collection $\lbrace(X_i,\Top_i)\rbrace_{i\in I}$.

From the definition of the product topology, we see that its base \text{B} constituted by the sets of the form
\[
\rho_{i_1}^{-1}(U_{i_1})\cap\ldots\cap\rho_{i_k}^{-1}(U_{i_k}),
\]
where $U_{i_1}\in\Top_{i_1},\ldots,U_{i_k}\in\Top_{i_k}$. This means that the sets of this base can be represented as follows:
\begin{equation}\label{E:representation}
	\prod_{i\in I}Z_i,
\end{equation}
where either $Z_i\in\Top_i$, if $i\in I_0$, for some $I_0\Subset I$, or $Z_i=X_i$, if $i\notin I_0$. If each $X_i$ in the cartesian product \text{P} is a finite discrete space, then in each representation~\eqref{E:representation} either $Z_i\subseteq X_i$, whenever
$i\in I_0$, for some finite subset $I_0$ of $I$, or $Z_i=X_i$. 

In our application of cartesian product (Section~\ref{section:finitary-matrix-consequence}) the following proposition is essential.

\begin{prop}[Tychonoff theorem]\label{P:tychonoff}
	The cartesian product of a collection of compact topological spaces is compact relative to the product topology.	
\end{prop}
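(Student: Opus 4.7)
The plan is to deduce Tychonoff's theorem from \emph{Alexander's subbase theorem}, which is natural here because the product topology on $\text{P}$ has just been introduced via the explicit subbase $\text{B}_{0}$. Alexander's theorem asserts that a space $Y$ with subbase $\text{B}_{0}$ is compact provided every cover of $Y$ whose members all lie in $\text{B}_{0}$ admits a finite subcover. My outline has two parts: first, establish Alexander's theorem; second, verify its hypothesis for $(\text{P},\Top)$ with the subbase $\text{B}_{0}$.

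For the first part I would argue by contradiction using Zorn's lemma. If $Y$ were not compact, the family of open covers of $Y$ without a finite subcover, partially ordered by inclusion, is nonempty and closed under unions of chains, hence has a maximal element $\mathcal{C}$. The key observation to extract is that $\mathcal{C}\cap\text{B}_{0}$ already covers $Y$: given $x\in Y$, pick $U\in\mathcal{C}$ with $x\in U$ and subbasic $V_{1},\ldots,V_{n}$ with $x\in V_{1}\cap\cdots\cap V_{n}\subseteq U$; if each $V_{j}$ lay outside $\mathcal{C}$, maximality would give a finite subcover of $\mathcal{C}\cup\{V_{j}\}$, and combining those lists together with $U$ would yield a finite subcover of $\mathcal{C}$ itself, a contradiction. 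Hence some $V_{j}\in\mathcal{C}\cap\text{B}_{0}$ contains $x$. But the subbasic cover $\mathcal{C}\cap\text{B}_{0}$ would then have a finite subcover by hypothesis, contradicting the choice of $\mathcal{C}$.

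For the second part, take any $\mathcal{C}\subseteq\text{B}_{0}$ covering $\text{P}$ and, for each $i\in I$, set
\[
\mathcal{U}_{i}:=\set{U\in\Top_{i}}{\rho_{i}^{-1}(U)\in\mathcal{C}}.
\]
If some index $i$ satisfies $\bigcup\mathcal{U}_{i}=X_{i}$, then compactness of $(X_{i},\Top_{i})$ yields a finite subfamily $U_{1},\ldots,U_{n}\in\mathcal{U}_{i}$ covering $X_{i}$, whose pullbacks $\rho_{i}^{-1}(U_{1}),\ldots,\rho_{i}^{-1}(U_{n})$ already cover $\text{P}$. Otherwise, for every $i$ one can (by the axiom of choice) pick $x_{i}\in X_{i}\setminus\bigcup\mathcal{U}_{i}$; the resulting point $(x_{i})_{i\in I}$ then lies in no member of $\mathcal{C}$, contradicting that $\mathcal{C}$ covers $\text{P}$.

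The main obstacle is Alexander's theorem itself, and within it the maximality step bridging $\mathcal{C}$ and its subbasic part. Once that is in hand, the reduction to it is purely combinatorial: it leverages only the compactness of each factor and the axiom of choice, packaged through the projections $\rho_{i}$.
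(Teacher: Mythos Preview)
Your argument is correct and complete; the route through Alexander's subbase theorem is one of the standard proofs, and both halves of your sketch are sound. Note, however, that the paper does not supply its own proof of Proposition~\ref{P:tychonoff}: it is stated without proof as a classical result, with the references~\cite{kelley1975} and~\cite{bourbaki1998} given at the head of Section~\ref{section:topology} for background. So there is nothing in the paper to compare your approach against; what you have written would serve as a self-contained justification where the paper simply cites the literature.
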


\begin{cor}\label{C:product-finite-discrete-spaces}
	The cartesian product of a collection of discrete spaces is compact relative to the product topology.	
\end{cor}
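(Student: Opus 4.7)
The plan is straightforward: this corollary is a one-line specialization obtained by chaining the two propositions just established in the subsection. The statement must be read with ``discrete'' meaning ``finite discrete'' (otherwise any infinite discrete space would itself serve as a counterexample, since it fails to be compact); this matches the antecedent Proposition~\ref{P:finite-discrete=compact} and the usage of the representation \eqref{E:representation} in the adjacent paragraph, where the factors are explicitly said to be finite discrete spaces.

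Concretely, given a collection $\{(X_i,\Top_i)\}_{i\in I}$ of finite discrete spaces, I would first invoke Proposition~\ref{P:finite-discrete=compact} to conclude that every factor $(X_i,\Top_i)$ is compact. Then I would apply Proposition~\ref{P:tychonoff} (the Tychonoff theorem) to this collection of compact spaces to obtain that $\prod_{i\in I} X_i$, endowed with the product topology defined via the subbase $\text{B}_0=\{\rho_i^{-1}(U)\mid U\in\Top_i,\ i\in I\}$, is compact. That is exactly the conclusion of the corollary.

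There is no real obstacle here, since the heavy lifting --- establishing compactness of an arbitrary product of compact spaces --- has already been carried out in Proposition~\ref{P:tychonoff}. The only point requiring a moment's care is the clarification about the word ``discrete'' noted above, so that Proposition~\ref{P:finite-discrete=compact} is legitimately applicable to each factor.
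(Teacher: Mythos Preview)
Your proposal is correct and matches the paper's intended approach: the corollary is stated without proof immediately after Proposition~\ref{P:tychonoff}, and is meant to follow by chaining Proposition~\ref{P:finite-discrete=compact} with Tychonoff's theorem exactly as you describe. Your observation that ``discrete'' must be read as ``finite discrete'' is well taken and is indeed consistent with the surrounding text.
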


\section{Preliminaries from algebra}

\subsection{Distributive lattices and Boolean algebras}\label{section:boolean-algebra}
Boolean algebras belong to to the class of distributive lattices.

An algebra $\alg{L}=\langle\textsf{L};\wedge,\vee\rangle$ of type $\langle\wedge,\vee\rangle$, where $\wedge$ (meet) and $\vee$ (join) are binary operations, is called a \textit{\textbf{distributive lattice}} if the identities following equalities hold in $\alg{L}$ for arbirary elements $x$, $y$ and $z$ of \textsf{L}:
\[
\begin{array}{cll}
(\text{l}_1) &i)~~~x\wedge y=y\wedge x, &ii)~~~x\vee y=y\vee x,\\
(\text{l}_2) &i)~~~x\wedge(y\wedge z)=(x\wedge y)\wedge z,
&ii)~~~x\vee(y\vee z)=(x\vee y)\vee z,\\
(\text{l}_3) &i)~~~(x\wedge y)\vee y=y, &ii)~~~x\wedge(x\vee y)=x,\\
(\text{l}_4) &i)~~~x\wedge(y\vee z) =(x\wedge y)\vee(x\wedge z),
&ii)~~~x\vee(y\wedge z) =(x\vee y)\wedge(x\vee z).\\
\end{array}
\]

In any distributive lattice the following identities hold:
\[
x\wedge x=x~~\text{and}~~x\vee x= x \tag{\textit{idempotent laws}}
\]

Indeed, for any $x,y\in\textsf{L}$,
\[
\begin{array}{rl}
x\!\!\! &= x\wedge(x\vee y)\\&=(x\wedge x)\vee(x\wedge y)\quad[\mbox{according to ($\text{l}_{4}$--$i$)}]\\
&= ((x\wedge y)\vee x)\wedge((x\wedge y)\vee x)\quad[\mbox{in virtue of ($\text{l}_{1}$--$ii$) and ($\text{l}_{4}$--$ii$)}]\\
&= x\wedge x.\quad[\mbox{according to ($\text{l}_{3}$--$i$)}]
\end{array}
\]

The second idempotent law can be proven in a similar fashion. \\

We observe that if $\alg{L}$ is a distributive lattice, then for any $x,y\in\textsf{L}$,
\[
x\wedge y=x~\Longleftrightarrow~x\vee y=y.
\]

Indeed, assume that $x\wedge y=x$. Then we have: $x\vee y=(x\wedge y)\vee y=y$.

On the other hand, if $x\vee y=y$, then $x\wedge y=x\wedge(x\vee y)=x$.

The last equivalence induces the following definition. Given a distributive lattice $\alg{L}$, we define a binary relation on $\textsf{L}$ as follows:
\begin{equation}\label{E:ordering-in-lattice}
	x\le y\stackrel{\text{df}}{\Longleftrightarrow} x\wedge y=x.
\end{equation}

According to the last equivalence, we have:
\[
x\le y~\Longleftrightarrow~x\vee y=y.
\]

We also notice that $\le$ is a partial ordering on \textsf{L} and, according to this ordering, $x\wedge y$ is the greatest lower bound and $x\vee y$ is the least upper bound of $\lbrace x,y\rbrace$, respectively. This, in particular, implies that the operations $\wedge$ and $\vee$ are monotone w.r.t. $\le$. Namely,
\begin{center}
	$x\le y$ implies $x\wedge z\le y\wedge z$ and $x\vee z\le y\vee z$.
\end{center}

A \textit{\textbf{Boolean algebra}} is an algebra $\alg{B}=\langle\textsf{B};\wedge,\vee,\neg,\one\rangle$ of type $\langle\wedge,\vee,\neg,\one\rangle$, where $\wedge$ (meet) and $\vee$ (join) are binary operations, $\neg$ (complementation) is a unary operation and $\one$
(unit) is a 0-ary operation, if the equalities $(\text{l}_1)$--$(\text{l}_4)$ above and $(\text{b}_1)$--$(\text{b}_2)$ below are satisfied in \alg{B} for arbitrary elements $x$, $y$ and $z$ of \textsf{B}:
\[
\begin{array}{cll}
(\text{b}_1) &i)~~~ x\wedge\one=x, &ii)~~~x\vee\one=\one,\\
(\text{b}_2) &i)~~~(x\wedge\neg x)\vee y=y,
&ii)~~~(x\vee\neg x)\wedge y=y.\\
\end{array}
\]

If we apply the definition~\eqref{E:ordering-in-lattice} for $\alg{B}$, we realize that $\one$ is the greatest element and $\neg\one$ the least element in $\alg{B}$, respectively. Indeed, the former comes from 
($\text{b}_{1}$--$i$) and the latter, with the help of ($\text{b}_{2}$--$i$)
and ($\text{b}_{1}$--$i$), can be obtained as follows:
\[
x=(\one\wedge\neg\one)\vee x= \neg\one\vee x.
\]

Denoting
\begin{equation}\label{E:zero-definition}
	\zero:=\neg\one,
\end{equation}
we observe that for any $x\in\textsf{B}$, $\neg x$ is a unique element $y$ such that $x\wedge y=\zero$ and $x\vee y=\one$. The element $\neg x$ is called a \textit{complement} of $x$. Thus any Boolean algebra is a distributive lattice with a greatest element $\one$, a least element $\zero$ and a \textit{complementation} $\neg x$. This implies that, the view of commutativity of $\wedge$ and $\vee$ (the equalities $(\text{l}_1)$), we can say the $x$ is the complement of $\neg x$, which immediately implies the equality
\begin{equation}\label{E:double-negation-equality}
	\neg\neg x=x.
\end{equation}

We also observe that
\begin{equation}\label{E:one-zero-in-boolean}
	\one=x\vee\neg x~~\text{and}~~\zero=x\wedge\neg x,
\end{equation}
for an arbitrary element $x\in\textsf{B}$.

It is obvious that a Boolean algebra is non-degenerate if, and only if, $\zero\neq\one$. This implies that the simplest non-degenerate Boolean algebras consists of two elements --- $\one$ and $\zero$.
Usually, this algebra is depicted by the following diagram.

\begin{figure}[!ht]
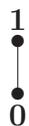
	
	\[
	\ctdiagram{
		\ctnohead
		\ctinnermid
		\ctel 0,0,0,20:{}
		\ctv 0,0:{\bullet}
		\ctv 0,20:{\bullet}
		\ctv 0,27:{\mathbf{1}}
		\ctv 0,-9:{\mathbf{0}}
	}
	\]\label{figure:two-element-boolean}
	\caption{A 2-element Boolean algebra}
	
\end{figure}

\pagebreak
\begin{prop}\label{P:finitely-generated-boolean}
	Let a Boolean algebra $\alg{B}$ be generated by a nonempty finite set {\em$\textsf{B}_0$}. Then any element {\em$x\in\textsf{B}$} can be represented in the form:
	\[
	x=\bigvee_{i=1}^{m}\bigwedge_{j=1}^{n_i}b_{ij},\tag{\textit{disjunctive normal form}}
	\]
	or in the form:
	\[
	x=\bigwedge_{i=1}^{m}\bigvee_{j=1}^{n_i}b_{ij},\tag{\textit{conjunctive normal form}}
	\]
	where for any $i$ and $j$ either {\em$b_{ij}\in\textsf{B}_0$}
	or {\em$\neg b_{ij}\in\textsf{B}_0$}. 
\end{prop}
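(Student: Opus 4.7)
The plan is to show that the set $D$ of elements of $\textsf{B}$ admitting a disjunctive normal form in the described sense is a subalgebra of $\alg{B}$ containing $\textsf{B}_0$, and then invoke the fact that $\textsf{B}_0$ generates $\alg{B}$. The case of the conjunctive normal form will follow by a dual argument, or alternatively by applying the DNF representation to $\neg x$ and then pushing the outer negation inward via De Morgan.

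First I would verify that every $b\in\textsf{B}_0$ belongs to $D$ trivially (it is a one-term disjunction of a one-term conjunction of a literal), and that $\one\in D$ by choosing any $b\in\textsf{B}_0$ and writing $\one=b\vee\neg b$ using \eqref{E:one-zero-in-boolean}. Closure of $D$ under $\vee$ is immediate: the disjunction of two DNFs is a DNF. Closure under $\wedge$ uses repeated application of the distributive law $(\text{l}_4$--$i)$: if $x=\bigvee_i A_i$ and $y=\bigvee_j A'_j$ with each $A_i,A'_j$ a conjunction of literals, then $x\wedge y=\bigvee_{i,j}(A_i\wedge A'_j)$, and each $A_i\wedge A'_j$ is again a conjunction of literals from $\textsf{B}_0\cup\neg\textsf{B}_0$, possibly after rearranging with the help of $(\text{l}_1)$ and $(\text{l}_2)$.

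The one step requiring a small detour is closure under $\neg$. For this I would first prove the De Morgan laws $\neg(x\wedge y)=\neg x\vee\neg y$ and $\neg(x\vee y)=\neg x\wedge\neg y$; these follow from the uniqueness of complements established after \eqref{E:zero-definition}, since one checks using $(\text{b}_1)$, $(\text{b}_2)$ and the distributive laws that $(x\wedge y)\wedge(\neg x\vee\neg y)=\zero$ and $(x\wedge y)\vee(\neg x\vee\neg y)=\one$. Given $x=\bigvee_i\bigwedge_j b_{ij}\in D$, iterating De Morgan yields $\neg x=\bigwedge_i\bigvee_j\neg b_{ij}$, and \eqref{E:double-negation-equality} ensures each $\neg b_{ij}$ is again a literal (a member of $\textsf{B}_0$ or the negation of one). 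Distributing via $(\text{l}_4$--$i)$ then converts this CNF back into a DNF, placing $\neg x$ in $D$.

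Thus $D$ is a subalgebra of $\alg{B}$ containing $\textsf{B}_0$, so $D=\textsf{B}$, which gives the disjunctive normal form. The main obstacle I expect is the negation step, specifically the bookkeeping involved in transforming $\bigwedge_i\bigvee_j\neg b_{ij}$ back into a disjunction of conjunctions via distributivity while tracking the indexing; this is routine but is the only place where more than a one-line application of an axiom is required. The conjunctive normal form is obtained by the dual argument, working with the subalgebra of elements expressible in CNF and using $(\text{l}_4$--$ii)$ in place of $(\text{l}_4$--$i)$.
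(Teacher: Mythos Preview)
Your proposal is correct and follows essentially the same approach as the paper: the paper's proof merely sketches that the generators lie in the set of DNF-representable elements and that this set is a subalgebra (citing Rasiowa--Sikorski for details), and you have supplied exactly those details, including the De~Morgan and distributivity steps needed for closure under $\neg$.
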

\begin{proof}
	To prove this proposition, it suffices to notice that all elements of $\textsf{B}_0$ can be represented in both forms. Then, one shows that all elements represented in the disjunctive normal form constitute a subalgebra of $\alg{B}$; and the same is true for the elements represented in the conjunctive normal form; see~\cite{rasiowa-sikorski1970}, theorem II.2.1, for detail.
\end{proof}

As we will see below (Proposition~\ref{P:boolean-algebra-as-heyting}), the class of Boolean algebras can be regarded as a subclass of a larger class of Heyting algebras which are discussed in the next subsection. Therefore, many properties related to Heyting algebras are applicable to Boolean algebras.
For example, the properties that are discussed below in Proposition~\ref{P:Int-properties} for Heyting algebras (Section~\ref{section:heyting-algebra}) can be applied to Boolean algebras as well.

\subsection{Heyting algebras}\label{section:heyting-algebra}
A \textit{\textbf{Heyting algebra}} is an algebra $\alg{H}=\langle\textsf{H};\wedge,\vee,\rightarrow,\neg,\one\rangle$ of type $\langle\wedge,\vee,\rightarrow,\neg,\one\rangle$, where $\wedge$ (meet) and $\vee$ (join), $\rightarrow$ (relative pseudo-complementation) are binary operations, $\neg$ (pseudo-complementation) is a unary operation and $\one$
(unit) is a 0-ary operation, if, besides the equalities ($\text{l}_{1}$)--($\text{l}_{2}$) and ($\text{b}_{1}$) (Section~\ref{section:boolean-algebra}), the following equalities are satisfied for arbitrary elements $x$, $y$, $z$ of \textsf{H}:
\[
\begin{array}{cc}
(\text{h}_1) &~~~x\wedge(x\rightarrow y)=x\wedge y,\\
(\text{h}_2) &~~~(x\rightarrow y)\wedge y=y,\\
(\text{h}_3) &~~~(x\rightarrow y)\wedge(x\rightarrow z)=x\rightarrow(y\wedge z),\\
(\text{h}_4) &~~~x\wedge(y\rightarrow y)=x,\\
(\text{h}_5) &~~~\neg\one\vee y=y,\\
(\text{h}_6) &~~~\neg x=x\rightarrow\neg\one.\\
\end{array}
\]

It is customary to denote:
\[
x\leftrightarrow y:=(x\rightarrow y)\wedge(y\rightarrow x).
\]

As in the case of Boolean algebras, in Heyting algebras, in virtue of
$(\text{b}_1)$, the element $\one$ is a greatest element in Heyting algebra.
The property $(\text{h}_4)$ implies that
\begin{equation}\label{E:x-implies-x}
	x\rightarrow x=\one.
\end{equation}

Using the notation \eqref{E:zero-definition} and the property $(\text{h}_5)$, we conclude that $\zero$ is a least element in Heyting algebra. According to $(\text{h}_6)$, we have:
\[
\neg x=x\rightarrow\zero.
\]

Thus, the last identity and $(\text{h}_1)$ imply the following:
\begin{equation}\label{E:zero-in-heyting}
	x\wedge\neg x=\zero.
\end{equation}

The following property characterizes pseudo-complementation:
\begin{equation}\label{E:pseudo-complementation}
	x\le y\rightarrow z~\Longleftrightarrow~x\wedge y\le z.
\end{equation}

Indeed, assume first that $x\le y\rightarrow z$. Then, in view of ($\text{l}_1$--$i$), the monotonicity of $\wedge$ w.r.t. $\le$ and $(\text{h}_1)$, we have:
\[
x\wedge y\le y\wedge(y\rightarrow z)=y\wedge z\le z.
\]

Conversely, suppose that $x\wedge y\le z$, that is $x\wedge y= x\wedge y\wedge z$. Then, in virtue of $(\text{h}_2)$, $(\text{h}_4)$, $(\text{h}_3)$,
we obtain:
\[
\begin{array}{rl}
x\le y\rightarrow x\!\!\!&=(y\rightarrow x)\wedge(y\rightarrow y)=y\rightarrow(x\wedge y)=y\rightarrow(x\wedge y\wedge z)\\
&=(y\rightarrow(x\wedge y))\wedge(y\rightarrow z)\le y\rightarrow z.
\end{array}
\]

Using~\eqref{E:pseudo-complementation}, we receive  immediately:
\begin{equation}\label{E:less-than=implication}
	x\le y~\Longleftrightarrow~x\rightarrow y=\one;
\end{equation}
which in turn implies:
\[
x=y~\Longleftrightarrow~x\leftrightarrow y=\one.
\]
\begin{prop}\label{P:boolean-algebra-as-heyting}
	Let {\em$\alg{H}=\langle\textsf{H};\wedge,\vee,\rightarrow,\neg,\one\rangle$} be a Heyting algebra. Then its restriction {\em$\alg{B}=\langle\textsf{H};\wedge,\vee,\neg,\one\rangle$} is a Boolean algebra if the identity $x\rightarrow y=\neg x\vee y$ holds in {\em\alg{H}}.
	Conversely, if {\em$\alg{B}=\langle\textsf{B};\wedge,\vee,\neg,\one\rangle$} is a Boolean algebra. Then its expansion {\em$\alg{H}=\langle\textsf{B};\wedge,\vee,\rightarrow,\neg,\one\rangle$},
	where $x\rightarrow y:=\neg x\vee y$, is a Heyting algebra.
\end{prop}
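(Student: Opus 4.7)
The plan is to prove the two implications separately, since each direction is a straightforward verification of the corresponding axiom list.

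For the forward direction, assume $\alg{H}=\langle\textsf{H};\wedge,\vee,\rightarrow,\neg,\one\rangle$ is a Heyting algebra in which $x\rightarrow y=\neg x\vee y$ holds, and consider the reduct $\alg{B}=\langle\textsf{H};\wedge,\vee,\neg,\one\rangle$. The axioms $(\text{l}_1)$, $(\text{l}_2)$, and $(\text{b}_1)$ are built into the Heyting axioms directly. The absorption laws $(\text{l}_3)$ and the distributivity laws $(\text{l}_4)$ hold in every Heyting algebra as a consequence of $(\text{h}_1)$--$(\text{h}_4)$ together with the order characterization \eqref{E:pseudo-complementation}; I would either sketch these derivations or cite a standard reference. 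For $(\text{b}_2$--$i)$, \eqref{E:zero-in-heyting} gives $x\wedge\neg x=\zero$, and $(\text{h}_5)$ then yields $(x\wedge\neg x)\vee y=\zero\vee y=y$. For $(\text{b}_2$--$ii)$, the hypothesis is used exactly once: instantiating it at $y=x$ and combining with \eqref{E:x-implies-x} gives $\one=x\rightarrow x=\neg x\vee x$, whence $(x\vee\neg x)\wedge y=\one\wedge y=y$ by commutativity and $(\text{b}_1$--$i)$.

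For the converse direction, let $\alg{B}$ be a Boolean algebra and set $x\rightarrow y:=\neg x\vee y$. The equalities $(\text{l}_1)$--$(\text{l}_2)$ and $(\text{b}_1)$ are inherited from $\alg{B}$. Each of $(\text{h}_1)$--$(\text{h}_6)$ is then an elementary Boolean computation: $(\text{h}_1)$ follows from distributivity of $\wedge$ over $\vee$ together with $x\wedge\neg x=\zero$ from \eqref{E:one-zero-in-boolean}; $(\text{h}_2)$ is absorption; $(\text{h}_3)$ is distributivity of $\vee$ over $\wedge$; $(\text{h}_4)$ uses $\neg y\vee y=\one$ from \eqref{E:one-zero-in-boolean}; $(\text{h}_5)$ is immediate once we note $\neg\one=\zero$; and $(\text{h}_6)$ reduces to $\neg x\vee\zero=\neg x$.

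There is no genuine obstacle here, but the conceptual point worth flagging is the asymmetric role of the identity $x\rightarrow y=\neg x\vee y$ in the two directions. In the forward direction it enters the proof only to extract the law of excluded middle $x\vee\neg x=\one$, which is precisely what is missing in a general Heyting algebra; this single consequence is enough to upgrade the reduct to a Boolean algebra. In the converse direction the same identity is turned into a definition, and the law of excluded middle, now available in $\alg{B}$, makes each of the Heyting axioms fall out by direct manipulation. Thus the equivalence truly hinges on $\one=x\vee\neg x$ being interchangeable with $x\rightarrow y=\neg x\vee y$.
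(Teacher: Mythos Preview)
Your proof is correct and follows essentially the same route as the paper: both directions are handled by direct verification of the relevant axiom lists, with $(\text{b}_2$--$ii)$ obtained via $x\vee\neg x=x\rightarrow x=\one$ and the converse by the same Boolean calculations of $(\text{h}_1)$--$(\text{h}_6)$. You are in fact slightly more careful than the paper in flagging that the absorption and distributivity laws $(\text{l}_3)$--$(\text{l}_4)$ are not among the listed Heyting axioms here and must be derived, a point the paper's proof passes over in silence.
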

\begin{proof}
	Let \alg{H} be a Heyting algebra in which $x\rightarrow y=\neg x\vee y$.
	We observe that, because of~\eqref{E:zero-in-heyting}, the identity ($\text{b}_2$--$i$) is true in \alg{H}. The identity ($\text{b}_2$--$ii$) is also true, for we have:
	\[
	(x\vee\neg x)\wedge y=(x\rightarrow x)\wedge y=y.
	\]
	
	Now assume that \alg{H} is an expansion of a Boolean algebra \alg{B}, where, by definition, $x\rightarrow y=\neg x\vee y$. We aim to show that all the properties $(\text{h}_1)$--$(\text{h}_6)$ are valid in \alg{H}. Indeed, we obtain:
	\[
	\begin{array}{l}
	x\wedge(x\rightarrow y)=x\wedge(\neg x\vee y)=(x\wedge\neg x)\vee(x\wedge y)=x\wedge y;\\
	(x\rightarrow y)\wedge y=(\neg x\vee y)\wedge y=y;\\
	(x\rightarrow y)\wedge(x\rightarrow z)=(\neg x\vee y)\wedge(\neg x\vee z)=
	\neg x\vee(y\wedge\neg x)\vee(\neg x\wedge z)\vee(y\wedge z)\\
	\qquad\qquad\qquad\qquad~=\neg x\vee(y\wedge z)=x\rightarrow(y\wedge z);\\
	x\wedge(y\rightarrow y)=x\wedge(\neg y\vee y)=x;\\
	\neg x=\neg x\vee\zero=\neg x\vee\neg\one=x\rightarrow\neg\one.
	\end{array}
	\]
\end{proof}

\begin{cor}\label{C:Cl-property}
	Let {\em$\alg{B}=\langle\textsf{H};\wedge,\vee,\neg,\one\rangle$} be a Boolean algebra. Then for an arbitrary {\em$x\in\textsf{B}$}, $\neg\neg x\rightarrow x=\one$, where the operation $x\rightarrow y$ is understood as $\neg x\vee y$.
\end{cor}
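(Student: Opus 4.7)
The plan is to unfold the definition of $\rightarrow$ given in the corollary, apply the double negation identity~\eqref{E:double-negation-equality} from the previous discussion of Boolean algebras, and finish with the excluded-middle identity~\eqref{E:one-zero-in-boolean}.

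More precisely, I would first observe that, by hypothesis, $\neg\neg x \rightarrow x = \neg(\neg\neg x) \vee x = \neg\neg\neg x \vee x$. Next I would invoke~\eqref{E:double-negation-equality}, which gives $\neg\neg y = y$ for every $y \in \textsf{B}$; applied to $y := \neg x$, this yields $\neg\neg\neg x = \neg x$. Substituting, we obtain $\neg\neg x \rightarrow x = \neg x \vee x$. Finally, by commutativity of $\vee$ (namely $(\text{l}_{1}$--$ii)$) and the first identity of~\eqref{E:one-zero-in-boolean}, $\neg x \vee x = x \vee \neg x = \one$, as required.

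There is no real obstacle here: the corollary is essentially a one-line computation that rests entirely on facts already established for Boolean algebras in Section~\ref{section:boolean-algebra}, together with the explicit definition of $\rightarrow$ provided in the statement. The only minor point to be careful about is keeping track of the three stacked negations and applying double negation at the correct position, which is an entirely routine manipulation.
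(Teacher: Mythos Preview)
Your argument is correct, but it differs from the paper's. The paper first invokes Proposition~\ref{P:boolean-algebra-as-heyting} to regard the expanded algebra as a Heyting algebra, and then combines~\eqref{E:double-negation-equality} with the Heyting identity~\eqref{E:x-implies-x} to get $\neg\neg x\rightarrow x = x\rightarrow x = \one$. You instead stay entirely within the Boolean setting: unfold $\rightarrow$, apply double negation to $\neg x$, and finish with excluded middle~\eqref{E:one-zero-in-boolean}. Your route is the more elementary one, since it avoids the detour through Heyting algebras; the paper's route, on the other hand, highlights the structural point that the corollary really follows from the Heyting-algebra framework just established, which is why the corollary is placed immediately after Proposition~\ref{P:boolean-algebra-as-heyting}.
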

\begin{proof}
	Augmenting \alg{B} with $\rightarrow$ defined as above, we receive a Heyting algebra. Then we use~\eqref{E:x-implies-x} and~\eqref{E:double-negation-equality}.
\end{proof}

It should be clear that the simplest non-degenerate Boolean algebra (Fig.~\ref{figure:two-element-boolean}) is also the simplest non-generate Heyting algebra.\\

We will be using Heyting algebras as a basic semantics for the intuitionistic propositional logic (Section~\ref{section:some-lindenbaum-algebras}). For this purpose, in the sequel, we will need the following propertied of Heyting algebras.
\begin{prop}\label{P:Int-properties}
	Let {\em$\alg{H}=\langle\textsf{H};\wedge,\vee,\rightarrow,\neg,\one\rangle$} be a Heyting algebra. For arbitrary elements $x$, $y$ and $z$ of {\em\textsf{H}} the following properties hold:
	{\em\[
		\begin{array}{cl}
		(\text{a}) &x\le y\rightarrow x,\\
		(\text{b}) &x\rightarrow y\le(x\rightarrow (y\rightarrow z))
		\rightarrow(x\rightarrow z),\\
		(\text{c}) &x\le y\rightarrow(x\wedge y),\\
		(\text{d}) &x\wedge y\le x,\\
		(\text{e}) &x\le x\vee y,\\
		(\text{f}) &x\rightarrow z\le(y\rightarrow z)\rightarrow((x\vee y)\rightarrow z),\\
		(\text{g}) &x\rightarrow y\le(x\rightarrow\neg y)\rightarrow\neg x,\\
		(\text{h}) &x\le\neg x\rightarrow y.
		\end{array}
		\]}
\end{prop}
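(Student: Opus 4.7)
The unifying strategy is to repeatedly apply the adjunction \eqref{E:pseudo-complementation}, namely $x\le y\rightarrow z \iff x\wedge y\le z$, to peel off each occurrence of $\rightarrow$ on the right-hand side of the inequality, thereby reducing every claim to a pure meet/join inequality that follows from lattice axioms together with $(\text{h}_1)$, $(\text{h}_3)$, and \eqref{E:zero-in-heyting}. I will treat the parts in the order they are stated, observing that (d) and (e) are simply the defining properties of $\wedge$ as the g.l.b. and $\vee$ as the l.u.b. (recorded right after \eqref{E:ordering-in-lattice}), and that (c) reduces by \eqref{E:pseudo-complementation} to $x\wedge y\le x\wedge y$, which is trivial.

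For (a), applying \eqref{E:pseudo-complementation} once reduces the goal to $x\wedge y\le x$, which is (d). For (h), it reduces to $x\wedge\neg x\le y$; but $x\wedge\neg x=\zero$ by \eqref{E:zero-in-heyting}, and $\zero$ is the least element, so we are done. The slightly more substantial parts are (b), (f), (g), where the key step is recognising that after applying \eqref{E:pseudo-complementation} enough times, the axioms $(\text{h}_1)$ and $(\text{h}_3)$ can collapse products of the form $x\wedge(x\rightarrow y)$ into $x\wedge y$, and $(x\rightarrow y)\wedge(x\rightarrow z)$ into $x\rightarrow(y\wedge z)$.

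Concretely, for (b), three applications of \eqref{E:pseudo-complementation} reduce the task to $(x\rightarrow y)\wedge(x\rightarrow(y\rightarrow z))\wedge x\le z$; using $(\text{h}_1)$ twice, the left-hand side equals $y\wedge(y\rightarrow z)\wedge x=y\wedge z\wedge x\le z$. For (f), two applications of \eqref{E:pseudo-complementation} reduce the goal to $(x\rightarrow z)\wedge(y\rightarrow z)\wedge(x\vee y)\le z$; distributing the meet over the join and applying $(\text{h}_1)$ on each summand gives $(x\wedge z\wedge(y\rightarrow z))\vee(y\wedge z\wedge(x\rightarrow z))\le z\vee z=z$. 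For (g), using \eqref{E:pseudo-complementation} twice and unfolding $\neg x = x\rightarrow\zero$ reduces it to $(x\rightarrow y)\wedge(x\rightarrow\neg y)\wedge x\le\zero$; by $(\text{h}_3)$ this meet equals $(x\rightarrow(y\wedge\neg y))\wedge x=(x\rightarrow\zero)\wedge x=\neg x\wedge x=\zero$ by \eqref{E:zero-in-heyting}.

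I do not anticipate a genuine obstacle: once one trusts the ``adjunction'' \eqref{E:pseudo-complementation} as the main workhorse, every part is mechanical. The only mild care needed is in (f), where distributivity of $\wedge$ over $\vee$ (axiom $(\text{l}_4$--$i)$, inherited by Heyting algebras) is invoked, and in (g), where one must be comfortable rewriting $\neg$ via $(\text{h}_6)$ so that $(\text{h}_3)$ becomes applicable.
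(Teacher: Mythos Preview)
Your proposal is correct and proceeds exactly as the paper does, via repeated use of the adjunction \eqref{E:pseudo-complementation} together with $(\text{h}_1)$, $(\text{h}_3)$, \eqref{E:zero-in-heyting} and distributivity. The only cosmetic discrepancies are that the paper obtains (a) directly from $(\text{h}_2)$ rather than via the adjunction, and that in (b) two (not three) applications of \eqref{E:pseudo-complementation} already yield the inequality you state.
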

\begin{proof}
	We obtain successively the following.
	
	(a) follows straightforwardly from $(\text{h}_2)$.
	
	To prove (b), using~\eqref{E:pseudo-complementation}, we have:
	\[
	\begin{array}{rl}
	(x\rightarrow y)\wedge(x\rightarrow (y\rightarrow z))\wedge x\!\!\!
	&=(x\rightarrow y)\wedge x\wedge(y\rightarrow z)\\
	&=x\wedge y\wedge (y\rightarrow z)=x\wedge y\wedge z\le z.
	\end{array}
	\]
	Then, we apply \eqref{E:pseudo-complementation} again twice to get (b).
	
	We derive (c) immediately from $x\wedge y\le x\wedge y$.
	
	(d) and (e) are obvious.
	
	To prove (f), we first have:
	\[
	\begin{array}{rl}
	(x\rightarrow z)\wedge(y\rightarrow z)\wedge(x\vee y)\!\!\!
	&=((x\rightarrow z)\wedge(y\rightarrow z)\wedge x)\vee
	((x\rightarrow z)\wedge(y\rightarrow z)\wedge y)\\
	&=(x\wedge z\wedge(z\rightarrow y))\vee(y\wedge z\wedge(x\rightarrow z))\\
	&=(x\wedge z)\vee(y\wedge z)=(x\vee y)\wedge z\le z.
	\end{array}
	\]
	Then, we apply~\eqref{E:pseudo-complementation} twice.
	
	To prove (g), applying~\eqref{E:pseudo-complementation} twice, we have:
	\[
	(x\rightarrow y)\wedge (x\rightarrow \neg y)\wedge x=x\wedge y\wedge\neg y=x\wedge\zero\le\zero,
	\]
	which implies that
	\[
	(x\rightarrow y)\wedge (x\rightarrow \neg y)\le x\rightarrow\zero=\neg x.
	\]
	It remains to apply~\eqref{E:pseudo-complementation}.
	
	Finally, we obtain (h) as follows:
	\[
	x\wedge\neg x=\zero\le y.
	\]
	Then, we apply~\eqref{E:pseudo-complementation}.
\end{proof}

\chapter[Formal Languages]{Formal Languages and Their Interpretation}\label{chapter:languages}	
\section{Formal languages}\label{section:languages}
It is commonly accepted that formal logic is an enterprise, in the broad sense of the word, which is characterized, first of all, by the use, study and development  of formal languages. One of the grounds for this characterization is that formal language ensures the codification of accepted patterns of reasoning. From the outset, we have to admit that such a representation of the train of thought in argumentation confines it to a framework of the discrete rather than the continuous. From this viewpoint, an explicit argument is represented in a formal language by a sequence of discrete units.

It is also commonly accepted that logic, formal or otherwise, deals with the concept of truth. Each unit in a formally presented reasoning is assumed to bear truth-related content. Yet, the use of formal language for the codification of the units of truth-related contents does not make logic formal;
only the realization that the discrete units, called \textit{words}, of a formal language, constituting a deduction chain, are codes of \textit{forms of judgments} rather than ``formulations of factual, contingent knowledge'' (Carnap) does.\footnote{Compare with the notion of elementary propositional function in~\cite{whitehead-russell}, section A $\ast1$.} 

Thus we arrive at a twofold view on the units of formal language, which are used to form the codes of formally presented deductions. On the one hand, they are words of a given formal language, satisfying  some standard requirements; on the other, they are forms that do not possess any factual content but are intended to represent it. Moreover, the content that is meant to be represented by means of a formal unit is intended to generate an assertion, not command, exclamation, question and the like. Having this in mind, we will be calling these formal units  (\textit{sentential}) \textit{formulas} or \textit{terms}, depending on which role they play in our analysis.

Concerning formal deduction, or formal reasoning, we will make a distinction, and emphasize it, between
what is known in philosophy of logic as the \textit{theory of consequence}, or \textit{dialectic argument}, which is rooted in Aristotle's \textit{Analytica Priora}, where the central question is `What follows from what?', and what philosophy of logic calls the \textit{theory of demonstration}, which can be traced to \textit{Analytica Posteriora}, where the main question is `What can be obtained from known (accepted) premises?'. The formulas qualifying to be answers to the latter question will be called \textit{theses} or (formal) \textit{theorems}. We note that the class of such theses depends on the set of \textit{premises}, as well as on the means of the deduction in use. The theory of consequence will be analyzed in terms of a binary relation between the sets of formulas of a given formal language and the formulas of this language. As to ``the means of the deduction in use,'' it will be our main focus throughout the book.\\

After these preparatory remarks, we turn to definition of a formal language. Here we face an obstacle. Since each formal language depends on a particular set of symbols, in order to reach some generality, we are to deal with a schematic language which would contain all main characteristics of many particular languages that are in use nowadays; many other languages of this category can be defined in the future. All our formal languages in focus are \textit{sentential}, that is, they are intended to serve for defining \textit{sentential logical systems}.  However, for technical purposes, we will occasionally be employing predicate languages, which will be introduced as needed.

The schematic language we define in this section will not be the only one we are going to deal with. In order to advance Lindenbaum method, as we understand it, to the limits we are able to envisage, we will discuss it in the framework of some fragments of first order language as well.

A \textit{\textbf{sentential schematic language}} $\Lan$ is assumed to contain symbols of the following pairwise disjoint categories:
\begin{itemize}
	\item a nonempty set $\Var$ of \textit{sentential} (or \textit{propositional}) \textit{variables}; we will refer to the elements of $\Var$ as $p,q, r,\ldots$, using subscripts if needed; 
	\item a set $\Cons$ (maybe, empty) of  \textit{sentential} (or \textit{propositional}) \textit{constants}; we will refer to the elements of $\Cons$ as $a,b,c,\ldots$, using subscripts if necessary; we assume that the cardinality of the set $\Cons$ is not bounded from above, that is $0\le\card{\Cons}$;
	\item a set $\Func$  of  \textit{sentential} (or \textit{propositional}) \textit{connectives} is nonempty; we will refer to the elements of $\Func$ as $F_0,F_1,\ldots,F_{\gamma},\ldots$, where $\gamma$ is any ordinal; for each connective $F_i$, there is a positive natural number $\#(F_i)$ called the \textit{arity of the connective} $F_i$. We will be omitting index and write simply $F$, if confusion is unlikely; the cardinality of $\Func$ is not bounded from above;
	that is $0<\card{\Func}$.
\end{itemize}

We repeat: the sets $\Var$, $\Cons$, and $\Func$, each consisting of symbols, are pairwise disjoint, and assume also that the elements of each of these sets are distinguishable in such a way that a concatenation of a number of these symbols can be read uniquely; that is, given a word $w$ of $\Lan$ of the length 1, one can decide whether $w\in\Var$ or $w\in\Cons$ or $w\in\Func$; if the word $w$ has the length greater than 1, then, we assume, it is possible to recognize each symbol of $w$ in a unique way.

Thus $\Lan$ gives space for defining many specific languages.
To denote a language defined by a specification within  $\Lan$ we will be using subscripts or other marks attached to $\Lan$. 

The most important concept within a language is that of \textit{formula}. To refer to formulas formed by the means of $\Lan$, we use the letters $\alpha,\beta,\gamma,\ldots$ (possibly with subscripts or other marks), calling them $\Lan$-\textit{formulas}. Specifically, these symbols play a role of \textit{informal metavariables} for $\Lan$-formulas. For specified languages we reserve the right to employ other notations, about which the reader will be advised. For a special purpose (to deal with structural inference rules), we introduce \textit{formal metavariables} as well.

Given two sentential languages, $\Lan^{\prime}$ and $\Lan^{\prime\prime}$ with $\Var^{\prime}\subseteq\Var^{\prime\prime}$, $\Cons^{\prime}\subseteq\Cons^{\prime\prime}$ and $\Func^{\prime}\subseteq\Func^{\prime\prime}$, $\Lan^{\prime\prime}$ is called an \textit{\textbf{extension}} of $\Lan^{\prime}$ and the latter is a \textit{\textbf{restriction}} of the former. An extension $\Lan^{\prime\prime}$ of a language $\Lan^{\prime}$ is called \textit{\textbf{primitive}} if $\Cons^{\prime}=\Cons^{\prime\prime}$ and $\Func^{\prime}=\Func^{\prime\prime}$, that is, when $\Lan^{\prime\prime}$ is obtained only by adding new variables, if any, to $\Var^{\prime}$.

\begin{defn}[$\Lan$-formulas]\label{D:L-formulas}
	The set $\textbf{Fm}_{\mathcal{L}}$ of $\Lan$-\textbf{formulas} is formed inductively according to the following rules$:$
	\[
	\begin{array}{cl}
	(\emph{a}) &\Var\cup\Cons\subseteq\bm{Fm}_{\mathcal{L}};~\text{if $\alpha\in\Var\cup\Cons$, $\alpha$ is called \textbf{atomic}};\\ 
	(\emph{b}) & \text{if $\alpha_1,\ldots,\alpha_n\in\bm{Fm}_{\mathcal{L}}$ and $F\in\Func$ with $\#(F)=n$, then $F\alpha_1\ldots\alpha_n\in\bm{Fm}_{\mathcal{L}}$};\\
	(\emph{c}) & \text{the $\Lan$-formalas are only those $\Lan$-words which can be obtained}\\
	&\text{successively according to the rules {\em(a)} and {\em(b)}.}\\
	\end{array}
	\]
\end{defn}

We will find useful the notion of the \textit{\textbf{degree of}} a (given) \textit{\textbf{formula}} $\alpha$, according to the following clauses:
\[
\begin{array}{cl}
(\text{a}) &\text{if $\alpha\in\Var\cup\Cons$, then its degree is $0$};\\
(\text{b}) &\text{if $\alpha=F\alpha_1\ldots\alpha_n$, then the degree of $\alpha$ is the sum of}\\ 
&\text{the degrees of all $\alpha_i$ plus $1$}.
\end{array}
\]
It is easy to see that the degree of a formula $\alpha$ equals the number of occurrences of the sentential connectives that are contained in $\alpha$.

As can be seen from the last definition, the degree of a formula is an estimate of its ``depth.'' Also, in the last definition, it is assumed that if $\alpha=F_{i}\alpha_1\ldots\alpha_n$, that is if $\alpha\notin\Var\cup\Cons$, it begins with some sentential connective ($F$ with $\#(F)=n$ in our case). But then, the words $\alpha_1,\ldots,\alpha_n$ must be recognizable as formulas. Thus it is important to note that for any given word $w$ of the language $\Lan$, one can decide whether $w$ is an $\Lan$-formula or not. One of the ways, by which it can be done, is the construction of a formula tree for the formula in question.\\

Technically, it is convenient to have the notion of metaformula associated with a given language $\Lan$.
\begin{defn}[$\Lan$-metaformulas]
	Given a language $\Lan$, the set $\Mform$ of $\Lan$-metaformulas is defined inductively by the clauses$:$
	\[
	\begin{array}{cl}
	(\emph{a}) &\Mvar\cup\Cons\subseteq\Mform;~\text{where $\Mvar$ is a set of \textbf{metavariables} $\bm{\alpha},\bm{\beta},\bm{\gamma},\ldots$}\\
	& \text{$($with or without subscripts$)$ with $\card{\Mvar}=\card{\Forms_{\mathcal{L}}}$ and so that}\\
	&\Mvar\cap\Cons=\emptyset~\text{and}~\Mvar\cap\Func=\emptyset;\\ 
	(\emph{b}) & \text{if $\phi_1,\ldots,\phi_n\in\Mform$ and $F\in\Func$ with $\#(F)=n$, then $F\phi_1\ldots\phi_n\in\Mform$};\\
	(\emph{c}) & \text{the $\Lan$-metaformalas are only those formal words which can be obtained}\\
	&\text{successively according to the items {\em(a)} and {\em(b)}.}\\
	\end{array}
	\]
\end{defn}

The notion of the \textit{\textbf{degree of metaformula}} is similar to that of formula.

The intended interpretation of metavariables is $\Lan$-formulas.\\

Now we turn again to formulas. We will be using the following notation: Given an $\Lan$-formula $\alpha$,
\[
\Var(\alpha)~\textit{is the set of variables that occur in $\alpha$}.
\]

It will be assumed that the set $\Var$ is decidable and the sets $\Cons$ and $\Func$ are decidable when they are countable; that is, for any $\Lan$-word $w$ of the length 1, there is an effective procedure which decides whether $w\in\Var$, whether $w\in\Cons$, or whether $w\in\Func$. In this case, it is also effectively decidable whether any $\Lan$-word is an $\Lan$-formula or not.

The \textit{\textbf{cardinality of a language}} $\Lan$, symbolically $\card{\Lan}$, is $\card{\Forms_{\mathcal{L}}}$. We will be using both notations. We note that for any language $\Lan$, $\card{\Lan}\ge\aleph_{0}$.

The notion of a \textit{\textbf{subformula of}} a (given) \textit{\textbf{formula}} is defined inductively through the notion of the \textit{\textbf{set}} $\Sub{\alpha}$ \textit{\textbf{of all subformulas}} \textit{\textbf{of}} a (given) \textit{\textbf{formula}} $\alpha$ as follows:
\[
\begin{array}{cl}
(\text{a}) &\text{if $\alpha\in\Var\cup\Cons$, then $\Sub{\alpha}=\{\alpha\}$};\\
(\text{b}) &\text{if $\alpha=F\alpha_1\ldots\alpha_n$, then $\Sub{\alpha}=\{\alpha\}\cup\Sub{\alpha_1}\cup
	\ldots\cup\Sub{\alpha_n}$}.
\end{array}
\]
A subformula $\beta$ of a formula $\alpha$ is called \textit{\textbf{proper}} if $\beta\in\Sub{\alpha}$ and $\beta\neq\alpha$. 

A useful view on a subfomula of a formula is that the former is a subword of the latter and is a formula itself.\\

The \textit{\textbf{formula tree}} for a formula $\alpha$ is a finite directed edge-weighted labeled tree where each node is labeled by a subformula of the initial formula $\alpha$.  The node labeled by $\alpha$ is the \textit{root} of the tree; that is its indegree equals 0. Suppose a node is labeled by  $\beta\in\Sub{\alpha}$. If $\beta\in\Var\cup\Cons$, we call this node a \textit{leaf}; its outdegree equals 0. Now assume that $\beta=F_{i}\beta_1\ldots\beta_n$, that is $\#(F_i)=n$. Then outdegree of this node is $n$. Since $\alpha$ may have more than one occurrence of a subformula $\beta$, all nodes labeled by the last formula will have outdegree equal to $\#(F_i)$. Then, $n$ directed edges will connect this node to $n$ nodes labeled by the subformulas $\beta_1,\ldots,\beta_n$, wherein the weight $i$ is assigned to the edge connecting $\beta$ with $\beta_i$. These $n$ nodes are distinct from one another even if some of $\beta_1,\ldots,\beta_n$ may be equal. We note that each occurrence of every subformula of $\alpha$ is a label of a formula tree for $\alpha$. Also, we observe that the degree of each $\beta_i$ is less than the degree of $\beta$. A formula tree for a given formula $\alpha$ is completed if each complete path from the root ends with a leaf. This necessarily happens, for the degree of a formula is a nonnegative integer.

We note that we use weighted edges only in order to identify a (unique) path from the root of a tree to a particular node.

It should be obvious that all formula trees for a given formula are isomorphic as graphs. So one can speak of \textit{the} formula tree for a given formula.

Let us illustrate the last definition for the formula $F_{i}F_{j}app$ with $\#(F_i)=\#(F_j)=2$.
Below we identify the nodes with their labels.

\begin{figure}[h!]
	\[
	\xymatrix{
		&&F_{i}F_{j}app \ar[dl]_1 \ar[dr]^2\\
		&F_{j}ap \ar[dl]_1 \ar[dr]^2 &&p\\
		a &&p
	}
	\]
	\caption{The formula tree for $F_{i}F_{j}apq$}
	\label{Fig-1}
\end{figure}
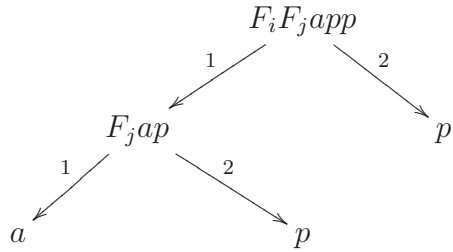

It should be clear that the formula tree for an initial formula allows us to work with each occurrence of every subformula of this formula separately and also to follow through the subformula relation even within a designated occurrence of a subformula of the initial formula. This observation will be used in the operation of replacement below.

Let us consider any leaf of the formula tree of Figure~\ref{Fig-1}, say the one labeled by $a$. We observe that the nodes of the path from the root to the selected leaf consist of all subformulas of the initial formula which contain $a$ as their subformula. Now let us color the first occurrence of $p$ in red and the second in blue so that we obtain
\[
F_{i}F_{j}a\textcolor{red}{p}\textcolor{blue}{p}. \tag{initial formula}
\]

We use color only for the sake of clarity to identify the path we are working with at the moment (see below); we could use weights of edges instead.

Next we consider the two paths: the first,
\[
F_{i}F_{j}a\textcolor{red}{p}\textcolor{blue}{p}\stackrel{1}{\longrightarrow} F_{j}a\textcolor{red}{p}
\stackrel{2}{\longrightarrow}\textcolor{red}{p},\tag{red}
\]
corresponds to the red occurrence of $p$ in the sense that  its nodes constitute all subformulas of the initial formula, which contain this red occurrence of $p$; the second path,
\[
F_{i}F_{j}a\textcolor{red}{p}\textcolor{blue}{p}\stackrel{1}{\longrightarrow}
\textcolor{blue}{p}, \tag{blue}
\]
corresponds to the blue occurrence of $p$ and satisfies the same property regarding this occurrence.
Now we substitute for both occurrences of $p$ in the initial formula any formula $\alpha$ so that we obtain the formula
\[
F_{i}F_{j}a\textcolor{red}{\alpha}\textcolor{blue}{\alpha}. \tag{red-blue $\alpha$}
\]
Instead of the tree of Figure 1 we will get the new tree:
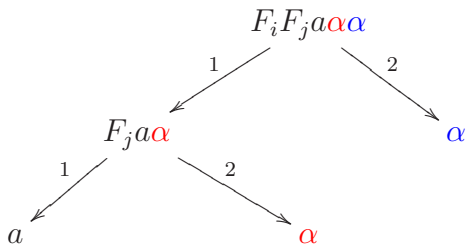
\begin{figure}[h!]
	\[
	\xymatrix{
		&&F_{i}F_{j}a\textcolor{red}{\alpha}\textcolor{blue}{\alpha}  \ar[dl]_1 \ar[dr]^2\\
		&F_{j}a\textcolor{red}{\alpha} \ar[dl]_1 \ar[dr]^2 &&\textcolor{blue}{\alpha}\\
		a &&\textcolor{red}{\alpha}
	}
	\]
	\caption{The transformed tree after the substitution $p\mapsto\alpha$}
	\label{Fig-2}
\end{figure}

If $\alpha\notin\Var\cup\Cons$, the tree of Figure~\ref{Fig-2} is not a formula tree. However, the last tree shows that the $\Lan$-word
$F_{i}F_{j}a\alpha\alpha$ obtained by this substitution is an $\Lan$-formula. Instead of the paths (red) and (blue) we observe the following paths:
\[
F_{i}F_{j}a\textcolor{red}{\alpha}\textcolor{blue}{\alpha}\stackrel{1}{\longrightarrow} F_{j}a\textcolor{red}{\alpha}
\stackrel{2}{\longrightarrow}\textcolor{red}{\alpha}\tag{red $\alpha$}
\]
and 
\[
F_{i}F_{j}a\textcolor{red}{\alpha}\textcolor{blue}{\alpha}\stackrel{2}{\longrightarrow}
\textcolor{blue}{\alpha}. \tag{blue $\alpha$}
\]
And again, we notice the same property: the nodes of (red $\alpha$) form the set of subsets of the red occurrence of $\alpha$ in the formula (red-blue $\alpha$); the same property regarding the blue occurrence of $\alpha$ is true for the path (blue $\alpha$).

This observation will help us discuss a syntactic transformation which is vital for the theory of sentential
logic. Essentially, this transformation makes speak of logic as a system of forms of judgments rather than that of individual judgments about facts.

First we define a transformation related to a formula $\alpha$ and a sentential variable $p$, denoting this operation by $\sigma^{\alpha}_{p}$. Before proceeding, we note that the initial formula $F_iF_japp$ was chosen rather arbitrarily; it might be that it does not contain $p$.
Having this in mind, given any $\Lan$-formula $\alpha$ and any sentential variable $p$, we define a
\textit{\textbf{unary substitution}} $\sigma^{\alpha}_{p}:\Forms_{\mathcal{L}}\longrightarrow\Forms_{\mathcal{L}}$ as follows:
\[
\begin{array}{cl}
\bullet & \text{if $\beta\in\Var\cup\Cons$ and $\beta\neq p$, then $\sigma^{\alpha}_{p}(\beta)=\beta$};\\
\bullet & \text{if $\beta = p$, then $\sigma^{\alpha}_{p}(\beta)=\alpha$};\\
\bullet &\text{if $\beta=F_{i}\beta_1\ldots\beta_n$, then $\sigma^{\alpha}_{p}(\beta)=F_{i}\sigma^{\alpha}_{p}(\beta_1)\ldots\sigma^{\alpha}_{p}(\beta_n)$}.
\end{array}
\]

At first sight, it may seem that unary substitution is not powerful enough. For example, let us take a formula $F_{i}F_{j}apq$, where  connectives $F_i$ and $F_j$ have the arity equal to 2. Assume that we want to produce a formula $F_{i}F_{j}a\alpha\beta$. Suppose we first obtain:
\[
\sigma^{\alpha}_{p}(F_{i}F_{j}apq)=F_{i}F_{j}a\alpha q.
\]
And we face a problem if $\alpha$ contains $q$, for then we get
\[
\sigma^{\beta}_{q}(F_{i}F_{j}a\alpha q)=F_{i}F_{j}a\sigma^{\beta}_{q}(\alpha)\beta,
\]
which is not necessarily equal to $F_{i}F_{j}a\alpha\beta$.
To overcome this difficulty, we select a variable which occurs neither in $\alpha$ nor in the initial formula $F_{i}F_{j}apq$. Suppose this variable is $r$. Then, we first produce:
\[
\sigma^{r}_{q}(F_{i}F_{j}apq)=F_{i}F_{j}apr.
\]
Now the desirable formula can be obtained in two steps:
\[
\begin{array}{l}
\sigma^{\alpha}_{p}(F_{i}F_{j}apr)=F_{i}F_{j}a\alpha r,\\
\sigma^{\beta}_{r}(F_{i}F_{j}a\alpha r)=F_{i}F_{j}a\alpha\beta.
\end{array}
\]
Thus we observe:
\[
\sigma^{\beta}_{r}\circ\sigma^{\alpha}_{p}\circ\sigma^{r}_{q}(F_{i}F_{j}apq)=F_{i}F_{j}a\alpha\beta.
\]

This leads to the conclusion that the composition of two and more unary substitutions may result in substitution of a different kind; namely, we can obtain the formula $F_{i}F_{j}a\alpha\beta$ from the formula $F_{i}F_{j}apq$ by  substituting simultaneously 
the formulas $\alpha$ and $\beta$ for the variables $p$ and $q$, respectively. Thus we arrive at the following definition.
\begin{defn}[substitution]\label{D:substitution} 
	A syntactic transformation $\sigma:\Forms_{\mathcal{L}}\longrightarrow\Forms_{\mathcal{L}}$ is called a \textit{\textbf{uniform}} $($or \textit{\textbf{simultaneous}}$)$ \textit{\textbf{substitution}} $($or simply a \textit{\textbf{substitution}} or $\Lan$-\textit{\textbf{substitution}}$)$ if it is an extension of a map $\sigma_{0}:\Var\longrightarrow
	\Forms_{\mathcal{L}}$, satisfying the following conditions$\,:$
	{\em\[
		\begin{array}{cl}
		(\text{a}) &\text{if $\alpha\in\Cons$, then $\sigma(\alpha)=\alpha$};\\
		(\text{b}) &\text{if $\alpha=F_{i}\alpha_1\ldots\alpha_n$, then $\sigma(\alpha)=F_{i}\sigma(\alpha_1)\ldots\sigma(\alpha_n)$}.
		\end{array}
		\]}
	Given a substitution $\sigma$ and a formula $\alpha$, $\sigma(\alpha)$ is called a \textbf{substitution instance} of $\alpha$. 
\end{defn}

For a set $X$ of $\Lan$-formulas and an $\Lan$-substitution $\sigma$, we denote:
\[
\sigma(X):=\set{\sigma(\alpha)}{\alpha\in X}~\text{and}~\sigma^{-1}(X):=\set{\alpha}{\sigma(\alpha)\in X}.
\]
Thus, for any substitution $\sigma$,
\[
\sigma(\emptyset)=\emptyset~\text{and}~\sigma^{-1}(\emptyset)=\emptyset.
\]
Also, we observe:
\begin{equation}\label{E:substitution-inequalities}
	i)~\sigma(\sigma^{-1}(X))\subseteq X\quad\text{and}\quad ii)~ X\subseteq\sigma^{-1}(\sigma(X)).
\end{equation}
(Exercise~\ref{section:languages}.\ref{EX:substitution-inequalities}) 

We leave for the reader to prove that, given a fixed formula $\alpha$, any simultaneous substitution applied to $\alpha$ results in the composition of singular substitutions applied to $\alpha$. (See Exercise~\ref{section:languages}.\ref{EX:substitution-2}.)
\begin{prop}\label{P:substitution}
	For any map $\sigma_{0}:\Var\longrightarrow\Forms_{\mathcal{L}}$, there is a unique substitution $\sigma:\Forms_{\mathcal{L}}\longrightarrow\Forms_{\mathcal{L}}$ extending $\sigma_0$.
\end{prop}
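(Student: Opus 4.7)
The plan is to prove existence and uniqueness separately, both by induction on the degree of a formula as introduced right after Definition~\ref{D:L-formulas}. The three clauses in Definition~\ref{D:substitution}, together with the requirement that $\sigma$ extend $\sigma_0$, completely pin down what $\sigma(\alpha)$ must be for each $\alpha \in \Forms_{\mathcal{L}}$, so the only real content is to verify that this prescription is well-defined on all formulas and never forces $\sigma$ to take two different values on the same formula.

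For existence, I would define $\sigma$ recursively on the degree $d(\alpha)$. Base case ($d(\alpha) = 0$): if $\alpha \in \Var$ set $\sigma(\alpha) := \sigma_0(\alpha)$, and if $\alpha \in \Cons$ set $\sigma(\alpha) := \alpha$. Inductive step: assuming $\sigma$ has been defined on all formulas of degree $< d$, let $\alpha$ be of degree $d > 0$. Then $\alpha \notin \Var \cup \Cons$, so by Definition~\ref{D:L-formulas}(b) we may write $\alpha = F\alpha_1 \ldots \alpha_n$ with $F \in \Func$, $\#(F) = n$, and each $\alpha_i \in \Forms_{\mathcal{L}}$ of strictly smaller degree (as noted in the text). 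Set $\sigma(\alpha) := F\sigma(\alpha_1)\ldots\sigma(\alpha_n)$. The result is an $\Lan$-formula by Definition~\ref{D:L-formulas}(b) applied to the already defined $\sigma(\alpha_i)$.

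For uniqueness, suppose $\tau_1, \tau_2 : \Forms_{\mathcal{L}} \to \Forms_{\mathcal{L}}$ both extend $\sigma_0$ and both satisfy conditions (a) and (b) of Definition~\ref{D:substitution}. I would show $\tau_1(\alpha) = \tau_2(\alpha)$ by induction on $d(\alpha)$. If $d(\alpha) = 0$ then $\alpha \in \Var \cup \Cons$: on $\Var$ both agree with $\sigma_0$, and on $\Cons$ both equal $\alpha$ by clause (a). If $d(\alpha) > 0$, write $\alpha = F\alpha_1\ldots\alpha_n$; then clause (b) gives $\tau_k(\alpha) = F\tau_k(\alpha_1)\ldots\tau_k(\alpha_n)$ for $k = 1, 2$, and the inductive hypothesis makes the right-hand sides coincide.

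The main obstacle, and the point most likely to need explicit reference to earlier material, is the well-definedness of the recursive step: it relies on the fact that whenever $\alpha \notin \Var \cup \Cons$, the decomposition $\alpha = F\alpha_1 \ldots \alpha_n$ is \emph{unique} (unique readability), so that the value $F\sigma(\alpha_1)\ldots\sigma(\alpha_n)$ does not depend on how we parse $\alpha$. This is precisely the assumption made in the paragraph just after Definition~\ref{D:L-formulas} (that each symbol of a word can be recognized uniquely and that one can decide whether a word is a formula). Once unique readability is invoked, both the existence and uniqueness arguments are routine inductions and should be stated briefly.
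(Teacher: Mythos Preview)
Your proposal is correct and follows the standard route; the paper itself leaves this proof to the reader as Exercise~\ref{section:languages}.\ref{EX:substitution}, so there is no authorial proof to compare against, but the induction on degree together with an appeal to unique readability is exactly what is expected here.
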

\noindent\textit{Proof}~is left to the reader. (Exercise~\ref{section:languages}.\ref{EX:substitution}.)\\

\textit{The last proposition allows us to think of a substitution simply as a map} $\sigma:\Var\longrightarrow\Forms_{\mathcal{L}}$, which will be convenient when we want to define a particular substitution. On the other hand, the property (b) of Definition~\ref{D:substitution} allows us to look at any substitution as a mapping satisfying a special algebraic property, which will also be convenient when we want to use this property.

For now, we want to note that composition of finitely many substitutions is a substitution. Moreover, for any substitutions $\sigma_1$, $\sigma_2$ and $\sigma_3$,
\begin{equation}\label{E:substitution-associative}
	\sigma_1\circ(\sigma_2\circ\sigma_3)=(\sigma_1\circ\sigma_2)\circ\sigma_3
\end{equation}
(We leave this property for the reader; see Exercise~\ref{section:languages}.\ref{EX:substitution-associative}.) This implies that the set of all substitutions, related to a language $\Lan$, along with composition $\circ$ constitute a monoid with the identity element
\[
\iota(p)=p,~\text{for any $p\in\Var$}
\]
which will be calling the \textit{\textbf{identity substitution}}. It must be clear that for any formula $\alpha$,
\[
\iota(\alpha)=\alpha.
\]
Because of \eqref{E:substitution-associative}, any grouping in
\[
\sigma_{1}\circ\ldots\circ\sigma_n
\]
leads to the same result. For this reason, parentheses can be omitted, if we do want to be specific. 

We will call a set $F\subseteq\Forms_{\mathcal{L}}$ \textit{\textbf{closed under substitution}}
if for any $\Lan$-formula $\alpha$ and substitution $\sigma$,
\[
\alpha\in F\Longrightarrow\sigma(\alpha)\in F.
\]

We have already noted that the tree of Figure~\ref{Fig-2} is not a formula tree if $\alpha\notin\Var\cup\Cons$. However, we can build the formula tree for $\alpha$. It does not matter how it looks; so we depict it schematically as follows:

\begin{figure}[h!]
	\[
	\xymatrix{
		\alpha \ar@{~}[d] \\
		\circ\circ\circ 
	}
	\]	
	\caption{The formula tree for $\alpha$}
	\label{Fig-3}
\end{figure}

Attaching two copies of the last tree to the tree of Figure~\ref{Fig-2}, we obtain the tree for $F_{i}F_{j}a\alpha\alpha$.

\begin{figure}[h!]
	\[
	\xymatrix{
		&&F_{i}F_{j}a\textcolor{red}{\alpha}\textcolor{blue}{\alpha}  \ar[dl]_1 \ar[dr]^2\\
		&F_{j}a\textcolor{red}{\alpha} \ar[dl]_1 \ar[dr]^2 &&\textcolor{blue}{\alpha} \ar@{~}[d]\\
		a &&\textcolor{red}{\alpha} \ar@{~}[d] &\circ\circ\circ\\
		&&\circ\circ\circ
	}
	\]
	\caption{The formula tree for $F_{i}F_{j}a\alpha\alpha$}
	\label{Fig-4}
\end{figure}
\pagebreak

We will call one copy of the formula tree for $\alpha$ \textit{the tree for red} $\alpha$ and the other copy \textit{the tree for the blue} $\alpha$.

Now let us take any formula $\beta$  and construct the formula tree for it depicted as follows:
\begin{figure}[h!]
	\[
	\xymatrix{
		\beta \ar@{~}[d] \\
		\bullet\bullet\bullet 
	}
	\]	
	\caption{The formula tree for $\beta$}
	\label{Fig-5}
\end{figure}

Now in the path
\[
F_{i}F_{j}a\textcolor{red}{\alpha}\textcolor{blue}{\alpha}\stackrel{1}{\longrightarrow}
F_{j}a\textcolor{red}{\alpha}\stackrel{2}{\longrightarrow} \textcolor{red}{\alpha}
\]
\pagebreak
of the tree of Figure~\ref{Fig-4}, we replace all red occurrences of $\alpha$ by $\beta$ and replace the tree for red alpha by the tree for $\beta$ (Figure~\ref{Fig-5}), arriving thus at the following tree:

\begin{figure}[h!]
	\[
	\xymatrix{
		&&F_{i}F_{j}a\beta\textcolor{blue}{\alpha}  \ar[dl]_1 \ar[dr]^2\\
		&F_{j}a\beta \ar[dl]_1 \ar[dr]^2 &&\textcolor{blue}{\alpha} \ar@{~}[d]\\
		a &&\beta \ar@{~}[d] &\circ\circ\circ\\
		&&\bullet\bullet\bullet
	}
	\]
	\caption{The formula tree for $F_{i}F_{j}a\beta\alpha$}
	\label{Fig-6}
\end{figure}

This is the formula tree for $F_{i}F_{j}a\beta\alpha$. The syntactic transformation just described shows that replacing a designated subformula of a formula, we obtain a formula. Such a transformation is called \textit{\textbf{replacement}}. Designating a subformula $\beta\in\Sub{\alpha}$, we write $\alpha[\beta]$. The result of the replacement of this particular subformula $\beta$ with a formula $\gamma$, we denote by $\alpha[\gamma]$.\\

Before going over examples of some sentential languages, we discuss how any metaformula can be seen in the light of the notion of substitution. It must be clear that the intended interpretation of any metaformula $\phi$ is the set of formulas of a particular shape.

Similar to Definition~\ref{D:substitution}, any map $\bm{\sigma}_0:\Mvar\longrightarrow
\Forms_{\mathcal{L}}$ can uniquely be extended to $\bm{\sigma}:\Mform\longrightarrow\Forms_{\mathcal{L}}$.  We will call both $\bm{\sigma}_0$ and its extension $\bm{\sigma}$ a \textit{\textbf{realization}}, or \textit{\textbf{instantiation}}, of metaformulas in $\Lan$. The following observation will be useful in the sequel. An instantiation $\bm{\sigma}$ is called \textit{\textbf{simple}} if it is the extension of $\bm{\sigma}_0:\Mvar\longrightarrow
\Var_{\mathcal{L}}$.
\begin{prop}\label{P:metaformula-instantiations}
	Let $\phi$ be an $\Lan$-metaformula and $\Sigma_\phi$ be the class of all instantiations of $\phi$. Then for any $\Lan$-substitution $\sigma$ and any $\Lan$-formula $\alpha$,
	\[
	\alpha\in\Sigma_\phi\Longrightarrow\sigma(\alpha)\in\Sigma_\phi.
	\]
\end{prop}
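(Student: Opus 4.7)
The plan is to unpack the statement so that the quantifiers line up, then build an explicit instantiation witnessing $\sigma(\alpha) \in \Sigma_\phi$. Read $\Sigma_\phi$ as the set of $\Lan$-formulas of the form $\bm{\tau}(\phi)$ for some instantiation $\bm{\tau}$ of $\Lan$-metaformulas in $\Lan$. So the hypothesis $\alpha \in \Sigma_\phi$ gives an instantiation $\bm{\sigma}$ (extending some $\bm{\sigma}_0 : \Mvar \longrightarrow \Forms_{\mathcal{L}}$) with $\alpha = \bm{\sigma}(\phi)$, and the goal is to exhibit an instantiation $\bm{\tau}$ with $\bm{\tau}(\phi) = \sigma(\alpha)$.

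First I would define the candidate instantiation $\bm{\tau}$ by specifying its action on metavariables and invoking the metaformula analogue of Proposition~\ref{P:substitution} to extend it to all of $\Mform$. Namely, set
\[
\bm{\tau}_0(\bm{\beta}) := \sigma(\bm{\sigma}_0(\bm{\beta})) \quad \text{for each } \bm{\beta} \in \Mvar,
\]
and let $\bm{\tau} : \Mform \longrightarrow \Forms_{\mathcal{L}}$ be its unique extension satisfying the defining clauses of an instantiation (i.e., $\bm{\tau}$ fixes constants and commutes with the connectives).

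The core step is then an induction on the degree of the metaformula $\phi$, showing that $\bm{\tau}(\phi) = \sigma(\bm{\sigma}(\phi))$. In the base case, if $\phi = \bm{\beta} \in \Mvar$, this is the definition of $\bm{\tau}_0$; if $\phi = c \in \Cons$, both sides equal $c$, since instantiations fix constants and substitutions do as well (clause (a) of Definition~\ref{D:substitution}). For the inductive step, if $\phi = F \phi_1 \ldots \phi_n$, then
\[
\bm{\tau}(\phi) = F \bm{\tau}(\phi_1) \ldots \bm{\tau}(\phi_n) = F \sigma(\bm{\sigma}(\phi_1)) \ldots \sigma(\bm{\sigma}(\phi_n)) = \sigma\bigl(F \bm{\sigma}(\phi_1) \ldots \bm{\sigma}(\phi_n)\bigr) = \sigma(\bm{\sigma}(\phi)),
\]
where the middle equalities use, respectively, the induction hypothesis and clause (b) of Definition~\ref{D:substitution} applied to $\sigma$. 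Applying this identity to our original $\phi$ yields $\sigma(\alpha) = \sigma(\bm{\sigma}(\phi)) = \bm{\tau}(\phi) \in \Sigma_\phi$, as required.

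I do not expect a serious obstacle; the only subtlety is the double bookkeeping — $\sigma$ acts on formulas while $\bm{\sigma}$ and $\bm{\tau}$ act on metaformulas — which is why the key move is to see the assignment $\bm{\beta} \mapsto \sigma(\bm{\sigma}_0(\bm{\beta}))$ as a map $\Mvar \longrightarrow \Forms_{\mathcal{L}}$ and thereby as a legitimate instantiation. Once that is recognized, the verification is an entirely routine structural induction paralleling the one used to define the extension of $\bm{\sigma}_0$.
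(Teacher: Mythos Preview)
Your argument is correct and is exactly the natural approach: the paper leaves this proposition as an exercise (Exercise~\ref{section:languages}.\ref{EX:metaformula-instantiations}), and the intended solution is precisely to compose the given instantiation with $\sigma$ on metavariables and verify by structural induction that the resulting instantiation $\bm{\tau}$ satisfies $\bm{\tau}(\phi)=\sigma(\bm{\sigma}(\phi))$. There is nothing to add.
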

\noindent\textit{Proof}~is leaft to the reader. (Exercise~\ref{section:languages}.\ref{EX:metaformula-instantiations})\\

Below we give examples of languages that are specifications of the schematic language $\Lan$.
All these languages include a denumerable set $\Var$ as the set of sentential variables. 
\begin{description}
	\item[$\Lan_{A}$:] In this language, there is no sentential constants; there are four (assertoric) logical connectives: $\wedge$ (\textit{conjunction}), $\vee$ (\textit{disjunction}), $\rightarrow$ (\textit{implication} or \textit{conditional}), and $\neg$ (\textit{negation}), having the arities $\#(\wedge)=\#(\vee)=\#(\rightarrow)=2$ and
	$\#(\neg)=1$. It is customary to use an infix notation for this language; however, the use of
	infix notation for this language requires two additional symbols: ``('' and ``)'' called the left and right parenthesis, respectively. Accordingly, the definition of formula will be different: $(\alpha\wedge\beta)$, $(\alpha\vee\beta)$, $(\alpha\rightarrow\beta)$ and $\neg\alpha$ are formulas, whenever $\alpha$ and $\beta$ are. \textit{The parentheses do not apply to atomic formulas and negations}. Sometimes, we will not employ all the connectives of $\Lan_{A}$ but some of them, continuing to call this restricted language $\Lan_{A}$. Sometimes, $\Lan_{A}$ is considered in its extended version, when a connective $\leftrightarrow$ is added; if not, then the last connective is used as an abbreviation:
	\[
	\alpha\leftrightarrow\beta:=(\alpha\rightarrow\beta)\wedge(\beta\rightarrow\alpha).
	\]
	\item[$\Lan_{B}$:] This language extends $\Lan_{A}$ by adding to it two constants: $\top$ (\textit{truth} or \textit{verum}) and $\bot$ (\textit{falsehood} or \textit{falsity} or \textit{falsum} or \textit{absurdity}). Sometimes, only one of these constants will be employed.
	\textit{The parentheses do not apply to atomic formulas, negations and to the constants $\top$ and $\bot$.} We will call all these variants by $\Lan_{B}$, indicating every time the list of constants. 
	\item[$\Lan_{C}$:] This language extends either $\Lan_{A}$ or $\Lan_{B}$ with two sentential connectives (\textit{modalities}): $\Box$ (\textit{necessity}) and $\Diamond$ (\textit{possibility}) both of the arity 1, that is $\#(\Box)=\#(\Diamond)=1$. In addition to the corresponding requirements above, \textit{the parentheses do not apply to the formulas that start with the modalities $\Box$ and $\Diamond$.} Sometimes, when only one of these modalities is employed, we continue to call this language $\Lan_{C}$.
\end{description}

In the sequel we will employ more formal languages, which do not fit the proposed framework but closely related to it.\\

\pagebreak
\noindent{\textbf{Exercises~\ref{section:languages}}}
\begin{enumerate}
	\item\label{EX:formula-tree} Here we give another definition of a formula tree. This new definition is convenient when we deal with an algorithmic problem, in which case the sets $\Var$, $\Cons$ and $\Func$ are regarded countable and effectively decidable. One of such problems will be raised below as an exercise. We note that in this definition a designated path can be identified exclusively by the nodes it comprises.
	
	The \textit{\textbf{formula tree}} for a formula $\alpha$ is a finite directed labeled tree where each node is labeled with a pair $\langle n,\beta\rangle$, where $n$ is a natural number and $\beta$ is a subformula of $\alpha$.  
	
	There is a unique node, called the \textit{root}, labeled by $\langle\p{0},\alpha\rangle$, the ingdegree of which is $0$.\footnote{We remind the reader that $\p{0},\p{1},\p{2},\ldots$ is the sequence of prime numbers.} The ingdegree of all other nodes of the tree, if any, equals $1$.
	Let a node $v$ be labeled with $\langle n,\beta\rangle$, where $\beta\in\Var\cup\Cons$. Then the outdegree of $v$ is always $0$. Such nodes are called \textit{leaves}. (As the reader will see below, leaves are always present in a formula tree.)
	Thus if $\alpha\in\Var\cup\Cons$, the formula tree of $\alpha$ consists of only one node, the root, which is also a leaf.  Assume that $\beta=F\beta_1\ldots\beta_k$ with $F\in\mathcal{F}$ is a proper subformula of $\alpha$ and a node $v$ is labeled with $\langle n,\beta\rangle$. Suppose that $\p{j}$ is the greatest prime number that divides $n$.  Then the outdegree of $v$ is $\#(F)$, that is in our case, equals $k$. And then $v$ will be connected via $k$ edges to $k$ different nodes, $v_1,\ldots,v_k$. (The direction of each such edge is from $v$ to $v_i$.) We label $v_i$ with $\langle n\cdot\p{j+i},\beta_i\rangle$. Thus, 
	if $\beta_i=\beta_j$, then the two distinct nodes, $v_i$ and $v_j$, will be labeled with labels distinct in their first place and equal in the second. Also, we note that the degree of each $\beta_i$ is less than the degree of $\beta$. 
	
	Use the new definition of formula tree to prove the following.
	\begin{quote}
		{\em 	There is a mechanical procedure $($an algorithm$)$ which, given a list of pairs $\langle n_1,\alpha_1\rangle,\ldots,\langle n_k,\alpha_k\rangle$, where each $n_i$ is a positive integer, can assemble a formula tree, where the pairs are regarded as the nodes of the tree, or states that such a tree does not exist.}	
	\end{quote}
	\item \label{EX:substitution-inequalities} Prove the inclusions \eqref{E:substitution-inequalities}.
	\item\label{EX:substitution-2} Prove that $(a)$ for any fixed formula $\alpha$ and any fixed substitution $\sigma$, there are unary substations $\sigma_1,\ldots,\sigma_n$ such that $\sigma(\alpha)=\sigma_1\circ\ldots\circ\sigma_n(\alpha)$; however, $(b)$ for any fixed substitution $\sigma$, there is no finitely many fixed unary substitutions $\sigma_1,\ldots,\sigma_n$ such that for any formula $\alpha$, $\sigma(\alpha)=\sigma_1\circ\ldots\circ\sigma_n(\alpha)$.	
	\item\label{EX:substitution} Prove Proposition~\ref{P:substitution}.
	\item \label{EX:substitution-associative} Prove \eqref{E:substitution-associative}.
	\item \label{EX:metaformula-instantiations}Prove Proposition~\ref{P:metaformula-instantiations}.
	
\end{enumerate}

\section{Semantics of formal language}\label{section:semantics}
One of our concerns in this book is to show how truth can be transmitted through formulas of a formal language by formally presented deduction. Calling a formula true, we will mean a kind of truth which does not depend on any factual content. Moreover, sharing the view of the multiplicity of the nature of truth, we will be dealing rather with degrees of truth, calling them \textit{truth values}. Then, the idea of truth will find its realization in the idea of \textit{validity}.

Concerning the independence of any factual, contingent content in relation to truth values, let us consider the following two sentences --- `Antarctica is large' and `Antarctica is large or Antarctica is not large'. If our formal language contains a symbol for the grammatical disjunction ``$\ldots_{1}~\text{or}~\ldots_{2}$'', say ``$\ldots_{1}\vee\ldots_{2}$''
(as in the languages $\Lan_{A}$--$\Lan_{C}$ of Section~\ref{section:languages}), and a symbol for creating the denial of sentence, say ``$\neg\ldots$'' (also as in $\Lan_{A}$--$\Lan_{C}$), we can write in symbolic form the first sentence as $p$ and the second as $p\vee\neg p$. From the viewpoint of Section~\ref{section:languages}, both formal expressions are formulas of the language $\Lan_{A}$. Now, depending on familiarity with geography, as well as of personal experience, one can count $p$ true or false. The second formula $p\vee\neg p$, however, in some semantic systems takes steadily the truth value \textit{true}, regardless of any personal relation towards $p$. The \textit{universality} of this kind will be stressed in the idea of validity.

From the last example, it should be clear the distinction between the statements, the truth value of which (understood as the  degree of trust or that of informativeness) is dependent on the contingency of facts, the level of knowledge or personal beliefs, and the formal judgments, which obtain their logical status or meaning determined by \textit{semantic rules}. These rules will allow us to interpret the formal judgments of a language in abstract structures, reflecting the structural character of these judgments, as well as the universal character of their validity, thereby understanding their invalidity as the failure to satisfy this universality.

In this book, semantics will play pragmatic rather than explanatory role;\footnote{However, since we state validity conditions for formal judgments, these conditions generate the meanings of the judgments, according to the point of view that has been emphasized by Wittgenstein in \textit{Tractatus}; cf.~\cite{wittgenstein2001}, 4.024. Also, compare with Curry's statement  that ``the meaning of a concept is determined by the conditions under which it is introduced into discourse;'' cf.~\cite{curry1966}, section II.2.} namely we will be stressing the use of semantic structures as \textit{separating means}. This will be explained in Section~\ref{section:separating-means}. Moreover, we want to emphasize that in our approach to semantics we adhere to the relativistic position, according to which a single formal judgment can be valid according to one semantic system and invalid with respect to another.\\

Now we turn to the definition of semantics of the schematic language $\Lan$ exemplified by the languages $\Lan_{A}$--$\Lan_{C}$ in Section~\ref{section:languages}. Depending on one's purposes, or depending on the lines, along which the semantics in question can be employed, the semantics we propose can use the items of the following two categories --- the \textit{algebras of type} $\Lan$ and expansions of the latter called \textit{logical matrices}. 
Later on, we will generalize the notion of logical matrix, when we discuss consequence relation in Chapter~\ref{chapter:consequence}.

The following semantic concepts will be specified later on as needed.
\begin{defn}[algebra of type $\Lan$]
	An \textbf{algebra of type} $\Lan$ $($or an $\Lan$-\textbf{algebra} or simply an \textbf{algebra} when a type is specified ahead and confusion is unlikely$)$ is a structure {\em$\alg{A}=\langle\textsf{A};\Func,\Cons\rangle$}, where {\em\textsf{A}} is a nonempty set called the \textbf{carrier} $($or \textbf{universe}$)$ \textbf{of} {\em\alg{A}}$;$ $\Func$, which in the notion of $\Lan$-formulas represents logical connectives, now represents a set of finitary operations on {\em\textsf{A}} of the corresponding arities$;$ and $\Cons$, which is a set of logical constants, now represents a set of nullary operations on {\em\textsf{A}}. As usual, we use the notation {\em$|\alg{A}|=\textsf{A}$}. The part $\langle\Func,\Cons\rangle$ of the structure {\em\alg{A}} is called its \textbf{signature}. Thus, referring to an algebra {\em\alg{A}}, we can say that it is an algebra of signature $\langle\Func,\Cons\rangle$ or of signature $\Func$, if $\Cons=\emptyset$.
\end{defn}

Thus any $\Lan$-formula in relation to any algebra of type $\Lan$ becomes a \textit{\textbf{term}}. Naturally, the sentential variables of a formula understood as term become the \textit{\textbf{individual variables}} of this term. Since each algebraic term of, say, $n$ variables can be read as an $n$-ary derivative operation in an algebra of type $\Lan$, so can be understood any $\Lan$-formula containing $n$ sentential variables. 
\begin{defn}[inessential expansion of an algebra, equalized algebras]\label{D:equalized-algebras}
	Given an algebra {\em$\alg{A}=\langle\textsf{A};\Func,\Cons\rangle$}, we obtain an \textbf{inessential expansion of} {\em$\alg{A}$} if we add to $\Func$ one or more terms regarding them as new signature operations, or if we add to $\Cons$ one or more symbols with their intended interpretation as elements of {\em\textsf{A}}. Two algebras $($or their signatures$)$ are called \textbf{equalized} if they have inessential expansions of one and the same type.
\end{defn}

The central notion of Lindenbaum method is that of formula algebra associated with a particular language. The \textbf{\textit{formula algebra}} of type $\Lan$ has $\textit{\textbf{Fm}}_{\mathcal{L}}$ as its carrier and the signature operations as defined in Definition~\ref{D:L-formulas}. This algebra is denoted by $\mathfrak{F}_{\mathcal{L}}$. Thus we have:
\[
\mathfrak{F}_{\mathcal{L}}=\langle\textit{\textbf{Fm}}_{\mathcal{L}};\Func,\Cons\rangle.
\]

Analogously,  the set of $\Lan$-metaformulas along with the set $\Cons$ regarded as nullary operations and the set
$\Func$ regarded as non-nullary operations constitute a \textit{\textbf{metaformula algebra}} $\MformAl$ of type $\Lan$.\\

A relation between the $\Lan$-formulas and the algebras of type $\Lan$ is established through the following notion.
\begin{defn}[valuation, value of a formula in an algebra]\label{D:valuation}
	Let {\em\alg{A}} be an algebra of type $\Lan$. Then any map {\em$v:\Var\longrightarrow|\alg{A}|$}	is called a \textbf{valuation} $($or an \textbf{assignment}$)$ in the algebra {\em\alg{A}}. Then, given a valuation $v$, the value of an $\Lan$-formula
	$\alpha$ with respect to $v$, in symbols $v[\alpha]$, is defined inductively as follows$\,:$
	{\em\[
		\begin{array}{cl}
		(\text{a}) &v[p]=v(p),~\textit{for any $p\in\Var$};\\
		(\text{b}) &v[c]~\textit{is the value of the nullary operation of}~\alg{A} ~\textit{corresponding to}\\ 
		&\text{the constant $c$ of $\Lan$};\\
		(\text{c}) &v[\alpha]=Fv[\alpha_1]\ldots v[\alpha_n]~\textit{if $\alpha=F\alpha_1\ldots\alpha_n$}.
		\end{array}
		\]}
	If $\Var^{\prime}\subset\Var$, then a map {\em$v:\Var^{\prime}\longrightarrow|\alg{A}|$}	is called a $\Var^{\prime}$-\textbf{valuation}, or a \textbf{valuation restricted to} $\Var^{\prime}$.
\end{defn}

It makes sense to denote the mapping 
\[
\alpha\mapsto v[\alpha]
\]
by the same letter $v$. The following observation is a justification for this usage.
\begin{prop}\label{P:valuation}
	Let $v$ be a valuation in an algebra {\em\alg{A}}. Then the mapping $v:\alpha\mapsto v[\alpha]$ defines a homomorphism {\em$\mathfrak{F}_{\mathcal{L}}\longrightarrow\alg{A}$}. Conversely, each homomorphism {\em$\mathfrak{F}_{\mathcal{L}}\longrightarrow\alg{A}$} is a valuation.
\end{prop}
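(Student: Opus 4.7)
The plan is to verify both directions by structural induction on $\Lan$-formulas, exploiting the fact that the clauses of Definition~\ref{D:valuation} mirror exactly the signature of $\mathfrak{F}_{\mathcal{L}}$.

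For the forward direction, I start with a valuation $v\colon\Var\longrightarrow|\alg{A}|$ and want to show that the extension $\alpha\mapsto v[\alpha]$ satisfies the homomorphism conditions, namely that (i) for each constant $c\in\Cons$, $v[c]$ equals the interpretation of $c$ in $\alg{A}$, and (ii) for each connective $F$ of arity $n$ and formulas $\alpha_1,\ldots,\alpha_n$, $v[F\alpha_1\ldots\alpha_n]=F^{\alg{A}}(v[\alpha_1],\ldots,v[\alpha_n])$. But these are literally clauses (b) and (c) of Definition~\ref{D:valuation}, so nothing remains to check beyond unpacking the notation; the map is well-defined on every $\Lan$-formula precisely because Definition~\ref{D:L-formulas} builds $\textit{\textbf{Fm}}_{\mathcal{L}}$ inductively from $\Var\cup\Cons$ using the operations in $\Func$.

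For the converse, suppose $h\colon\mathfrak{F}_{\mathcal{L}}\longrightarrow\alg{A}$ is a homomorphism. Define $v\colon\Var\longrightarrow|\alg{A}|$ to be the restriction $v:=h\!\restriction\!\Var$; this is a valuation. I then prove by induction on the degree of $\alpha$ that $h(\alpha)=v[\alpha]$. In the base case, either $\alpha=p\in\Var$, in which case $h(\alpha)=v(p)=v[p]$, or $\alpha=c\in\Cons$, in which case the homomorphism property forces $h(c)=c^{\alg{A}}=v[c]$ by clause (b) of Definition~\ref{D:valuation}. For the inductive step with $\alpha=F\alpha_1\ldots\alpha_n$, each $\alpha_i$ has strictly smaller degree, so by the induction hypothesis $h(\alpha_i)=v[\alpha_i]$; applying the homomorphism condition on $F$ and then clause (c) of Definition~\ref{D:valuation} gives
\[
h(\alpha)=F^{\alg{A}}(h(\alpha_1),\ldots,h(\alpha_n))=F^{\alg{A}}(v[\alpha_1],\ldots,v[\alpha_n])=v[\alpha].
\]

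There is no real obstacle here: the proposition is almost a tautology given that $\mathfrak{F}_{\mathcal{L}}$ is the absolutely free algebra over $\Var$ in the signature $\langle\Func,\Cons\rangle$. The only care needed is the observation that the recursive definition of $v[\alpha]$ is unambiguous, which rests on the unique readability of $\Lan$-words asserted in the discussion preceding Definition~\ref{D:L-formulas}; without that, the extension of $v$ might not be well-defined. Once unique readability is granted, both directions are immediate consequences of matching the inductive clauses of $v[\cdot]$ against the defining conditions of a homomorphism.
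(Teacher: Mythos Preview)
Your proof is correct and follows exactly the approach the paper indicates: the paper's own proof is a one-line hint (``based on the clauses (a)--(c) of Definition~\ref{D:valuation}; the second part is obvious''), and you have simply carried out the induction it gestures at. There is nothing to add.
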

\noindent\textit{Proof}~of the first part is based on the clauses (a)--(c) of Definition~\ref{D:valuation}. The second part is obvious.\\

Thus, given a valuation $v$ in \alg{A} and a formula $\alpha(p_1,\ldots,p_n)$ where $p_1,\ldots,p_n$ are all variables occurring in $\alpha$, we observe that
\[
v[\alpha]=\alpha(v[p_1],\ldots,v[p_n]).
\]

Grounding on the last observation base in turn on Proposition~\ref{P:valuation}, very often a valuation is defined as a 
homomorphism $\mathfrak{F}_{\mathcal{L}}\longrightarrow\alg{A}$.\\

Comparing (b)--(c) of Definition~\ref{D:valuation} with (a)--(b) of Definition~\ref{D:substitution}, we obtain the following.
\begin{prop}\label{P:substitution-as-homomorphism}
	Any substitution is a valuation in $\mathfrak{F}_{\mathcal{L}}$ and vice versa. Hence any substitution is an endomorphism $\mathfrak{F}_{\mathcal{L}}\longrightarrow\mathfrak{F}_{\mathcal{L}}$.
\end{prop}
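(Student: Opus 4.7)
The plan is to unwind the two definitions and observe that, when the target algebra is the formula algebra $\mathfrak{F}_{\mathcal{L}}$ itself, the recursive clauses defining a valuation literally coincide with the recursive clauses defining a substitution. First I would note that, in $\mathfrak{F}_{\mathcal{L}}$, each constant $c\in\Cons$ is by construction interpreted as the nullary operation whose value is $c$ (viewed as an atomic formula), and each connective $F\in\Func$ of arity $n$ is interpreted as the $n$-ary operation $(\alpha_1,\ldots,\alpha_n)\mapsto F\alpha_1\ldots\alpha_n$. This is the content of the definition of $\mathfrak{F}_{\mathcal{L}}$ given just before Definition~\ref{D:valuation}.

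To prove the first direction, I would take an arbitrary substitution $\sigma:\Forms_{\mathcal{L}}\longrightarrow\Forms_{\mathcal{L}}$ arising from $\sigma_0:\Var\longrightarrow\Forms_{\mathcal{L}}$, and regard $\sigma_0$ as a valuation assignment in $\mathfrak{F}_{\mathcal{L}}$. Let $v$ be the unique extension supplied by Definition~\ref{D:valuation}. By induction on the degree of an $\Lan$-formula $\alpha$ I would show $v[\alpha]=\sigma(\alpha)$: the base case $\alpha\in\Var$ holds because $v[p]=v(p)=\sigma_0(p)=\sigma(p)$; the base case $\alpha\in\Cons$ holds because clause (b) of Definition~\ref{D:valuation} gives $v[c]=c$, while clause (a) of Definition~\ref{D:substitution} gives $\sigma(c)=c$; the inductive step uses clause (c) of Definition~\ref{D:valuation} together with clause (b) of Definition~\ref{D:substitution} and the induction hypothesis, since the operation in $\mathfrak{F}_{\mathcal{L}}$ associated with $F$ is precisely the formation of $F\sigma(\alpha_1)\ldots\sigma(\alpha_n)$.

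For the converse I would start from an arbitrary valuation $v:\Var\longrightarrow|\mathfrak{F}_{\mathcal{L}}|=\Forms_{\mathcal{L}}$, and invoke Proposition~\ref{P:substitution} to produce the unique substitution $\sigma$ extending $v$. The same inductive argument, read in the opposite direction, shows that $\sigma(\alpha)=v[\alpha]$ for every $\alpha$, so $v$ (identified with its extension $\alpha\mapsto v[\alpha]$) is exactly a substitution. Combining the two directions gives the stated equivalence.

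Finally, for the ``hence'' clause, I would appeal to Proposition~\ref{P:valuation}, which states that every valuation in an algebra $\alg{A}$ is the same thing as a homomorphism $\mathfrak{F}_{\mathcal{L}}\longrightarrow\alg{A}$. Specializing to $\alg{A}=\mathfrak{F}_{\mathcal{L}}$ and combining with the equivalence just established, every substitution is a homomorphism $\mathfrak{F}_{\mathcal{L}}\longrightarrow\mathfrak{F}_{\mathcal{L}}$, i.e.\ an endomorphism of $\mathfrak{F}_{\mathcal{L}}$. There is no real obstacle here; the only delicate point is notational bookkeeping — keeping straight the dual roles played by the symbols of $\Cons$ and $\Func$, once as syntactic labels entering into formulas and once as operations in the algebra $\mathfrak{F}_{\mathcal{L}}$ — and confirming that under this identification clauses (a)–(c) of Definition~\ref{D:valuation} with $\alg{A}=\mathfrak{F}_{\mathcal{L}}$ are verbatim the clauses of Definition~\ref{D:substitution}.
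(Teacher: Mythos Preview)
Your proposal is correct and follows exactly the approach the paper takes: the paper's entire argument is the one-line observation that clauses (b)--(c) of Definition~\ref{D:valuation} coincide with clauses (a)--(b) of Definition~\ref{D:substitution} when $\alg{A}=\mathfrak{F}_{\mathcal{L}}$, and you have simply spelled out that comparison in detail via induction on degree.
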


The converse of the last conclusion is also true: any endomorphism $\mathfrak{F}_{\mathcal{L}}\longrightarrow\mathfrak{F}_{\mathcal{L}}$ can be regarded as a substitution. (Exercise~\ref{section:semantics}.\ref{EX:endomorphism}.)\\ 

Now we turn to a central notion of any semantics --- the notion of\textit{ validity}.\footnote{W.~and M.~Kneale note, ``[\dots] logic is not simply valid argument but the reflection upon principles of validity;''~\cite{kneales1962}, section I.1.} This notion in turn is defined in the framework of the notion of logical matrix.

\begin{defn}[logical matrix of type $\Lan$]\label{D:matrix-type-L}
	A \textbf{logical matrix} $($or simply a \textbf{matrix}$)$ of type $\Lan$ is a structure {\em$\mat{M}=\langle\alg{A},D(x)\rangle$},  where {\em\alg{A}} is a nontrivial algebra of type $\Lan$, which, if considered as part of {\em\mat{M}}, is called an {\em\mat{M}}-\textbf{algebra}, and $D(x)$ is a predicate on {\em$|\alg{A}|$}. We will also denote a matrix as a structure {\em$\langle\textsf{A};\Func,\Cons,D(x)\rangle$} or as a structure
	{\em$\langle\textsf{A};\Func,D(x)\rangle$}, if $\Cons=\emptyset$. We will also denote a matrix as {\em$\langle\alg{A},D\rangle$} or as {\em$\langle\textsf{A};\Func,D\rangle$}
	or as {\em$\langle\textsf{A};\Func,D\rangle$}, if $\Cons=\emptyset$, where {\em$D\subseteq|\alg{A}|$}. In the last three notations $D$ is called  the set of \textbf{designated elements} or a \textbf{logical filter} $($or simply a \textbf{filter}$)$ in $($or of$)$ {\em\mat{M}}. 
\end{defn}

We should remark that using ``logical'' in the last two definitions was a bit hasty, for the first connection with logic emerges, at least in the case of logical matrices, in the next definition
when we separate semantically valid $\Lan$-formulas from the other formulas.
\begin{defn}\label{D:validity-1}
	Let {\em$\alg{M}=\langle\alg{A},D\rangle$} be a logical matrix of type $\Lan$. Then a formula $\alpha$ is \textbf{satisfied} by a valuation $v$ in {\em\alg{A}} if $v[\alpha]\in D$, in which case we say that $v$ \textbf{satisfies} $\alpha$ in {\em\mat{M}}{\em;} further, $\alpha$ is \textbf{semantically valid} $($or simply \textbf{valid}$)$ \textbf{in} {\em\alg{M}}, symbolically {\em$\alg{M}\models\alpha$} or {\em$\models_{\textbf{M}}\alpha$}, if for any valuation $v$ in {\em\alg{A}},
	$v[\alpha]\in D$. If a formula is not valid in a matrix, it is called \textbf{invalid} in this matrix or is \textbf{rejected} by the matrix or one can say that this matrix \textbf{rejects} the formula. If $v[\alpha]\notin D$ in {\em\alg{M}}, $v$ is a \textbf{valuation rejecting} or \textbf{refuting} $\alpha$. The set of all formulas valid in a matrix {\em\alg{M}} is denoted by {\em$L\alg{M}$} and is called the \textbf{logic of} {\em\mat{M}}.
\end{defn}

\begin{prop}\label{P:matrix-substitution-closedness}
	Let {\em$\alg{M}=\langle\alg{A},D\rangle$} be a matrix $($of type $\Lan$$)$. Then for any substitution $\sigma$, if {\em$\alg{M}\models\alpha$}, so is {\em$\alg{M}\models\sigma(\alpha)$}.
\end{prop}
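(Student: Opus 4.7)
The plan is to show that validity is preserved under substitution by reducing to the fact that the composition of a substitution with a valuation is itself a valuation. Fix a substitution $\sigma$ and a formula $\alpha$ with $\mat{M} \models \alpha$. To establish $\mat{M} \models \sigma(\alpha)$, I take an arbitrary valuation $v$ in $\alg{A}$ and must show $v[\sigma(\alpha)] \in D$.

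First I would invoke Proposition~\ref{P:substitution-as-homomorphism}: $\sigma$ is an endomorphism $\mathfrak{F}_{\mathcal{L}} \longrightarrow \mathfrak{F}_{\mathcal{L}}$. Next, by Proposition~\ref{P:valuation}, the valuation $v$ is a homomorphism $\mathfrak{F}_{\mathcal{L}} \longrightarrow \alg{A}$. Since the composition of homomorphisms of algebras of the same type is again a homomorphism, $v \circ \sigma : \mathfrak{F}_{\mathcal{L}} \longrightarrow \alg{A}$ is a homomorphism, and by the converse part of Proposition~\ref{P:valuation} it is therefore a valuation in $\alg{A}$. Call this valuation $w$, so that $w[\beta] = v[\sigma(\beta)]$ for every $\Lan$-formula $\beta$; in particular $w[\alpha] = v[\sigma(\alpha)]$.

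Now I apply the hypothesis $\mat{M} \models \alpha$, which by Definition~\ref{D:validity-1} means that $u[\alpha] \in D$ for every valuation $u$ in $\alg{A}$. Applying this to $w$, we get $v[\sigma(\alpha)] = w[\alpha] \in D$. Since $v$ was arbitrary, $\mat{M} \models \sigma(\alpha)$.

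There is essentially no hard step here; the only subtle point is the identification $v[\sigma(\beta)] = (v \circ \sigma)[\beta]$ on formulas, which one can either read off from the two inductive clauses (b)--(c) in Definition~\ref{D:valuation} and (a)--(b) in Definition~\ref{D:substitution} matching up under composition of homomorphisms, or, if one prefers to avoid the homomorphism viewpoint altogether, establish by a routine induction on the degree of $\beta$: the base cases $\beta \in \Var \cup \Cons$ are immediate, and the inductive step reduces $v[\sigma(F\beta_1\ldots\beta_n)] = F\,v[\sigma(\beta_1)]\ldots v[\sigma(\beta_n)]$ to the inductive hypothesis.
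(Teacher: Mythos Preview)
Your proof is correct and follows essentially the same approach as the paper: both recognize that $v\circ\sigma$ is a homomorphism $\mathfrak{F}_{\mathcal{L}}\to\alg{A}$ (via Propositions~\ref{P:substitution-as-homomorphism} and~\ref{P:valuation}), hence a valuation, and then apply the validity hypothesis to this new valuation. Your additional remarks on the identification $v[\sigma(\beta)]=(v\circ\sigma)[\beta]$ simply make explicit what the paper leaves implicit.
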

\begin{proof}
	Let $v$ be any valuation in \alg{A}. According to Propositions~\ref{P:substitution-as-homomorphism} and~\ref{P:valuation}, $\sigma:\mathfrak{F}_{\mathcal{L}}\longrightarrow\mathfrak{F}_{\mathcal{L}}$ and
	$v:\mathfrak{F}_{\mathcal{L}}\longrightarrow\alg{A}$ are homomorphisms. Then $v\circ\sigma$ is also a homomorphism $\mathfrak{F}_{\mathcal{L}}\longrightarrow\alg{A}$ and hence, by Proposition~\ref{P:valuation}, is a valuation. Therefore,
	$\alg{M}\models\alpha$ implies that $v[\sigma(\alpha)]\in D$.
\end{proof}

The last proposition induces a way of how, given a matrix, to obtain a matrix on the formula algebra so that both matrices validate the same set of formulas.

Let $\alg{M}=\langle\alg{A},D\rangle$ be a matrix. We denote
\[
D_{\textsf{M}}=\set{\alpha}{\alg{M}\models\alpha}
\]
and then define a matrix
\[
\alg{M}_{\textit{Fm}}=\langle\mathfrak{F}_{\mathcal{L}},D_{\textsf{M}}\rangle.
\]
We observe that for any formula $\alpha$,
\[
\alg{M}_{\textit{Fm}}\models\alpha\Longleftrightarrow\alg{M}\models\alpha.
\tag{\textit{Lindenbaum equivalence}}
\]

Indeed, let $\alg{M}_{\textit{Fm}}\models\alpha$. Then for any substitution $\sigma$ (which, according to Proposition~\ref{P:substitution-as-homomorphism}, is a valuation in $\mathfrak{F}_{\mathcal{L}}$), $\sigma(\alpha)\in D_{\textsf{M}}$; in particular, $\iota(\alpha)=\alpha\in D_{\textsf{M}}$, that is $\alg{M}\models\alpha$.

Now suppose that $\alg{M}\models\alpha$, that is $\alpha\in D_{\textsf{M}}$. In virtue of Proposition~\ref{P:matrix-substitution-closedness}, $D_{\textsf{M}}$ is closed under substitution. This completes the proof of the equivalence above.

Next let us consider $\alg{M}=\langle\mathfrak{F}_{\mathcal{L}},D\rangle$. We observe that if $\alpha\in D_{\textsf{M}}$, that is $\alg{M}\models\alpha$, then, since, by virtue of Proposition~\ref{P:substitution-as-homomorphism}, each substitution is a valuation in $\mathfrak{F}_{\mathcal{L}}$ and vice versa, $\alpha\in D$.

Now let $D^{\prime}\subseteq D$ and $D^{\prime}$ be closed under substitution. Assume that $\alpha\in D^{\prime}$. Then, obviously, $\alg{M}\models\alpha$ and hence $\alpha\in  D_{\textsf{M}}$.

Thus we have obtained the following.
\begin{prop}
	For any matrix {\em\alg{M}}, there is a matrix on the formula algebra, namely {\em$\alg{M}_{\textit{Fm}}$}, such that both matrices validate the same set of $\Lan$-formulas. Moreover, given a matrix {\em$\alg{M}=\langle\mathfrak{F}_{\mathcal{L}},D\rangle$}, the filter of the matrix
	{\em$\alg{M}_{\textit{Fm}}=\langle\mathfrak{F}_{\mathcal{L}},D_{\textsf{M}}\rangle$} is the largest subset of $D$ closed under substitution. Thus, if $D$ itself is closed under substitution, then {\em$D=D_{\textsf{M}}$}.
\end{prop}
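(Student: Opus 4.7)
The plan is to assemble the proof from observations already made in the paragraphs immediately preceding this proposition. The Lindenbaum equivalence handles the first assertion, and the ``moreover'' clause decomposes into three short verifications (inclusion, substitution-closedness, maximality), each of which is essentially read off from earlier results.

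For the first sentence, I would simply invoke the Lindenbaum equivalence derived in the text: for an arbitrary matrix $\alg{M}$, by construction $\alg{M}_{\textit{Fm}}=\langle\mathfrak{F}_{\mathcal{L}},D_{\textsf{M}}\rangle$ satisfies $\alg{M}_{\textit{Fm}}\models\alpha$ iff $\alg{M}\models\alpha$ for every $\Lan$-formula $\alpha$. Hence both matrices validate the same set of formulas.

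For the ``moreover'' clause, restrict to $\alg{M}=\langle\mathfrak{F}_{\mathcal{L}},D\rangle$ and verify three things. First, $D_{\textsf{M}}\subseteq D$: if $\alpha\in D_{\textsf{M}}$, then every valuation in $\mathfrak{F}_{\mathcal{L}}$ sends $\alpha$ into $D$; applied to the identity substitution $\iota$, which is a valuation by Proposition~\ref{P:substitution-as-homomorphism}, this yields $\alpha=\iota(\alpha)\in D$. Second, $D_{\textsf{M}}$ is closed under substitution, which is exactly Proposition~\ref{P:matrix-substitution-closedness}. Third, if $D'\subseteq D$ is closed under substitution and $\alpha\in D'$, then for any valuation $v$ in $\mathfrak{F}_{\mathcal{L}}$ (a substitution by Proposition~\ref{P:substitution-as-homomorphism}) we have $v[\alpha]=v(\alpha)\in D'\subseteq D$; hence $\alg{M}\models\alpha$, i.e.\ $\alpha\in D_{\textsf{M}}$. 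Together these three facts say that $D_{\textsf{M}}$ is the largest substitution-closed subset of $D$.

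The last sentence is then immediate: if $D$ itself is closed under substitution, applying step (iii) with $D'=D$ gives $D\subseteq D_{\textsf{M}}$, while step (i) gives $D_{\textsf{M}}\subseteq D$, so $D=D_{\textsf{M}}$. There is no genuine obstacle in the argument; everything turns on the identification of substitutions with valuations in $\mathfrak{F}_{\mathcal{L}}$, already secured by Proposition~\ref{P:substitution-as-homomorphism}, so the proof amounts to packaging the observations that precede the proposition.
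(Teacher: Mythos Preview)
Your proposal is correct and follows exactly the paper's approach: the proposition is stated as a summary of the observations made in the preceding paragraphs (the Lindenbaum equivalence for the first part, and the three verifications $D_{\textsf{M}}\subseteq D$, closure under substitution via Proposition~\ref{P:matrix-substitution-closedness}, and maximality via Proposition~\ref{P:substitution-as-homomorphism} for the second), and you have reproduced precisely these steps.
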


The importance of the last proposition for what follows is that it shows that the matrices of the form 
$\langle\mathfrak{F}_{\mathcal{L}},D\rangle$, where a filter $D$ is closed under substitution, suffice for obtaining all sets of valid $\Lan$-formulas, which can be obtained individually  by the matrices of type $\Lan$. This observation will be clarified below as \textit{Lindenbaum's Theorem}; see Proposition~\ref{P:lindenbaum-theorem}.\\

Now we turn to some special matrices which have played their important role in the development of sentential logic. In all these matrices, the filter of designated elements consists of one element. In this connection, we note the following equivalence, which will be used throughout:
If $D=\{c\}$, then
\begin{equation}\label{E+one-element-filter}
	x\in D\Longleftrightarrow x=c.
\end{equation}

In the sequel, we find useful the following property.
\begin{prop}\label{P:validity-homomorphism}
	Let {\em$\alg{M}_{1}=\langle\alg{A}_{1},D_{1}\rangle$} and {\em$\alg{M}_{2}=\langle\alg{A}_{2},D_{2}\rangle$} be matrices of the same type.
	Also, let $h$ be an epimorphism of {\em$\alg{A}_{1}$} onto {\em$\alg{A}_{2}$} such that
	$h[D_1]\subseteq D_2$. Then {\em$L\alg{M}_{1}\subseteq L\alg{M}_{2}$}
\end{prop}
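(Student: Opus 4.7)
The plan is to fix an arbitrary $\alpha \in L\alg{M}_1$ and an arbitrary valuation $v_2$ in $\alg{A}_2$, and show $v_2[\alpha]\in D_2$ by pulling $v_2$ back through $h$ to a valuation $v_1$ in $\alg{A}_1$ whose values get mapped by $h$ onto those of $v_2$.

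First, using the surjectivity of $h$, I would for each sentential variable $p\in\Var$ choose an element $v_1(p)\in|\alg{A}_1|$ satisfying $h(v_1(p))=v_2(p)$. This determines a map $v_1:\Var\longrightarrow|\alg{A}_1|$, hence by Proposition~\ref{P:valuation} a valuation in $\alg{A}_1$, i.e., a homomorphism $v_1:\mathfrak{F}_{\mathcal{L}}\longrightarrow\alg{A}_1$. Likewise $v_2$ extends to a homomorphism $\mathfrak{F}_{\mathcal{L}}\longrightarrow\alg{A}_2$. Since $h$ is an algebra homomorphism, the composition $h\circ v_1:\mathfrak{F}_{\mathcal{L}}\longrightarrow\alg{A}_2$ is again a homomorphism, and by construction $(h\circ v_1)(p)=v_2(p)$ for every $p\in\Var$.

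Next, I would invoke the uniqueness part of Proposition~\ref{P:substitution} (transferred to valuations via Proposition~\ref{P:valuation}): two homomorphisms out of $\mathfrak{F}_{\mathcal{L}}$ that agree on the generating set $\Var$ must coincide. Hence $h\circ v_1 = v_2$ on all of $\Forms_{\mathcal{L}}$, so in particular
\[
v_2[\alpha]=h(v_1[\alpha]).
\]
Since $\alpha\in L\alg{M}_1$, the value $v_1[\alpha]$ lies in $D_1$, and therefore $v_2[\alpha]=h(v_1[\alpha])\in h[D_1]\subseteq D_2$. As $v_2$ was arbitrary, $\alg{M}_2\models\alpha$, giving $L\alg{M}_1\subseteq L\alg{M}_2$.

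The only real subtlety is the lifting step: we need $h$ surjective to produce $v_1$ from $v_2$ variable-by-variable, and the argument that $h\circ v_1$ and $v_2$ agree globally (not just on variables) depends on both being homomorphisms out of the absolutely free algebra $\mathfrak{F}_{\mathcal{L}}$. Neither is deep, but the hypothesis that $h$ is an epimorphism (not merely a homomorphism) is essential here; without it one could not, in general, realize every $\alg{A}_2$-valuation as an $h$-image of an $\alg{A}_1$-valuation, and the inclusion $L\alg{M}_1\subseteq L\alg{M}_2$ could fail.
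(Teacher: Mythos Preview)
Your proof is correct and follows essentially the same approach as the paper: lift an arbitrary valuation in $\alg{A}_2$ through the epimorphism $h$ to a valuation in $\alg{A}_1$, then use $h[D_1]\subseteq D_2$. You are in fact slightly more explicit than the paper, which only notes that $h(v^{\prime}[p])=v[p]$ on variables and then jumps directly to $v[\alpha]\in D_2$, leaving implicit the freeness argument you spell out.
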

\begin{proof}
	Let $v$ be a valuation in $\alg{A}_2$. Since $h$ is an epimorphisms, for each variable $p_{\gamma}$, there is an element $x_{\gamma}\in|\alg{A}_1|$ such that $h(x_\gamma)=v[p_\gamma]$. We define a valuation $v^{\prime}$ in $\alg{A}_1$ as follows:~$v^{\prime}[p_\gamma]=x_\gamma$, for any variable $p_\gamma$. It is obvious that $h(v^{\prime}[p_\gamma])=v[p_\gamma]$. Assume that $\alpha\in L\alg{M}_{1}$; in particular, $v^{\prime}[\alpha]\in D_1$. This implies that $v[\alpha]\in D_2$.
\end{proof}

\subsection{The logic of a two-valued matrix}\label{S:two-valued}

We begin with a well-known matrix of the so-called ``two-valued logic.'' The significance of this matrix in the history of logic is difficult to overestimate. The two-valued matrix was assumed, first, to distinguish true statements from false ones; and, second, to validate only the former. As we will see below, the main logic laws such as, for example, the law of identity, the law of contradiction, the law of the excluded middle (see below) and others can be demonstrated as valid formulas in this matrix. For this reason, it is worth considering this matrix first. Later on, we will explain another significance of this matrix, purely algebraic in nature, which leads to a very important view on this matrix as a refuting and separating means. \\

The two-valued matrix assumes two main variants of formulation --- as a matrix of type $\Lan_{A}$ and as that of type $\Lan_{B}$. As will be seen below, the difference between these variants concerns only semantic rules and does not affect essentially the set of valid formulas in the corresponding matrices.
For this reason, we denote both variants by $\booleTwo$. The elements of $\booleTwo$ are denoted by $\zero$ (\textit{false}) and $\one$ (\textit{true}), which constitute a Boolean algebra with $\zero$ as the least element and $\one$ as the greatest with respect to the partial ordering $\zero\leq \one$. According to the intended interpretation, $\one$ represents a truth value \textit{true}, $\zero$ represents \textit{false}. Being viewed as a 2-element Boolean algebra of type $\Lan_{A}$, the connectives $\wedge$, $\vee$ and $\neg$ are interpreted as meet, join and complementation, respectively, while
\begin{equation}\label{E:material-implication}
	x\rightarrow y:=\neg x\vee y.\footnote{We remind the reader that, in terms where the complementation (or pseudo-complementation) occurs, it precedes both meet and join when we interpret such terms in the lattices with complementation or pseudo-complementation.}
\end{equation}
This leads to the familiar truth tables:
\begin{center}
	\begin{tabular}{c|cc} \hline
		&\multicolumn{2}{|c}{$x\wedge y$}\\
		$x\backslash y$ &$\zero$  &$\one$\\ \hline
		$\zero$ &$\zero$
		&$\zero$ \\
		$\one$ &$\zero$
		&$\one$\\ \hline
	\end{tabular}
	\quad\begin{tabular}{c|cc} \hline
		&\multicolumn{2}{|c}{$x\vee y$}\\
		$x\backslash y$ &$\zero$  &$\one$\\ \hline
		$\zero$ &$\zero$ &$\one$\\
		$\one$ &$\one$
		&$\one$ \\ \hline
	\end{tabular}
	\quad\begin{tabular}{c|c} \hline
		$x$ &$\neg x$\\ \hline
		$\zero$ &~~$\one$\\
		$\one$ &~~$\zero$\\ \hline
	\end{tabular}
	\quad\begin{tabular}{c|cc} \hline
		&\multicolumn{2}{|c}{$x\rightarrow y$}\\
		$x\backslash y$ &$\zero$  &$\one$\\ \hline
		$\zero$ &$\one$ &$\one$\\
		$\one$ &$\zero$
		&$\one$ \\ \hline
	\end{tabular}
\end{center}

We observe that for any $x,y\in|\booleTwo|$,
\begin{equation}\label{E:implication}
	\begin{rcases*}
		\begin{array}{c}
			x\rightarrow y=\one\Longleftrightarrow x\leq y;\\
			x\rightarrow\zero=\neg x;\\
			\one\rightarrow x=x.
		\end{array}
	\end{rcases*}
\end{equation}

If one regards $\booleTwo$ (more exactly, its counterpart) as an algebra of type $\Lan_{B}$, two nullary operations, $\zero$ and $\one$, should be added to the signature, along with the intended interpretation of $\bot$ and $\top$ as $\zero$ and $\one$, respectively. Making of $\booleTwo$ a logical matrix, we add a logical filter $\{\one\}$. Thus a formula $\alpha$ of type $\Lan_{A}$ (or of type $\Lan_{B}$) is valid in $\booleTwo$ if for any valuation $v$,
\[
v[\alpha]=\one.
\] 
We denote both, the algebra and the matrix, grounded on this algebra, by $\booleTwo$. It must be clear that any $\Lan_{A}$-formula is valid in the matrix $\booleTwo$ of type $\Lan_{B}$ if and only if it is valid in $\booleTwo$ of type $\Lan_{A}$.

A formula $\alpha$ valid in $\booleTwo$ is called a \textit{\textbf{classical tautology}}. It is not difficult to see that the following formulas are classical tautologies:
\[
\begin{array}{cl}
p\rightarrow p, &(\textit{law of identity})\\
\neg(p\wedge\neg p), &(\textit{law of contradiction})\\
p\vee\neg p; &(\textit{law of the excluded middle})
\end{array}
\]
as well as the following:
\[\label{E:paradoxes-implicaion}
\begin{array}{cl}
\begin{rcases*}
p\rightarrow(q\rightarrow p);\\
\neg p\rightarrow(p\rightarrow q).
\end{rcases*} &(\text{``paradoxes of material implication''})
\end{array}
\]
The above tautologies had been known from ancient times. In modern times, such tautologies as the following have been in focus:
\[
\begin{array}{cl}
((p\rightarrow q)\rightarrow p)\rightarrow p; &(\textit{Peirce's law})\\
\neg\neg p\leftrightarrow p; &(\textit{double negation law})\\
\begin{rcases}
\neg(p\wedge q)\leftrightarrow\neg p\vee\neg q;\\
\neg(p\vee q)\leftrightarrow\neg p\wedge\neg q.
\end{rcases} &(\textit{De Morgan's laws})
\end{array}
\]

By the \textit{\textbf{two-valued logic}}, one usually means the set of all classical tautologies.

\subsection{The {\L}ukasiewicz logic of a three-valued matrix}\label{S:lukasiewicz}

Perhaps, the ``{\L}ukasiewicz 3-valued logic'' is the best-known of all multiple-valued systems alternative to the set of classical tautologies.
The matrix which defines this ``logic'' has three values, $\zero$ (\textit{false}), $\tb$ (\textit{indeterminate}) and
$\one$ (\textit{true}), which are arranged by a linear ordering as follows: $\zero\leq\tb\leq\one$. On this carrier, an algebra of type $\Lan_{A}$ is defined in such a way that $\wedge$ and $\vee$ are interpreted as meet and join in the 3-element distributive lattice; then, $\neg$ and $\rightarrow$ are defined according to the following tables:

\begin{center}
	\begin{tabular}{c|c} \hline
		$x$ &$\neg x$\\ \hline
		$\zero$ &~~$\one$\\
		$\tb$  &~~$\tb$\\
		$\one$ &~~$\zero$\\ \hline
	\end{tabular}
	\quad\begin{tabular}{c|ccc} \hline
		&\multicolumn{3}{|c}{$x\rightarrow y$}\\
		$x\backslash y$ &$\zero$  &$\tb$ &$\one$\\ \hline
		$\zero$ &$\one$ &$\one$ &$\one$\\
		$\tb$  &$\tb$ &$\one$ &$\one$\\
		$\one$ &$\zero$ &$\tb$	&$\one$ \\ \hline
	\end{tabular}
\end{center}

We denote this algebra by $\lukasThree$.
Actually, when we have defined $\neg$ in $\lukasThree$ as above, the operation $\rightarrow$ can be defined according to \eqref{E:implication}. Moreover, the following identities can be easily checked:
\begin{equation}\label{E:lukasiewicz}
	\begin{array}{c}
		\begin{rcases*}
			x\vee y=(x\rightarrow y)\rightarrow y;\\
			x\wedge y=\neg(\neg x\vee\neg y).
		\end{rcases*}
	\end{array}
\end{equation}
(Exercise~\ref{section:semantics}.\ref{EX:lukasiewicz}.)
Also, we note that, as algebras of type $\Lan_{A}$, $\booleTwo$ is a subalgebra $\lukasThree$. 
(See Exercise~\ref{section:semantics}.\ref{EX:subalgebra}.)

Now taking $\{\one\}$ as a logical filter, we obtain the {\L}ukaciewicz 3-valued matrix which is also denoted by $\lukasThree$.

We note that the law of the excluded middle and Peirce's law, which are tautologies of the two-valued logic, fail to be tautologous in $\lukasThree$. (Exercise~\ref{section:semantics}.\ref{EX:failure}.)
The law of contradiction is also invalid in $\lukasThree$, for the set $\{\tb\}\subseteq|\lukasThree|$ is closed under $\wedge$ and $\neg$.

By the {\L}ukasiewicz 3-valued logic one means the set of formulas which are validated by $\lukasThree$. These formulas are called $\lukasThree$-\textit{\textbf{tautologies}}. It should be clear that the set 
$\lukasThree$-tautologies is a proper subset of the set of classical tautologies.
(Exercise~\ref{section:semantics}.\ref{EX:luk-three-included-boole-two}.) One time, it had been assumed that the matrix $\langle\lukasThree,\{\tb,\one\}\rangle$ validates the classical tautologies and only them. If the first part of the last statement is obviously true, the second is false. The following counterexample is due to A.~R. Turquette: the formula $\neg(p\rightarrow\neg p)\vee\neg(\neg p\rightarrow p)$ is a classical tautology but is refuted in the last matrix by the assignment $v[p]=\tb$.

\subsection{The {\L}ukasiewicz modal logic of a three-valued matrix}\label{S:lukasiewicz-modal}

Both the algebra and matrix $\lukasThree$ can be expanded to type $\Lan_{C}$ as follows:
\begin{center}
	\begin{tabular}{c|c|c} \hline
		$x$ &$\Box x$ &$\Diamond x$\\ \hline
		$\zero$ &~$\zero$ &~$\zero$\\
		$\tb$ &~$\zero$ &$\one$\\
		$\one$ &~$\one$ &$\one$\\ \hline
	\end{tabular}
\end{center}

The last definitions of the modalities $\Box$ and $\Diamond$ are not independent of the other assertoric connectives, for in the expanded algebra $\lukasThree$ the following identities hold:
\begin{equation}\label{E:lukasiewicz-modal}
	\begin{array}{c}
		\begin{rcases*}
			\Diamond x=\neg x\rightarrow x;\\
			\Box x=\neg\Diamond\neg x.
		\end{rcases*}
	\end{array}
\end{equation}
We leave for the reader to check this. (See Exercise~\ref{section:semantics}.\ref{EX:lukasiewicz-modal}.)

We observe that the formulas
\[
\Box p\rightarrow p,~p\rightarrow\Diamond p~\text{and}~\Box p\rightarrow\Diamond p
\]
are valid in the expanded matrix, while their converses,
\[
p\rightarrow\Box p,~\Diamond p\rightarrow p~\text{and}~\Diamond p\rightarrow\Box p,
\]
are rejected by it. (See Exercise~\ref{section:semantics}.\ref{EX:lukasiewicz-modal-2}.)

\subsection{The G\"{o}del $n$-valued logics}\label{section:goedel}
As in the case $\lukasThree$, let us consider the three distinct ``truth values,'' $\zero$, $\tb$ and
$\one$, arranged by a linear ordering as follows: $\zero\leq\tb\leq\one$. Regarding this chain as a Heyting algebra with $\wedge$, $\vee$, $\rightarrow$ and $\neg$ as meet, join, relative pseudo-complementation and pseudo-complementation, respectively, we get an algebra $\godelThree$ of type $\Lan_{A}$. Thus we arrive at the following truth tables:
\begin{center}
	\begin{tabular}{c|ccc} \hline
		&\multicolumn{3}{|c}{$x\wedge y$}\\
		$x\backslash y$ &$\zero$ &$\tb$  &$\one$\\ \hline
		$\zero$ &$\zero$
		&$\zero$ &$\zero$\\
		$\tb$ &$\zero$ &$\tb$ &$\tb$\\
		$\one$ &$\zero$ &$\tb$
		&$\one$\\ \hline
	\end{tabular}
	\quad\begin{tabular}{c|ccc} \hline
		&\multicolumn{3}{|c}{$x\vee y$}\\
		$x\backslash y$ &$\zero$ &$\tb$ &$\one$\\ \hline
		$\zero$ &$\zero$ &$\tb$ &$\one$\\
		$\tb$ &$\tb$ &$\tb$ &$\one$\\
		$\one$ &$\one$
		&$\one$ &$\one$\\ \hline
	\end{tabular}
	\quad\begin{tabular}{c|c} \hline
		$x$ &$\neg x$\\ \hline
		$\zero$ &~~$\one$\\
		$\tb$ &~~$\zero$\\
		$\one$ &~~$\zero$\\ \hline
	\end{tabular}
	\quad\begin{tabular}{c|ccc} \hline
		&\multicolumn{3}{|c}{$x\rightarrow y$}\\
		$x\backslash y$ &$\zero$ &$\tb$ &$\one$\\ \hline
		$\zero$ &$\one$ &$\one$ &$\one$\\
		$\tb$ &$\zero$ &$\one$ &$\one$\\
		$\one$ &$\zero$ &$\tb$
		&$\one$ \\ \hline
	\end{tabular}
\end{center}

We notice that all properties \eqref{E:implication} are satisfied in $\godelThree$.

Taking into account the relation $\leq$, the above truth tables can be read through the following equalities.
\begin{equation}\label{E:LC-algebra-operations}
	\begin{rcases}
		\begin{array}{l}
			x\wedge y=\min(x,y)\\
			x\vee y=\max(x,y)\\
			x\rightarrow y=\begin{cases}
				\begin{array}{cl}
					\one &\text{if $x\le y$}\\
					y &\text{otherwise}
				\end{array}
			\end{cases}\\
			\neg x=\begin{cases}
				\begin{array}{cl}
					\zero &\text{if $x\neq\zero$}\\
					\one &\text{otherwise},
				\end{array}
			\end{cases}
		\end{array}
	\end{rcases}
\end{equation}

Taking $\{\one\}$ as a logical filter, we obtain a logical matrix $\godelThree$.
The last matrix can be generalized if instead of 3 elements we take 4 and more elements and use \eqref{E:LC-algebra-operations} to define the same signature operations as in $\godelThree$. Thus we obtain G\"{o}del's $n$-valued logics $\textbf{G}_n$, where $n\geq 3$. We note that \eqref{E:LC-algebra-operations} are also valid in $\booleTwo$; therefore, it is sometimes convenient to rename  the algebra $\booleTwo$ by $\textbf{G}_2$.

The matrices $\textbf{G}_n$, $n\ge 2$, have been in particular important in investigation of the lattice of intermediate logics, that is the logics the valid formulas of which contain all \textit{intuitionistic tautologies} and are included into the set of classical tautologies. In particular, all the matrices $\textbf{G}_n$, $n\ge 3$, fail Peirce's law, the double negation law, as well as both De Morgan's laws. (Exercise~\ref{section:semantics}.\ref{EX:Peirce-double-negation}.)

\subsection{The Dummett denumerable matrix}\label{section:dummett}
An denumerable generalization of $\textbf{G}_n$, $n\ge 2$, can be obtained if we take take a denumerable set and arrange it by a linear ordering $\le$ according to type $1+\omega^{\ast}$. We denote the least element by $\zero$ and the greatest by $\one$. (We note: the ordinal notation of the former is $\omega$ and of the latter is $0$.) Then, introducing the operations $\wedge$, $\vee$, $\rightarrow$ and $\neg$ as meet, join, relative pseudo-complementation and pseudo-complementation, respectively, of a Heyting algebra and equipping the latter algebra with a logical filter $\{\one\}$, we receive the \textit{Dummett denumerable matrix} \textbf{LC}. As for $\godelThree$, the algebra of the Dummett matrix satisfies the properties~\eqref{E:implication} and also~\eqref{E:LC-algebra-operations}. We note that \textbf{LC} validates the formula
\[
(p\rightarrow q)\vee(q\rightarrow p).
\]

The set of all formulas valid in this matrix, $L\mat{LC}$, is called the \textit{\textbf{Dummett logic}}. 

M. Dummett shows in~\cite{dummett1959}, theorem 2, that
\begin{equation}\label{E:LC-characterization}
	L\mat{LC}=\bigcap_{n\ge 2}L\mat{G}_{n}.
\end{equation}

In the sequel, we will need another logical matrix whose logic coincides with the Dummett logic $\LC$. 

Let us consider a denumerable set arranged linearly according to type $\omega+1$.
We treat this structure as a Heyting algebra with a least element $\zero$, greatest element $\one$ and operations $\wedge$, $\vee$, $\rightarrow$ and $\neg$ that are defined according to~\eqref{E:LC-algebra-operations}. Announcing $\lbrace\one\rbrace$ a logical filter, we define the logical matrix $\textbf{LC}^{\ast}$.
\begin{prop}\label{P:LC=LC-ast}
	{\em$L\textbf{LC}=L\textbf{LC}^{\ast}$}.
\end{prop}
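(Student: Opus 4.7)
The plan is to reduce to the characterization $L\mat{LC} = \bigcap_{n \geq 2} L\mat{G}_n$ provided by \eqref{E:LC-characterization} and to establish the companion identity $L\mat{LC}^{\ast} = \bigcap_{n \geq 2} L\mat{G}_n$ by essentially the same kind of argument. Both $\mat{LC}$ and $\mat{LC}^{\ast}$ are linearly ordered Heyting algebras whose operations are governed by \eqref{E:LC-algebra-operations} and whose filter is $\{\one\}$, so the entire argument hinges on a single observation: every finite subchain $S \subseteq |\mat{LC}^{\ast}|$ that contains $\zero$ and $\one$ is a subalgebra of $\mat{LC}^{\ast}$ and is Heyting-algebra isomorphic to $\mat{G}_n$, where $n = \card{S}$.

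To verify this, I would enumerate $S = \{\zero = s_0 < s_1 < \cdots < s_{n-1} = \one\}$ and use \eqref{E:LC-algebra-operations} to check closure: $s_i \wedge s_j = \min(s_i,s_j) \in S$, $s_i \vee s_j = \max(s_i,s_j) \in S$, $s_i \rightarrow s_j$ is either $\one \in S$ (if $s_i \leq s_j$) or $s_j \in S$, and $\neg s_i$ is either $\zero \in S$ or $\one \in S$. Since the operations in $\mat{G}_n$ are given by exactly the same formulas on an $n$-element chain, $S$ carries the structure of $\mat{G}_n$. In particular, for each $n \geq 2$, picking any $n-2$ strictly intermediate elements of $|\mat{LC}^{\ast}|$ (which exist since the carrier has order type $\omega+1$) yields a subalgebra embedding $\mat{G}_n \hookrightarrow \mat{LC}^{\ast}$ that sends $\one$ to $\one$.

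For $L\mat{LC}^{\ast} \subseteq L\mat{LC}$, I would take $\alpha \notin L\mat{LC}$; by \eqref{E:LC-characterization}, $\alpha \notin L\mat{G}_n$ for some $n \geq 2$, witnessed by a valuation $u$ in $\mat{G}_n$ with $u[\alpha] \neq \one$. Composing $u$ with the embedding $\mat{G}_n \hookrightarrow \mat{LC}^{\ast}$ yields (by Proposition~\ref{P:valuation}) a valuation in $\mat{LC}^{\ast}$ whose value on $\alpha$ is still not $\one$, so $\alpha \notin L\mat{LC}^{\ast}$. For the converse, suppose a valuation $v$ in $\mat{LC}^{\ast}$ refutes $\alpha$; the finite set $S := \{v[\beta] \mid \beta \in \Sub{\alpha}\} \cup \{\zero, \one\}$ is, by the observation above, a subalgebra of $\mat{LC}^{\ast}$ isomorphic to some $\mat{G}_n$. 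Restricting $v$ to this subalgebra (its codomain already lies in $S$ by induction on the degree of subformulas) produces a valuation in $\mat{G}_n$ refuting $\alpha$, so $\alpha \notin L\mat{G}_n$ and hence $\alpha \notin L\mat{LC}$ by \eqref{E:LC-characterization}.

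I do not expect any real obstacle here; the only step that wants a moment's care is confirming that $S = \{v[\beta] \mid \beta \in \Sub{\alpha}\} \cup \{\zero, \one\}$ is genuinely closed under the Heyting operations when $v$ is evaluated on $\alpha$, and this follows at once from totality of the order on $|\mat{LC}^{\ast}|$ together with \eqref{E:LC-algebra-operations}, since those formulas keep every output within any subchain that already contains its inputs and the constants $\zero$, $\one$.
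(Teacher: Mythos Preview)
Your proof is correct and follows essentially the same plan as the paper: both reduce to showing $L\mat{LC}^{\ast} = \bigcap_{n \geq 2} L\mat{G}_n$, and both handle the inclusion $\bigcap_{n \geq 2} L\mat{G}_n \subseteq L\mat{LC}^{\ast}$ via the finite-subalgebra argument on $\{v[\beta] : \beta \in \Sub{\alpha}\} \cup \{\zero,\one\}$.

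The one small difference is in the other inclusion. The paper observes that each $\mat{G}_n$-algebra is a \emph{homomorphic image} of the $\mat{LC}^{\ast}$-algebra (collapsing the tail of $\omega+1$ above a cut point) and then invokes Proposition~\ref{P:validity-homomorphism}. You instead exhibit each $\mat{G}_n$ as a \emph{subalgebra} of $\mat{LC}^{\ast}$ and transport a refuting valuation along the embedding. Both work; your route is a bit more self-contained since it reuses the very closure observation you already need for the reverse inclusion, whereas the paper's route is a one-line appeal to a general proposition but requires checking separately that the quotient map respects the Heyting operations.
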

\begin{proof}
	We show that
	\[
	L\mat{LC}^{\ast}=\bigcap_{n\ge 2}L\mat{G}_{n}.
	\]
	It is clear that for any $n\ge 2$, the algebra of $\mat{G}_n$ is a homomorphic image of the algebra of $\mat{LC}^{\ast}$; see Section~\ref{section:heyting-algebra}. In virtue of Proposition~\ref{P:validity-homomorphism}, $L\mat{LC}^{\ast}\subseteq\bigcap_{n\ge 2}L\mat{G}_{n}$.
	
	Now, assume that a valuation $v$ in $\mat{LC}^{\ast}$ refutes a formula $\alpha$, that is $\alpha\notin L\mat{LC}^{\ast}$. Let $\beta_1,\ldots,\beta_n$ be all subformulas of $\alpha$. Then, according to~\eqref{E:LC-algebra-operations}, the set $\lbrace v[\beta_1],\ldots,v[\beta_n],\zero,\one\rbrace$ is the carrier of a subalgebra of the $\mat{LC}^{\ast}$-algebra, which in turn is isomorphic to a $\mat{G}_{n}$-algebra, for some $n\ge 2$. This implies that $\alpha\notin\bigcap_{n\ge 2}L\mat{G}_{n}$. 
\end{proof}

\subsection{Two infinite generalizations of {\L}ukasiewicz' $\lukasThree$}

The {\L}ukasiewicz algebra $\lukasThree$ can be easily generalized if we take for a carrier the interval $[0,1]$ or the rational scale $\mathbb{Q}$ or on the real scale $\mathbb{R}$ arranged in both cases by the ordinary less-than-or-equal relation $\le$. The set of designated elements in both cases is $\{1\}$.
The operations $\neg$ and $\rightarrow$ are defined as follows:
\[
\begin{array}{l}
\neg x:=1-x,\\
x\rightarrow y:=\min(1,1-x+y);
\end{array}
\] 
then, the other operations are defined according to \eqref{E:lukasiewicz}. This leads to the identities
\[
\begin{array}{c}
x\wedge y=\min(x,y)~\text{and}~x\vee y=\max(x,y),
\end{array}
\]
which can be taken as definitions of $\wedge$ and $\vee$ as well.

The set of formulas valid in the first matrix, that is when $[0,1]$ is considered in the rational scale, is denoted by $\mat{\L}_{\aleph_{0}}$ and the formulas valid in the second matrix by $\mat{\L}_{\aleph_{1}}$.\\

So far, we have discussed the semantic rules for the language $\Lan$. As we introduce other schematic languages later on, we will have to adjust these rules for them. However, although semantic rules allow us to distinguish valid formulas from refuted ones, they, taken alone, do not suffice to
analyze reasoning. This echoes the remark due to Aristotle that ``it is absurd to discuss refutation (\textgreek{>'elegqoc}) without discussing reasoning (\textgreek{snllogism'os});'' cf.~\cite{kneales1962}, section I.4. We discuss the relation of formal deduction between $\Lan$-formulas  in the form of \textit{consequence relation}. This is a topic of the next chapter.\\

\noindent{\textbf{Exercises~\ref{section:semantics}}}
\begin{enumerate}
	\item\label{EX:endomorphism}Show that any endomorphism $f:\mathfrak{F}_{\mathcal{L}}\longrightarrow\mathfrak{F}_{\mathcal{L}}$ can be regarded as a substitution; that is $f$ is a unique extension of $v:\Var\longrightarrow\mathfrak{F}_{\mathcal{L}}$ such that $v(p)=f(p)$, for any $p\in\Var$.
	\item Prove the second part of Proposition~\ref{P:valuation}.
	\item Prove that the laws of identity, of contradiction, of the excluded middle, as well as Peirce's and double negation laws are tautologies in $\booleTwo$.
	\item Prove the properties~\eqref{E:implication}.
	\item Show that for any two $\Lan_{A}$-formulas $\alpha$ and $\beta$, $\alpha\leftrightarrow\beta$ is a classical tautology if and only if for any $\booleTwo$-valuation $v$, $v[\alpha]=v[\beta]$.
	\item\label{EX:lukasiewicz}Prove the identities \eqref{E:lukasiewicz}.
	\item\label{EX:subalgebra} Prove that $\booleTwo$ is a subalgebra of $\lukasThree$.
	\item\label{EX:failure} Show that the law of the excluded middle and Peirce's law are not tautologous in $\lukasThree$.
	\item\label{EX:luk-three-included-boole-two}Prove that the set of $\lukasThree$-tautologies is properly included in the set of classical tautologies. 
	\item Prove that the ``paradoxes of material implications'' (p.~\pageref{E:implication}) are $\lukasThree$-tautologous.
	\item\label{EX:wajsberg-axioms} Show that the following formulas are $\lukasThree$-tautologies.
	\[
	\begin{array}{cl}
	(a) &p\rightarrow(q\rightarrow p);\\
	(b) &(p\rightarrow q)\rightarrow((q\rightarrow r)\rightarrow(p\rightarrow r));\\
	(c) &(\neg p\rightarrow\neg q)\rightarrow(q\rightarrow p);\\
	(d) &((p\rightarrow\neg p)\rightarrow p)\rightarrow p.
	\end{array}
	\]
	\item\label{EX:lukasiewicz-modal}Prove that the identities~\eqref{E:lukasiewicz-modal} hold the algebra $\lukasThree$ expanded by the modalities $\Box$ and $\Diamond$.
	\item\label{EX:lukasiewicz-modal-2}Show that the logical matrix of type $\Lan_{C}$ of Section~\ref{S:lukasiewicz-modal} validates the formulas $\Box p\rightarrow p,~p\rightarrow\Diamond p~\text{and}~\Box p\rightarrow\Diamond p$ and rejects their converses $p\rightarrow\Box p,~\Diamond p\rightarrow p~\text{and}~\Diamond p\rightarrow\Box p$.
	\item\label{EX:Peirce-double-negation}Show that Peirce's law, the double negation law and both De Morgan's laws can be refuted in $\godelThree$.
	\item Let us designate on the algebra $\godelThree$ the logical filter $\{\tb,\one\}$. We denote the new matrix by $\godelThree^{\prime}$. Show that $\godelThree^{\prime}$ validates a formula $\alpha$ if and only if $\alpha$ is a classical tautology.
\end{enumerate}

\section{Historical notes}\label{languages-historical-notes}
The idea that logic, as the science of reasoning, deals with the universal rather than the particular can be traced back to Plato and Aristotle. According to the Platonic doctrine of Forms, a single Form is associated with each group of things to which a common name can be applied. Plato believed that each Form differs from the particulars it represents but, at the same time, is a particular of its own kind.
Applying this doctrine to reasoning, he believed that correct reasoning is based on the connections of the involved Forms. This is how W. Kneale and M. Kneale summarize Plato's view on reasoning.
\begin{quote}
	``For Plato necessary connexions hold between Forms, and inference is presumably valid when we follow in thought the connexions between Forms as they are. How is this view related to the theory which assigns truth and falsity to sentences made up of nouns and verbs? Plato seems to hold that a sentence is true if the arrangement of its parts reflects or corresponds to connexion between Forms.''
	(\cite{kneales1962}, section I.5.)
\end{quote}

However, what Plato wrote about correct thinking can be regarded as philosophy of logic rather than logic itself. The first construction of formal logic we encounter in Aristotle. Although Aristotle was influenced by Plato's doctrine of Forms, he rejected it. Nevertheless, we dare to assert that Aristotle's doctrine of the syllogism is \textit{formal} in the Platonic sense. Namely Aristotle was the first to introduce variables into logic. In his influential book on Aristotle's syllogistic J. {\L}ukasiewicz makes the following comment about this fact.
\begin{quote}
	``It seems that Aristotle regarded his invention as entirely plain and requiring no explanation, for there is nowhere in his logical works any mention of variables. It was Alexander who first said explicitly that Aristotle presents his doctrine in letters, \textgreek{stoiqe{\^{i}}a}, in order to show that we get the conclusion not in consequence of the matter of the premisses, but in consequence of their form and combination; the letters are marks of universality and show that such a conclusion will follow always and for any term we may choose.'' (\cite{luk1957}, {\S} 4.)
\end{quote}

Although in the course of the development of logic, when after Aristotle the generalized nature of formal judgments was reflected in the use of letters, and the latter were perceived, at least gradually, as sentential variables, and therefore it would be natural to consider the uniform substitution, as defined above, as a legitimate \textit{operation} that would apply to all formal judgments, however, the uniform substitution was first proposed as an \textit{inference rule}, that is, only in conjunction with (and for constructing) logical inferences. 

Nonetheless, the realization, though implicit and at least for propositional languages such as $\Lan$-languages above, of that the set of formulas is closed under substitution and that the notion of validity should be defined in such a way to maintain this closedness for the formulas identified as valid one can find in George Boole. As to the first part of this observation, Boole puts it as follows.
\begin{quote}
	``Let us imagine any known or existing language freed from idioms and divested of superfluity and let us express in that language any given proposition in a manner the most simple and literal [\ldots] The transition from such a language to the notation of analysis would consist of no more than the substitution of one set of signs for another, without essential change either in form or character.'' (\cite{boole1854}, chapter 11, section 15.)
\end{quote}
As to the second, he writes:
\begin{quote}
	``I shall in some instances modify the premises by [\ldots] substitution of some new proposition, and shall determine how by such change the ultimate conclusions are affected.'' (\cite{boole1854}, chapter 13, section 1.)
\end{quote}

It appears that this idea has become fully realized by the 1920s in the Lvov-Warsaw School, for without this, Lindenbaum's Theorem (Proposition~\ref{P:lindenbaum-theorem}) would be impossible. Although we can only guess, but it seems plausible to suggest that the confusion between the concepts of predicate and propositional term was an obstacle for such a long delay.\\

With merely some reservation, one can say that George Boole was the first who interpreted
symbolic expressions representing ``laws of thought'' as algebraic terms. However,
his algebraic treatment of logic was merely the beginning of a long way. This is how T. Hailperin estimates Boole's contribution.   
\begin{quote}
	``Boole wrote his \textit{Laws of Thought} before the notion of an abstract formal system, expressed within a precise language, was fully developed. And, at that time, still far in the future was the contemporary view which makes a clear distinction between a formal system and its realization or models. Boole's work contributed to bringing these ideas to fruition.'' (\cite{hailperin1986}, chapter 1, {\S} 1.0.)
\end{quote}

We find interpretation of formal judgments in an abstract logical matrix in, namely $\booleTwo$,  Wittgenstein's \textit{Tractatus}; cf.~\cite{wittgenstein2001}, 4.441 and further. Also, Wittgenstein used the term \textit{tautology}, applying it to any classical tautology. Later on, logical matrices turned to be a natural way to define numerous deductive systems. We owe to Alfred Tarski a systematic approach to deductive systems through logical matrices.  

At first matrices were employed in the search for an independent axiomatic system for the set of classical tautologies; cf.~\cite{bernays1926} and~\cite{lukasiewicz1929}, section 6. Also, J. C. C. McKinsey~\cite{mckinsey1939} used matrices to prove independence of logical connectives in intuitionistic propositional logic.\\

The idea of the use of a 3-valued matrix was expressed by Jan {\L}ukasiewicz in his  inaugural address as Chancellor of the University of Warsaw in 1922. In 1961, a revised version of this speech was published, where one can read:
\begin{quote}
	``I maintain that there are propositions which are neither true nor false but \textit{indeterminate}. All sentences about future facts which are not yet decided belong to this category. Such sentences are neither true at the present moment, for they have no real correlate, nor are they false, for their denials too have no real correlate. If we make use of philosophical terminology which is not particularly clear, we could say that ontologically there corresponds to these sentences neither being nor non-being but possibility. Indeterminate sentences, which ontologically have possibility as their correlate, take the third truth-value.'' (\cite{lukasiewicz1970}, p. 126.)
\end{quote}

Technically, however, {\L}ukasiewicz proposed $\lukasThree$ already in 1920; cf.~\cite{lukasiewicz1920}. In~\cite{lukasiewicz1930} he expands $\lukasThree$ to include modalities and thus obtain the matrix of Section~\ref{S:lukasiewicz-modal}. In fact, he defines only $\Diamond$, but it is easy to check that the definition
\[
\Box x:=\neg\Diamond\neg x
\] 
gives operation $\Box$ as introduced in Section~\ref{S:lukasiewicz-modal} directly.

It is worth noting that Lukasiewicz's interest in "multivalued logic" (in the sense of logical matrices having more than two values) was motivated by his interest in modal contexts, and not vice versa. Indeed, he wrote:
\begin{quote}
	``I can assume without contradiction that my presence in Warsaw at a certain moment of next year, e.g. at noon on 21 December, is at the present time determined neither positively nor negatively, Hence it is \textit{possible}, but not \textit{necessary}, that I shall be present in Warsaw at the given time. On this assumption the proposition `I shall be in Warsaw at noon on 21 December of next year', can at the present time be neither true nor false. For if it were true now, my future presence in Warsaw would have to be necessary, which is contradictory to the assumption. If it were false now, on the other hand, my future presence in Warsaw would have to be impossible, which is also contradictory to the assumption. Therefore the proposition considered is at the moment \textit{neither true nor false} and must possess a third value, different from `0' or falsity and `1' or truth. This value we can designate by `$\frac{1}{2}$'. It represents `the possible', and joins `the true' and `the false' as a third value.'' (\cite{lukasiewicz1930}, {\S} 6; cf. English translation in~\cite{mccall1967}, pp. 40--65.)
\end{quote}

The logical matrix $\textbf{L}_{\aleph_{1}}$ and its logic for the first time were discussed by {\L}ukasiewicz in his address delivered at a meeting of the Polish Philosophical Society in Lvov on October 14, 1922. The report of this lecture was published in {\em Ruch Filozoficzny}, vol. 7 (1923), no. 6,  pp. 92--93; see English translation in \cite{lukasiewicz1970}, pp. 129--130. According to this report, {\L}ukasiwicz's motivation for this matrix was as follows.
\begin{quote}
	``The application of this interpretation is twofold: 1) It can be demonstrated that if those verbal rules, or directives, which are accepted by the authors of {\em Principia Mathematica} (the rule of deduction and the rule of substitution) are adopted, then no set of the logical laws that have the numerical value 1 can yield any law that would have a lesser numerical value. 2) If 0 is interpreted as falsehood, 1 as truth, and other numbers in the interval 0--1 as the degrees of probability corresponding to various possibilities, a many-valued logic is obtained, which is an expansion of three-valued logic and differs from the latter in certain details.'' (\cite{lukasiewicz1970}, p. 130.)
\end{quote}

It is clear that $\textbf{L}_{\aleph_{0}}$ is an obvious simplification of $\textbf{L}_{\aleph_{1}}$.\\

Answering a question of Hahn, Kurt G\"{o}del considered~\cite{godel1932} (see English translation in~\cite{godel1986}) an infinite series of finite logical matrices and their logics, where the matrix $\booleTwo$ was the second and $\godelThree$ the third in the series. All these matrices were partially ordered chains with the maximal element designated. G\"{o}del's goal was twofold: to show that 1) the intuitionistic propositional calculus cannot be determined by a single matrix; and that 2) there are at least countably many logics between the \textit{intuitionistic} and \textit{classical logics}. (These logics will be descussed in more detail in Chapter~\ref{chapter:consequence}.)\\

The aforementioned series of G\"{o}del's matrices produces a descending chain of logics all containing the intuitionistic propositional logic. Michael Dummett showed~\cite{dummett1959} that \textbf{LC} equals the intersection of all G\"{o}del's logics. He also gave an axiomatization of \textbf{LC}, which will be discussed in Chapter~\ref{chapter:consequence}.

\chapter[Logical Consequence]{Logical Consequence}\label{chapter:consequence}	
The conception of logical consequence which we discuss in this chapter can be traced back to
Aristotle's distinction between demonstrative and dialectical argument. The essence of their difference lies in the status of the premises of the argument in question. In a demonstrative argument, its premises are regarded as true. Aristotle called the argument with such premises ``didactic'' and compared them with what a teacher lays down as a starting point of the development of his subject. We would call such premises the axioms of a theory. In a dialectical argument, according to Aristotle, any assumptions can be employed for premises. More than that, in the \textit{reductio ad absurdum} argument one starts with a premise which is aimed to be proven false.

Thus, the focus in the dialectical reasoning is transferred on the argument itself. This involves considering such an argumentation as a binary relation between a set of premises and what can be obtained from them in the course of this argumentation. And the first question raised by this point of view is this: What characteristics of the dialectical argument could qualify it as sound?

\section{Consequence relation}\label{section:consequence-relation}
From the outset, we are faced with a choice between two paths each addressing one of the questions: Should consequence relation state determination between a set of premises (which can be empty) and a set of conclusions (which can also be empty), or should it do it between a set of premises (possibly the empty one) and a single formula?  In this text, we address the first option.
 The framework of this option is known as \textit{single-conclusion}, while that of the second as \textit{multiple-conclusion} or \textit{poly-conclusion} consequence relation.\\

Talking about consequence relation in general, we denote it by ``$\vdash$'' and equip this sign with subscript when referring to a specific consequence relation.

In the context of consequence relation, we use the following notation:
\begin{equation}\label{E:agreement}
	\begin{rcases}
		\begin{array}{c}
			X,Y,\ldots,Z:= X\cup Y\cup\ldots\cup Z;\\
			X,\alpha,\ldots,\beta:=X\cup\{\alpha,\ldots,\beta\},
		\end{array}
	\end{rcases}
\end{equation}
for any sets $X, Y,\ldots, Z$ of $\Lan$-formulas and any $\Lan$-formulas $\alpha,\ldots,\beta$.
\begin{defn}[single-conclusion consequence relation]\label{D:consequnce-relation-single}
	A relation $\vdash\subseteq\mathcal{P}(\Forms)\times\Forms$ is an $\Lan$-\textbf{single-conclusion consequence relation} $($or simply an $\Lan$-\textbf{consequence relation} or just a \textbf{consequence relation} when  consideration occurs in a fixed, perhaps unspecified, formal language$)$ if the following conditions are satisfied$\,:$
	{\em\[
		\begin{array}{cll}
		(\text{a}) &\alpha\in X~\textit{implies}~X\vdash\alpha; &(\textit{reflexivity})\\
		(\text{b}) &X\vdash\alpha~\textit{and}~X\subseteq Y~\textit{imply}~Y\vdash\alpha;
		&(\textit{monotonicity})\\
		(\text{c}) &\textit{if $X\vdash\beta$, for all $\beta\in Y$, and
			$Y,Z\vdash\alpha$, then $X,Z\vdash\alpha$}, &(\textit{transitivity})\\
		\end{array} 
		\]}
	for any sets $X$, $Y$  and $Z$ of $\Lan$-formulas.
\end{defn}

In case $Y=\{\beta\}$, the property  (c) reads:
\[
\begin{array}{cll}
(\text{c}^{\ast}) &\textit{$X\vdash\beta$ and $Z,\beta\vdash\alpha$ imply $X,Z\vdash\alpha$}. &\quad\quad~~~~~~(\textit{cut})
\end{array}
\]

A consequence relation $\vdash$ is called \textit{\textbf{nontrivial}} if
\[
\vdash~\subset\,\mathcal{P}(\Forms)\times\Forms;
\]
otherwise it is \textit{\textbf{trivial}}. 

A consequence relation $\vdash$ is called \textit{\textbf{finitarity}} if
\[
\textit{$X\vdash\alpha$ implies that there is $Y\Subset X$ such that $Y\vdash\alpha$},
\tag{\textit{finitariness}}
\]
where here and on $Y\Subset X$ means that $Y$ is a finite (maybe empty) subset of $X$.

The last notion can be generalized as follows. Let $\kappa$ be an infinite cardinal. A consequence relation $\vdash$ is called $\kappa$-\textit{\textbf{compact}} if
\[
\textit{$X\vdash\alpha$ implies that there is $Y\subseteq X$ such that $Y\vdash\alpha$
	and $\card{Y}<\kappa$}.
\tag{$\kappa$-\textit{compactness}}
\]
Thus finitariness is simply $\aleph_{0}$-compactness.

A consequence relation $\vdash$ is \textit{\textbf{structural}} if
{\em\[
	\textit{$X\vdash\alpha$ implies $\sigma(X)\vdash\sigma(\alpha)$}, \tag{\textit{structurality}}\\ 
	\]}
\text{for any set $X\cup\{\alpha\}\subseteq\Forms$ and any $\Lan$-substitution $\sigma$}.

We will be using the term \textit{\textbf{sequent}} for a statement ``$X\vdash\alpha$''.

\begin{quote}\textit{Note on monotonicity of $\vdash$}.
	The property (b) of Definition~\ref{D:consequnce-relation-single} characterizes the category of \textit{monotone consequence relations} as opposed to that of \textit{non-monotone consequence relations} when the property (b) does not hold. In this book we consider only the first category of consequence relations. (\textit{End of note}.)
\end{quote}

Other, specific, properties in addition to (a)--(c) have been considered in literature. For instance, for the language $\Lan_{B}$ and its expansions the following property is typical for many consequence relations:
\[
X,\alpha\vdash\bot~\textit{implies}~X\vdash\neg\alpha. \tag{\textit{reductio ad absurdum}}
\]

We will be using indices to distinguish in the same formal language either two different consequence relations or two equal consequence relation that are defined differently. The following three propositions will be used in the sequel.
\begin{prop}\label{P:con-relation-intersection}
	Let $\lbrace\vdash_i\rbrace_{i\in I}$ be a family of consequence relations in a language $\Lan$. Then the relation $\vdash$ defined by $\bigcap_{i\in I}\lbrace\vdash_i\rbrace$ is also a consequence relation. Moreover, if each $\vdash_i$ is structural, so is $\vdash$. In addition, if $I$ is finite and each $\vdash_i$ is finitary, then $\vdash$ is also finitary.
\end{prop}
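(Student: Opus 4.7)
The plan is to verify each claim by unfolding the definitions and doing a straightforward ``pointwise'' argument on the index $i \in I$. I treat the three assertions in the order they appear.

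For the first assertion, I check that $\vdash\,:=\,\bigcap_{i\in I}\vdash_i$ satisfies the three clauses of Definition~\ref{D:consequnce-relation-single}. Reflexivity is immediate: if $\alpha\in X$, then $X\vdash_i\alpha$ for every $i\in I$ by reflexivity of each $\vdash_i$, so $X\vdash\alpha$. Monotonicity is equally direct: from $X\vdash\alpha$ I obtain $X\vdash_i\alpha$ for all $i$, and monotonicity of each $\vdash_i$ gives $Y\vdash_i\alpha$ whenever $X\subseteq Y$, hence $Y\vdash\alpha$. For transitivity, assume $X\vdash\beta$ for every $\beta\in Y$ and $Y,Z\vdash\alpha$. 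Fix $i\in I$. Then $X\vdash_i\beta$ for every $\beta\in Y$ and $Y,Z\vdash_i\alpha$, so by the transitivity of $\vdash_i$ we have $X,Z\vdash_i\alpha$. Since $i$ was arbitrary, $X,Z\vdash\alpha$.

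For the second assertion, suppose each $\vdash_i$ is structural and let $\sigma$ be any $\Lan$-substitution. If $X\vdash\alpha$, then $X\vdash_i\alpha$ for every $i\in I$, so $\sigma(X)\vdash_i\sigma(\alpha)$ for every $i\in I$ by structurality of $\vdash_i$, and therefore $\sigma(X)\vdash\sigma(\alpha)$.

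For the third assertion, assume $I$ is finite and each $\vdash_i$ is finitary, and let $X\vdash\alpha$. For each $i\in I$ there exists $Y_i\Subset X$ with $Y_i\vdash_i\alpha$. Put $Y:=\bigcup_{i\in I}Y_i$. Because $I$ is finite and each $Y_i$ is finite, $Y\Subset X$. By monotonicity of each $\vdash_i$, $Y_i\subseteq Y$ gives $Y\vdash_i\alpha$ for every $i\in I$, whence $Y\vdash\alpha$, as required.

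The only delicate point is the finitarity clause, and the finiteness of $I$ is exactly what is needed to keep $Y=\bigcup_{i}Y_i$ finite; without this hypothesis the union of the witnessing sets could be infinite and the argument would collapse. The other two parts reduce to the observation that the defining conditions of Definition~\ref{D:consequnce-relation-single} and of structurality are universally quantified over $i$, so they pass through intersections without difficulty.
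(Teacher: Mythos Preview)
Your proof is correct and is exactly the routine verification the paper has in mind; in fact the paper does not supply a proof at all but leaves this proposition as an exercise (Exercise~\ref{section:consequence-relation}.\ref{EX:con-relation-intersection}), so your argument fills that gap in the expected way.
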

\noindent\textit{Proof}~is left to the reader. (See Exercise~\ref{section:consequence-relation}.\ref{EX:con-relation-intersection}.)\\

Given a consequence relation $\vdash$ and a set $X_0$ of $\Lan$-formulas. We define a relation as follows:
\[
X\vdash_{X_0}\alpha\stackrel{\text{df}}{\Longleftrightarrow}X, X_0\vdash\alpha.
\]
\begin{prop}\label{P:relative-consequence}
	For any consequence relation $\vdash$ and arbitrary fixed set $X_0$, $\vdash_{X_0}$ is a consequence relation such that
	\[
	\emptyset\vdash_{X_0}\alpha~\Longleftrightarrow~ X_0\vdash\alpha.
	\]
\end{prop}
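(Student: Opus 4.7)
The plan is simply to verify the three defining clauses of Definition~\ref{D:consequnce-relation-single} for $\vdash_{X_0}$, using in each case the corresponding clause for $\vdash$, and then to read off the final equivalence directly from the definition. None of the steps should require any new ideas; the only thing that takes care is keeping track of which set plays which role when invoking transitivity.

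\medskip

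First, for reflexivity, suppose $\alpha\in X$. Then $\alpha\in X\cup X_0$, so by reflexivity of $\vdash$ we have $X,X_0\vdash\alpha$, which is exactly $X\vdash_{X_0}\alpha$. For monotonicity, if $X\vdash_{X_0}\alpha$ and $X\subseteq Y$, then $X\cup X_0\subseteq Y\cup X_0$, so monotonicity of $\vdash$ gives $Y,X_0\vdash\alpha$, i.e.\ $Y\vdash_{X_0}\alpha$.

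\medskip

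The step that requires the most care is transitivity. Assume $X\vdash_{X_0}\beta$ for every $\beta\in Y$, and $Y,Z\vdash_{X_0}\alpha$. Unfolding the definition, this says $X,X_0\vdash\beta$ for every $\beta\in Y$, and $Y,Z,X_0\vdash\alpha$. I would now apply clause (c) of Definition~\ref{D:consequnce-relation-single} to $\vdash$ with the substitutions $X\rightsquigarrow X\cup X_0$, $Y\rightsquigarrow Y$, $Z\rightsquigarrow Z\cup X_0$; the two hypotheses are exactly what was just observed, and the conclusion is $(X\cup X_0),(Z\cup X_0)\vdash\alpha$, which by \eqref{E:agreement} collapses to $X,Z,X_0\vdash\alpha$, that is, $X,Z\vdash_{X_0}\alpha$.

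\medskip

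Finally, the displayed equivalence is immediate: by the defining abbreviation, $\emptyset\vdash_{X_0}\alpha$ means $\emptyset,X_0\vdash\alpha$, which is just $X_0\vdash\alpha$. The only real subtlety in the whole argument is the choice of parameters in the application of transitivity, which is why I singled out that step; everything else is a direct unwinding of the definition of $\vdash_{X_0}$.
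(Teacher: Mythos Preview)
Your proof is correct and is precisely the routine verification the paper has in mind; the paper itself omits the argument and leaves it as an exercise, so there is nothing to compare against beyond noting that your unwinding of reflexivity, monotonicity, and transitivity (with the careful choice of parameters in the last) is exactly the intended approach.
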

\noindent\textit{Proof}~is left to the reader. (Exercise~\ref{section:consequence-relation}.\ref{EX:relative-consequence})\\

For any consequence relation $\vdash$, we define:
\begin{equation}\label{E:nonempty-consequence}
	X\vdash^{\circ}\alpha~\stackrel{\text{df}}{\Longleftrightarrow}~X\vdash\alpha~~\text{and}~~X\neq\emptyset.
\end{equation}

\begin{prop}\label{P:nonempty-consequence}
	Let $\vdash$ be a consequence relation. Then $\vdash^{\circ}$ is also a consequence relation. In addition, if $\vdash$ is structural, $\vdash^{\circ}$ is also structural; further, if $\vdash$ is $\kappa$-compact, so is $\vdash^{\circ}$.
\end{prop}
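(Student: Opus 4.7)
The plan is to verify, in order, the three defining conditions of Definition~\ref{D:consequnce-relation-single} for $\vdash^\circ$, and then check preservation of structurality and of $\kappa$-compactness, each by direct reduction to the corresponding property of $\vdash$ together with the nonemptiness clause built into \eqref{E:nonempty-consequence}.

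Reflexivity and monotonicity I expect to dispatch in one line each. If $\alpha\in X$, then $X\neq\emptyset$ and reflexivity of $\vdash$ gives $X\vdash\alpha$, so $X\vdash^\circ\alpha$. If $X\vdash^\circ\alpha$ and $X\subseteq Y$, then $Y\supseteq X\neq\emptyset$ and monotonicity of $\vdash$ yields $Y\vdash\alpha$.

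Transitivity is the main conceptual step and the place where the nonemptiness side condition must be tracked. Assume $X\vdash^\circ\beta$ for every $\beta\in Y$ and $Y,Z\vdash^\circ\alpha$. Transitivity of $\vdash$ gives $X,Z\vdash\alpha$, so all that remains is to check $X\cup Z\neq\emptyset$. From $Y\cup Z\neq\emptyset$ split into two cases: if $Z\neq\emptyset$ we are done immediately, and if $Z=\emptyset$ then $Y\neq\emptyset$, so there is some $\beta\in Y$ for which $X\vdash^\circ\beta$ forces $X\neq\emptyset$. The rest is bookkeeping.

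Structurality follows at once, since any substitution $\sigma$ sends a nonempty set of formulas to a nonempty one and preserves $\vdash$ by hypothesis. For $\kappa$-compactness, given $X\vdash^\circ\alpha$, apply $\kappa$-compactness of $\vdash$ to obtain $Y\subseteq X$ with $\card{Y}<\kappa$ and $Y\vdash\alpha$. If $Y\neq\emptyset$ we are done. The only subtlety is the degenerate case $Y=\emptyset$: then $\emptyset\vdash\alpha$, so picking any $\beta\in X$ (possible since $X\neq\emptyset$) and invoking monotonicity of $\vdash$ gives $\{\beta\}\vdash\alpha$; since $\kappa$ is infinite, $\card{\{\beta\}}=1<\kappa$, and $\{\beta\}\vdash^\circ\alpha$ is the required witness. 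I do not anticipate any deeper obstacle; the only place one can slip is forgetting that the empty set can witness $\vdash$ but never $\vdash^\circ$, which is precisely what the two case splits above are designed to patch.
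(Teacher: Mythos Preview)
Your proof is correct. The paper leaves this proposition entirely as an exercise, so there is no proof in the text to compare against; your argument is exactly the routine verification intended, and you have correctly identified and handled the two places where the nonemptiness clause requires a case split (transitivity when $Z=\emptyset$, and $\kappa$-compactness when the witness $Y$ returned by compactness of $\vdash$ happens to be empty).
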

\noindent\textit{Proof}~is left to the reader. (Exercise~\ref{section:consequence-relation}.\ref{EX:nonempty-consequence})\\

As is seen from the Proposition~\ref{P:con-relation-intersection}, Definition~\ref{D:consequnce-relation-single} is essentially extensional, because it does not show how the transition from premises to conclusion can be carried out.
However, before addressing this issue, we will discuss how the concept of single-conclusion consequence relation can be represented purely in terms of sets of $\Lan$-formulas.

\paragraph{Exercises~\ref{section:consequence-relation}}
\begin{enumerate}
	\item Consider the properties (a), (b) and (c) of Definition~\ref{D:consequnce-relation-single}. Show that (a) and (c) imply (b).
	\item\label{EX:con-relation-intersection} Prove Proposition~\ref{P:con-relation-intersection}.
	\item \label{EX:relative-consequence}Prove Proposition~\ref{P:relative-consequence}.
	\item \label{EX:nonempty-consequence}Prove Proposition~\ref{P:nonempty-consequence}.
\end{enumerate}

\section{Consequence operator}\label{section:consequence-operator}

Let $\vdash$ be a consequence relation for $\Lan$-formulas. Then, we define:
\begin{equation}\label{E:consequence-definition}
	\Con{X}:=\set{\alpha}{X\vdash\alpha},
\end{equation}
for any set $X$ of $\Lan$-formulas. We observe the following connection: \begin{equation}\label{E:consequence-interconnection}
	X\vdash\alpha~\textit{if and only if}~\alpha\in\Con{X}.
\end{equation}

The last equivalence induces the following definition.
\begin{defn}\label{D:consequence-operator}
	A map {\em$\textbf{Cn}:\mathcal{P}(\Forms)\longrightarrow\mathcal{P}(\Forms)$} is called a \textbf{consequence operator} if it satisfies the following three conditions:
	{\em\[
		\begin{array}{cll}
		(\text{a}^{\dag}) &X\subseteq\Con{X}; &(\textit{reflexivity})\\
		(\text{b}^{\dag}) &X\subseteq Y~\textit{implies}~\Con{X}\subseteq\Con{Y}; &(\textit{monotonicity})\\
		(\text{c}^{\dag}) &\Con{\Con{X}}\subseteq\Con{X}, &(\textit{closedness})\\
		\end{array}
		\]}
	for any sets $X$ and $Y$ of $\Lan$-formulas. 
\end{defn}

A consequence operator \textbf{Cn} is called \textit{\textbf{finitary}} (or \textit{\textbf{compact}}) if
\[
\begin{array}{cl}
(\text{d}^{\dag}) &\Con{X}\subseteq\bigcup\set{\Con{Y}}{Y\Subset X}. \tag{\textit{finitariness}~\text{or}~\textit{compactness}}
\end{array}
\]

As in case of consequence relation, we generalize the last notion for any infinite cardinal $\kappa$: an consequence operator \textbf{Cn} is $\kappa$-\textit{\textbf{compact}} if
\[
\Con{X}\subseteq\bigcup\set{\Con{Y}}{Y\subseteq X~\textit{and}~\card{Y}<\kappa}. \tag{$\kappa$-\textit{compactness}}
\]

And a consequence operator \textbf{Cn} is called \textit{\textbf{structural}}, if for any set $X$ and  any substitution $\sigma$,
\[
\sigma(\Con{X})\subseteq\Con{\sigma(X)}. \tag{\textit{structurality}}
\]
\begin{quote}\textit{Note on monotonicity of} $\textbf{Cn}$.
	A consequence operator as defined in Definition~\ref{D:consequence-operator} is called \textit{monotone} as opposed to \textit{non-monotone} consequence operators. (\textit{End of note}.)
\end{quote}

A set $X$ of formulas is called \textit{\textbf{closed}}, or is a \textit{\textbf{theory}}, (w.r.t. $\textbf{Cn}$) if $X=\Con{X}$. This, in view of $(\text{a}^{\dag})$ and $(\text{c}^{\dag})$, any set $\Con{X}$ is a theory; we call it the \textit{\textbf{theory generated by}} $X$ (w.r.t. \textbf{Cn}). 

We observe the following.
\begin{prop}\label{P:intersection-con}
	Given a consequence operator {\em\textbf{Cn}}, any set {\em$\bigcap_{i\in I}\lbrace\Con{X_i}\rbrace$} is closed.
\end{prop}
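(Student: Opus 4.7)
The plan is to show directly that $Y := \bigcap_{i\in I}\Con{X_i}$ satisfies $\Con{Y} = Y$, which is precisely the definition of being closed. The inclusion $Y \subseteq \Con{Y}$ is immediate from reflexivity $(\text{a}^{\dag})$, so the whole content lies in the reverse inclusion $\Con{Y} \subseteq Y$.

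To obtain $\Con{Y} \subseteq Y$, I would fix an arbitrary index $i \in I$ and argue as follows. Since $Y \subseteq \Con{X_i}$ by the definition of intersection, monotonicity $(\text{b}^{\dag})$ gives $\Con{Y} \subseteq \Con{\Con{X_i}}$. Then closedness $(\text{c}^{\dag})$ yields $\Con{\Con{X_i}} \subseteq \Con{X_i}$, so by transitivity of set inclusion $\Con{Y} \subseteq \Con{X_i}$. Since $i$ was arbitrary, $\Con{Y} \subseteq \bigcap_{i\in I}\Con{X_i} = Y$.

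Combining the two inclusions gives $\Con{Y} = Y$, so $Y$ is a theory (in the terminology introduced just before the proposition). There is no genuine obstacle here: the argument is a routine verification using all three defining properties of a consequence operator, with the only mild subtlety being the standard move of quantifying over $i$ inside the chain $Y \subseteq \Con{X_i} \Rightarrow \Con{Y} \subseteq \Con{\Con{X_i}} \subseteq \Con{X_i}$ before intersecting over $i$. No structurality or compactness assumptions are needed.
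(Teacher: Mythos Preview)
Your proof is correct and is exactly the standard argument; the paper leaves this proof to the reader, but the same reasoning appears verbatim later in the proof of Proposition~\ref{P:closure-system}, where from $X_0\subseteq X_i$ one applies $(\text{b}^{\dag})$ and $(\text{c}^{\dag})$ to get $\Con{X_0}\subseteq\Con{X_i}=X_i$ for each $i$, hence $\Con{X_0}\subseteq X_0$.
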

\noindent\textit{Proof}~is left to the reader. (See Exercise~\ref{section:consequence-operator}.\ref{EX:intersection-con}.) Compare this proposition with Proposition~\ref{P:con-relation-intersection}.\\

The next proposition establishes connection between the notion of consequence relation and that of consequence operator.
\begin{prop}\label{P:con-operation=con-relation}
	Let $\vdash$ be a consequence relation. The operator {\em\textbf{Cn}} defined by~\eqref{E:consequence-definition} is a consequence operator. Conversely, for a given consequence operator {\em\textbf{Cn}}, the corresponding relation defined by~\eqref{E:consequence-interconnection} is a consequence relation. Moreover, {\em\textbf{Cn}} is finitary $($$\kappa$-compact$)$ if, and only if, so is $\vdash$.
	In addition, {\em\textbf{Cn}} is structural if, and only if, so is $\vdash$.
\end{prop}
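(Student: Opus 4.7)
The plan is to verify the three axioms in each direction, then handle the preservation of compactness and structurality using the equivalence $X \vdash \alpha \Leftrightarrow \alpha \in \textbf{Cn}(X)$ as a translation device. Nothing deep is involved; the whole proof is a matter of unpacking definitions, so I will only indicate what is matched with what.

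For the forward direction, assume $\vdash$ is a consequence relation and set $\textbf{Cn}(X) := \{\alpha : X \vdash \alpha\}$. Reflexivity $(\text{a}^{\dag})$ is immediate from $(\text{a})$: every $\alpha \in X$ satisfies $X \vdash \alpha$, hence $\alpha \in \textbf{Cn}(X)$. Monotonicity $(\text{b}^{\dag})$ is immediate from $(\text{b})$: if $X \subseteq Y$ and $\alpha \in \textbf{Cn}(X)$, then $X \vdash \alpha$, so $Y \vdash \alpha$ by $(\text{b})$, whence $\alpha \in \textbf{Cn}(Y)$. The closedness condition $(\text{c}^{\dag})$ is where transitivity $(\text{c})$ enters: if $\alpha \in \textbf{Cn}(\textbf{Cn}(X))$, then $\textbf{Cn}(X) \vdash \alpha$, and simultaneously $X \vdash \beta$ for every $\beta \in \textbf{Cn}(X)$; applying $(\text{c})$ with $Y := \textbf{Cn}(X)$ and $Z := \emptyset$ yields $X \vdash \alpha$, i.e., $\alpha \in \textbf{Cn}(X)$.

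For the converse direction, given a consequence operator $\textbf{Cn}$, define $X \vdash \alpha$ to mean $\alpha \in \textbf{Cn}(X)$. Reflexivity and monotonicity are again direct translations of $(\text{a}^{\dag})$ and $(\text{b}^{\dag})$. The substantive step — which I expect to be the one piece worth writing out carefully — is transitivity $(\text{c})$. Assume $X \vdash \beta$ for every $\beta \in Y$ and $Y, Z \vdash \alpha$; the first hypothesis says $Y \subseteq \textbf{Cn}(X)$, and by $(\text{b}^{\dag})$ together with $(\text{a}^{\dag})$ applied to $X \cup Z$ we obtain $Y \cup Z \subseteq \textbf{Cn}(X) \cup \textbf{Cn}(X \cup Z) \subseteq \textbf{Cn}(X \cup Z)$. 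A second application of $(\text{b}^{\dag})$ gives $\textbf{Cn}(Y \cup Z) \subseteq \textbf{Cn}(\textbf{Cn}(X \cup Z))$, which by $(\text{c}^{\dag})$ sits inside $\textbf{Cn}(X \cup Z)$. Since $\alpha \in \textbf{Cn}(Y \cup Z)$, we conclude $\alpha \in \textbf{Cn}(X \cup Z)$, i.e., $X, Z \vdash \alpha$.

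Finally, the compactness and structurality clauses are handled uniformly by translating the set-theoretic form into the relational form and back. For $\kappa$-compactness: $\textbf{Cn}(X) \subseteq \bigcup\{\textbf{Cn}(Y) : Y \subseteq X, \, \textit{card}(Y) < \kappa\}$ says exactly that whenever $X \vdash \alpha$ there exists such a $Y$ with $Y \vdash \alpha$, which is the $\kappa$-compactness of $\vdash$; the converse reads the same equivalence in the other direction. For structurality: $\sigma(\textbf{Cn}(X)) \subseteq \textbf{Cn}(\sigma(X))$ is equivalent to saying that $X \vdash \alpha$ implies $\sigma(X) \vdash \sigma(\alpha)$, so the two structurality conditions translate into each other. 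The only minor care point throughout is to keep the direction of implication straight when passing between $\alpha \in \textbf{Cn}(\cdot)$ and the sequent notation, but there is no genuine obstacle.
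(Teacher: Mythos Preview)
Your proof is correct and is exactly the routine verification the paper has in mind; the paper itself leaves the argument to the reader as an exercise, and your unpacking of the definitions---in particular the use of $(\text{c})$ with $Y=\textbf{Cn}(X)$, $Z=\emptyset$ for closedness, and the chain $Y\cup Z\subseteq\textbf{Cn}(X\cup Z)$ for transitivity---is the standard way to carry it out.
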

\noindent\textit{Proof}~can be obtained by routine check and is left to the reader. (See Exercise~\ref{section:consequence-operator}.\ref{EX:con-operation=con-relation}.)
\begin{quote}
	{\em Proposition~\ref{P:con-operation=con-relation} allows us to use the notion of consequence relation and that of consequence operator interchangeably.}	
\end{quote}

The next observation confirms the last remark; it just mirrors Proposition~\ref{P:con-relation-intersection} 
\begin{prop}\label{P:con-operator-intersection}
	Let {\em$\lbrace\textbf{Cn}_i\rbrace_{i\in I}$} be a family of consequence operators in a language $\Lan$. Then the operator {\em$\textbf{Cn}$} defined by 
	the equality
	{\em\[
		\textbf{Cn}(X):=\bigcap_{i\in I}\textbf{Cn}_{i}(X)
		\]}is also a consequence operator. Moreover, if each {\em$\textbf{Cn}_i$} is structural, so is {\em $\textbf{Cn}$}. In addition, if $I$ is finite and each {\em$\textbf{Cn}_i$} is finitary, then {\em$\textbf{Cn}$} is also finitary.
\end{prop}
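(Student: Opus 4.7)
The plan is to verify the three axioms $(\text{a}^{\dag})$--$(\text{c}^{\dag})$ of Definition~\ref{D:consequence-operator} pointwise, then separately handle structurality and finitariness. This proposition mirrors Proposition~\ref{P:con-relation-intersection}; one could in principle deduce it from that result via Proposition~\ref{P:con-operation=con-relation}, but a direct verification on the operator side is cleaner and essentially a routine chase through the definitions.

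First I would check reflexivity and monotonicity. Reflexivity is immediate: since $X\subseteq\textbf{Cn}_i(X)$ for every $i\in I$, we get $X\subseteq\bigcap_{i\in I}\textbf{Cn}_{i}(X)=\textbf{Cn}(X)$. Monotonicity follows by applying the monotonicity of each $\textbf{Cn}_i$ termwise and then intersecting. For closedness, the key observation is that $\textbf{Cn}(X)\subseteq\textbf{Cn}_i(X)$ for every $i$, so monotonicity of $\textbf{Cn}_i$ gives $\textbf{Cn}_i(\textbf{Cn}(X))\subseteq\textbf{Cn}_i(\textbf{Cn}_i(X))\subseteq\textbf{Cn}_i(X)$, whence intersecting over $i$ yields $\textbf{Cn}(\textbf{Cn}(X))\subseteq\textbf{Cn}(X)$.

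For structurality, I would use that the direct image of an intersection is contained in the intersection of direct images: for any substitution $\sigma$, $\sigma(\textbf{Cn}(X))=\sigma\bigl(\bigcap_{i\in I}\textbf{Cn}_i(X)\bigr)\subseteq\bigcap_{i\in I}\sigma(\textbf{Cn}_i(X))\subseteq\bigcap_{i\in I}\textbf{Cn}_i(\sigma(X))=\textbf{Cn}(\sigma(X))$, where the last inclusion uses structurality of each $\textbf{Cn}_i$.

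For finitariness, suppose $\alpha\in\textbf{Cn}(X)$. Then $\alpha\in\textbf{Cn}_i(X)$ for every $i\in I$, so finitariness of each $\textbf{Cn}_i$ supplies some $Y_i\Subset X$ with $\alpha\in\textbf{Cn}_i(Y_i)$. Take $Y:=\bigcup_{i\in I} Y_i$; by the monotonicity of each $\textbf{Cn}_i$ we get $\alpha\in\textbf{Cn}_i(Y)$ for every $i$, so $\alpha\in\textbf{Cn}(Y)$. The main (indeed only) obstacle is that $Y$ must be finite, which is precisely where the hypothesis that $I$ is finite is used: a finite union of finite sets is finite. Without finiteness of $I$ one cannot aggregate the witnesses $Y_i$ into a single finite set, which is why the statement restricts to finite $I$ and why no analogous clause is claimed for $\kappa$-compactness in general.
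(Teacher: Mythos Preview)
Your proof is correct. The paper does not give a detailed argument for this proposition—it is left to the reader as an exercise, with the remark that the result ``just mirrors Proposition~\ref{P:con-relation-intersection}''—so there is no paper proof to compare against; your direct verification of $(\text{a}^{\dag})$--$(\text{c}^{\dag})$, structurality, and finitariness is exactly the natural route and each step is sound.
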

\noindent\textit{Proof}~is left to the reader. (Exercise~\ref{section:consequence-operator}.\ref{EX:con-operator-intersection})\\

Besides the properties $(\text{a}^{\dag})$--$(\text{c}^{\dag})$, finitariness and $\kappa$-compactness, other properties of a map $\textbf{Cn}:\mathcal{P}(\Forms)\longrightarrow\mathcal{P}(\Forms)$ related to logical consequence have been considered in literature. These are some of them.
\[
\begin{array}{cl}
(\text{e}^{\dag}) &X\subseteq \Con{Y}~\textit{implies}~\Con{X}\subseteq\Con{Y}; \quad(\textit{comulative transitivity})\\
(\text{f}^{\dag}) &X\subseteq \Con{Y}~\textit{implies}~\Con{X\cup Y}\subseteq\Con{Y}; \quad(\textit{strong comulative transitivity})\\
(\text{g}^{\dag}) &X\subseteq Y\subseteq\Con{X}~\textit{implies}~\Con{Y}\subseteq\Con{X}; 
\quad(\textit{weak comulative transitivity})\\
(\text{h}^{\dag}) &\bigcup\set{\Con{Y}}{Y\Subset X}\subseteq\Con{X};
\quad(\textit{finitary inclusion})\\
(\text{i}^{\dag}) &\textit{if}~X\neq\emptyset,~\textit{then}~\Con{X}\subseteq
\bigcup\set{\Con{Y}}{Y\neq\emptyset~\mbox{and}~Y\Subset X};
\quad(\textit{strong finitariness})\\
(\text{j}^{\dag}) &\textit{if {\em$\alpha\notin\Con{X}$}, then there is
	a maximal $X^{\ast}$ such that $X\subseteq X^{\ast}$}\\
&\textit{and}~\alpha\notin\Con{X^{\ast}}.
\quad(\textit{maximalizability})
\end{array}
\]
\begin{prop}\label{P:con-connections}
	Let {\em\textbf{Cn}} be a map from $\mathcal{P}(\Forms)$ into $\mathcal{P}(\Forms)$, Then the following implications hold$\,:$
	{\em\[
		\begin{array}{rl}
		i) &(\text{a}^{\dag})~\textit{and}~(\text{g}^{\dag})~\textit{imply}~(\text{c}^{\dag});
		\\
		ii) &(\text{b}^{\dag})~\textit{and}~(\text{c}^{\dag})~\textit{imply}~(\text{e}^{\dag});
		\\
		iii) &(\text{e}^{\dag})~\textit{implies}~(\text{g}^{\dag});
		\\
		iv) &(\text{f}^{\dag})~\textit{implies}~(\text{g}^{\dag});
		\\
		v) &(\text{a}^{\dag})~\textit{and}~(\text{g}^{\dag})~\textit{imply}~(\text{f}^{\dag});
		\\
		vi) &(\text{a}^{\dag})~\textit{and}~(\text{f}^{\dag})~\textit{imply}~(\text{c}^{\dag});
		\\
		vii) &(\text{a}^{\dag})~\textit{and}~(\text{b}^{\dag})~\textit{and}~(\text{c}^{\dag})~
		\textit{imply}~(\text{f}^{\dag});
		\\
		viii) &(\text{i}^{\dag})~\textit{implies}~(\text{d}^{\dag});
		\\
		ix) &(\text{b}^{\dag})~\textit{and}~(\text{d}^{\dag})~\textit{imply}~(\text{i}^{\dag});
		\\
		x) &(\text{b}^{\dag})~\textit{implies}~(\text{h}^{\dag});
		\\
		xi) &(\text{b}^{\dag})~\textit{and}~(\text{d}^{\dag})~\textit{imply}~(\text{j}^{\dag});
		\\
		xii) &(\text{d}^{\dag})~\textit{and}~(\text{h}^{\dag})~\textit{imply}~(\text{b}^{\dag}).
		\\
		\end{array}	
		\]}
\end{prop}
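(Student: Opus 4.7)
The plan is to verify each of the twelve implications by direct set-theoretic manipulations that exploit the definitions of the various properties. Most parts are routine inclusion-chases; the single genuinely substantive step is (xi), which requires Zorn's lemma in the style of Lindenbaum's maximalization argument.

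For the first block (i)--(vii), which concern only the interplay of reflexivity, monotonicity, closedness, and the three cumulative-transitivity properties, the strategy is to make a single clever choice of an auxiliary set and then chase inclusions. For (i), set $Y:=\Con{X}$, so by (a$^{\dag}$) we have $X\subseteq Y\subseteq\Con{X}$, and (g$^{\dag}$) yields $\Con{\Con{X}}\subseteq\Con{X}$. For (ii), given $X\subseteq\Con{Y}$, apply (b$^{\dag}$) to get $\Con{X}\subseteq\Con{\Con{Y}}$ and then (c$^{\dag}$) to collapse. (iii) is immediate because the hypothesis of (g$^{\dag}$) already supplies $Y\subseteq\Con{X}$. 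For (iv), observe that under $X\subseteq Y$ one has $X\cup Y=Y$, so (f$^{\dag}$) reduces to (g$^{\dag}$). For (v) and (vii), the trick is to enlarge $X$ by $Y$, using (a$^{\dag}$) to conclude $X\cup Y\subseteq\Con{Y}$, and then apply (g$^{\dag}$) in one case and (b$^{\dag}$)+(c$^{\dag}$) in the other. For (vi), plug $\Con{X}$ into the role of $X$ in (f$^{\dag}$) and use $X\subseteq\Con{X}$ to collapse $\Con{X}\cup X=\Con{X}$.

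The second block (viii)--(x) and (xii) concerns compactness and finitariness. For (viii), split on whether $X$ is empty: when $X\neq\emptyset$ the statement (i$^{\dag}$) is precisely the required bound, and when $X=\emptyset$ the only $Y\Subset\emptyset$ is $\emptyset$ itself so (d$^{\dag}$) is trivial. For (ix), if the witness $Y\Subset X$ supplied by (d$^{\dag}$) happens to be empty, pick any $\beta\in X$ and replace $Y$ by $\{\beta\}$, using (b$^{\dag}$) to preserve membership of $\alpha$ in $\Con{\{\beta\}}$. (x) is instant from (b$^{\dag}$) applied to each $Y\Subset X$. For (xii), given $X\subseteq Y$ and $\alpha\in\Con{X}$, (d$^{\dag}$) locates a finite $Z\Subset X\subseteq Y$ with $\alpha\in\Con{Z}$, and (h$^{\dag}$) pushes this into $\Con{Y}$.

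The hard part will be (xi), a Lindenbaum-type maximalization. I would let
\[
\mathcal{F}:=\set{Z\subseteq\Forms}{X\subseteq Z~\text{and}~\alpha\notin\Con{Z}},
\]
which is nonempty because $X\in\mathcal{F}$, and apply Zorn's lemma to the poset $(\mathcal{F},\subseteq)$. The main verification is that the union of any chain $\{Z_i\}_{i\in I}\subseteq\mathcal{F}$ lies in $\mathcal{F}$, and this is exactly where (d$^{\dag}$) is essential: if $\alpha\in\Con{\bigcup_{i\in I}Z_i}$, then (d$^{\dag}$) yields a finite $Y\Subset\bigcup_{i\in I}Z_i$ with $\alpha\in\Con{Y}$; since $Y$ is finite and $\{Z_i\}$ is a chain, $Y\subseteq Z_{i_0}$ for some $i_0$, and then (b$^{\dag}$) forces $\alpha\in\Con{Z_{i_0}}$, contradicting $Z_{i_0}\in\mathcal{F}$. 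Zorn then delivers a maximal $X^{\ast}\supseteq X$ with $\alpha\notin\Con{X^{\ast}}$, proving (j$^{\dag}$). Packaging this Zorn argument cleanly is the only real obstacle; every other item collapses once the right rewrite ($X\cup Y=Y$ under $X\subseteq Y$, or $Y:=\Con{X}$ in part (i)) is spotted.
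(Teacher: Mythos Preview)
Your proposal is correct and follows essentially the same approach as the paper: routine inclusion-chasing for most parts, and the standard Zorn/Lindenbaum argument for (xi). The paper only spells out (xi), (xii), and one more (labeled (vi) but actually proving (vii), which matches your argument for (vii) verbatim); your remaining verifications are exactly the kind of straightforward checks the paper relegates to the exercises.
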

\begin{proof}
	We prove $(vi)$, $(xi)$ and $(xii)$, in this order, and leave the rest to the reader. (See Exercise~\ref{section:consequence-operator}.\ref{EX:con-connections}.)
	
	Suppose $X\subseteq\Con{Y}$, Then, by virtue of $(\text{a}^\dag)$, $X\cup Y\subseteq\Con{Y}$. According to $(\text{b}^\dag)$ and $(\text{c}^\dag)$, we have: 
	$\Con{X\cup Y}\subseteq\Con{\Con{Y}}\subseteq\Con{Y}$.
	
	To prove $(xi)$, we assume that $\alpha\notin\Con{X}$, for some set $X$.
	
	Let us denote
	\[
	C_{\alpha}:=\set{Y}{X\subseteq Y~\text{and}~\alpha\notin\Con{Y}}.
	\]
	We aim to show that $C_\alpha$ contains a maximal set.
	
	First, we observe that $C_\alpha$ is nonempty, for $X\in C_\alpha$. Regarding $C_\alpha$ as a partially ordered set with respect to $\subseteq$, we consider a chain $\lbrace Y_i\rbrace_{i\in I}$ in $C_\alpha$. That is, either $Y_i\subseteq Y_j$ or $Y_i\subseteq Y_j$, and $\alpha\notin Y_i$, for any $i\in I$.  Next we denote
	\[
	Y_0:=\bigcup_{i\in I}\lbrace Y_i\rbrace.
	\]
	We aim to show that $\alpha\notin\Con{Y_0}$. For contradiction, assume that $\alpha\in\Con{Y_0}$. Then, by $(\text{d}^{\dag})$, there is a set $Y^{\prime}\Subset Y_0$ such that $\alpha\in\Con{Y^{\prime}}$. We note that $Y^{\prime}\neq\emptyset$, for otherwise, by $(\text{b}^{\dag})$, we would have that $\alpha\in\Con{X}$. Since $Y^\prime$ is finite, there is ${i_0}\in I$ such that $Y^\prime\subseteq Y_{i_0}$. Then, by virtue of $(\text{b}^{\dag})$, $\alpha\in\Con{Y_{i_0}}$. We have obtained a contradiction. Thus $C_\alpha$ satisfies the conditions of Zorn's lemma, according to which $C_\alpha$ contains a maximal set.
	
	Finally, to prove ($xii$), we assume that $X\subseteq Y$ and $\alpha\in\Con{X}$. In virtue of $(\text{d}^\dag)$, there is a finite $Y^{\prime}\subseteq X$ such that $\alpha\in\Con{Y^\prime}$. We note that, by premise, $Y^{\prime}\Subset Y$. Then, by $(\text{h}^\dag)$, $\alpha\in\Con{Y}$.
\end{proof}

\paragraph{Exercises~\ref{section:consequence-operator}}
\begin{enumerate}
	\item \label{EX:intersection-con}Prove Proposition~\ref{P:intersection-con}.
	\item \label{EX:con-operation=con-relation}Prove Proposition~\ref{P:con-operation=con-relation}.
	\item Prove that a consequence operator \textbf{Cn} is structural if, and only if, $\Con{\sigma(\Con{X})}=\Con{\sigma(X)}$, for any set $X$ and any substitution $\sigma$.
	\item  Let \textbf{Cn} be a structural operator. Prove that a set $\sigma(\Con{X})$ is closed (or is a theory) if, and only if, 
	$\sigma(\Con{X})=\Con{\sigma(X)}$, for any set $X$ and any substitution $\sigma$.
	\item \label{EX:con-operator-intersection}Prove Proposition~\ref{P:con-operator-intersection}.
	\item \label{EX:con-connections}Complete the proof of Proposition~\ref{P:con-connections}.
\end{enumerate}
\section{Defining consequence relation}\label{section:consequence-defining}
In order to distinguish consequence relations or consequence operators we will be using the symbols $\vdash$ and \textbf{Cn} with subscripts.  Quite often, we employ a letter  `$\mathcal{S}$', calling a consequence relation $\vdash_{\mathcal{S}}$ and the corresponding consequence operator $\textbf{Cn}_{\mathcal{S}}$ an \textit{\textbf{abstract logic}} $\mathcal{S}$.
Thus, given an abstract logic $\mathcal{S}$, for any sets $X$ of $\Lan$-formulas and any $\Lan$-formula $\alpha$, we have:
\begin{equation}\label{E:Cn-S-connection}
	X\vdash_{\mathcal{S}}\alpha\Longleftrightarrow\alpha\in\textbf{Cn}_{\mathcal{S}}(X).
\end{equation}

Further, we denote 
\[
\bm{T}_{\mathcal{S}}:=\textbf{Cn}_{\mathcal{S}}(\emptyset)
\]
and call the elements of the latter set \textbf{\textit{theorems}}, or \textit{\textbf{theses}}, \textit{\textbf{of}} $\mathcal{S}$, or simply $\mathcal{S}$-\textit{\textbf{theorems}}, or $\mathcal{S}$-\textit{\textbf{theses}}.
Any set $\textbf{Cn}_{\mathcal{S}}(X)$ is call the $\mathcal{S}$-\textit{\textbf{theory generated by a set}} $X$.  Thus $\bm{T}_{\mathcal{S}}$ is the least $\mathcal{S}$-theory with respect to $\subseteq$. 

A set $X$, as well as the theory $\textbf{Cn}_{\mathcal{S}}(X)$, is called \textit{\textbf{inconsistent}} (relative to $\mathcal{S}$) if
$\textbf{Cn}_{\mathcal{S}}(X)=\Forms$; otherwise both are \textit{\textbf{consistent}}.
An abstract logic $\mathcal{S}$ is \textit{\textbf{trivial}} if $\vdash_{\mathcal{S}}$ is trivial, and otherwise is \textit{\textbf{nontrivial}}. We observe that any structural $\mathcal{S}$ is trivial if, and only if, for any arbitrary variable $p$,
$p\in\ConS{\emptyset}$. (Exercise~\ref{section:consequence-defining}.\ref{EX:trivial-1})
We note that for any nontrivial abstract logic $\mathcal{S}$, $\bm{T}_{\mathcal{S}}\subset\Forms$. (Exercise~\ref{section:consequence-defining}.\ref{EX:trivial-2})
) We also note that a theory is not required to be closed under substitution. A bit more we can say about substitution in the structural abstract logics: every theory of a structural abstract logic is closed under substitution if, and only if, this theory is generated by a set itself closed under substitution. (Exercise~\ref{section:consequence-defining}.\ref{EX:theory})

We denote
\[
\theory:=\set{X}{\ConS{X}=X}\tag{\textit{the set of $\mathcal{S}$-theories}}
\]
and
\[
\theoryC:=\set{X}{\ConS{X}=X\subset\Forms}\tag{\textit{the set of consistent $\mathcal{S}$-theories}}
\]

\subsection{Consequence operator (relation) via a closure system} 
A set $\mathcal{A}\subseteq\mathcal{P}(\Forms)$ which contains $\Forms$ and is closed under any nonempty intersections in $\mathcal{P}(\Forms)$ is called a \textit{\textbf{closure system over}} $\Forms$. That is, for any closure system $\mathcal{A}$, $\Forms\in\mathcal{A}$ and for any $I\neq\emptyset$,
\[
\lbrace X_i\rbrace_{i\in I}\subseteq\mathcal{A}\Longrightarrow\bigcap_{i\in I}
\lbrace X_i\rbrace\in\mathcal{A}.
\]

All operators in this subsection are considered over a fixed language $\Lan$.
\begin{prop}\label{P:closure-system}
	Let $\mathcal{A}$ be a closure system. Then the operator
	{\em\[
		\textbf{Cn}_{\mathcal{A}}(X):=\bigcap\set{Y}{X\subseteq Y~\textit{and}\,~Y\in\mathcal{A}}
		\]}is a consequence operator and {\em$\mathcal{A}=\set{X}{X=\textbf{Cn}_{\mathcal{A}}(X)}$}. Conversely, if {\em\textbf{Cn}} is a consequence operator and {\em$\mathcal{A}_{\textbf{Cn}}$} is the family of its closed sets, then
	{\em$\textbf{Cn}=\textbf{Cn}_{\mathcal{A}_{\textbf{Cn}}}$}.
\end{prop}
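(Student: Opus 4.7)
The plan is to treat the proposition as two independent directions, each broken into verification of standard closure-operator axioms plus a bookkeeping identity. Throughout I will lean on the fact, noted after the definition, that the defining collection $\{Y \in \mathcal{A} \mid X \subseteq Y\}$ is always nonempty because $\Forms \in \mathcal{A}$ and $X \subseteq \Forms$.

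For the first direction I would fix a closure system $\mathcal{A}$ and verify $(\text{a}^{\dag})$--$(\text{c}^{\dag})$ for $\textbf{Cn}_{\mathcal{A}}$. Reflexivity is immediate, since $X$ is contained in every $Y$ entering the intersection. Monotonicity comes from the standard ``bigger family, smaller intersection'' observation: if $X \subseteq X'$, then $\{Y \in \mathcal{A} \mid X' \subseteq Y\} \subseteq \{Y \in \mathcal{A} \mid X \subseteq Y\}$. For $(\text{c}^{\dag})$, the key remark is that $\textbf{Cn}_{\mathcal{A}}(X)$ itself belongs to $\mathcal{A}$, because it is a nonempty intersection of elements of $\mathcal{A}$; hence it already appears among the sets whose intersection defines $\textbf{Cn}_{\mathcal{A}}(\textbf{Cn}_{\mathcal{A}}(X))$, which forces the latter to be contained in it. To prove $\mathcal{A} = \{X \mid X = \textbf{Cn}_{\mathcal{A}}(X)\}$, I would argue both inclusions: if $X \in \mathcal{A}$, then $X$ itself is the smallest $Y \in \mathcal{A}$ with $X \subseteq Y$, so $\textbf{Cn}_{\mathcal{A}}(X) \subseteq X$, and equality follows with reflexivity; conversely, if $X = \textbf{Cn}_{\mathcal{A}}(X)$, then $X$ is (again) an intersection of elements of $\mathcal{A}$, hence in $\mathcal{A}$.

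For the converse direction I start with a consequence operator $\textbf{Cn}$ and form $\mathcal{A}_{\textbf{Cn}} := \{X \mid \textbf{Cn}(X) = X\}$. I first check that this is a closure system: $\Forms \in \mathcal{A}_{\textbf{Cn}}$ by $(\text{a}^{\dag})$, and closure under nonempty intersections is exactly Proposition~\ref{P:intersection-con} applied to the family of closed sets (since each $X_i$ equals $\textbf{Cn}(X_i)$). Then, for a fixed $X$, I prove $\textbf{Cn}(X) = \textbf{Cn}_{\mathcal{A}_{\textbf{Cn}}}(X)$ by two inclusions. The inclusion ``$\subseteq$'' uses that every closed $Y$ with $X \subseteq Y$ satisfies $\textbf{Cn}(X) \subseteq \textbf{Cn}(Y) = Y$ by monotonicity, so $\textbf{Cn}(X)$ is contained in their intersection. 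The inclusion ``$\supseteq$'' uses that $\textbf{Cn}(X)$ is itself a member of the intersecting family: it contains $X$ by $(\text{a}^{\dag})$, and $\textbf{Cn}(\textbf{Cn}(X)) = \textbf{Cn}(X)$ combining $(\text{a}^{\dag})$ and $(\text{c}^{\dag})$, so $\textbf{Cn}(X) \in \mathcal{A}_{\textbf{Cn}}$ and hence contains the intersection.

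I do not expect any genuine obstacle here; everything is a direct manipulation of the closure axioms. The only point that requires a moment's care is remembering that the closure-system definition demands \emph{nonempty} intersections, which is why the presence of $\Forms$ in $\mathcal{A}$ is essential both for $\textbf{Cn}_{\mathcal{A}}$ being well-defined on every $X$ and for showing that $\textbf{Cn}_{\mathcal{A}}(X)$ itself lies in $\mathcal{A}$.
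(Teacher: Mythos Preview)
Your proposal is correct and follows essentially the same route as the paper's proof: verify $(\text{a}^{\dag})$--$(\text{c}^{\dag})$ for $\textbf{Cn}_{\mathcal{A}}$, establish the fixed-point description of $\mathcal{A}$, then for the converse check that $\mathcal{A}_{\textbf{Cn}}$ is a closure system and prove the two inclusions between $\textbf{Cn}(X)$ and $\textbf{Cn}_{\mathcal{A}_{\textbf{Cn}}}(X)$. Your argument for $(\text{c}^{\dag})$---observing directly that $\textbf{Cn}_{\mathcal{A}}(X)\in\mathcal{A}$ as a nonempty intersection---is in fact a little cleaner than the paper's element-chasing version, but the underlying idea is the same.
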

\begin{proof}
	Assume that $\mathcal{A}$ is a closure system. It is easy to check that the operator $\textbf{Cn}_{\mathcal{A}}$ satisfies the properties $(\text{a}^{\dag})$ and $(\text{b}^{\dag})$ of Definition~\ref{D:consequence-operator}. We prove that it also satisfies $(\text{c}^{\dag})$. For this, given a set $X$, we denote
	\[
	X_0:=\textbf{Cn}_{\mathcal{A}}(X)
	\]
	and suppose that $\alpha\in \textbf{Cn}_{\mathcal{A}}(X_0)$. Aiming to show that $\alpha\in X_0$, we note that
	\[
	\alpha\in X_0\Longleftrightarrow\forall Y\in\mathcal{A}.~X\subseteq Y\Rightarrow
	\alpha\in Y.
	\]
	Thus, we assume that $X\subseteq Y$, for an arbitrary $Y\in\mathcal{A}$. In virtue of 
	$(\text{b}^\dag)$, $X_0\subseteq\textbf{Cn}_{\mathcal{A}}(Y)$ and hence $\Con{X_0}\subseteq Y$, for $Y\in\mathcal{A}$. This implies that $\alpha\in Y$ and, using the equivalence above, we obtain that $\alpha\in X_0$.
	
	Finally, one can easily observe that
	\[
	X\in\mathcal{A}\Longleftrightarrow\textbf{Cn}_{\mathcal{A}}(X)=X.
	\]
	
	Conversely, suppose \textbf{Cn} is a consequence operator and
	\[
	\mathcal{A}_{\textbf{Cn}}:=\set{X}{\textbf{Cn}(X)=X}.
	\]
	Assume that $\lbrace X_i\rbrace_{i\in I}\subseteq\mathcal{A}_{\textbf{Cn}}$ and denote
	\begin{equation}\label{E:conjunction}
		X_0:=\bigcap_{i\in I}\lbrace X_i\rbrace.
	\end{equation}
	Since $X_0\subseteq X_i$, for each $i\in I$, $\Con{X_0}\subseteq\Con{X_i}=X_i$, also for each $i\in I$. Hence $\Con{X_0}\subseteq X_0$, that is $X_0\in\mathcal{A}_{\textbf{Cn}}$.
	
	Finally, we show that for any formula $\alpha$ and any set $X$ of formulas,
	\[
	\alpha\in\textbf{Cn}(X)\Longleftrightarrow\alpha\in
	\textbf{Cn}_{\mathcal{A}_{\textbf{Cn}}}(X).
	\]
	
	First, we notice that
	\[
	\alpha\in
	\textbf{Cn}_{\mathcal{A}_{\textbf{Cn}}}(X)\Longleftrightarrow
	\forall Y.~(X\subseteq Y~\text{and}~\Con{Y}=Y)\Rightarrow\alpha\in Y. \tag{$\ast$}
	\]
	Then, in virtue of $(\ast)$, we have:
	\[
	\begin{array}{rl}
	\alpha\in
	\textbf{Cn}_{\mathcal{A}_{\textbf{Cn}}}(X)\!\!\!\! &\Longrightarrow
	[(X\subseteq\Con{X}~\text{and}~\Con{\Con{X}}=\Con{X})\Rightarrow\alpha\in\Con{X}]\\
	&\Longrightarrow\alpha\in\Con{X}.
	\end{array}
	\]
	
	Now we suppose that $\alpha\in\Con{X}$, $X\subseteq Y$ and $\Con{Y}=Y$. From the second premise, we derive that $\Con{X}\subseteq\Con{Y}$ and hence $\Con{X}\subseteq
	Y$. Thus $\alpha\in Y$. Since $Y$ is an arbitrary set satisfying the aforementioned conditions, using $(\ast)$, we conclude that $\alpha\in\textbf{Cn}_{\mathcal{A}_{\textbf{Cn}}}(X)$.
\end{proof}

Given an abstract logic $\mathcal{S}$, we remind the notation:
\[
\Sigma_{\mathcal{S}}:=\set{X}{X=\ConS{X}}.
\]

We conclude with the following observation.
\begin{prop}[Brown-Suszko theorem]\label{P:brown-suszko-theorem}
	An abstract logic $\mathcal{S}$ in a language $\Lan$ is structural if, and only if, for any set $X\subseteq\Forms$ and any substitution $\sigma$, the following holds:
	\[
	X\in\Sigma_{\mathcal{S}}\Longrightarrow\sigma^{-1}(X)\in\Sigma_{\mathcal{S}}.
	\]
\end{prop}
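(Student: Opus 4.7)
The plan is to prove both directions by directly using the consequence-operator formulation (via Proposition~\ref{P:con-operation=con-relation}, structurality of $\vdash_{\mathcal{S}}$ is equivalent to $\sigma(\ConS{X})\subseteq\ConS{\sigma(X)}$ for all $X$ and all substitutions $\sigma$) together with the two basic substitution inequalities~\eqref{E:substitution-inequalities}, namely $\sigma(\sigma^{-1}(X))\subseteq X$ and $X\subseteq\sigma^{-1}(\sigma(X))$, plus monotonicity of $\textbf{Cn}_{\mathcal{S}}$. The proof is essentially an exchange between ``push forward by $\sigma$'' and ``pull back by $\sigma^{-1}$'' across the closure operator.

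For the forward direction, I assume $\mathcal{S}$ is structural and take any theory $X\in\Sigma_{\mathcal{S}}$ and any substitution $\sigma$. I need to show $\ConS{\sigma^{-1}(X)}\subseteq\sigma^{-1}(X)$. Pick $\alpha\in\ConS{\sigma^{-1}(X)}$; by structurality $\sigma(\alpha)\in\ConS{\sigma(\sigma^{-1}(X))}$, and by monotonicity applied to $\sigma(\sigma^{-1}(X))\subseteq X$ together with $\ConS{X}=X$, this yields $\sigma(\alpha)\in X$, i.e., $\alpha\in\sigma^{-1}(X)$.

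For the converse, I assume the theory-pullback property and aim to derive structurality $\sigma(\ConS{Y})\subseteq\ConS{\sigma(Y)}$ for arbitrary $Y$ and $\sigma$. The key trick is to apply the hypothesis to the theory $X:=\ConS{\sigma(Y)}$: by assumption $\sigma^{-1}(X)$ is again a theory. From $\sigma(Y)\subseteq X$ one gets $Y\subseteq\sigma^{-1}(\sigma(Y))\subseteq\sigma^{-1}(X)$, so monotonicity and closedness of $\sigma^{-1}(X)$ give $\ConS{Y}\subseteq\sigma^{-1}(X)$. Hence for any $\alpha\in\ConS{Y}$, $\sigma(\alpha)\in X=\ConS{\sigma(Y)}$, which is exactly structurality.

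I do not expect any serious obstacle: the statement is essentially a formal duality between the ``covariant'' image-based formulation of structurality and the ``contravariant'' preimage-based formulation on theories, and both directions fall out of the two lines of~\eqref{E:substitution-inequalities} combined with the defining properties of $\textbf{Cn}_{\mathcal{S}}$. The only point worth being careful about is quantifying correctly: structurality must hold for \emph{all} sets $Y$ (not just theories), which is precisely what the converse argument delivers because the set $Y$ used there is arbitrary and the theory $X$ is constructed from it.
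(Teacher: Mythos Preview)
Your proof is correct and follows essentially the same approach as the paper's: both directions use the substitution inequalities~\eqref{E:substitution-inequalities} together with monotonicity, and in the converse you apply the hypothesis to the theory $\ConS{\sigma(Y)}$ exactly as the paper does (with different variable names). The only cosmetic difference is that the paper phrases the forward direction in terms of $\vdash_{\mathcal{S}}$ while you use $\textbf{Cn}_{\mathcal{S}}$, which is immaterial by Proposition~\ref{P:con-operation=con-relation}.
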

\begin{proof}
	Suppose $\mathcal{S}$ is structural. Let $\sigma$ be a substitution and $X\in\Sigma_{\mathcal{S}}$. Assume that $\sigma^{-1}(X)\vdash_{\mathcal{S}}\alpha$.
	Since $\mathcal{S}$ is structural, we have: $\sigma(\sigma^{-1}(X))\vdash_{\mathcal{S}}\sigma(\alpha)$. In virtue of (\ref{E:substitution-inequalities}--$i$), $X\vdash_{\mathcal{S}}\sigma(\alpha)$, that is $\sigma(\alpha)\in X$. This means that $\alpha\in\sigma^{-1}(X)$.
	
	To prove the converse, we assume that the conditional above holds. Then we, with help of~(\ref{E:substitution-inequalities}--$ii$), obtain that $X\subseteq\sigma^{-1}(\sigma(X))\subseteq\sigma^{-1}(\ConS{\sigma(X)})$. This implies that $\ConS{X}\subseteq\ConS{\sigma^{-1}(\ConS{\sigma(X)})}$. Applying the assumption, we get that $\ConS{X}\subseteq\sigma^{-1}(\ConS{\sigma(X)})$. Thus, if $X\vdash_{\mathcal{S}}\alpha$, that is $\alpha\in\ConS{X}$, then $\alpha\in\sigma^{-1}(\ConS{\sigma(X)})$, that is $\sigma(X)\vdash_{\mathcal{S}}\sigma(\alpha)$.
\end{proof}

\subsection{Consequence relation via logical matrices}\label{section:con-via-matrices}
Given a (logical) matrix $\mat{M}=\langle\alg{A},D\rangle$, we define a relation
$\models_{\textbf{M}}$ on $\mathcal{P}(\Forms)\times\Forms$ as follows:
\[
X\models_{\textbf{M}}\alpha\stackrel{\text{df}}{\Longleftrightarrow}(v[X]\subseteq D
\Rightarrow v[\alpha]\in D,~\text{for any valuation $v$ in \alg{A}}),
\]
where
\[
v[X]:=\set{v[\beta]}{\beta\in X}.
\]

Let $\mathcal{M}$ be a nonempty class of matrices. We define the relation of \textit{\textbf{matrix consequence}} (w.r.t. a class $\mathcal{M}$) as follows:
\begin{equation}\label{E:matrix-consequence}
	X\models_{\mathcal{M}}\alpha\stackrel{\text{df}}{\Longleftrightarrow}
	(X\models_{\textbf{M}}\alpha,~\text{for all $\mat{M}\in\mathcal{M}$}).
\end{equation}
Accordingly, the relation $\models_{\textbf{M}}$ is called a \textit{\textbf{single-matrix consequence}}, or $\mat{M}$-\textit{\textbf{consequence}} for short.
If a class $\mathcal{M}$ is known, we call a matrix consequence associated with a class $\mathcal{M}$ an $\mathcal{M}$-\textit{\textbf{consequence}}. Given a nonempty set of matrices $\mathcal{M}$, we denote the $\mathcal{M}$-consequence by $\mathcal{S}_{\mathcal{M}}$ and say that an abstract logic $\mathcal{S}$ is \textit{\textbf{determined}} by $\mathcal{M}$ if $\mathcal{S}=\mathcal{S}_{\mathcal{M}}$.

The justification for the terminology `$\mathcal{M}$-consequence' and `\mat{M}-consequence' is the following. 

\begin{prop}\label{P:matrix-con-is-con-relation}
	Any matrix consequence is a structural consequence relation.
\end{prop}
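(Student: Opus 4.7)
The plan is to first verify the claim for a single matrix $\mathbf{M} = \langle \mathbf{A}, D\rangle$, and then obtain the general case by noting that, by definition~\eqref{E:matrix-consequence}, $\models_\mathcal{M} \,=\, \bigcap_{\mathbf{M}\in\mathcal{M}} \models_\mathbf{M}$, so Proposition~\ref{P:con-relation-intersection} (extended in the obvious way to cover structurality) reduces everything to the single-matrix case.

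For the single-matrix case I would check each clause of Definition~\ref{D:consequnce-relation-single} directly against the defining implication ``$v[X]\subseteq D \Rightarrow v[\alpha]\in D$ for every valuation $v$.'' Reflexivity is immediate since $\alpha \in X$ forces $v[\alpha]\in v[X]$. Monotonicity follows because $X\subseteq Y$ implies $v[X]\subseteq v[Y]$, so any valuation satisfying the premise for $Y$ also satisfies it for $X$. For transitivity, suppose $X\models_\mathbf{M}\beta$ for every $\beta\in Y$ and $Y,Z\models_\mathbf{M}\alpha$; given a valuation $v$ with $v[X\cup Z]\subseteq D$, the first batch of assumptions yields $v[Y]\subseteq D$, whence $v[Y\cup Z]\subseteq D$, whence $v[\alpha]\in D$.

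For structurality, the essential point is Proposition~\ref{P:substitution-as-homomorphism} together with Proposition~\ref{P:valuation}: given a substitution $\sigma$ and a valuation $v$ in $\mathbf{A}$, the composition $v\circ\sigma: \mathfrak{F}_\mathcal{L} \longrightarrow \mathbf{A}$ is again a homomorphism, hence a valuation. If $X\models_\mathbf{M}\alpha$ and $v[\sigma(X)]\subseteq D$, then $(v\circ\sigma)[X] = v[\sigma(X)] \subseteq D$, so $(v\circ\sigma)[\alpha] = v[\sigma(\alpha)]\in D$; this is exactly $\sigma(X)\models_\mathbf{M}\sigma(\alpha)$.

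There is no real obstacle here; the only subtle point worth being careful about is that a ``valuation'' is formally defined on $\Var$ but extended homomorphically to all of $\mathfrak{F}_\mathcal{L}$, so the identification $(v\circ\sigma)[\alpha]=v[\sigma(\alpha)]$ must be justified via Proposition~\ref{P:valuation}. Once that identification is in hand, the proof is a routine unpacking of definitions, and the passage from a single matrix to an arbitrary class $\mathcal{M}$ is handled uniformly by the intersection argument.
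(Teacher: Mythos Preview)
Your proposal is correct and matches the paper's approach: the paper simply states that one must verify conditions (a)--(c) of Definition~\ref{D:consequnce-relation-single} together with structurality for $\models_{\mathcal{M}}$ and leaves this routine check to the reader as an exercise, so you have in fact supplied more detail than the paper does. Your reduction to the single-matrix case via Proposition~\ref{P:con-relation-intersection} is a perfectly clean way to organize the verification.
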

\begin{proof}
	Let $\mathcal{M}=\lbrace\textbf{M}_i\rbrace_{i\in I}$ be a nonempty set of matrices. We have to check that the conditions (a)--(c) of Definition~\ref{D:consequnce-relation-single} are satisfied for the relation $\models_{\mathcal{M}}$. In addition, we have to show that this consequence relation is structural. We leave this routine check to the reader. (Exercise~\ref{section:consequence-defining}.\ref{EX:matrix-con-is-con-relation})
\end{proof}

Let $\mathcal{S}$ be an abstract logic and $\mat{M}=\langle\alg{A},D\rangle$ be a matrix. Accordingly, we have the consequence relation $\vdash_{\mathcal{S}}$ and a single-matrix consequence $\models_{\textbf{M}}$. If $\vdash_{\mathcal{S}}\subseteq\models_{\textbf{M}}$, we call the filter $D$ an $\mathcal{S}$-\textit{\textbf{filter}}, in which case the matrix \mat{M} is called an $\mathcal{S}$-\textit{\textbf{matrix}} (or an $\mathcal{S}$-\textit{\textbf{model}}).

\begin{defn}[Lindenbaum matrix]\label{D:lindebaum-matrix}
	Let $\mathcal{S}$ be a structural abstract logic and $D_{\mathcal{S}}$ be an $\mathcal{S}$-theory. The matrix $\langle\FormAl,D_{\mathcal{S}}\rangle$ is called a Lindenbaum matrix $($relative to $\mathcal{S}$$)$. For an arbitrary $X\subseteq\Forms$, we denote:
	{\em\[
		\Lin_{\,\mathcal{S}}[X]:=\langle\FormAl,\ConS{X}\rangle
		\]}
	and
	{\em\[
		\Lin_{\,\mathcal{S}}:=\langle\FormAl,\ConS{\emptyset}\rangle
		\]}
\end{defn}

The reader is advised to remember that (Proposition~\ref{P:substitution-as-homomorphism})
\begin{center}
	\textit{any $\Lan$-substitution is a valuation in $\mathfrak{F}_{\mathcal{L}}$ and vice versa.}	
\end{center}

\begin{prop}[Lindenbaum's theorem]\label{P:lindenbaum-theorem}
	Let $\mathcal{S}$ be a structural abstract logic. Then {\em$\Lin_{\,\mathcal{S}}$} is an $\mathcal{S}$-model. Also, if a set $X\subseteq\Forms$ is closed under arbitrary $\Lan$-substitution, then for any $\Lan$-formula $\alpha$,
	{\em\[
		X\vdash_{\mathcal{S}}\alpha~\Longleftrightarrow~\models_{\textsf{\textbf{Lin}}_{\,\mathcal{S}}[X]}\alpha;
		\]}
	in particular,
	{\em\[
		\alpha\in\bm{T}_{\mathcal{S}}~\Longleftrightarrow~\models_{\textsf{\textbf{Lin}}_{\,\mathcal{S}}}\alpha.
		\]}
\end{prop}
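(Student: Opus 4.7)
The plan is to verify both assertions by exploiting the identification (Proposition~\ref{P:substitution-as-homomorphism}) of valuations in $\FormAl$ with $\Lan$-substitutions: if $\sigma$ is such a valuation, then for every $\Lan$-formula $\alpha$ the value $\sigma[\alpha]$ is literally the formula $\sigma(\alpha)$. Once this identification is in place, each half of the theorem reduces to a short application of structurality of $\mathcal{S}$ plus elementary properties of $\textbf{Cn}_{\mathcal{S}}$.

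First I would check that $\Lin_{\,\mathcal{S}}$ is an $\mathcal{S}$-model, i.e.\ that $\vdash_{\mathcal{S}}\,\subseteq\,\models_{\Lin_{\,\mathcal{S}}}$. So suppose $Y\vdash_{\mathcal{S}}\beta$, fix a valuation $\sigma$ in $\FormAl$, and assume $\sigma(Y)\subseteq\bm{T}_{\mathcal{S}}$. Structurality of $\mathcal{S}$ gives $\sigma(Y)\vdash_{\mathcal{S}}\sigma(\beta)$, i.e.\ $\sigma(\beta)\in\ConS{\sigma(Y)}$; monotonicity and closedness of $\textbf{Cn}_{\mathcal{S}}$ then yield $\sigma(\beta)\in\ConS{\bm{T}_{\mathcal{S}}}=\ConS{\ConS{\emptyset}}=\bm{T}_{\mathcal{S}}$. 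Hence $\bm{T}_{\mathcal{S}}$ is an $\mathcal{S}$-filter on $\FormAl$, so $\Lin_{\,\mathcal{S}}$ is an $\mathcal{S}$-model.

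For the biconditional, let $X\subseteq\Forms$ be closed under substitution. For the direction $(\Rightarrow)$, assume $X\vdash_{\mathcal{S}}\alpha$ and let $\sigma$ be any substitution. Structurality gives $\sigma(X)\vdash_{\mathcal{S}}\sigma(\alpha)$; since $X$ is substitution-closed, $\sigma(X)\subseteq X$, so monotonicity delivers $\sigma(\alpha)\in\ConS{X}$. Because $\sigma$ was arbitrary and valuations in $\FormAl$ are exactly substitutions, this is precisely $\models_{\Lin_{\,\mathcal{S}}[X]}\alpha$. For $(\Leftarrow)$, apply the validity hypothesis to the identity substitution $\iota$ (which is a valuation by Proposition~\ref{P:substitution-as-homomorphism}) to obtain $\alpha=\iota(\alpha)\in\ConS{X}$, i.e.\ $X\vdash_{\mathcal{S}}\alpha$. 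The ``in particular'' clause is the case $X=\emptyset$, which is vacuously substitution-closed.

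No step presents a genuine obstacle; the argument is essentially a direct unwinding of definitions. The two places where the standing hypotheses of the theorem get used are easily pinpointed: structurality, to transport the sequent along the substitution $\sigma$, and substitution-closedness of $X$, to push $\sigma(X)$ back inside $X$ before invoking monotonicity. The only conceptual ingredient is the identification of substitutions with valuations in the formula algebra, without which the very formulation of validity for $\Lin_{\,\mathcal{S}}[X]$ would not connect back to the derivability relation $\vdash_{\mathcal{S}}$ in the first place.
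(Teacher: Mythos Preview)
Your proof is correct and follows essentially the same approach as the paper's: both parts rest on the identification of valuations in $\FormAl$ with substitutions, structurality to transport the sequent, and the identity substitution $\iota$ for the converse direction. The only difference is cosmetic---you spell out the appeal to monotonicity and closedness of $\textbf{Cn}_{\mathcal{S}}$ explicitly, whereas the paper leaves these implicit.
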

\begin{proof}
	Let us take any set $X\cup\lbrace\alpha\rbrace\subseteq\Forms$ such that $X\vdash_{\mathcal{S}}\alpha$. Then, for an arbitrary $\Lan$-substitution $\sigma$, $\sigma(X)\vdash_{\mathcal{S}}\sigma(\alpha)$. Therefore, if $\sigma(X)\subseteq\ConS{\emptyset}$, then $\sigma(\alpha)\in\ConS{\emptyset}$. Thus
	$\Lin_{\,\mathcal{S}}$ is an $\mathcal{S}$-model. 
	
	Now, if, in addition, $\sigma(X)\subseteq X$, then $\sigma(\alpha)\in\ConS{X}$. Since $\sigma$ is an arbitrary $\Lan$-substitution, we have that
	$\models_{\textsf{\textbf{Lin}}_{\,\mathcal{S}}[X]}\alpha$.
	
	Conversely, if $\models_{\textsf{\textbf{Lin}}_{\,\mathcal{S}}[X]}\alpha$, then, in particular, $\iota(\alpha)\in\ConS{X}$, that is $X\vdash_{\mathcal{S}}\alpha$.
	
	The second equivalence is a specification of the first.
\end{proof}

We say that a matrix is \textit{\textbf{weakly adequate}} for an abstract logic $\mathcal{S}$ if this matrix validates all $\mathcal{S}$-theorems and only them. Thus, by virtue of the last equivalence of Proposition~\ref{P:lindenbaum-theorem},
we obtain the following.
\begin{cor}\label{C:lindenbaum-matrix}
	Let $\mathcal{S}$ be structural abstract logic. Then any Lindenbaum matrix relative to $\mathcal{S}$
	is an $\mathcal{S}$-model.  Moreover, $\Lin_{\,\mathcal{S}}$ is weakly adequate for $\mathcal{S}$.
\end{cor}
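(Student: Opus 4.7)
The statement has two parts, and both follow by minor extensions of the reasoning already used to prove Proposition~\ref{P:lindenbaum-theorem}. My plan is to dispatch each part separately, exploiting the key fact (Proposition~\ref{P:substitution-as-homomorphism}) that valuations in $\FormAl$ and $\Lan$-substitutions are the same objects.

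For the first claim, fix any $\mathcal{S}$-theory $D_{\mathcal{S}}$ and set $\mat{M}=\langle\FormAl,D_{\mathcal{S}}\rangle$. I must show $\vdash_{\mathcal{S}}\,\subseteq\,\models_{\mat{M}}$. Suppose $X\vdash_{\mathcal{S}}\alpha$ and let $v$ be a valuation in $\FormAl$ with $v[X]\subseteq D_{\mathcal{S}}$. Viewing $v$ as a substitution and applying structurality of $\mathcal{S}$, I obtain $v(X)\vdash_{\mathcal{S}} v(\alpha)$, i.e.\ $v(\alpha)\in\ConS{v(X)}$. Since $v(X)\subseteq D_{\mathcal{S}}$, monotonicity of $\ConS{\cdot}$ gives $\ConS{v(X)}\subseteq\ConS{D_{\mathcal{S}}}=D_{\mathcal{S}}$, where the final equality holds because $D_{\mathcal{S}}$ is an $\mathcal{S}$-theory. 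Hence $v[\alpha]=v(\alpha)\in D_{\mathcal{S}}$, so $X\models_{\mat{M}}\alpha$, as required.

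For the second claim, I simply appeal to the second equivalence in Proposition~\ref{P:lindenbaum-theorem}, which asserts
\[
\alpha\in\bm{T}_{\mathcal{S}}\Longleftrightarrow\,\models_{\Lin_{\,\mathcal{S}}}\alpha.
\]
Reading this from right to left says that every formula validated by $\Lin_{\,\mathcal{S}}$ is an $\mathcal{S}$-theorem; reading it from left to right says that every $\mathcal{S}$-theorem is validated by $\Lin_{\,\mathcal{S}}$. Together these say exactly that $\Lin_{\,\mathcal{S}}$ validates all and only the $\mathcal{S}$-theorems, which is the definition of weak adequacy.

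I do not expect a genuine obstacle here. The only place where one has to be slightly careful is noticing that the first part needs $D_{\mathcal{S}}$ to be an $\mathcal{S}$-theory (not merely a set closed under substitution, as in the general hypothesis of Proposition~\ref{P:lindenbaum-theorem}), so that $\ConS{v(X)}\subseteq D_{\mathcal{S}}$; this is the only ingredient used beyond structurality and the identification of valuations with substitutions.
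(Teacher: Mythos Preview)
Your proof is correct and follows essentially the same route the paper intends: the first part spells out the argument from the proof of Proposition~\ref{P:lindenbaum-theorem} (replacing $\ConS{\emptyset}$ by an arbitrary $\mathcal{S}$-theory $D_{\mathcal{S}}$ and using its closedness), and the second part is exactly the appeal to the last equivalence of that proposition, as the paper itself indicates just before stating the corollary.
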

\noindent{\em Proof}~is left to the reader. (Exercise~\ref{section:consequence-defining}.\ref{EX:lindenbaum-matrix})\\

Proposition~\ref{P:lindenbaum-theorem} can be generalized as follows.
\begin{prop}
	Given a structural abstract logic $\mathcal{S}$, let $\mathcal{M}_{\mathcal{S}}$ be the set of all Lindenbaum matrices relative to $\mathcal{S}$. Then $\vdash_{\mathcal{S}}\,=\,\models_{\mathcal{M}_{\mathcal{S}}}$; that is for any set $X$ of formulas and any formula $\alpha$,
	\[
	X\vdash_{\mathcal{S}}\alpha\Longleftrightarrow X\models_{\mathcal{M}_{\mathcal{S}}}
	\alpha.
	\]
\end{prop}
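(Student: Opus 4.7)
The plan is to prove both inclusions directly from the definitions, using structurality for the forward direction and a single cleverly chosen Lindenbaum matrix for the reverse direction.

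For the forward inclusion $\vdash_{\mathcal{S}}\,\subseteq\,\models_{\mathcal{M}_{\mathcal{S}}}$, I would pick an arbitrary Lindenbaum matrix $\mat{M}=\langle\FormAl,D_{\mathcal{S}}\rangle\in\mathcal{M}_{\mathcal{S}}$, where $D_{\mathcal{S}}$ is some $\mathcal{S}$-theory. Suppose $X\vdash_{\mathcal{S}}\alpha$ and let $v$ be a valuation in $\FormAl$ with $v[X]\subseteq D_{\mathcal{S}}$. By Proposition~\ref{P:substitution-as-homomorphism}, $v$ is a substitution $\sigma$, so structurality gives $\sigma(X)\vdash_{\mathcal{S}}\sigma(\alpha)$, i.e.\ $\sigma(\alpha)\in\ConS{\sigma(X)}$. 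Since $\sigma(X)\subseteq D_{\mathcal{S}}$ and $D_{\mathcal{S}}$ is an $\mathcal{S}$-theory, monotonicity $(\text{b}^\dag)$ and closedness $(\text{c}^\dag)$ give $\ConS{\sigma(X)}\subseteq\ConS{D_{\mathcal{S}}}=D_{\mathcal{S}}$. Hence $v[\alpha]=\sigma(\alpha)\in D_{\mathcal{S}}$, so $X\models_{\textsf{\textbf{M}}}\alpha$, and since $\mat{M}$ was arbitrary, $X\models_{\mathcal{M}_{\mathcal{S}}}\alpha$.

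For the reverse inclusion $\models_{\mathcal{M}_{\mathcal{S}}}\,\subseteq\,\vdash_{\mathcal{S}}$, the trick is to choose the Lindenbaum matrix whose filter is exactly $\ConS{X}$. Assume $X\models_{\mathcal{M}_{\mathcal{S}}}\alpha$. The set $D_{\mathcal{S}}:=\ConS{X}$ is an $\mathcal{S}$-theory by $(\text{c}^\dag)$, so $\mat{M}_{X}:=\langle\FormAl,\ConS{X}\rangle=\Lin_{\,\mathcal{S}}[X]$ belongs to $\mathcal{M}_{\mathcal{S}}$. Now take the identity substitution $\iota$ as a valuation. By $(\text{a}^\dag)$, $\iota[X]=X\subseteq\ConS{X}$, so the premise yields $\iota[\alpha]=\alpha\in\ConS{X}$, which is exactly $X\vdash_{\mathcal{S}}\alpha$.

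The core of the argument is already packaged in Proposition~\ref{P:lindenbaum-theorem}: the forward direction is a generalization of its first part (replacing $\ConS{\emptyset}$ by an arbitrary $\mathcal{S}$-theory), and the reverse direction exploits the identity substitution exactly as in its proof. I do not anticipate any real obstacle; the only point requiring any care is recognizing that $\ConS{X}$ itself provides the separating Lindenbaum matrix for the reverse inclusion, so that the claim follows from evaluating at $\iota$ without needing to range over all theories.
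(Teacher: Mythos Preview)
Your proof is correct and follows essentially the same approach as the paper: structurality plus closedness of the theory for the forward direction, and the Lindenbaum matrix $\Lin_{\,\mathcal{S}}[X]=\langle\FormAl,\ConS{X}\rangle$ together with the identity substitution for the reverse direction. The only cosmetic difference is that the paper argues the reverse inclusion by contrapositive while you argue it directly.
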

\begin{proof}
	First we suppose that $X\vdash_{\mathcal{S}}\alpha$. Let us take any $\langle\FormAl,D_{\mathcal{S}}\rangle$. Assume that for some substitution $\sigma$, $\sigma(X)\subseteq D_{\mathcal{S}}$.
	Since $\mathcal{S}$ is structural, $\sigma(X)\vdash_{\mathcal{S}}\sigma(\alpha)$.
	This implies that $\sigma(\alpha)\in\textbf{Cn}_{\mathcal{S}}(\sigma(X))$ and hence
	$\sigma(\alpha)\in D_{\mathcal{S}}$. 
	
	Conversely, assume that $X\not\vdash_{\mathcal{S}}\alpha$. This means (see~\eqref{E:Cn-S-connection}) that $\alpha\notin\textbf{Cn}_{\mathcal{S}}(X)$.
	Let us denote $\textbf{M}:=\langle\FormAl,\textbf{Cn}_{\mathcal{S}}(X)\rangle$ and take the identity substitution $\iota$. It is obvious that $\iota(X)\subseteq
	\textbf{Cn}_{\mathcal{S}}(X)$ but $\iota(\alpha)\notin\textbf{Cn}_{\mathcal{S}}(X)$; that is $X\not\models_{\textbf{M}}\alpha$.
\end{proof}

The last proposition inspires us for the following (complex) definition.
\begin{defn}
	A $($nonempty$)$ family $\mathcal{B}=\lbrace\langle\alg{A}_{i},D_i\rangle\rbrace_{i\in I}$ of matrices of type $\Lan$ is called a \textbf{bundle} if for any $i,j\in I$, the algebras $\alg{A}_i$ and $\alg{A}_j$ are isomorphic. A pair $\langle\alg{A},\lbrace D_i\rbrace_{i\in I}\rangle$, where each $D_i$ is a logical filter in $($or of$)$ $\alg{A}$, is called an \textbf{atlas}. By definition, the \textbf{matrix consequence relative to an atlas}
	$\langle\alg{A},\lbrace D_i\rbrace_{i\in I}\rangle$ is the same as the matrix consequence related the bundle $\lbrace\langle\alg{A},D_i\rangle\rbrace_{i\in I}$, which in turn is defined in the sense of~{\em \eqref{E:matrix-consequence}}.
	Given an abstract logic $\mathcal{S}$, the bundle  $\lbrace\langle\FormAl,D\rangle\rbrace_{D\in\Sigma_{\mathcal{S}}}$, where $\Sigma_{\mathcal{S}}$ is the set of all $\mathcal{S}$-theories, is called the \textbf{Lindenbaum bundle} relative to $\mathcal{S}$ and the atlas $\mat{Lin}[\Sigma_{\mathcal{S}}]:=\langle\FormAl,\Sigma_{\mathcal{S}}\rangle$, is called the \textbf{Lindenbaum atlas} relative to $\mathcal{S}$.
\end{defn}

We say that a given matrix (bundle or atlas) is \textit{\textbf{adequate}} for an abstract logic $\mathcal{S}$ if $\vdash_{\mathcal{S}}$ and the corresponding matrix relation coincide. 

We denote:
\[
\begin{array}{rcl}
X\models_{\textsf{\textbf{Lin}}[\Sigma_{\mathcal{S}}]}\alpha &\stackrel{\text{df}}{\Longleftrightarrow}
&\textit{for any $\Lan$-substitution $\sigma$ and any $D_{\mathcal{S}}\in\Sigma_{\mathcal{S}}$},\\
&&\textit{$\sigma(X)\subseteq D_{\mathcal{S}}$ implies $\sigma(\alpha)\in D_{\mathcal{S}}$},
\end{array}
\]
for any $X\cup\lbrace\alpha\rbrace\subseteq\Forms$.

Thus we conclude with the following.
\begin{cor}\label{C:Lindenbaum-completeness}
	The Lindenbaum atlas relative to a structural abstract logic is adequate for this logic.
	That is, for any $X\cup\lbrace\alpha\rbrace\subseteq\Forms$,
	\[
	X\vdash_{\mathcal{S}}\alpha~\Longleftrightarrow~X\models_{\textsf{\textbf{Lin}}[\Sigma_{\mathcal{S}}]}\alpha.
	\]
\end{cor}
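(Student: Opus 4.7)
The statement is a biconditional, so I would prove each direction separately, and both should follow the pattern of the unnamed proposition immediately preceding the corollary (about the set $\mathcal{M}_{\mathcal{S}}$ of all Lindenbaum matrices). The key observation is that $\Sigma_{\mathcal{S}}$ is exactly the family of logical filters that appear in the Lindenbaum bundle, so unpacking $\models_{\textsf{\textbf{Lin}}[\Sigma_{\mathcal{S}}]}$ amounts to quantifying over all $\Lan$-substitutions $\sigma$ and all $\mathcal{S}$-theories $D_{\mathcal{S}}$.

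For the forward direction, I would assume $X\vdash_{\mathcal{S}}\alpha$ and fix an arbitrary substitution $\sigma$ together with an arbitrary $D_{\mathcal{S}}\in\Sigma_{\mathcal{S}}$ satisfying $\sigma(X)\subseteq D_{\mathcal{S}}$. By structurality of $\mathcal{S}$ (applied to the premise), $\sigma(X)\vdash_{\mathcal{S}}\sigma(\alpha)$, so $\sigma(\alpha)\in\ConS{\sigma(X)}$. Monotonicity of $\ConS{\,\cdot\,}$ together with $\sigma(X)\subseteq D_{\mathcal{S}}$ yields $\ConS{\sigma(X)}\subseteq\ConS{D_{\mathcal{S}}}$, and closedness of the theory $D_{\mathcal{S}}$ gives $\ConS{D_{\mathcal{S}}}=D_{\mathcal{S}}$, so $\sigma(\alpha)\in D_{\mathcal{S}}$ as required.

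For the reverse direction, I would argue by contraposition. Assume $X\not\vdash_{\mathcal{S}}\alpha$, i.e.\ $\alpha\notin\ConS{X}$. The candidate theory that witnesses failure of the semantic relation is $D_{\mathcal{S}}:=\ConS{X}$, which belongs to $\Sigma_{\mathcal{S}}$ by the closedness axiom $(\text{c}^{\dag})$. Take the identity substitution $\iota$; then reflexivity gives $\iota(X)=X\subseteq\ConS{X}=D_{\mathcal{S}}$, while by choice of $D_{\mathcal{S}}$ we have $\iota(\alpha)=\alpha\notin D_{\mathcal{S}}$. Hence $X\not\models_{\textsf{\textbf{Lin}}[\Sigma_{\mathcal{S}}]}\alpha$.

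There is no real obstacle here, since this is essentially a corollary of the preceding proposition and Proposition~\ref{P:substitution-as-homomorphism}; the only point where one must be a little careful is to note that the definition of $\models_{\textsf{\textbf{Lin}}[\Sigma_{\mathcal{S}}]}$ quantifies over \emph{all} substitutions and \emph{all} $\mathcal{S}$-theories simultaneously, so the forward direction really does need structurality (one cannot get away with just plugging in $\iota$), whereas the converse is proved precisely by plugging in $\iota$ against the single theory $\ConS{X}$.
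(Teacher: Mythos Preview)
Your proof is correct and is precisely the intended argument: it is the same two-direction reasoning as in the unnamed proposition just before the corollary, specialized to the atlas notation $\models_{\textsf{\textbf{Lin}}[\Sigma_{\mathcal{S}}]}$. The paper leaves this proof to the reader, and what you have written is exactly the expected unpacking --- structurality plus closedness of theories for the forward direction, and the identity substitution against $\ConS{X}$ for the converse.
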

\noindent\textit{Proof}~is left to the reader. (Exercise~\ref{section:consequence-defining}.\ref{EX:Lindenbaum-completeness})

\begin{cor}\label{C:S-matrix-completeness}
	Let $\mathcal{S}$ be a structural abstract logic and $\mathcal{M}$ be the set of all $\mathcal{S}$-models. Then $\mathcal{S}=\mathcal{S}_{\mathcal{M}}$.
\end{cor}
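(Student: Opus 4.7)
The plan is to prove the two inclusions $\vdash_{\mathcal{S}}\,\subseteq\,\models_{\mathcal{M}}$ and $\models_{\mathcal{M}}\,\subseteq\,\vdash_{\mathcal{S}}$ separately, leaning heavily on the preceding results on Lindenbaum matrices. Essentially, the claim is an ``absorption'' statement: the logic reconstructed from all matrices that validate it is the logic itself.

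For the left-to-right direction I would argue directly from the definition of an $\mathcal{S}$-model. If $X\vdash_{\mathcal{S}}\alpha$ then for every $\mat{M}\in\mathcal{M}$ we have $\vdash_{\mathcal{S}}\,\subseteq\,\models_{\mat{M}}$ by the very definition preceding Definition~\ref{D:lindebaum-matrix}, so $X\models_{\mat{M}}\alpha$; quantifying over $\mat{M}\in\mathcal{M}$ and invoking \eqref{E:matrix-consequence} yields $X\models_{\mathcal{M}}\alpha$. This direction is purely definitional.

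For the converse, the strategy is to exhibit a subclass $\mathcal{M}'\subseteq\mathcal{M}$ that already reconstructs $\vdash_{\mathcal{S}}$, whence $\models_{\mathcal{M}}\,\subseteq\,\models_{\mathcal{M}'}$ automatically. The natural choice is the class $\mathcal{M}_{\mathcal{S}}$ of all Lindenbaum matrices relative to $\mathcal{S}$, i.e.\ matrices of the form $\langle\FormAl,D_{\mathcal{S}}\rangle$ with $D_{\mathcal{S}}\in\Sigma_{\mathcal{S}}$. By Corollary~\ref{C:lindenbaum-matrix} (and the general observation underlying it, namely that structurality forces $\vdash_{\mathcal{S}}\,\subseteq\,\models_{\langle\FormAl,D_{\mathcal{S}}\rangle}$ for every $\mathcal{S}$-theory $D_{\mathcal{S}}$) each such Lindenbaum matrix is an $\mathcal{S}$-model, so $\mathcal{M}_{\mathcal{S}}\subseteq\mathcal{M}$. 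By the preceding (unnumbered) proposition stating $\vdash_{\mathcal{S}}\,=\,\models_{\mathcal{M}_{\mathcal{S}}}$, we then get
\[
\models_{\mathcal{M}}\,\subseteq\,\models_{\mathcal{M}_{\mathcal{S}}}\,=\,\vdash_{\mathcal{S}},
\]
closing the argument.

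There is no real obstacle: both directions are immediate corollaries of previously established results, with the only mild subtlety being the contravariance of $\models_{(-)}$ in the class of matrices (a larger class of matrices yields a smaller consequence relation), which is what makes the inclusion $\mathcal{M}_{\mathcal{S}}\subseteq\mathcal{M}$ useful in the direction needed. One should also remark that $\mathcal{M}\neq\emptyset$, so that $\models_{\mathcal{M}}$ is well-defined in the sense of \eqref{E:matrix-consequence}; this is guaranteed because $\mathcal{M}_{\mathcal{S}}$ is already nonempty (e.g.\ $\Lin_{\,\mathcal{S}}\in\mathcal{M}_{\mathcal{S}}\subseteq\mathcal{M}$).
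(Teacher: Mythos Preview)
Your proof is correct and is precisely the intended argument: the paper leaves this corollary as an exercise, but its placement immediately after the unnumbered proposition establishing $\vdash_{\mathcal{S}}=\models_{\mathcal{M}_{\mathcal{S}}}$ and after Corollary~\ref{C:lindenbaum-matrix} makes clear that one is meant to combine these two facts exactly as you do. Your remark on the contravariance of $\models_{(-)}$ and on nonemptiness of $\mathcal{M}$ is a nice touch of care.
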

\noindent\textit{Proof}~is left to the reader. (Exercise~\ref{section:consequence-defining}.\ref{EX:S-matrix-completeness})

\subsection{Consequence relation via inference rules}\label{section:inference-rules}
Since we perceive the sentential formulas as forms of judgments (Section 2.1), our understanding of a rule of inference, at least in one  important case, will be a relation in the class of forms of formulas. As in the first event, where substitution plays a key role in seeing any sentential formula as an abstract representation of infinitely many judgments, it will play a similar role in the concept of inference rule, at least in this important class of inference rules. It is a way, by which we create an abstraction of infinitely many instances of one element that falls under the name of \textit{structural inference rule}. 

An \textit{\textbf{inference rule}} is a set $R\subseteq\mathcal{P}(\Forms)\times\Forms$. An inference rule receives its significance, when it is a subset of some consequence relation, say $\vdash$. We say that a rule $R$ is \textit{\textbf{sound w.r.t.}} $\vdash$ if for any $\langle\Gamma,\alpha\rangle\in R$, $\Gamma\vdash\alpha$.

Let $\mathcal{R}$ be a set of inference rules. Taking into account that $\mathcal{P}(\Forms)\times\Forms$ is the trivial consequence relation (relative to a language $\Lan$), we define
\begin{equation}\label{E:consequence-by-rules}
	\vdash_{[\mathcal{R}]}:=\bigcap\set{\vdash_{\mathcal{S}}}
	{\forall R\in\mathcal{R}.~\text{$R$ is sound w.r.t. $\mathcal{S}$}}.
\end{equation}
(We note that the right-hand set is nonempty.)

It is not difficult to see that $\vdash_{[\mathcal{R}]}$ is a consequence relation. (Exercise~\ref{section:consequence-defining}.\ref{EX:S_R-consequence}) We say that an abstract logic $\mathcal{S}$ is \textit{\textbf{determined by}} a set $\mathcal{R}$ of inference rules if $\vdash_{\mathcal{S}}\,=\,\vdash_{[\mathcal{R}]}$. Given an abstract logic $\mathcal{S}$, one can treat the consequence relation $\vdash_{\mathcal{S}}$ as an inference rule that determines $\mathcal{S}$.\\

Now we turn to structural rules. 

Let us consider a pair $\langle\Gamma,\phi\rangle\in\mathcal{P}(\Mform)\times\Mform$. We call the set
\[
R_{\langle\Gamma,\phi\rangle}:=\set{\langle\bm{\sigma}(\Gamma),\bm{\sigma}(\phi)\rangle}{\bm{\sigma}~\text{is an instantiation of}~\Mvar}
\]
a \textit{\textbf{structural rule}} (in $\Lan$) with the premises $\Gamma$ and conclusion $\phi$. Each pair $\langle\bm{\sigma}(\Gamma),\bm{\sigma}(\phi)\rangle$ is an \textit{\textbf{instance of the rule}} $R_{\langle\Gamma,\phi\rangle}$ (with the premises $\bm{\sigma}(\Gamma)$ and conclusion $\bm{\sigma}(\phi)$). It is convenient (and we follow this practice) to identify the rule $R_{\langle\Gamma,\phi\rangle}$ with its label $\langle\Gamma,\phi\rangle$, using them interchangeably. It is customary to write a structural rule $\langle\Gamma,\phi\rangle$ in the forms $\Gamma\slash\phi$ or
$\frac{\Gamma}{\phi}$. (It is customary to omit the set-builder notation ``$\{\ldots\}$'', when a set of premises is finite, and leave the space blank if $\Gamma=\emptyset$.)

Given an abstract logic $\mathcal{S}$, specifically, a structural rule $\langle\Gamma,\phi\rangle$ is \textit{\textbf{sound w.r.t.}} $\mathcal{S}$ if for any instantiation $\bm{\sigma}$, $\bm{\sigma}(\Gamma)\vdash_{\mathcal{S}}\bm{\sigma}(\phi)$.

A structural rule is called \textit{\textbf{finitary}} (or a \textit{\textbf{modus}} or \textit{\textbf{modus rule}}) if $\Gamma\Subset\Mform$. If an abstract logic $\mathcal{S}$ is determined by a finite set of modus rules, one says that set of modus rules is a \textit{\textbf{calculus}} for $\mathcal{S}$. An abstract logic may be defined by more than one calculus, if any. An example of a modus rule is \textit{modus ponens} which is the rule
$\bm{\alpha},\bm{\alpha}\rightarrow\bm{\beta}\slash\bm{\beta}$.

Not all useful inference rules are structural.

Let $\sigma$ be a fixed $\Lan$-substitution. Then we define
\[
R_{\sigma}:=\set{\langle\lbrace\alpha\rbrace,\sigma(\alpha)\rangle}{\alpha\in\Forms}.
\]
Then the \textit{\textbf{rule of substitution}} is the set
\[
R_{\text{sub}}:=\bigcup\set{R_{\sigma}}{\sigma~\text{is an $\Lan$-substitution}}.
\]
Each $\langle\alpha,\sigma(\alpha)\rangle$ is called an \textit{\textbf{application of the substitution}} $\sigma$ \textit{\textbf{to}} $\alpha$. A customary utterance is: $\sigma(\alpha)$ \textit{is obtained from $\alpha$ by substitution} $\sigma$. 

Although substitution is an important component of any structural rule (see Proposition~\ref{P:metaformula-instantiations}), the rule of substitution itself is not structural, providing that $\Lan$ contains sentential connectives.

Indeed, for contradiction, we assume that 
\[
R_{\text{sub}}=R_{\langle\Gamma,\phi\rangle},
\]
for some $\Gamma\cup\lbrace\phi\rbrace\subseteq\Mform$.
Let us fix an arbitrary $p\in\Var_{\Lan}$. By definition, it is clear that for any $\alpha\in\Forms$, $\langle\lbrace p\rbrace,\alpha\rangle\in R_{\text{sub}}$.
This implies that for any $\alpha\in\Forms$, there is an instantiation $\bm{\xi}_{\alpha}$ such that $\bm{\xi}_{\alpha}(\Gamma)=\lbrace p\rbrace$ and $\bm{\xi}_{\alpha}(\phi)=\alpha$. Thus, we conclude that $\Gamma\cup\lbrace\phi\rbrace\subseteq\Mvar$ and $\Gamma\cap\lbrace\phi\rbrace=\emptyset$; moreover, it must be clear that $\Gamma$ cannot have more than one metavariables, because if it were otherwise, the claim 
$\langle\lbrace p\rbrace,\alpha\rangle\in R_{\langle\Gamma,\phi\rangle}$ would not be true. Thus, $\langle\Gamma,\phi\rangle=\langle\lbrace\gamma\rbrace,\beta\rangle$, for two distinct metavariables $\gamma$ and $\beta$. Then, providing that $\Lan$ contains sentential connectives, there is an instantiation $\bm{\xi}$ such that the degree of $\bm{\xi}(\gamma)$ is greater than the degree of $\bm{\xi}(\beta)$. A contradiction.\\

We will not discuss here non-structural rules, except substitution.\footnote{However, we note that the hyperrules introduced below, strictly speaking, are  also not structural. }

\begin{prop}\label{P:structural-rules-for-structural-logic}
	Let $\mathcal{S}$ be a structural abstract logic in $\Lan$. Then there is a set of structural rules that determines $\mathcal{S}$.
\end{prop}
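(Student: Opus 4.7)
The plan is to manufacture one structural rule for every $\mathcal{S}$-valid sequent $\langle X,\alpha\rangle$, using the fact that $\card{\Mvar}=\card{\Forms_{\mathcal{L}}}\ge\card{\Var}$ to convert formulas into metaformulas.

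First I would fix, once and for all, an injection $\Var\hookrightarrow\Mvar$, written $p\mapsto\bm{p}$; this is possible by the cardinality stipulation in the definition of $\Mform$. Using it, I would define, by induction on the degree of a formula, a map $\alpha\mapsto\widetilde{\alpha}$ from $\Forms_{\mathcal{L}}$ to $\Mform$ by the clauses $\widetilde{p}=\bm{p}$ for $p\in\Var$, $\widetilde{c}=c$ for $c\in\Cons$, and $\widetilde{F\alpha_{1}\ldots\alpha_{n}}=F\widetilde{\alpha}_{1}\ldots\widetilde{\alpha}_{n}$. For a set $X\subseteq\Forms_{\mathcal{L}}$, put $\widetilde{X}:=\{\widetilde{\beta}:\beta\in X\}$. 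The key linking lemma (an easy induction on the degree of $\beta$) is that if $\bm{\sigma}:\Mvar\longrightarrow\Forms_{\mathcal{L}}$ is any instantiation and $\sigma:\Var\longrightarrow\Forms_{\mathcal{L}}$ is the substitution defined by $\sigma(p):=\bm{\sigma}(\bm{p})$, then $\bm{\sigma}(\widetilde{\beta})=\sigma(\beta)$ for every formula $\beta$.

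Next I would set
\[
\mathcal{R}:=\set{\langle\widetilde{X},\widetilde{\alpha}\rangle}{X\vdash_{\mathcal{S}}\alpha}
\]
and check that each rule in $\mathcal{R}$ is sound with respect to $\mathcal{S}$. Take $\langle\widetilde{X},\widetilde{\alpha}\rangle\in\mathcal{R}$ and an arbitrary instantiation $\bm{\sigma}$; by the linking lemma, its instance is $\langle\sigma(X),\sigma(\alpha)\rangle$ for the induced substitution $\sigma$, and structurality of $\mathcal{S}$ yields $\sigma(X)\vdash_{\mathcal{S}}\sigma(\alpha)$. Hence every consequence relation in which all rules of $\mathcal{R}$ are sound contains $\vdash_{\mathcal{S}}$, so by \eqref{E:consequence-by-rules}, $\vdash_{[\mathcal{R}]}\,\subseteq\,\vdash_{\mathcal{S}}$.

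Finally, for the reverse inclusion, I would exhibit the original sequent as a bare instance of its own rule. If $X\vdash_{\mathcal{S}}\alpha$, define the instantiation $\bm{\iota}_{0}:\Mvar\longrightarrow\Forms_{\mathcal{L}}$ by $\bm{\iota}_{0}(\bm{p})=p$ for every $p\in\Var$ (and arbitrarily on metavariables outside the image of $p\mapsto\bm{p}$). The linking lemma with $\sigma$ equal to the identity substitution $\iota$ gives $\bm{\iota}_{0}(\widetilde{\beta})=\beta$ for each $\beta$, so $\langle X,\alpha\rangle$ is an instance of $\langle\widetilde{X},\widetilde{\alpha}\rangle\in\mathcal{R}$. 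Since $\vdash_{[\mathcal{R}]}$ is itself a consequence relation with respect to which every rule of $\mathcal{R}$ is sound, $X\vdash_{[\mathcal{R}]}\alpha$, proving $\vdash_{\mathcal{S}}\,\subseteq\,\vdash_{[\mathcal{R}]}$.

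I do not anticipate a genuine obstacle here; the construction is essentially bookkeeping. The only subtlety is the linking lemma, which is where the distinction between $\Var$ and $\Mvar$ must be handled carefully, and the reliance on $\card{\Mvar}\ge\card{\Var}$ to set up $p\mapsto\bm{p}$ in the first place.
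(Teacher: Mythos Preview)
Your proposal is correct and follows essentially the same approach as the paper: inject $\Var$ into $\Mvar$, extend to a map $\widetilde{(\cdot)}$ (the paper calls it $f$) from formulas to metaformulas, and take $\mathcal{R}$ to consist of all pairs $\langle\widetilde{X},\widetilde{\alpha}\rangle$ with $X\vdash_{\mathcal{S}}\alpha$. Your explicit ``linking lemma'' is exactly the content the paper uses implicitly, and your argument for $\vdash_{\mathcal{S}}\subseteq\vdash_{[\mathcal{R}]}$ via soundness of $\mathcal{R}$ in $\vdash_{[\mathcal{R}]}$ is a mild rephrasing of the paper's route (which instead checks directly that $\vdash_{\mathcal{S}}\subseteq\vdash_{\mathcal{S}'}$ for every $\mathcal{S}'$ in which all rules of $\mathcal{R}$ are sound).
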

\begin{proof}
	Since $\card{\Var_{\Lan}}\le\card{\Mvar}$, there is a one-one map $\Var_{\Lan}\longrightarrow\Mvar$ which can, obviously, be extended to $f:\FormAl\longrightarrow\MformAl$. (We employ the same notation for both maps.)
	
	Next, we define:
	\[
	\mathcal{R}:=\set{\langle f(X),f(\alpha)\rangle}{X\vdash_{\mathcal{S}}\alpha}.
	\]
	
	It must be clear that any rule $R\in\mathcal{R}$ is structural. And if $R=\langle f(X),f(\alpha)\rangle$, then there is an instantiation $\bm{\xi_0}$ such that $\bm{\xi_0}(f(X))=X$
	and $\bm{\xi_0}(f(\alpha))=\alpha$. This implies that any rule $R\in\mathcal{R}$ is sound w.r.t. $\mathcal{S}$. Hence $\vdash_{[\mathcal{R}]}\subseteq\vdash_{\mathcal{S}}$.
	
	Now we assume that any rule $R\in\mathcal{R}$ is sound w.r.t. some $\mathcal{S}^{\prime}$. Suppose $X\vdash_{\mathcal{S}}\alpha$. Then 
	$\langle f(X),f(\alpha)\rangle\in\mathcal{R}$. Therefore, for any instantiation $\bm{\eta}$, $\bm{\eta}(f(X))\vdash_{\mathcal{S}^{\prime}}\bm{\eta}(f(\alpha))$.
	In particular, $\bm{\xi_0}(f(X))\vdash_{\mathcal{S}^{\prime}}\bm{\xi_0}(f(\alpha))$.
	Hence $X\vdash_{\mathcal{S}^{\prime}}\alpha$.
\end{proof}

For defining consequence relations, especially structural consequence relations,  sometimes even more powerful machinery is convenient. We mean the use of hyperrules.

An (n+1)-tuple
\[
\Theta:=\langle\langle\Gamma_1,\phi_1\rangle,\ldots,\langle\Gamma_n,\phi_n\rangle,
\langle\Delta,\psi\rangle\rangle,
\]
where $\Gamma_{1}\cup\ldots\cup\Gamma_{n}\cup\Delta\cup\lbrace\phi_1,\ldots\phi_n,\psi
\rbrace\subseteq\Mform$, is called a (\textit{\textbf{structural}}) \textit{\textbf{hyperrule}}. 

A hyperrule
$\Theta$ is \textit{\textbf{sound w.r.t.}} an abstract logic $\mathcal{S}$ if for any $\Lan$-instantiation $\bm{\xi}$ and any set $X\subseteq\Forms$,
\[
[X\cup\,\bm{\xi}(\Gamma_{1})\vdash_{\mathcal{S}}\bm{\xi}(\phi_1),\ldots,X\cup\,\bm{\xi}(\Gamma_{n})\vdash_{\mathcal{S}}\bm{\xi}(\phi_n)]\Longrightarrow
X\cup\,\bm{\xi}(\Delta)\vdash_{\mathcal{S}}\bm{\xi}(\psi).
\]

If a hyperrule $\Theta$ is employed in relation to an abstract logic $\mathcal{S}$, it is customary to write this hyperrule in the following form:
\[
\dfrac{X,\Gamma_{1}\vdash_{\mathcal{S}}\phi_1;\ldots; X,\Gamma_{n}\vdash_{\mathcal{S}}\phi_n}{X,\Delta\vdash_{\mathcal{S}}\psi}.
\]

Next, using  rules and hyperrules (not all of them for each example but selectively), we define several consequence relations in language $\Lan_{A}$ (Section~\ref{section:languages}) and discuss their properties.

\textsf{Inference rules:}
\[
\begin{array}{cllll}
(\text{a}) &i)~\dfrac{\bm{\alpha},\bm{\beta}}{\bm{\alpha}\wedge\bm{\beta}} &ii)~\dfrac{\bm{\alpha}}{\bm{\alpha}\vee\bm{\beta}}
&iii)~\dfrac{\bm{\beta}}{\bm{\alpha}\vee\bm{\beta}} 
&iv)~\dfrac{\bm{\alpha}}{\neg\neg\bm{\alpha}}\\\\
(\text{b}) &i)~\dfrac{\bm{\alpha}\wedge\bm{\beta}}{\bm{\alpha}}
&ii)~\dfrac{\bm{\alpha}\wedge\bm{\beta}}{\bm{\beta}}
&iii)~\dfrac{\bm{\alpha},\bm{\alpha}\rightarrow\bm{\beta}}{\bm{\beta}}
&iv)~\dfrac{\neg\neg\bm{\alpha}}{\bm{\alpha}}.
\end{array}
\]

\textsf{Hyperrules:}
\[
\begin{array}{cccc}
(\text{c}) &i)~\dfrac{X,\bm{\alpha}\vdash\bm{\beta}}{X\vdash\bm{\alpha}\rightarrow\bm{\beta}}
&ii)~\dfrac{X,\bm{\alpha}\vdash\bm{\beta};~X,\bm{\alpha}\vdash\neg\bm{\beta}}{X\vdash\neg\bm{\alpha}}  &iii)~\dfrac{X,\bm{\alpha}\vdash\bm{\gamma};~X,\bm{\beta}\vdash\bm{\gamma}}
{X,\bm{\alpha}\vee\bm{\beta}\vdash\bm{\gamma}},
\end{array}
\]
where $\vdash$ in the hyperrules above denotes a fixed binary relation on
$\mathcal{P}(\Forms)\times\Forms$.\\

First we define a relation $\vdash_1$ on $\mathcal{P}(\Forms)\times\Forms$ and then prove that it is a consequence relation. Then, we do the same for relations $\vdash_2$ and $\vdash_3$. In all three definitions, we use the concept of (\textit{\textbf{formal}}) \textit{\textbf{derivation}} (or \textit{\textbf{formal proof}}). The relations being defined are written as $X\vdash_i\alpha$, where $i=1,2,3$. (For convenience we call $X\vdash_i\alpha$ a \textit{sequent}, even when we may not know yet that $\vdash_i$ is a consequent relation.) If $X\vdash_i\alpha$ is justified according to the clauses below, we say that \textit{$X$ derives $\alpha$}, or \textit{$\alpha$ is derivable from $X$} (all w.r.t. $\vdash_i$). It is for such a justification we use the notion of derivation.

\paragraph{Relation $\vdash_1$} Given a set $X$ of $\Lan_{A}$-formulas and an $\Lan_{A}$-formula $\alpha$, derivation confirming (or establishing or justifying) a sequent $X\vdash_1\alpha$ is a finite (nonempty) list of $\Lan_{A}$-formulas
\begin{equation}\label{E:list}
	\alpha_1,\ldots,\alpha_n
\end{equation}
such that
\[
\begin{array}{cl}
(\text{d}_1) &\alpha_n=\alpha;~\text{and}\\
(\text{d}_2) &\text{each $\alpha_i$ is either in $X$ or  obtained from one or more formulas}\\ 
&\text{preceeding it on the list \eqref{E:list} by one of the rules (a)--(b)}.
\end{array}
\]

It is easy to see that $\vdash_1$ satisfies the properties (a)--(c) of Definition~\ref{D:consequnce-relation-single} and thus is a consequence relation. Moreover, this relation is obviously finitary and structural. One peculiarity of it is that the set of the theses of this relation is empty. Another property of $\vdash_1$ is that
\begin{equation}\label{E:classical-1}
	X\vdash_1\alpha\Longrightarrow X\models_{\textbf{B}_{2}}\alpha,
\end{equation}
where $\models_{\textbf{B}_{2}}$ is a single-matrix consequence w.r.t. matrix $\textbf{B}_{2}$ (Section~\ref{S:two-valued}).  To prove~\eqref{E:classical-1}, one needs to show that each rule of (a)--(b) preserves validity in $\textbf{B}_2$.
We leave this task to the reader. (Exercise~\ref{section:consequence-defining}.\ref{EX:classical-1})

Clearly, the converse of \eqref{E:classical-1} is not true, for if $\alpha$ is a classical tautology, then 
$\emptyset\models_{\textbf{B}_{2}}\alpha$ but $\emptyset\not\vdash_1\alpha$.

\paragraph{Relation $\vdash_{2}$} Our next example, relation $\vdash_2$, is a modification of $\vdash_1$. Namely we define:
\[
X\vdash_2\alpha\stackrel{\text{df}}{\Longleftrightarrow} X,L\textbf{B}_{2}\vdash_1\alpha,
\]
where $L\textbf{B}_{2}$ is the logic of matrix $\textbf{B}_{2}$ (Definition~\ref{D:validity-1}), that is the set of classical tautologies. According to Proposition~\ref{P:relative-consequence}, $\vdash_2$ is a consequence relation whose set of theses is $L\textbf{B}_{2}$. 

In addition, we claim that
\begin{equation}\label{E:classical-2}
	X\vdash_2\alpha\Longrightarrow X\models_{\textbf{B}_{2}}\alpha.
\end{equation}

The proof of \eqref{E:classical-2} follows from the proof of \eqref{E:classical-1}, where we have to add one more (obvious) case --- when $\alpha_i$ in \eqref{E:list} is a classical tautology, that is $\alpha_i\in L\textbf{B}_2$. 

The converse of \eqref{E:classical-2} is also true, that is
\begin{equation}\label{E:classical-2-converse}
	X\models_{\textbf{B}_{2}}\alpha\Longrightarrow X\vdash_2\alpha.
\end{equation}

To prove \eqref{E:classical-2-converse}, we first prove that the consequence relation $\models_{\textbf{B}_{2}}$ is finitary. Indeed, the premise $X\models_{\textbf{B}_{2}}\alpha$ is equivalent to that the set $X\cup\lbrace\neg\alpha\rbrace$ is not \textit{satisfiable} in $\textbf{B}_2$, that is for any valuation $v$ in $\textbf{B}_2$, $v[X\cup\lbrace\neg\alpha\rbrace]\not\subseteq\lbrace\one\rbrace$. 
Then, by compactness theorem,\footnote{Cf.~\cite{barwise1977}, J. Barwise, An introduction to first-order logic,  theorem 4.2. See also~\cite{chang-keisler1990}, corollary 1.2.12.} $X\cup\lbrace\neg\alpha\rbrace$ is not \textit{finitely satisfiable} in $\textbf{B}_2$, that is, there is $X_{0}\Subset X$ such that $X_0\cup\lbrace\neg\alpha\rbrace$ is not satisfiable in $\textbf{B}_2$.
The latter is equivalent to $X_{0}\models_{\textbf{B}_{2}}$.\footnote{We will consider this argument in a more general setting in Section~\ref{section:finitary-matrix-consequence}.}

In the next step, having $X_{0}\models_{\textbf{B}_{2}}\alpha$ with $X_{0}\Subset X$, we show that $X_{0}\vdash_2\alpha$.

If $X_0=\emptyset$, then $\alpha$ is a classical tautology and the conclusion is obvious. Assume that
\[
X_0:=\lbrace\beta_1,\ldots,\beta_n\rbrace.
\]
We denote
\[
\wedge X_0:=(\ldots(\beta_1\wedge\beta_2)\wedge\ldots\wedge\beta_n)
\]
and observe that $\wedge X_0\rightarrow\alpha$ is a classical tautology, that is
$\wedge X_0\rightarrow\alpha\in L\textbf{B}_2$. Further, it is easy to see that
$X_0\vdash_2 \wedge X_0$. The two last observations imply that $X_0\vdash_2\alpha$ and hence $X\vdash_2\alpha$.

According to Proposition~\ref{P:matrix-con-is-con-relation} and the consideration above, $\models_{\textbf{B}_{2}}$, as well as $\vdash_2$, is a finitary structural consequence relation.\\

\paragraph{Relation $\vdash_3$} For the next example, we employ the hyperrules (c). We state that $X\vdash_3\alpha$ when the latter can be confirmed by one of the following types of \textit{confirmation} or their combination.  A formal definition is the following.
\begin{quote}
	$1^{\circ}$~The first type is a derivation of $\alpha$ from $X$ by using steps determined by ($\text{d}_1$)--($\text{d}_2$), that is a derivation confirming $X\vdash_1\alpha$.\\
	$2^{\circ}$~The second type is when confirmed premises of one of the hyperrules (c) are satisfied for $\vdash_3$, we can confirm the conclusion of this hyperrule. (This means an application of the hyperrule in question.)\\ 
	$3^{\circ}$~The third type consists in the following.
	When we have confirmed $X\vdash_3\alpha_1,\ldots,X\vdash_3\alpha_n$, we confirm
	$X\vdash_3\alpha$ if $X,\alpha_1,\ldots,\alpha_n\vdash_1\alpha$.
\end{quote}

Thus, for any set $X$ of $\Lan_{A}$-formulas and any $\Lan_{A}$-formula $\alpha$, $X\vdash_3\alpha$ is true if, and only if, there is a consecutive finite number of confirmations of the types of $1^{\circ}$--$3^{\circ}$. In particular,
\begin{equation}\label{E:classic-1-implies-classic-3}
	X\vdash_1\alpha\Longrightarrow X\vdash_3\alpha.
\end{equation}

We illustrate the last definition on the following example. Using $X=\lbrace\alpha,\beta\rbrace$ (for concrete formulas $\alpha$ and $\beta$), we confirm $X\vdash_3\alpha\wedge\beta$ by the derivation
\[
\alpha,\beta,\alpha\wedge\beta.
\]
Then, we rewrite the last sequent as $\lbrace\alpha\rbrace,\beta\vdash_3\alpha\wedge\beta$ and apply (c-$i$) to obtain $\lbrace\alpha\rbrace\vdash_3\beta\rightarrow(\alpha\wedge\beta)$ (confirmation of the second type). Now, we rewrite the last sequent as $\emptyset,\alpha\vdash_3\beta\rightarrow(\alpha\wedge\beta)$ and apply this hyperrule one more time. The resulting sequent is $\emptyset\vdash_3\alpha\rightarrow(\beta\rightarrow(\alpha\wedge\beta))$, or
simply $\vdash_3\alpha\rightarrow(\beta\rightarrow(\alpha\wedge\beta))$.\\

It should be clear that the relation $\vdash_3$ is monotone; that is
\begin{equation}\label{E:classical-3-is-monotone}
	X\vdash_3\alpha~\textit{and}~X\subseteq Y~\textit{imply}~Y\vdash_3\alpha.
\end{equation}
At the point, we do not know yet if $\vdash_3$ is a consequence relation. Our aim is to prove this. More than that, we shall show that $\vdash_3$ and $\vdash_2$ are equal as relations, though defined differently. We discuss advantage and disadvantage of $\vdash_3$ in relation to $\vdash_2$ at the end of this subsection.

Aiming to prove $\vdash_3\,=\,\vdash_2$, we notice that the relation $\vdash_3$ enjoys the following variant of cut (the property $(\text{c}^{\ast})$ from Definition~\ref{D:consequnce-relation-single}).
\begin{equation}\label{E:calculus-3-has-cut}
	\textit{If $X,\beta\vdash_3\alpha$ and $\vdash_3\beta$, then $X\vdash_3\alpha$}.
\end{equation}

Indeed, applying the hyperrule of (c-$i$), we confirm that $X\vdash_3\beta\rightarrow\alpha$. Using \eqref{E:classical-3-is-monotone}, we obtain that $X\vdash_3\beta$. Finally, we use a confirmation of type $3^{\circ}$ to conclude that $X\vdash_3\alpha$.

On the next step, we prove that
\begin{equation}\label{E:cal-2-implies-cal-3}
	X\vdash_2\alpha\Longrightarrow X\vdash_3\alpha.
\end{equation}

Our proof of~\eqref{E:cal-2-implies-cal-3} is based on the following observation.
\begin{equation}\label{E:classical-tautology-in-cal-3}
	\textit{If {\em$\alpha\in L\textbf{B}_2$}, then $\vdash_3\alpha$}.
\end{equation}

To prove~\eqref{E:classical-tautology-in-cal-3}, first, we show that for any $\Lan_{A}$-formula $\alpha$,
\begin{equation}\label{E:excluded-middle}
	\vdash_3\alpha\vee\neg\alpha.
\end{equation}

Indeed,\footnote{The following steps 1--6 are the adaptation of the proof of $51^{\ast}$ from~\cite{kleene1952}, {\S} 27.}
\[
\begin{array}{cl}
1. &\lbrace\neg(\alpha\vee\neg\alpha)\rbrace,\alpha\vdash_3 \alpha\vee\neg\alpha\quad[\text{for $\neg(\alpha\vee\neg\alpha),\alpha\vdash_1 \alpha\vee\neg\alpha$ and~\eqref{E:classic-1-implies-classic-3}}]\\
2. &\lbrace\neg(\alpha\vee\neg\alpha)\rbrace,\alpha\vdash_3 \neg(\alpha\vee\neg\alpha)\quad[\text{for $\neg(\alpha\vee\neg\alpha),\alpha\vdash_1 \neg(\alpha\vee\neg\alpha)$ and~\eqref{E:classic-1-implies-classic-3}}]\\
3. &\neg(\alpha\vee\neg\alpha)\vdash_3\neg\alpha\quad[\text{by (c-$ii$) with 1 and 2 as premises}]\\
4. &\neg(\alpha\vee\neg\alpha)\vdash_3\neg\neg\alpha\quad[\text{analogously to 1--3}]\\
5. &\vdash_3\neg\neg(\alpha\vee\neg\alpha)\quad[\text{by (c-$ii$) with 3 and 4 as premises}]\\
6. &\vdash_3\alpha\vee\neg\alpha\quad[\text{by applying \eqref{E:calculus-3-has-cut} to 5 and $\neg\neg(\alpha\vee\neg\alpha)\vdash_1\alpha\vee\neg\alpha$}]
\end{array}
\]

Also, we will use lemma 13 of~\cite{kleene1952}, {\S} 29, which we formulate in a paraphrased form as follows.
\begin{quote}
	Let $\alpha$ be an $\Lan_{A}$-formula in the distinct propositional variables $p_1,\ldots,p_n$; let $v$ be a valuation in $\textbf{B}_2$; and let a sequence of $\Lan_{A}$-formulas is produced according to the rule: if $v(p_j)=\one$, then
	$\beta_j=p_j$, and if $v(p_j)=\zero$, then $\beta_j=\neg p_j$. Then 
	$\beta_1,\ldots,\beta_n\vdash_3\alpha$ or $\beta_1,\ldots,\beta_n\vdash_3\neg\alpha$, according as for the given valuation $v$ $\alpha$ takes the value $\one$ or the value $\zero$.
\end{quote}

Now, to prove ~\eqref{E:classical-tautology-in-cal-3}, we assume that $\alpha\in L\textbf{B}_2$ and $p_1,\ldots,p_n$ are all variables that occur in $\alpha$. Then, according to the last property, $p_1,\ldots,p_n\vdash_3\alpha$ and
$p_1,\ldots,\neg p_n\vdash_3\alpha$. This, in virtue of (c-$iii$), implies that
$p_1,\ldots,p_{n-1},p_{n}\vee\neg p_n\vdash_3\alpha$. Since, by~\eqref{E:calculus-3-has-cut}, $\vdash_3 p_{n}\vee\neg p_n$, we obtain that
$p_1,\ldots,p_{n-1}\vdash_3\alpha$. Continuing the elimination of the variables $p_j$, we come to the conclusion that $\vdash_3\alpha$.  

Now we obtain the proof of~\eqref{E:cal-2-implies-cal-3} as follows.

Assume that $X\vdash_2\alpha$; that is for some $\alpha_1,\ldots,\alpha_n\in L\textbf{B}_2$, $X,\alpha_1,\ldots,\alpha_n\vdash_1\alpha$. According to \eqref{E:classic-1-implies-classic-3}, $X,\alpha_1,\ldots,\alpha_n\vdash_3\alpha$,
where for each $\alpha_j$, in virtue of~\eqref{E:classical-tautology-in-cal-3},
$\vdash_3\alpha_j$. Using \eqref{E:calculus-3-has-cut} $n$ times, we obtain that
$X\vdash_3\alpha$.

Finally, we show that
\begin{equation}\label{E:cal-3-implies-cal-2}
	X\vdash_3\alpha\Longrightarrow X\vdash_2\alpha.
\end{equation}
The last implication will be proven if we show that
\begin{equation}\label{E:cal-3-implies-B-2}
	X\vdash_3\alpha\Longrightarrow X\models_{\textbf{B}_{2}}\alpha
\end{equation}
and then apply~\eqref{E:classical-2-converse}. 

We note  that~\eqref{E:cal-3-implies-B-2} will be proven if we establish that every rule of (a)--(b) and every hyperrule of (c) preserves the truth value $\one$. Let us check this property for the hyperrules.

We begin with (c-$i$). Assume that for any valuation $v^{\prime}$ in $\textbf{B}_2$, 
\[
v^{\prime}[X,\alpha]\subseteq\lbrace\one\rbrace\Longrightarrow v^{\prime}[\beta]=\one.
\]
Suppose for some valuation $v$, $v[X]\subseteq\lbrace\one\rbrace$. We observe that if $v[\alpha]=\zero$, then $v[\alpha\rightarrow\beta]=\one$, regardless of the value of $v[\beta]$. However, if $v[\alpha]=\one$, then $v[\alpha\rightarrow\beta]=\one$, by premise. 

Now we turn to (c-$ii$). Suppose that for any valuation $v^{\prime}$ in $\textbf{B}_2$, 
\[
v^{\prime}[X,\alpha]\subseteq\lbrace\one\rbrace\Longrightarrow
\text{both $v^{\prime}[\beta]=\one$ and $v^{\prime}[\neg\beta]=\one$}.
\]
It is clear that this premise is equivalent to that for any valuation $v^{\prime}$ in $\textbf{B}_2$, 
\[
v^{\prime}[X,\alpha]\not\subseteq\lbrace\one\rbrace.
\]
Thus, if for some valuation $v$, $v[X]\subseteq\lbrace\one\rbrace$, then obviously
$v[\alpha]=\zero$, that is $v[\neg\alpha]=\one$.

To prove this preservation property for (c-$iii$), we suppose that for any valuation $v^{\prime}$ in $\textbf{B}_2$, both
\[
\begin{array}{l}
v^{\prime}[X,\alpha]\subseteq\lbrace\one\rbrace\Longrightarrow v^{\prime}[\gamma]=\one~\text{and}\\
v^{\prime}[X,\beta]\subseteq\lbrace\one\rbrace\Longrightarrow v^{\prime}[\gamma]=\one
\end{array}
\]
hold. Now assume that for some valuation $v$, $v[X,\alpha\vee\beta]\subseteq\lbrace\one\rbrace$. It is clear that either $v[\alpha]=\one$ or $v[\beta]=\one$ (or both). In each case, we obtain that $v[\gamma]=\one$.

Thus we have obtained the equality $\vdash_2\,=\,\vdash_3$. As a byproduct of this equality, we conclude that $\vdash_3$ is a structural consequence relation.\\

\paragraph{Comparison of $\vdash_2$ and $\vdash_3$} We begin with the remark that neither $\vdash_2$ nor $\vdash_3$ is a calculus; the later is not because of the presence of hyperrules, the former because of the factual presence of the infinite set $L\textbf{B}_2$ of classical tautologies. However, $\vdash_2$ can be reformulated in such a way that in a new formulation, let us call it $\vdash_{2^{\circ}}$, the latter is a calculus. First of all, we remove from the definition of $\vdash_2$ all rules except $\alpha,\alpha\rightarrow\beta\slash\beta$ (known as \textit{modus ponens} or \textit{detachment}). The removed rules are actually replaced with rules without premises, that is rules of type $\emptyset\slash\varphi$, where $\varphi$ is one of the following forms of formulas (which are called  \textit{axiom schemata}):
\[
\begin{array}{cl}
\text{ax1} &\bm{\alpha}\rightarrow(\bm{\beta}\rightarrow\bm{\alpha}),\\
\text{ax2} &(\bm{\alpha}\rightarrow\bm{\beta})
\rightarrow((\bm{\alpha}\rightarrow(\bm{\beta}\rightarrow\bm{\gamma}))\rightarrow
(\bm{\alpha}\rightarrow\bm{\gamma})),\\
\text{ax3} &\bm{\alpha}\rightarrow(\bm{\beta}\rightarrow(\bm{\alpha}\wedge\bm{\beta})),\\
\text{ax4} &(\bm{\alpha}\wedge\bm{\beta})\rightarrow\bm{\alpha},\\
\text{ax5} &(\bm{\alpha}\wedge\bm{\beta})\rightarrow\bm{\beta},\\
\text{ax6} &\bm{\alpha}\rightarrow(\bm{\alpha}\vee\bm{\beta}),\\
\text{ax7} &\bm{\beta}\rightarrow(\bm{\alpha}\vee\bm{\beta}),\\
\text{ax8} &(\bm{\alpha}\rightarrow\bm{\gamma})\rightarrow((\bm{\beta}\rightarrow\bm{\gamma})
\rightarrow((\bm{\alpha}\vee\bm{\beta})\rightarrow\bm{\gamma})),\\
\text{ax9} &(\bm{\alpha}\rightarrow\bm{\beta})
\rightarrow((\bm{\alpha}\rightarrow\neg\bm{\beta})\rightarrow\neg\bm{\alpha}), \\
\text{ax10} &\neg\neg\bm{\alpha}\rightarrow\bm{\alpha}.
\end{array}
\]

We will not prove that $\vdash_{2^{\circ}}\,=\,\vdash_2$, but we state that $\vdash_{2^{\circ}}$ is a calculus.

However, the aforementioned removal of rules is not necessary; it would suffice to add to the rules (a)--(b) nine new rules of type $\emptyset\slash\varphi$, where $\varphi$ is one of the schemata ax1--ax10; in this case, the notion of derivation should be modified from $\vdash_1$ (namely the clause ($\text{d}_{2}$)) by including the new rules. We treat the forms ax1--ax10 as \textit{\textbf{premiseless modus rules}}. 

Although $\vdash_3$ is not a calculus, it is very convenient in confirming sequents that are true w.r.t. $\vdash_{2^{\circ}}$. For instance, we show that the schema ax8 above is a thesis of $\vdash_3$. Indeed, we have:
\[
\begin{array}{cl}
1. &\lbrace\alpha\rightarrow\gamma,\beta\rightarrow\gamma\rbrace,\alpha
\vdash_3\gamma;~\lbrace\alpha\rightarrow\gamma,\beta\rightarrow\gamma\rbrace,\beta
\vdash_3\gamma\quad[\text{two $\vdash_1$-derivations}]\\
2. &\lbrace\alpha\rightarrow\gamma,\beta\rightarrow\gamma\rbrace,
\alpha\vee\beta\vdash_3\gamma\quad[\text{from 1 by (c-$iii$)}]\\
3. &\lbrace\alpha\rightarrow\gamma\rbrace,\beta\rightarrow\gamma
\vdash_3(\alpha\vee\beta)\rightarrow\gamma\quad[\text{from 2 by (c-$i$)}]\\
4. &\alpha\rightarrow\gamma\vdash_3(\beta\rightarrow\gamma)\rightarrow((\alpha\vee\beta)\rightarrow\gamma)\quad[\text{from 3 by (c-$i$)}]\\
5. &\vdash_3(\alpha\rightarrow\gamma)\rightarrow((\beta\rightarrow\gamma)\rightarrow((\alpha\vee\beta)\rightarrow\gamma)).\quad[\text{from 4 by (c-$i$)}]
\end{array}
\]
However, there is a price for such a convenience. While in proving $X\vdash_2\alpha$ (or in proving $X\vdash_{2^{\circ}}\alpha$) a set $X$ stays the same in each application of a rule; in proving $X\vdash_3\alpha$, when we need to apply a hyperrule, the set ``$X$'' in the premise(s) of a hyperrule may differ from the original $X$ in the sequent we are proving. In addition, in each application of a hyperrule, the $X$ in its premise(s) may vary from hyperrule to hyperrule or from one application to another of the same hyperrule.

\paragraph{The idea of semantic completeness}\label{paragraph:idea-completeness}
The equalities like $\vdash_2\,=\,\models_{\textbf{B}_{2}}$ or $\vdash_{2^{\circ}}\,=\,\models_{\textbf{B}_{2}}$ or $\vdash_3\,=\,\models_{\textbf{B}_{2}}$, but not $\vdash_2\,=\,\vdash_{2^{\circ}}$ or $\vdash_2\,=\,\vdash_3$, (see Section~\ref{section:consequence-defining}) are typical examples of semantic completeness. That is, a completeness result is claimed when one and the same consequence relation admits two different definitions --- one purely syntactic and the other with the help of semantic concepts, such as, e.g., logical matrices. The former is regarded as a convenient way to define an abstract logic, say $\mathcal{S}$, the latter as an effective way to show that for a particular set $X$ and formula $\alpha$, the sequent $X\vdash_{\mathcal{S}}\alpha$ does not hold. This issue will be further discussed in the next chapter (Section~\ref{section:separating-means}).

To demonstrate, e.g., the validity of $\vdash_2\,=\,\models_{\textbf{B}_{2}}$, we proved the properties~\eqref{E:classical-2} and~\eqref{E:classical-2-converse}. The former is known in literature as \textit{soundness}, the latter as (properly) \textit{semantic completeness}. Thus we can state that the abstract logic $\vdash_2$ is \textit{complete w.r.t.} the logical matrix $\mathbf{B}_2$, or that
$\vdash_2$ is \textit{determined by} $\mathbf{B}_2$.

Although the main concept of this book is consequence relation, the equalities
like 
\[
\begin{array}{c}
\bm{T}_{\textsf{LC}}=L\mat{LC}~~\text{and}~~\bm{T}_{\textsf{LC}}=L\mat{LC}^{\ast}\
\end{array}
\]
(compare with the $\LC$ completeness above) will attract our attention too.
We call them also completeness results but in a weaker sense of the term ``completeness''.

\paragraph{Exercises~\ref{section:consequence-defining}}
\begin{enumerate}
	\item \label{EX:trivial-1}Prove that a structural logic $\mathcal{S}$ is trivial if, and only if, for any arbitrary variable $p$, $p\in\ConS{\emptyset}$.
	\item\label{EX:trivial-2} Prove that for any abstract logic $\mathcal{S}$, $\mathcal{S}$ is trivial if, and only if, $\bm{T}_{\mathcal{S}}$ is inconsistent.
	\item \label{EX:theory}Let $\mathcal{S}$ be a structural abstract logic. Prove that an $\mathcal{S}$-theory is closed under substitution if, and only if, this theory is generated by a set closed under substitution. Prove that $\bm{T}_{\mathcal{S}}$ is the least $\mathcal{S}$-theory closed under any substitution.
	\item\label{EX:matrix-con-is-con-relation} Prove Proposition~\ref{P:matrix-con-is-con-relation}.
	\item\label{EX:lindenbaum-matrix} Prove Corollary~\ref{C:lindenbaum-matrix}.
	\item \label{EX:Lindenbaum-completeness}
	Prove Corollary~\ref{C:Lindenbaum-completeness}
	\item \label{EX:S-matrix-completeness}
	Prove Corollary~\ref{C:S-matrix-completeness}
	\item \label{EX:S_R-consequence} Show that $\vdash_{[\mathcal{R}]}$ is a consequence relation.
	\item \label{EX:S-as-a-rule}Prove that for any abstract logic $\mathcal{S}$, the consequence relation $\vdash_{\mathcal{S}}$ is determined by $\lbrace\vdash_{\mathcal{S}}\rbrace$.
	\item \label{EX:classical-1}Prove \eqref{E:classical-1}.
	\item Show that the schemata ax1--ax7 and ax9--ax10 are theses of $\vdash_3$.
	\item Although for any consequence operator $\textbf{Cn}$, $\Con{X\cap Y}\subseteq\Con{X}\cap\Con{Y}$ for any sets $X$ and $Y$, the converse inclusion is not necessarily true. Define such an abstract logic $\mathcal{S}$ that $\ConS{X}\cap\ConS{Y}\not\subseteq\ConS{X\cap Y}$ for some sets $X$ and $Y$.
\end{enumerate}

\section{Abstract logics defined by modus rules}\label{section:modus-rules}
When we use or investigate calculi, we inevitably deal with modus rules.
As we will see below (Corollary~\ref{C:logic-by-modus-rules-criterion}), for a structural logic to be finitary  and defined as calculus, are equivalent properties. On the other hand, the modus rules are naturally modeled in first order logic.

\subsection{General characterization}\label{section:modus-rules-general}
If $\mathcal{R}$ is a set of modus rules, there is an effective way to determine the consequence relation $\vdash_{[\mathcal{R}]}$ defined in~\eqref{E:consequence-by-rules} (Section~\ref{section:inference-rules}). For this, we define a relation $\vdash_{\mathcal{R}}$ on $\mathcal{P}(\Forms)\times\Forms$, by generalizing (for $\Lan$-formulas) the notion of derivation which earlier was defined for $\Lan_{A}$-formulas via ($\text{d}_1$)--($\text{d}_2$).

The relation $X\vdash_{\mathcal{R}}\alpha$ holds, by definition, if there is a list
\begin{equation}\label{E:list-2}
	\alpha_1,\ldots,\alpha_n,\alpha
\end{equation}
(the list $\alpha_1,\ldots,\alpha_n$ can be empty), where  each formula on \eqref{E:list-2} is 
\[
\begin{array}{cl}
(\text{d}^{\ast}_{1}) &\text{either in $X$ or}\\
(\text{d}^{\ast}_{2}) &\text{obtained from the formulas preceding it on \eqref{E:list-2} by one of the rules $\mathcal{R}$.}
\end{array}
\]
We note that so defined $\vdash_{\mathcal{R}}$ is finitary. About~\eqref{E:list-2}, we say that it is an $\mathcal{R}$-\textit{derivation} of $\alpha$ from $X$.
\begin{prop}\label{P:modus-rules}
	Let $\mathcal{R}$ be a set of modus rules. Then $\vdash_{\mathcal{R}}\,=\,\vdash_{[\mathcal{R}]}$. Hence $\vdash_{\mathcal{R}}$ is a consequence relation.
\end{prop}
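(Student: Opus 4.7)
The plan is to establish two inclusions: $\vdash_{\mathcal{R}}\,\subseteq\,\vdash_{[\mathcal{R}]}$ and $\vdash_{[\mathcal{R}]}\,\subseteq\,\vdash_{\mathcal{R}}$. For the latter it suffices, by the very definition~\eqref{E:consequence-by-rules}, to verify that $\vdash_{\mathcal{R}}$ is itself a consequence relation with respect to which every rule of $\mathcal{R}$ is sound; then $\vdash_{\mathcal{R}}$ is one of the relations whose intersection is $\vdash_{[\mathcal{R}]}$, and the inclusion follows. So the first step is to check clauses (a)--(c) of Definition~\ref{D:consequnce-relation-single} for $\vdash_{\mathcal{R}}$. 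Reflexivity is immediate: if $\alpha\in X$, the one-formula list $\alpha$ is an $\mathcal{R}$-derivation. Monotonicity is equally trivial because an $\mathcal{R}$-derivation witnessing $X\vdash_{\mathcal{R}}\alpha$ also witnesses $Y\vdash_{\mathcal{R}}\alpha$ whenever $X\subseteq Y$. Transitivity is obtained by concatenating derivations: if $X\vdash_{\mathcal{R}}\beta$ for every $\beta\in Y$ and $Y,Z\vdash_{\mathcal{R}}\alpha$ via a list $\gamma_{1},\ldots,\gamma_{m},\alpha$, I prepend, for each occurrence of a $\gamma_{i}$ coming from $Y$, the $\mathcal{R}$-derivation of that $\gamma_{i}$ from $X$; the resulting concatenated list is a valid $\mathcal{R}$-derivation of $\alpha$ from $X\cup Z$. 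Soundness of each $R\in\mathcal{R}$ w.r.t. $\vdash_{\mathcal{R}}$ is also immediate, since an instance of $R$ with premises $\Gamma$ and conclusion $\phi$ yields the one-step derivation $\gamma_{1},\ldots,\gamma_{k},\phi$ from $\Gamma=\{\gamma_{1},\ldots,\gamma_{k}\}$.

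For the reverse inclusion $\vdash_{\mathcal{R}}\,\subseteq\,\vdash_{[\mathcal{R}]}$, I fix an arbitrary consequence relation $\vdash_{\mathcal{S}}$ with respect to which every rule of $\mathcal{R}$ is sound, and I show by induction on the length $n$ of an $\mathcal{R}$-derivation~\eqref{E:list-2} that $X\vdash_{\mathcal{S}}\alpha_{i}$ for every $i\le n+1$. In the base case, $\alpha_{i}\in X$ and reflexivity gives $X\vdash_{\mathcal{S}}\alpha_{i}$. For the inductive step, if $\alpha_{i}$ is obtained by applying a modus rule $\langle\Gamma,\phi\rangle\in\mathcal{R}$ via some instantiation, so that there is a set $\{\beta_{1},\ldots,\beta_{k}\}\subseteq\{\alpha_{1},\ldots,\alpha_{i-1}\}$ forming the premises of the instance and $\alpha_{i}$ is its conclusion, then soundness of the rule w.r.t. $\vdash_{\mathcal{S}}$ gives $\beta_{1},\ldots,\beta_{k}\vdash_{\mathcal{S}}\alpha_{i}$; the inductive hypothesis gives $X\vdash_{\mathcal{S}}\beta_{j}$ for each $j$; and an application of transitivity (clause (c), with $Z=\emptyset$ and $Y=\{\beta_{1},\ldots,\beta_{k}\}$) yields $X\vdash_{\mathcal{S}}\alpha_{i}$. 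Taking $i=n+1$ we obtain $X\vdash_{\mathcal{S}}\alpha$, and since $\vdash_{\mathcal{S}}$ was arbitrary this delivers $X\vdash_{[\mathcal{R}]}\alpha$.

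The only conceptual friction, and the main point worth checking carefully, is the transitivity step for $\vdash_{\mathcal{R}}$: one must be confident that the naive splicing of derivations really produces a legitimate $\mathcal{R}$-derivation, i.e. that every intermediate formula still satisfies clause $(\mathrm{d}^{\ast}_{1})$ or $(\mathrm{d}^{\ast}_{2})$ relative to the new premise set $X\cup Z$. This is true because a formula that was previously justified by $(\mathrm{d}^{\ast}_{1})$ as belonging to $Y$ is now justified by the prepended derivation from $X$ ending with that formula, while formulas justified by $(\mathrm{d}^{\ast}_{2})$ retain their justification, since the preceding formulas are still present. Once this is settled, the last clause of the proposition---that $\vdash_{\mathcal{R}}$ is a consequence relation---is already a byproduct of establishing the first inclusion.
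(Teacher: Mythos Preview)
Your proof is correct and follows essentially the same three-part strategy as the paper: verify that $\vdash_{\mathcal{R}}$ is a consequence relation, that every rule of $\mathcal{R}$ is sound with respect to it (giving $\vdash_{[\mathcal{R}]}\subseteq\vdash_{\mathcal{R}}$), and then show by induction on derivation length that $\vdash_{\mathcal{R}}\subseteq\vdash_{\mathcal{S}}$ for any $\mathcal{S}$ in which all rules are sound. You are in fact more explicit than the paper about transitivity (the paper leaves clause~(c) as an exercise), and your concatenation argument is fine; conversely, the paper also checks structurality of $\vdash_{\mathcal{R}}$ here, which you omit, but that is not required by the proposition as stated and only matters for the subsequent corollary.
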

\begin{proof}
	We shall prove successively the following three claims.
	\[
	\begin{array}{cl}
	1^{\circ} &\vdash_{\mathcal{R}}~\text{is a structural consequence relation};\\
	2^{\circ} &\text{for any $R\in\mathcal{R}$, $R$ is sound w.r.t. $\vdash_{\mathcal{R}}$};\\
	3^{\circ} &\text{if an abstract logic $\mathcal{S}$ is such that if for any $R\in\mathcal{R}$,}\\
	&\text{$R$ is sound w.r.t. $\mathcal{S}$, then $\vdash_{\mathcal{R}}\subseteq\vdash_{\mathcal{S}}$.}
	\end{array}
	\]
	
	We notice that the properties (a)--(b) of Definition~\ref{D:consequnce-relation-single} are obviously true for $\vdash_{\mathcal{R}}$; and we leave for the reader to check the property (c) and  structurality property. (Exercise~\ref{section:modus-rules}.\ref{EX:modus-rules})
	
	Next, suppose that $\langle\Gamma,\phi\rangle\in\mathcal{R}$. Let $\bm{\sigma}$ be an arbitrary instantiation in $\Lan$. Since $\Gamma$ is finite, we denote
	\[
	\bm{\sigma}(\Gamma):=\lbrace\alpha_1,\ldots,\alpha_n\rbrace.
	\]
	According to the clauses $(\text{d}^{\ast}_{1})$--$(\text{d}^{\ast}_{2})$, the list
	\[
	\alpha_1,\ldots,\alpha_n,\bm{\sigma}(\phi)
	\]
	is an $\mathcal{R}$-derivation of $\bm{\sigma}(\phi)$ from $\bm{\sigma}(\Gamma)$. Hence $\bm{\sigma}(\Gamma)\vdash_{\mathcal{R}}\bm{\sigma}(\phi)$.
	
	Finally, let $\mathcal{S}$ be an abstract logic and for any $R\in\mathcal{R}$, $R$ is sound w.r.t. $\mathcal{S}$. We show that then
	\[
	X\vdash_{\mathcal{R}}\alpha\Longrightarrow X\vdash_{\mathcal{S}}\alpha.
	\]
	
	Suppose~\eqref{E:list-2} is a derivation w.r.t. $\mathcal{R}$. We prove that for each formula $\alpha_i$ of~\eqref{E:list-2}, $X\vdash_{\mathcal{S}}\alpha_i$.
	We employ induction on the number of the formulas preceding $\alpha$ in~\eqref{E:list-2}.
	Indeed, we see that, if $\alpha=\alpha_{1}$, then $X\vdash_{\mathcal{S}}\alpha_1$, for either $\alpha\in X$ or $\alpha$ is a conclusion of a premiseless rule.
	
	Now, suppose $\alpha$ is obtained by application of a rule  $R_{\langle\Gamma,\bm{\beta}\rangle}$. That is, for some formulas $\alpha_{i_1},\ldots,
	\alpha_{i_k}$ preceding $\alpha$ and some instantiation $\bm{\sigma}$,
	$\bm{\sigma}(\Gamma)=\lbrace\alpha_{i_1},\ldots,
	\alpha_{i_k}\rbrace$ and $\bm{\sigma}(\bm{\beta})=\alpha$. Since, by induction, $X\vdash_{\mathcal{S}}\alpha_{i_j}$, $1\le j\le k$, and, by soundness of $R_{\langle\Gamma,\bm{\beta}\rangle}$ w.r.t. $\mathcal{S}$, $\bm{\sigma}(\Gamma)\vdash_{\mathcal{S}}\alpha$, we conclude that
	$X\vdash_{\mathcal{S}}\alpha$.
\end{proof}

Given a set $\mathcal{R}$ of modus rules, we denote by $\mathcal{S}_{\mathcal{R}}$ the abstract logic corresponding to $\vdash_{[\mathcal{R}]}$ (see the definition~\eqref{E:consequence-by-rules}) or, equivalently (due to Proposition~\ref{P:modus-rules}), to $\vdash_{\mathcal{R}}$.
\begin{cor}\label{C:modus-rules-finitary}
	If $\mathcal{R}$ is a set of modus rules, then the abstract logic $\mathcal{S}_{\mathcal{R}}$ is structural and finitary.
\end{cor}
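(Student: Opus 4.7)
The first half of the statement, structurality, requires no new work: claim $1^{\circ}$ inside the proof of Proposition~\ref{P:modus-rules} already asserts that $\vdash_{\mathcal{R}}$ is a structural consequence relation, and by Proposition~\ref{P:modus-rules} itself we have $\vdash_{\mathcal{R}}\,=\,\vdash_{[\mathcal{R}]}$, which is precisely the consequence relation defining $\mathcal{S}_{\mathcal{R}}$. So I would just cite these two facts and dispatch structurality in a single sentence.

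For finitariness, the plan is to read off the finite subset from the derivation itself. Suppose $X\vdash_{\mathcal{R}}\alpha$. By Proposition~\ref{P:modus-rules} (via the definition of $\vdash_{\mathcal{R}}$), there exists an $\mathcal{R}$-derivation $\alpha_{1},\ldots,\alpha_{n},\alpha$ in the sense of $(\text{d}^{\ast}_{1})$--$(\text{d}^{\ast}_{2})$. Since this list is finite, I define
\[
Y:=\{\alpha_{i}\mid 1\le i\le n,\ \alpha_{i}\in X\}\cup(\{\alpha\}\cap X),
\]
which is a finite (possibly empty) subset of $X$, i.e.\ $Y\Subset X$.

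The key verification is that the very same list is still an $\mathcal{R}$-derivation after replacing the premise set $X$ by $Y$. For each $\alpha_{i}$ in the list, either it was originally justified by $(\text{d}^{\ast}_{1})$, in which case $\alpha_{i}\in X$ and so $\alpha_{i}\in Y$ by construction, so $(\text{d}^{\ast}_{1})$ continues to apply; or it was justified by $(\text{d}^{\ast}_{2})$ as the conclusion of an application of some $R\in\mathcal{R}$ whose premises are among the earlier $\alpha_{j}$'s, and this justification depends only on the preceding list entries, not on the ambient premise set, so it is unaffected by the passage from $X$ to $Y$. Hence $Y\vdash_{\mathcal{R}}\alpha$, which is finitariness.

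The only mild subtlety --- the ``main obstacle'' such as it is --- is making explicit that $(\text{d}^{\ast}_{2})$ only consults the earlier part of the derivation; once that is observed, the argument is mechanical, and the fact that the rules are modus (rather than arbitrary structural) is actually inessential for finitariness beyond the guarantee that a derivation has finitely many steps, which already follows from the definition of $\mathcal{R}$-derivation.
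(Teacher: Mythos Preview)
Your argument is correct and is precisely the intended one. The paper leaves this corollary as an exercise, but it has already remarked, immediately after introducing $\vdash_{\mathcal{R}}$ via $(\text{d}^{\ast}_{1})$--$(\text{d}^{\ast}_{2})$, that ``so defined $\vdash_{\mathcal{R}}$ is finitary''; your extraction of the finite set $Y$ from the derivation is exactly the justification behind that remark, and your appeal to claim~$1^{\circ}$ of Proposition~\ref{P:modus-rules} together with $\vdash_{\mathcal{R}}=\vdash_{[\mathcal{R}]}$ is the right way to handle structurality.
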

\noindent\textit{Proof}~is left to the reader. (Exercise~\ref{section:modus-rules}.\ref{EX:modus-rules-finitary})

\begin{cor}\label{C:logic-by-modus-rules-criterion}
	Let $\mathcal{S}$ be a structural abstract logic. Then $\mathcal{S}$ is finitary if, and only if, there is a set $\mathcal{R}$ of modus rules such that $\mathcal{S}=\mathcal{S}_{\mathcal{R}}$.
\end{cor}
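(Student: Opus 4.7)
The plan is to prove the two implications separately, leveraging Proposition~\ref{P:structural-rules-for-structural-logic} as a template and Corollary~\ref{C:modus-rules-finitary} for one direction. The ``if'' direction is immediate: if $\mathcal{S}=\mathcal{S}_{\mathcal{R}}$ for some set $\mathcal{R}$ of modus rules, then by Corollary~\ref{C:modus-rules-finitary}, $\mathcal{S}$ is (structural and) finitary.

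For the ``only if'' direction, assume $\mathcal{S}$ is structural and finitary. I would imitate the construction in the proof of Proposition~\ref{P:structural-rules-for-structural-logic}, but restrict attention to sequents with \emph{finite} premise sets, so that each induced rule is a modus rule. Concretely, fix an injection $\Var_{\Lan}\longrightarrow\Mvar$ and extend it to a map $f:\FormAl\longrightarrow\MformAl$. Then set
\[
\mathcal{R}:=\set{\langle f(Y),f(\alpha)\rangle}{Y\Subset\Forms\text{ and }Y\vdash_{\mathcal{S}}\alpha}.
\]
Every element of $\mathcal{R}$ is a structural rule with a finite set of premises, i.e., a modus rule.

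Soundness of each $R\in\mathcal{R}$ with respect to $\mathcal{S}$ follows just as in Proposition~\ref{P:structural-rules-for-structural-logic}: if $Y\vdash_{\mathcal{S}}\alpha$ with $Y\Subset\Forms$, then for any instantiation $\bm{\sigma}$, one considers the substitution $\sigma$ defined by $\sigma(p):=\bm{\sigma}(f(p))$ for $p\in\Var_{\Lan}$, and structurality of $\mathcal{S}$ yields $\bm{\sigma}(f(Y))\vdash_{\mathcal{S}}\bm{\sigma}(f(\alpha))$. Hence $\vdash_{\mathcal{S}_{\mathcal{R}}}\,\subseteq\,\vdash_{\mathcal{S}}$ by Proposition~\ref{P:modus-rules}.

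For the reverse inclusion, suppose $X\vdash_{\mathcal{S}}\alpha$. By finitariness of $\mathcal{S}$, there exists $Y\Subset X$ with $Y\vdash_{\mathcal{S}}\alpha$, so $\langle f(Y),f(\alpha)\rangle\in\mathcal{R}$. Pick an instantiation $\bm{\xi}_0$ with $\bm{\xi}_0(f(p))=p$ for every $p\in\Var_{\Lan}$ (possible because $f$ is one-to-one on variables); then $\bm{\xi}_0(f(\beta))=\beta$ for every $\Lan$-formula $\beta$, so the pair $\langle Y,\alpha\rangle$ is an instance of the rule $\langle f(Y),f(\alpha)\rangle$. Therefore $Y\vdash_{\mathcal{R}}\alpha$, and monotonicity (Definition~\ref{D:consequnce-relation-single}(b)) gives $X\vdash_{\mathcal{R}}\alpha$, i.e., $X\vdash_{\mathcal{S}_{\mathcal{R}}}\alpha$.

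The only delicate point I foresee is the bookkeeping around the auxiliary instantiation $\bm{\xi}_0$: one must be careful that $f$ is injective on $\Var_{\Lan}$ (guaranteed by choosing $f$ to be one-to-one) and that $\bm{\xi}_0\circ f$ acts as the identity on $\Forms$, which is a straightforward induction on formula degree. Everything else is a direct assembly of Proposition~\ref{P:modus-rules}, Corollary~\ref{C:modus-rules-finitary}, and the finitariness hypothesis.
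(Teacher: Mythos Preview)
Your proposal is correct and follows essentially the same approach as the paper. The only cosmetic difference is that the paper defines $\mathcal{R}$ as the set of \emph{all} sound modus rules $\langle\Gamma,\phi\rangle$ (those for which every instantiation $\bm{\xi}$ gives $\bm{\xi}(\Gamma)\vdash_{\mathcal{S}}\bm{\xi}(\phi)$), whereas you take the smaller set of $f$-images of finite $\mathcal{S}$-sequents; both choices work, and the key step---lifting a finite sequent $Y\vdash_{\mathcal{S}}\alpha$ to a modus rule via a one-one variable-to-metavariable map and recovering $\langle Y,\alpha\rangle$ as an instance---is identical in both proofs.
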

\begin{proof}
	The if-part follows from Corollary~\ref{C:modus-rules-finitary}.
	
	To prove the only-if part, we assume that $\mathcal{S}$ is finitary. Next we define:
	\[
	\mathcal{R}:=\set{\langle\Gamma,\phi\rangle}{\Gamma\cup\lbrace\phi\rbrace\Subset
		\Mform~\text{and $\bm{\xi}(\Gamma)\vdash_{\mathcal{S}}\bm{\xi}(\phi)$, for any instantiation $\bm{\xi}$}}.
	\]
	
	We aim to prove that $\mathcal{S}=\mathcal{S}_{\mathcal{R}}$.
	
	First, assume that $X\vdash_{\mathcal{S}}\beta$, for a particular set $X\cup\lbrace\beta\rbrace\subseteq\Forms$.
	Since $\mathcal{S}$ is finitary, there is a set
	$Y\Subset X$ such that $Y\vdash_{\mathcal{S}}\beta$. Now, replacing the variables $\Var(Y\cup\lbrace\beta\rbrace)$ with metavariables according to arbitrary one-one correspondence, we obtain a finite set $\Gamma\cup\lbrace\phi\rbrace$ of metaformulas such that for any instantiation $\bm{\xi}$,
	$\bm{\xi}(\Gamma)\vdash_{\mathcal{S}}\bm{\xi}(\phi)$ and for a particular instantiation $\bm{\xi}_{0}$, $\bm{\xi}_{0}(\Gamma)=Y$ and 
	$\bm{\xi}_{0}(\phi)=\beta$. This implies that
	$\langle\Gamma,\phi\rangle\in\mathcal{R}$. The latter in turn allows us to conclude that $Y\vdash_{\mathcal{R}}\beta$ and hence $X\vdash_{\mathcal{R}}\beta$. 
	
	Now, we suppose that $X\vdash_{\mathcal{R}}\beta$. That is, there is an $\mathcal{R}$--derivation, determined by $(\text{d}^{\ast}_{1})$--$(\text{d}^{\ast}_{2})$:
	\[
	\alpha_1,\ldots,\alpha_n,\beta. \tag{$\ast$}
	\]
	
	It is obvious that the list $(\ast)$ is not empty. We prove (by induction) that for each formula $\gamma$ of $(\ast)$, $X\vdash_{\mathcal{S}}\gamma$.
	
	It is true for the first formula $\gamma$ in $(\ast)$, in which case $\gamma=\alpha_1=\beta$, for, then, $\gamma\in X$ and hence $X\vdash_{\mathcal{S}}\gamma$.
	
	Suppose we proved that for each $\alpha_{i}$, $X\vdash_{\mathcal{S}}\alpha_{i}$. If $\beta\in X$, we are done. If $\beta\notin X$, then $\beta$ is obtained from $\alpha_{i_1},\ldots,\alpha_{i_k}$ by a rule
	$\langle\Gamma,\phi\rangle\in\mathcal{R}$. By induction, this implies that $\lbrace\alpha_{i_1},\ldots,\alpha_{i_k}\rbrace
	\vdash_{\mathcal{S}}\beta$. By the transitivity property, we conclude that $X\vdash_{\mathcal{S}}\beta$.
\end{proof}

\subsection{The class of $\mathcal{S}$-models}\label{section:S-models}
In this subsection we focus on the class of $\mathcal{S}$-models, when an abstract logic $\mathcal{S}$ can be determined by a set of modus rules. Given a structural abstract logic
$\mathcal{S}$ in a language $\Lan$, we employ for our characterization of $\mathcal{S}$-models a special language of first order. This language depends on $\Lan$ and we denote it by $\textbf{FO}_{\Lan}$.

The denumerable set of the individual variables of $\textbf{FO}_{\Lan}$ consists of the
symbols $\bm{x_{\alpha}}$, for each metavariable $\bm{\alpha}$. The functional
connectives coincide with the connectives of $\Lan$. Also, there is one unary predicate
symbol, $D$, two logical connectives, $\&$ (conjunction) and $\Rightarrow$
(implication), universal quantifier $\forall$, as well the parentheses, `(' and `)'. The rules of formula formation are usual, as well as is usual the notion of an $\textbf{FO}_{\Lan}$-structure (including the notions of \textit{satisfactory} and \textit{validity}). If $\Theta$ is an $\textbf{FO}_{\Lan}$-formula containing only free variables, its universal closure is denoted by $\forall\ldots\forall\Theta$.
Main reference here is~\cite{chang-keisler1990}.\\

It must be clear that if $\phi$ is a metaformula relevant to $\Lan$, then, replacing each metavariable $\bm{\alpha}$ in $\phi$ with the individual variable $\bm{x_\alpha}$, we obtain a term
of $\textbf{FO}_{\Lan}$. We denote this term (obtained from $\phi$) 
by $\phi^{\ast}$.

Further, any modus rule $R:=\dfrac{\psi_1,\ldots,\psi_n}{\phi}$ is translated to the formula
\begin{equation}\label{E:horn-formula-1}
	R^{\ast}:=\forall\ldots\forall((D(\psi_{1}^{\ast})\&\ldots\& D(\psi_{n}^{\ast}))\Rightarrow D(\phi^{\ast}))
\end{equation}
and any premiseless rule $T:=\emptyset\slash\phi$ to the formula
\begin{equation}\label{E:horn-formula-2}
	T^{\ast}:=\forall\ldots\forall D(\phi^{\ast}).
\end{equation}

The formulas of types~\eqref{E:horn-formula-1} and~\eqref{E:horn-formula-2} belong to the class of \textit{basic Horn sentences}; cf.~\cite{chang-keisler1990}, section 6.2.\\

It is clear that any matrix $\mat{M}=\langle\alg{A},D\rangle$ of type $\Lan$ can be regarded as a model for $\textbf{FO}_{\Lan}$-formulas.  We denote the \textit{validity} of a closed formula $\Theta$ in $\mat{M}$ by $\mat{M}\Vdash\Theta$.

Let $\psi$ be a metaformula of $\Mform$ containing only metavariables $\alpha_1,\ldots,\alpha_n$. We note that there is a strong correlation between the standard valuation of the term $\psi^{\ast}$ (containing only individual variables $x_{\alpha_1}, \ldots,x_{\alpha_n}$) in a first order structure $\mat{M}$ and a valuation of a formula $\bm{\xi}(\psi)$, obtained from $\psi$ by a simple instantiation $\bm{\xi}$, in a logical matrix $\mat{M}$. For instance, let us denote the value of $\psi^{\ast}$ on a sequence $a_1,\ldots,a_n$ in $\mat{M}$ (as a first order structure) by $\psi^{\ast}[a_1,\ldots,a_n]$. Now let $\bm{\xi}$ be a simple instantiation with
$\bm{\xi}(\alpha_i)=p_i$. Then, if $v$ is a valuation in $\alg{A}$ with $v[p_i]=a_i$,
then $\psi^{\ast}[a_1,\ldots,a_n]=v[\bm{\xi}(\psi)]$. This can be proven by induction on the degree of $\psi$.

Grounding on the last conclusion, we obtain The first order formula $D(\psi^{\ast})$ is satisfied on the sequence $\alpha_1,\ldots,\alpha_n$ in the first order structure $\mat{M}$ if, and only if, $v[\bm{\xi}(\psi)]\in D$ in the logical matrix $\mat{M}$.

This argument leads to the following lemma.

\begin{lem}\label{L:R-and-R*-in-matrix}
	Let $\mat{M}=\langle\alg{A},D\rangle$ be a matrix of type $\Lan$. For any modus rule $R$, $R$ is sound w.r.t. {\em$\models_{\textbf{M}}$} if, and only if, $\mat{M}\Vdash R^{\ast}$.
\end{lem}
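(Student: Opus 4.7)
The plan is to reduce both directions to the correspondence already noted just before the lemma, namely the equality $\psi^{\ast}[a_1,\ldots,a_n] = v[\bm{\xi}(\psi)]$ when $\bm{\xi}(\alpha_i) = p_i$ and $v[p_i] = a_i$. The only real work is to observe that this correspondence extends naturally to arbitrary (not merely simple) instantiations, in the following form: if $\bm{\sigma}$ is any instantiation whose metavariables lie in $\{\alpha_1,\ldots,\alpha_n\}$, if $v$ is any valuation in $\alg{A}$, and if we set $a_i := v[\bm{\sigma}(\alpha_i)]$, then $\psi^{\ast}[a_1,\ldots,a_n] = v[\bm{\sigma}(\psi)]$ for every metaformula $\psi$ built from those metavariables. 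This is proven by a short induction on the degree of $\psi$: the base case $\psi = \alpha_i$ is the definition of $a_i$; the base case $\psi = c \in \Cons$ holds because both sides evaluate to the nullary operation of $\alg{A}$ corresponding to $c$; the induction step $\psi = F\psi_1\ldots\psi_m$ uses the clause (c) of Definition~\ref{D:valuation} on one side and the corresponding rule for evaluating terms in a first-order structure on the other.

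For the only-if direction, assume $R = \langle\{\psi_1,\ldots,\psi_n\},\phi\rangle$ is sound w.r.t. $\models_{\mat{M}}$. Let $\alpha_1,\ldots,\alpha_k$ be the metavariables occurring in $R$. Given an arbitrary assignment $a_1,\ldots,a_k \in |\alg{A}|$ satisfying the antecedent of $R^{\ast}$, choose distinct propositional variables $p_1,\ldots,p_k$, take the simple instantiation $\bm{\xi}$ with $\bm{\xi}(\alpha_i) = p_i$, and take any valuation $v$ with $v[p_i] = a_i$. The correspondence yields $v[\bm{\xi}(\psi_i)] = \psi_i^{\ast}[a_1,\ldots,a_k] \in D$, so the hypothesis of $\bm{\xi}(\Gamma) \models_{\mat{M}} \bm{\xi}(\phi)$ is met for this particular $v$, whence $v[\bm{\xi}(\phi)] \in D$. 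Reapplying the correspondence gives $\phi^{\ast}[a_1,\ldots,a_k] \in D$, which is precisely what $\mat{M} \Vdash R^{\ast}$ demands.

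For the if direction, assume $\mat{M} \Vdash R^{\ast}$ and let $\bm{\sigma}$ be any instantiation and $v$ any valuation in $\alg{A}$ with $v[\bm{\sigma}(\psi_i)] \in D$ for all $i$. Set $a_i := v[\bm{\sigma}(\alpha_i)]$ for the metavariables $\alpha_1,\ldots,\alpha_k$ of $R$. By the extended correspondence above, $\psi_i^{\ast}[a_1,\ldots,a_k] = v[\bm{\sigma}(\psi_i)] \in D$, so the antecedent of the universally closed implication $R^{\ast}$ is satisfied by the tuple $(a_1,\ldots,a_k)$; by hypothesis the consequent is satisfied as well, giving $\phi^{\ast}[a_1,\ldots,a_k] \in D$, i.e., $v[\bm{\sigma}(\phi)] \in D$. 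Since $\bm{\sigma}$ and $v$ were arbitrary, $\bm{\sigma}(\Gamma) \models_{\mat{M}} \bm{\sigma}(\phi)$ for every instantiation $\bm{\sigma}$, which is soundness of $R$.

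The only place where any real care is needed is the extended correspondence in the if direction, since the paper only spells it out for simple instantiations. Everything else is bookkeeping: the quantifiers of $R^{\ast}$ bind exactly the individual variables $\bm{x_{\alpha_i}}$ matching the metavariables of $R$, and the choice of $(a_1,\ldots,a_k)$ either as $v[p_i]$ (forward) or as $v[\bm{\sigma}(\alpha_i)]$ (backward) lines up the two notions of assignment so that premises and conclusion translate verbatim.
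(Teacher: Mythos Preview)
Your proof is correct and is precisely the argument the paper intends: the paper leaves this lemma as an exercise, having set up the key correspondence $\psi^{\ast}[a_1,\ldots,a_n]=v[\bm{\xi}(\psi)]$ for simple instantiations just before the statement, and your write-up is the natural completion of that sketch. The one genuine addition you make---extending the correspondence to arbitrary instantiations $\bm{\sigma}$ via $a_i:=v[\bm{\sigma}(\alpha_i)]$---is exactly what is needed for the if direction and is handled cleanly by your induction on the degree of $\psi$.
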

\noindent\textit{Proof}~is left to the reader. (Exercise~\ref{section:modus-rules}.\ref{EX:R-and-R*-in-matrix}) 

\begin{prop}\label{P:S-models-modus-rules}
	Let $\mathcal{R}$ be a set of modus rules in $\Lan$. Then for any $\Lan$-matrix $\mat{M}$, $\mat{M}$ is an $\mathcal{S}_{\mathcal{R}}$-model if, and only if, 
	$\mat{M}\Vdash R^{\ast}$, for any $R\in\mathcal{R}$.
\end{prop}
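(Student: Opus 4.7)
The plan is to reduce the statement to the earlier characterizations of $\mathcal{S}_{\mathcal{R}}$ and combine them with Lemma~\ref{L:R-and-R*-in-matrix}. By definition, $\mat{M}$ is an $\mathcal{S}_{\mathcal{R}}$-model exactly when $\vdash_{\mathcal{S}_{\mathcal{R}}}\,\subseteq\,\models_{\mat{M}}$. Also, by Lemma~\ref{L:R-and-R*-in-matrix}, the condition $\mat{M}\Vdash R^{\ast}$ for every $R\in\mathcal{R}$ is equivalent to saying that every $R\in\mathcal{R}$ is sound with respect to $\models_{\mat{M}}$. So the proposition amounts to
\[
\vdash_{\mathcal{S}_{\mathcal{R}}}\,\subseteq\,\models_{\mat{M}}\;\Longleftrightarrow\;\text{every $R\in\mathcal{R}$ is sound w.r.t.~$\models_{\mat{M}}$.}
\]

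For the \emph{only if} direction I would argue as follows. Assume $\vdash_{\mathcal{S}_{\mathcal{R}}}\,\subseteq\,\models_{\mat{M}}$. By claim $2^{\circ}$ in the proof of Proposition~\ref{P:modus-rules}, every $R\in\mathcal{R}$ is sound with respect to $\vdash_{\mathcal{S}_{\mathcal{R}}}$; in other words, for any instantiation $\bm\sigma$ of $R=\langle\Gamma,\phi\rangle$ we have $\bm\sigma(\Gamma)\vdash_{\mathcal{S}_{\mathcal{R}}}\bm\sigma(\phi)$. Combined with the inclusion $\vdash_{\mathcal{S}_{\mathcal{R}}}\,\subseteq\,\models_{\mat{M}}$, this yields $\bm\sigma(\Gamma)\models_{\mat{M}}\bm\sigma(\phi)$, i.e.\ soundness of $R$ with respect to $\models_{\mat{M}}$. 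Applying Lemma~\ref{L:R-and-R*-in-matrix} gives $\mat{M}\Vdash R^{\ast}$.

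For the \emph{if} direction, suppose every $R\in\mathcal{R}$ is sound with respect to $\models_{\mat{M}}$. By Proposition~\ref{P:matrix-con-is-con-relation}, $\models_{\mat{M}}$ is a (structural) consequence relation. Hence it belongs to the family on the right-hand side of the definition
\[
\vdash_{[\mathcal{R}]}\;=\;\bigcap\set{\vdash_{\mathcal{S}'}}{\text{every $R\in\mathcal{R}$ is sound w.r.t.~$\mathcal{S}'$}},
\]
from~\eqref{E:consequence-by-rules}, and therefore $\vdash_{[\mathcal{R}]}\,\subseteq\,\models_{\mat{M}}$. By Proposition~\ref{P:modus-rules}, $\vdash_{[\mathcal{R}]}\,=\,\vdash_{\mathcal{S}_{\mathcal{R}}}$, so $\vdash_{\mathcal{S}_{\mathcal{R}}}\,\subseteq\,\models_{\mat{M}}$, meaning $\mat{M}$ is an $\mathcal{S}_{\mathcal{R}}$-model.

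There is no real obstacle here; the proposition is essentially a bookkeeping corollary of Proposition~\ref{P:modus-rules} and Lemma~\ref{L:R-and-R*-in-matrix}. The only subtle point to verify is that Lemma~\ref{L:R-and-R*-in-matrix} is being applied in both directions at the level of all instantiations (not merely simple ones), but since the lemma is stated for arbitrary modus rules this is immediate.
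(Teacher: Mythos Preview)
Your argument is correct. The only-if direction is essentially the same as the paper's, just with an explicit citation of claim~$2^{\circ}$ from the proof of Proposition~\ref{P:modus-rules} where the paper leaves that step implicit.

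For the if direction you take a genuinely different route. The paper argues directly: assuming each $R\in\mathcal{R}$ is sound for $\models_{\mat{M}}$, it takes an arbitrary $\mathcal{R}$-derivation witnessing $X\vdash_{\mathcal{R}}\alpha$ and shows by induction on its length that every valuation sending $X$ into $D$ sends $\alpha$ into $D$ (the induction is in fact left as Exercise~\ref{section:modus-rules}.\ref{EX:S-models-modus-rules}). You instead invoke the minimality characterization~\eqref{E:consequence-by-rules}: since $\models_{\mat{M}}$ is itself a consequence relation for which all rules of $\mathcal{R}$ are sound, it occurs in the intersection defining $\vdash_{[\mathcal{R}]}$, whence $\vdash_{[\mathcal{R}]}\subseteq\models_{\mat{M}}$, and Proposition~\ref{P:modus-rules} closes the gap. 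This is more economical---in effect you are reusing claim~$3^{\circ}$ from the proof of Proposition~\ref{P:modus-rules} rather than redoing the induction that established it---while the paper's explicit induction has the minor pedagogical benefit of making the preservation-along-derivations mechanism visible once more.
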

\begin{proof}
	We fix an $\Lan$-matrix $\mat{M}=\langle\alg{A},D\rangle$. In view of Proposition~\ref{P:modus-rules}, we can associate with the abstract logic $\mathcal{S}_{\mathcal{R}}$ the consequence relation $\vdash_{\mathcal{R}}$.
	
	We begin with the ``only-if'' part. Thus we assume that the inclusion $\vdash_{\mathcal{R}}\,\subseteq\,\models_{\textbf{M}}$ holds. This implies that any $R\in\mathcal{R}$ is sound w.r.t. $\models_{\textbf{M}}$. By Lemma~\ref{L:R-and-R*-in-matrix}, this in turn implies that $\mat{M}\Vdash R^{\ast}$, for any $R\in\mathcal{R}$.
	
	To prove the ``if'' part, we assume that the last conclusion holds. By Lemma~\ref{L:R-and-R*-in-matrix}, we derive that any $R\in\mathcal{R}$ is sound w.r.t.
	$\models_{\textbf{M}}$.
	
	Now, suppose that $X\vdash_{\mathcal{R}}\alpha$. That is, there is an $\mathcal{R}$-derivation~\eqref{E:list-2}. By induction on the length of~\eqref{E:list-2},
	one can prove that for any valuation $v$ in $\alg{A}$, if $v[X]\subseteq D$, then $v[\alpha]\in D$. We leave this task to the reader. (Exercise~\ref{section:modus-rules}.\ref{EX:S-models-modus-rules})
\end{proof}
\begin{cor}\label{C:S-models-modus-rules}
	Let $\mathcal{R}$ be a set of modus rules in $\Lan$. Then the class of all 
	$\mathcal{S}_{\mathcal{R}}$-models is closed under reduced products, in particular under ultraproducts.
\end{cor}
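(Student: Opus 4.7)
The plan is to reduce the statement to a classical preservation theorem from model theory. By Proposition~\ref{P:S-models-modus-rules}, a matrix $\mat{M}$ of type $\Lan$ is an $\mathcal{S}_{\mathcal{R}}$-model if and only if $\mat{M}\Vdash R^{\ast}$ for every $R\in\mathcal{R}$, where each $R^{\ast}$ has the shape displayed in~\eqref{E:horn-formula-1} or~\eqref{E:horn-formula-2}. As the text remarks immediately after~\eqref{E:horn-formula-2}, both shapes are \emph{basic Horn sentences} of the first order language $\textbf{FO}_{\Lan}$.

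The key model-theoretic fact I would invoke is the classical preservation theorem: every Horn sentence (in particular every basic Horn sentence) is preserved under reduced products; see~\cite{chang-keisler1990}, Theorem~6.2.5. Since an $\Lan$-matrix $\langle\alg{A},D\rangle$ is exactly an $\textbf{FO}_{\Lan}$-structure (the functional connectives of $\Lan$ give the function symbols and $D$ interprets the unary predicate symbol $D$), the reduced product of a family $\{\mat{M}_i\}_{i\in I}$ of matrices over a filter $\mathcal{F}$ on $I$, formed in the sense of first order model theory, is again an $\Lan$-matrix. Applying the preservation theorem, if each $\mat{M}_i\Vdash R^{\ast}$, then $\prod_{\mathcal{F}}\mat{M}_i\Vdash R^{\ast}$ for every $R\in\mathcal{R}$.

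Putting the pieces together: given any family $\{\mat{M}_i\}_{i\in I}$ of $\mathcal{S}_{\mathcal{R}}$-models and any filter $\mathcal{F}$ on $I$, Proposition~\ref{P:S-models-modus-rules} shows that each $\mat{M}_i$ satisfies every $R^{\ast}$, the preservation result transfers this to the reduced product, and Proposition~\ref{P:S-models-modus-rules} applied in the other direction shows that $\prod_{\mathcal{F}}\mat{M}_i$ is an $\mathcal{S}_{\mathcal{R}}$-model. Taking $\mathcal{F}$ to be an ultrafilter gives the ultraproduct case as a special instance.

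The only genuinely non-routine ingredient is the preservation of Horn sentences under reduced products, and that is taken verbatim from~\cite{chang-keisler1990}; the remaining work is the bookkeeping identification of $\Lan$-matrices with $\textbf{FO}_{\Lan}$-structures, which is already established in the paragraph preceding Lemma~\ref{L:R-and-R*-in-matrix}. Thus the proof is short enough to be left to the reader, or presented in two or three lines quoting Proposition~\ref{P:S-models-modus-rules} and the Horn preservation theorem.
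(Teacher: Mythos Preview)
Your proposal is correct and follows essentially the same approach as the paper: invoke Proposition~\ref{P:S-models-modus-rules} to identify the class of $\mathcal{S}_{\mathcal{R}}$-models with the models of a set of Horn sentences, then cite the Chang--Keisler preservation theorem for Horn sentences under reduced products. The paper's proof is exactly these two lines (citing proposition~6.2.2 rather than theorem~6.2.5, but the content is the same).
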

\begin{proof}
	Let $\mathcal{M}$ be the class of all $\mathcal{S}_{\mathcal{R}}$-models. According to Proposition~\ref{P:S-models-modus-rules}, an $\Lan$-matrix $\mat{M}\in\mathcal{M}$ if, and only if, for any $R\in\mathcal{R}$, $R\in\mathcal{R}$. This means that $\mathcal{M}$ is determined by a set of Horn formulas. It is well known that every Horn sentence is preserved under reduced products, in particular under ultraproducts; cf.~\cite{chang-keisler1990}, proposition 6.2.2.
\end{proof}

\subsection{Modus rules vs. non-finitary rules}\label{section:modus-rules-vs-rules}

Proposition~\ref{P:modus-rules} and Corollary~\ref{C:modus-rules-finitary} show limitation of modus rules in defining consequent relations. It should not be surprising that there are consequence relations that cannot be defined by any set of modus rules. Below we discuss an example of the consequence relation of that kind; moreover, the theorems of that abstract logic, as we will show, coincide with the theorems of a calculus, \textsf{LC}. (See \textsf{LC} completeness below.)

In the language $\Lan_{A}$, we formulate two more premiseless structural rules: 
\[
\begin{array}{cl}
\text{ax11} &\beta\rightarrow(\neg\beta\rightarrow\alpha),\\
\text{ax12} &(\alpha\rightarrow\beta)\vee(\beta\rightarrow\alpha).
\end{array}
\]

A calculus \textsf{LC} is defined through derivations according to $(\text{d}_1)$--$(\text{d}_2)$ where in $(\text{d}_2)$, however, we use only the rules 
ax1--ax9, ax11-ax12 and (\text{b}-$iii$) (modus ponens).

We note two properties of \textsf{LC}. The first is that for any set $X_{0}\cup\lbrace\alpha\rbrace\Subset\Forms_{\Lan_{A}}$,
\begin{equation}\label{E:LC-property-1}
	X_0\vdash_{\textsf{LC}}\alpha~\Longleftrightarrow~\wedge X_0\vdash_{\textsf{LC}}\alpha.
\end{equation}
(See Exercise~\ref{section:modus-rules}.\ref{EX:LC-property-1}.)

The second property is known as the ``deduction theorem'' (for \textsf{LC}), which in its $\Rightarrow$-part, in essence, asserts that the hyperrule (c--$i$) is sound w.r.t. $\vdash_{\textsf{LC}}$. Namely, for any set $X\cup\lbrace\alpha,\beta\rbrace\subseteq\Forms_{\Lan_{A}}$,
\begin{equation}\label{E:LC-deduction-theorem}
	X,\alpha\vdash_{\textsf{LC}}\beta~\Longleftrightarrow~X\vdash_{\textsf{LC}}\alpha\rightarrow\beta.
\end{equation}
(See Exercise~\ref{section:modus-rules}.\ref{EX:LC-deduction-theorem}.)

In our consideration, we employ the single matrix relation $\models_{\textbf{LC}}$  which is grounded on the logical matrix \textbf{LC}; see Section~\ref{section:dummett}. 

We will need the following property.

For any $\Lan_{A}$-formulas $\alpha$ and $\beta$,
\[
\vdash_{\textsf{LC}}\alpha~\Longleftrightarrow~\models_{\textbf{LC}}\alpha.
\tag{$\LC$ completeness}
\]

The last equivalence is due to M. Dummett~\cite{dummett1959}, theorem 1.\\

Next, we define a consequence relation, $\vdash_{\textsf{LC}}$ (or abstract logic $\LC$), analogously to the relation $\vdash_{2}^{\circ}$ (Section~\ref{section:consequence-defining}), but by using the rules ax1--ax9, ax11--ax12 and modus ponens (the rule (b-$iii$)) for the notion of \textsf{LC}-derivation defined by the clauses ($\text{d}_{1}^{\ast}$)--($\text{d}_{2}^{\ast}$) above. 

In addition, we consider the non-modus structural rule $R^{\ast}:=R_{\langle\Gamma^{\ast},\bm{\alpha}_0\rangle}$ with
\[
\Gamma^{\ast}:=\set{(\bm{\alpha}_{i}\leftrightarrow \bm{\alpha}_{j})\rightarrow \bm{\alpha}_0}{0<i<j<\omega},
\]
where $\bm{\alpha}_i$, $0\leq i<\omega$, are pairwise distinct metavariables. Also, we define an infinite set of modus rules
\[
\mathcal{R}:=\set{\Gamma\slash\bm{\alpha}_0}{\emptyset\subset\Gamma\Subset\Gamma^{\ast}}.
\]

Then, two consequence relations, $\vdash^{\ast}$ and $ \vdash^{\star} $, are defined in $\mathcal{P}(\Forms_{\Lan_{A}})\times\Forms_{\Lan_{A}}$
similarly to the notion of ${\textbf{LC}}$-derivation but also for $\vdash^{\ast}$ with $R^{\ast}$ as an additional inference rule and for $\vdash^{\star}$ with the rules of $\mathcal{R}$ as additional inference rules. 

We aim to show that, although for any set $X_{0}\cup\lbrace\alpha\rbrace\Subset\Forms_{\Lan_{A}}$,
\begin{equation}\label{E:finitary-equivalence}
	X_0\vdash^{\star}\alpha~\Longleftrightarrow~X_0\vdash^{\ast}\alpha,
\end{equation}
the consequence relation $\vdash^{\ast}$ cannot be defined by any set of structural modus rules, for, as we will show, it is not finitary; see Corollary~\ref{C:modus-rules-finitary}.

As concerns~\eqref{E:finitary-equivalence}, it is obvious, for in any $\vdash^{\ast}$-derivation with a finite set $X_0$ as a set of premises,
the rule $R^{\ast}$ can be applied only under such an instantiation $\bm{\xi}$, for which
there is an instantiation $\bm{\zeta}$ such that \mbox{$\bm{\xi}(\Gamma^\ast)\slash\bm{\xi}(\bm{\alpha_{0}})=\bm{\zeta}(\Gamma)
	\slash\bm{\zeta}(\bm{\alpha})$}, for some $\Gamma\Subset\Gamma^{\ast}$; and vice versa.

Finally, we demonstrate that the consequence relation $\vdash^{\ast}$ differs from $\vdash_{\textsf{LC}}$, for the latter is finitary (Corollary~\ref{C:modus-rules-finitary}), while the former cannot be defined by any set of modus rules. 

For contradiction, assume that $\vdash^{\ast}$ is finitary. Let $\bm{\sigma}$ be an instantiation such that $\bm{\sigma}(\bm{\alpha}_{i})=p_i$, $0\le i<\omega$. We note that $\bm{\sigma}(\Gamma^{\ast})\vdash^{\ast}p_0$. By assumption, for some finite $X_0\Subset\bm{\sigma}(\Gamma^{\ast})$, $X_{0}\vdash^{\ast}p_0$. In virtue of~\eqref{E:finitary-equivalence}, $X_0\vdash_{\textsf{LC}}p_0$. This in turn,
according to~\eqref{E:LC-property-1} and~\eqref{E:LC-deduction-theorem}, implies
that $\vdash_{\textsf{LC}}\wedge X_{0}\rightarrow p_0$. By \textsf{LC} completeness,
we obtain that $\models_{\textbf{LC}}\wedge X_{0}\rightarrow p_0$ and hence
$X_{0}\models_{\textbf{LC}}p_0$. Thus, by monotonicity, we have that
$\bm{\sigma}(\Gamma^{\ast})\models_{\textbf{LC}} p_0$. However, let $v$ be such a valuation that $v[p_0]$ is a pre-last element $a$ of $\mat{LC}$ w.r.t. $\leq$ and $v[p_i]\neq v[p_j]$, whenever $i<j$. Then, since
$v[p_i\leftrightarrow p_j]\leq a$, we have: $v[\bm{\sigma}(\Gamma^{\ast})]\subseteq\lbrace\one\rbrace$, but $v[p_0]=a\neq\one$.

\paragraph{Exercises~\ref{section:modus-rules}}
\begin{enumerate}
	\item \label{EX:modus-rules-finitary}Prove Corollary~\ref{C:modus-rules-finitary}.
	\item \label{EX:modus-rules}Show that if $\mathcal{R}$ is a set of structural modus rules, then the relation $\vdash_{[\mathcal{R}]}$ satisfies the property (c) and structurality property of Definition~\ref{D:consequnce-relation-single}.
	\item \label{EX:LC-property-1}Prove the equivalence~\eqref{E:LC-property-1}.
	\item\label{EX:LC-deduction-theorem} Prove the equivalence~\eqref{E:LC-deduction-theorem} (the deduction theorem for \textsf{LC}).
	\item \label{EX:R-and-R*-in-matrix}Prove Lemma~\ref{L:R-and-R*-in-matrix}.
	\item \label{EX:S-models-modus-rules}Finish the proof of Proposition~\ref{P:S-models-modus-rules}.
\end{enumerate}

\section{Extensions of abstract logics}\label{section:extensions}
In this section we consider interaction of abstract logics defined in different formal languages. We begin with the following two definitions.

\begin{defn}[extension, conservative extension]\label{D:conservative-extension}
	Let a language $\Lan^{+}$ be an extension of a language $\Lan$. Also, let $\mathcal{S}^{+}$ and $\mathcal{S}$ be abstract logics in the languages $\Lan^{+}$ and
	$\Lan$, respectively. The abstract logic $\mathcal{S}^{+}$ is an \textbf{extension of}  $($or \textbf{over}$)$ 
	$\mathcal{S}$ if for any set $X$ of  $\Lan$\!-formulas,
	{\em\[
		\textbf{Cn}_{\mathcal{S}}(X)\subseteq\textbf{Cn}_{\mathcal{S}^{+}}(X).
		\]}
	We call $\mathcal{S}^{+}$ a \textbf{conservative extension of} $($or \textbf{over}$)$ $\mathcal{S}$ if for any set $X$ of $\Lan$\!-formulas,
	{\em\begin{equation}\label{E:conservative-extension}
			\textbf{Cn}_{\mathcal{S}^{+}}(X)\cap\Forms=\textbf{Cn}_{\mathcal{S}}(X).
	\end{equation}}
\end{defn}

The following two observation are obvious.
\begin{prop}\label{P:trivial-extension}
	Let a language $\Lan^{+}$ be an extension of a language $\Lan$. Assume that $\mathcal{S}$ is a structural logic in $\Lan$ and $\mathcal{S}^{+}$ is its structural extension in $\Lan^{+}$. Then if $\mathcal{S}$ is trivial, then so is $\mathcal{S}^{+}$.
\end{prop}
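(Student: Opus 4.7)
The plan is to reduce triviality to the one-variable criterion of Exercise~\ref{section:consequence-defining}.\ref{EX:trivial-1}, namely that a structural abstract logic $\mathcal{S}$ is trivial if and only if $p \in \textbf{Cn}_{\mathcal{S}}(\emptyset)$ for every $p \in \Var$ (equivalently, for some single variable, together with structurality). This will make it immediate to push triviality across the extension.

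First I would fix an arbitrary propositional variable $p$ of $\Lan$. By the triviality of $\mathcal{S}$ and the criterion above, $p \in \textbf{Cn}_{\mathcal{S}}(\emptyset)$. Since $\Var \subseteq \Var^{+}$, the symbol $p$ is also a variable of $\Lan^{+}$; and since $\mathcal{S}^{+}$ is an extension of $\mathcal{S}$, Definition~\ref{D:conservative-extension} gives $\textbf{Cn}_{\mathcal{S}}(\emptyset) \subseteq \textbf{Cn}_{\mathcal{S}^{+}}(\emptyset)$, hence $p \in \textbf{Cn}_{\mathcal{S}^{+}}(\emptyset)$.

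Next I would invoke structurality of $\mathcal{S}^{+}$. Given an arbitrary $\Lan^{+}$-formula $\alpha$, pick an $\Lan^{+}$-substitution $\sigma$ with $\sigma(p) = \alpha$; this exists since $p$ is a variable. Structurality of $\mathcal{S}^{+}$ (equivalently, the structurality formulation of $\textbf{Cn}_{\mathcal{S}^{+}}$) yields $\sigma(p) \in \textbf{Cn}_{\mathcal{S}^{+}}(\sigma(\emptyset)) = \textbf{Cn}_{\mathcal{S}^{+}}(\emptyset)$, i.e.\ $\alpha \in \textbf{Cn}_{\mathcal{S}^{+}}(\emptyset)$. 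Thus $\textbf{Cn}_{\mathcal{S}^{+}}(\emptyset) = \textit{\textbf{Fm}}_{\mathcal{L}^{+}}$, and by monotonicity $\textbf{Cn}_{\mathcal{S}^{+}}(X) = \textit{\textbf{Fm}}_{\mathcal{L}^{+}}$ for every $X$, proving $\mathcal{S}^{+}$ trivial.

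There is no genuine obstacle here; the only point to be careful about is that $\Lan^{+}$ may introduce new variables and new connectives, so one must not just transport the conclusion $\textbf{Cn}_{\mathcal{S}}(X) = \textit{\textbf{Fm}}_{\mathcal{L}}$ directly — it only gives triviality on the $\Lan$-fragment. The role of structurality of $\mathcal{S}^{+}$ is precisely to bridge this gap by substituting an arbitrary $\Lan^{+}$-formula for a single $\Lan$-variable already known to be derivable from $\emptyset$.
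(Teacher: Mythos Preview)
Your proof is correct and follows essentially the same route as the paper: use the one-variable criterion for triviality (Exercise~\ref{section:consequence-defining}.\ref{EX:trivial-1}) to get $p\in\textbf{Cn}_{\mathcal{S}}(\emptyset)$, push this through the extension to obtain $p\in\textbf{Cn}_{\mathcal{S}^{+}}(\emptyset)$, and then invoke structurality of $\mathcal{S}^{+}$ to conclude triviality. The paper's proof is slightly terser (it cites the criterion again for the last step rather than spelling out the substitution argument), but the content is identical.
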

\begin{proof}
	If $\mathcal{S}$ is trivial, then $p\in\ConS{\emptyset}$, for an arbitrary $p\in\Var_{\Lan}$. Since $\mathcal{S}^{+}$ is an extension of $\mathcal{S}$, $p\in\textbf{Cn}_{\mathcal{S}^{+}}(\emptyset)$. And since $\mathcal{S}^{+}$ is structural, the latter implies that $\mathcal{S}^{+}$ is trivial.
\end{proof}
\begin{prop}\label{P:intersection-conservative-extensions}
	Let a language $\Lan^{+}$ be an extension of a language $\Lan$ and let $\mathcal{S}$ be an abstract logic in $\Lan$.
	The intersection of any nonempty set of conservative extensions of $\mathcal{S}$ in $\Lan^{+}$ is a conservative extensions of $\mathcal{S}$ in $\Lan^{+}$.
\end{prop}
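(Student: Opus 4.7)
The plan is to unfold the definition of the intersection of operators, invoke Proposition~\ref{P:con-operator-intersection} to get a consequence operator for free, and then verify the two clauses of Definition~\ref{D:conservative-extension}. Concretely, let $\{\mathcal{S}^{+}_{i}\}_{i\in I}$, with $I\neq\emptyset$, be a family of conservative extensions of $\mathcal{S}$ in $\Lan^{+}$, and let $\mathcal{S}^{+}$ be the abstract logic whose consequence operator is
\[
\textbf{Cn}_{\mathcal{S}^{+}}(Y):=\bigcap_{i\in I}\textbf{Cn}_{\mathcal{S}^{+}_{i}}(Y),
\]
for every $Y\subseteq\Forms_{\Lan^{+}}$. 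By Proposition~\ref{P:con-operator-intersection} this is a consequence operator in $\Lan^{+}$, so only the two parts of the conservativity condition remain.

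First I would check that $\mathcal{S}^{+}$ is an extension of $\mathcal{S}$. Given $X\subseteq\textit{\textbf{Fm}}_{\Lan}$, each $\mathcal{S}^{+}_{i}$ is in particular an extension of $\mathcal{S}$, so $\textbf{Cn}_{\mathcal{S}}(X)\subseteq\textbf{Cn}_{\mathcal{S}^{+}_{i}}(X)$ for every $i\in I$; intersecting over $i$ yields $\textbf{Cn}_{\mathcal{S}}(X)\subseteq\textbf{Cn}_{\mathcal{S}^{+}}(X)$. Combined with $\textbf{Cn}_{\mathcal{S}}(X)\subseteq\textit{\textbf{Fm}}_{\Lan}$, this already gives the inclusion $\textbf{Cn}_{\mathcal{S}}(X)\subseteq\textbf{Cn}_{\mathcal{S}^{+}}(X)\cap\textit{\textbf{Fm}}_{\Lan}$, i.e., the ``$\supseteq$'' half of~\eqref{E:conservative-extension}.

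For the reverse inclusion, pick any $\alpha\in\textbf{Cn}_{\mathcal{S}^{+}}(X)\cap\textit{\textbf{Fm}}_{\Lan}$ and fix an arbitrary $i_{0}\in I$, which exists because $I$ is nonempty. By definition of the intersection, $\alpha\in\textbf{Cn}_{\mathcal{S}^{+}_{i_{0}}}(X)$, and since $\alpha\in\textit{\textbf{Fm}}_{\Lan}$, we get $\alpha\in\textbf{Cn}_{\mathcal{S}^{+}_{i_{0}}}(X)\cap\textit{\textbf{Fm}}_{\Lan}=\textbf{Cn}_{\mathcal{S}}(X)$ by the assumed conservativity of $\mathcal{S}^{+}_{i_{0}}$. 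This closes the proof.

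There is essentially no obstacle: the only place where the hypotheses are used nontrivially is the choice of $i_{0}$, which is why the assumption that the family is nonempty is genuinely needed (otherwise the intersection collapses to the trivial operator $Y\mapsto\textit{\textbf{Fm}}_{\Lan^{+}}$ and conservativity fails). No structurality, finitariness, or matrix-theoretic machinery is required, so the argument is a direct unfolding of definitions.
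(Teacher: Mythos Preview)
Your argument is correct and is exactly the direct verification the paper has in mind; the paper itself leaves this proposition as an exercise and gives no proof to compare against. Your use of Proposition~\ref{P:con-operator-intersection} to dispose of the consequence-operator part and your explicit use of $I\neq\emptyset$ to pick an $i_{0}$ for the nontrivial inclusion are both appropriate.
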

\noindent\textit{Proof}~is left to the reader. (Exercise~\ref{section:extensions}.\ref{EX:intersection-conservative-extensions})\\

A simple way to obtain a (structural) conservative extension of a given abstract logic is the following.

Indeed, assume that $\mathcal{S}$ is a structural logic in $\Lan$ and $\Lan^{+}$ is an extension of $\Lan$. According to Corollary~\ref{C:Lindenbaum-completeness}, the atlas $\Lin[\Sigma_{\mathcal{S}}]=\left\langle \FormAl,\Sigma_{\mathcal{S}}\right\rangle $ is adequate for $\mathcal{S}$.
Let us select an arbitrary formula $\alpha\in\Forms$ and expand the signature of $\FormAl$ to $\Lan^{+}$, thereby obtaining an expansion $\FormAl[\alpha]^{+}$, as follows. For any $c\in\Cons_{\Lan^{+}}\setminus\Cons_{\Lan}$ we interpret as the element $\alpha\in |\FormAl|$. Similarly, for any new connective $F\in\Func_{\Lan^{+}}\setminus\Func_{\Lan}$, we interpret any term $F(\beta_1,\ldots,\beta_n)$ by $\alpha$. Then, we define an abstract logic $\mathcal{S}^{+}$ as the one determined by the atlas $\left\langle \FormAl[\alpha]^{+},\Sigma_{\mathcal{S}}\right\rangle $.

According to Proposition~\ref{P:matrix-con-is-con-relation}, $\mathcal{S}^{+}$ is structural. Further, $\mathcal{S}^{+}$ is a conservative extension of $\mathcal{S}$, for for any set $X\cup\lbrace\beta\rbrace\subseteq\Forms$,
\begin{equation}\label{E:S^+-conservativity}
	X\vdash_{\mathcal{S}^{+}}\beta~\Longleftrightarrow~X\models_{\textsf{\textbf{Lin}}[\Sigma_{\mathcal{S}}]}\beta.
\end{equation}

We leave the reader to prove~\eqref{E:S^+-conservativity}; see Exercise~\ref{section:extensions}.\ref{EX:S^+-conservativity}.

In virtue of Proposition~\ref{P:trivial-extension}, $\mathcal{S}^{+}$ is trivial if $\mathcal{S}$ is trivial. \\

We summarize as follows.
\begin{prop}\label{P:S^+-conservativity}
	Let $\mathcal{S}$ be an structural abstract logic in $\Lan$ and $\Lan^{+}$ be an extension of $\Lan$. Then there is a conservative structural extension $\mathcal{S}^{+}$ of $\mathcal{S}$. Moreover, if $\mathcal{S}$ is trivial, $\mathcal{S}^{+}$ is also trivial.
\end{prop}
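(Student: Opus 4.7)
The plan is to make the construction sketched immediately before the proposition do all the work, and then just assemble the three claimed properties (existence, conservativity, preservation of triviality) from results already in the text. So I would first fix any $\Lan$\!-formula $\alpha \in \Forms$ and expand the formula algebra $\FormAl$ to an $\Lan^+$-algebra $\FormAl[\alpha]^{+}$ by interpreting every new constant $c \in \Cons_{\Lan^+}\setminus\Cons_{\Lan}$ as the element $\alpha$ and every new connective $F \in \Func_{\Lan^+}\setminus\Func_{\Lan}$ as the constant operation with value $\alpha$. Then I would let $\mathcal{S}^{+}$ be the abstract logic determined by the atlas $\langle\FormAl[\alpha]^{+},\Sigma_{\mathcal{S}}\rangle$.

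Next I would check the three items. Structurality of $\mathcal{S}^{+}$ is immediate from Proposition~\ref{P:matrix-con-is-con-relation}, since any atlas-consequence is a structural consequence relation. To see that $\mathcal{S}^{+}$ is an extension of $\mathcal{S}$, observe that any $\Lan$\!-valuation in $\FormAl$ is (up to the extra interpretations of the new symbols) an $\Lan^{+}$-valuation in $\FormAl[\alpha]^{+}$, and on $\Lan$\!-formulas the two valuations agree; hence any $\Lan$-sequent valid over the Lindenbaum atlas $\Lin[\Sigma_{\mathcal{S}}]$ remains valid when we pass to $\langle\FormAl[\alpha]^{+},\Sigma_{\mathcal{S}}\rangle$. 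Combined with Corollary~\ref{C:Lindenbaum-completeness}, this gives the ``$\subseteq$'' in the equivalence~\eqref{E:S^+-conservativity} on $\Lan$-formulas.

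The main obstacle is the other direction of~\eqref{E:S^+-conservativity}, i.e.\ showing
\[
X \vdash_{\mathcal{S}^{+}}\beta \Longrightarrow X \models_{\textsf{\textbf{Lin}}[\Sigma_{\mathcal{S}}]}\beta
\]
whenever $X\cup\{\beta\}\subseteq\Forms$. The key point is that every $\Lan^{+}$-substitution $\sigma^{+}$ restricted to the $\Lan$-variables, followed by the natural collapse that sends each new constant and each term $F(\gamma_1,\dots,\gamma_n)$ with $F\in\Func_{\Lan^+}\setminus\Func_{\Lan}$ to $\alpha$, determines an $\Lan$-substitution $\sigma$ which agrees with the interpretation in $\FormAl[\alpha]^{+}$ on every $\Lan$-formula. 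Hence for any $D_{\mathcal{S}}\in\Sigma_{\mathcal{S}}$ and any $\Lan^+$-substitution $\sigma^+$ with $\sigma^{+}(X)\subseteq D_{\mathcal{S}}$ we can produce an $\Lan$-substitution $\sigma$ with $\sigma(X)\subseteq D_{\mathcal{S}}$ and $\sigma(\beta)=\sigma^{+}(\beta)$; so if the atlas validates the sequent in $\Lan^+$, it already validates it in $\Lan$. This delivers the ``$\cap\,\Forms$'' equality in~\eqref{E:conservative-extension} via Corollary~\ref{C:Lindenbaum-completeness}.

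Finally, the ``moreover'' clause is free: $\mathcal{S}^{+}$ is structural, and it extends $\mathcal{S}$, so Proposition~\ref{P:trivial-extension} immediately transfers triviality of $\mathcal{S}$ to $\mathcal{S}^{+}$. The only step requiring genuine care is the collapsing-substitution argument above, which needs to be carried out by induction on the degree of the $\Lan^{+}$-formulas occurring in the relevant evaluations; everything else is a direct appeal to the Lindenbaum-completeness corollary and the structurality of atlas consequences.
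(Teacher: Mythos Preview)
Your approach is exactly the paper's: the proposition is stated as a summary of the atlas construction given just above it, with structurality from Proposition~\ref{P:matrix-con-is-con-relation}, conservativity from the equivalence~\eqref{E:S^+-conservativity} (left as an exercise with the hint you essentially follow), and triviality transfer from Proposition~\ref{P:trivial-extension}.

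One small point on the ``main obstacle'' paragraph: the collapse map you describe is unnecessary, because the carrier of $\FormAl[\alpha]^{+}$ is already $\Forms$, not $\Forms_{\Lan^{+}}$. A valuation in the atlas $\langle\FormAl[\alpha]^{+},\Sigma_{\mathcal{S}}\rangle$ is a map $v^{+}:\Var_{\Lan^{+}}\to\Forms$, extended to a homomorphism $\Forms_{\Lan^{+}}\to\Forms$; it is not an $\Lan^{+}$-substitution in the sense of an endomorphism of $\FormAl_{\Lan^{+}}$. For $\gamma\in\Forms$ the value $v^{+}[\gamma]$ depends only on $v^{+}\!\upharpoonright\!\Var_{\Lan}$, which is already an $\Lan$-substitution, and the $\Lan$-connectives are interpreted identically in $\FormAl$ and in $\FormAl[\alpha]^{+}$. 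So the equivalence~\eqref{E:S^+-conservativity} follows by simple restriction/extension of valuations (this is the content of the paper's hint), with no collapse step needed.
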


\begin{cor}
	Let a language $\Lan^{+}$ be an extension of a language $\Lan$. Any structural abstract logic in $\Lan$ has a least structural conservative extension in $\Lan^{+}$.
\end{cor}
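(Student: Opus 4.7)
The plan is to realize the least conservative extension as the intersection of the family of all structural conservative extensions of $\mathcal{S}$ in $\Lan^{+}$. The main ingredients are already at hand, so the proof will amount to checking that this intersection still lies in the family, and then observing it is smallest by construction.

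First I would fix a structural abstract logic $\mathcal{S}$ in $\Lan$ and denote by $\mathcal{F}$ the family of all structural conservative extensions of $\mathcal{S}$ in $\Lan^{+}$. The crucial preliminary is that $\mathcal{F}\neq\emptyset$: this is exactly the content of Proposition~\ref{P:S^+-conservativity}, which produces a concrete structural conservative extension built from the Lindenbaum atlas by choosing a formula $\alpha\in\Forms$ and interpreting every new constant and every term headed by a new connective as $\alpha$. So the intersection is over a nonempty class and is therefore well-defined.

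Next I would define $\textbf{Cn}_{\mathcal{S}^{\min}}(X):=\bigcap_{\mathcal{S}^{+}\in\mathcal{F}}\textbf{Cn}_{\mathcal{S}^{+}}(X)$ for every $X\subseteq\Forms_{\Lan^{+}}$ and verify three properties in turn. Property (i): $\textbf{Cn}_{\mathcal{S}^{\min}}$ is a structural consequence operator. This follows directly from Proposition~\ref{P:con-operator-intersection}, which guarantees that the intersection of any family of structural consequence operators is again a structural consequence operator. Property (ii): $\mathcal{S}^{\min}$ is a conservative extension of $\mathcal{S}$. This is precisely Proposition~\ref{P:intersection-conservative-extensions} applied to the nonempty family $\mathcal{F}$. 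Property (iii): $\mathcal{S}^{\min}$ is the least structural conservative extension; for any $\mathcal{S}^{+}\in\mathcal{F}$ and any $X\subseteq\Forms_{\Lan^{+}}$ one has $\textbf{Cn}_{\mathcal{S}^{\min}}(X)\subseteq\textbf{Cn}_{\mathcal{S}^{+}}(X)$ by definition of intersection, so $\mathcal{S}^{\min}$ is below every element of $\mathcal{F}$ in the pointwise order. Since $\mathcal{S}^{\min}$ itself belongs to $\mathcal{F}$ by (i) and (ii), it is the minimum.

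There is no substantial obstacle here; the proof is a bookkeeping combination of Propositions~\ref{P:con-operator-intersection}, \ref{P:intersection-conservative-extensions}, and \ref{P:S^+-conservativity}. The only point worth flagging is the nonemptiness of $\mathcal{F}$: without Proposition~\ref{P:S^+-conservativity}, the intersection would be the trivial ``all formulas'' operator and the argument would collapse. With nonemptiness in hand, conservativity and structurality are both closed under arbitrary intersection, so $\mathcal{S}^{\min}$ inherits both properties and is minimal by construction.
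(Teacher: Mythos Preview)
Your proof is correct and follows essentially the same approach as the paper's: the paper's one-line proof cites Proposition~\ref{P:S^+-conservativity} for nonemptiness and Proposition~\ref{P:intersection-conservative-extensions} for closure under intersection, and your write-up makes the same argument explicit (with the welcome addition of citing Proposition~\ref{P:con-operator-intersection} to secure structurality of the intersection, which the paper leaves implicit).
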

\begin{proof}
	We use Proposition~\ref{P:S^+-conservativity} and  Proposition~\ref{P:intersection-conservative-extensions}. 	
\end{proof}

Our next definition will play a key part in the sequel.
\begin{defn}[relation $\vdash_{(\mathcal{S})^{+}}$]\label{D:(S)^+}
	Let a language $\Lan^{+}$ be an extension of a language $\Lan$. Also, let $\mathcal{S}$ be an abstract logic in $\Lan$. We define a relation $\vdash_{(\mathcal{S})^{+}}$ in $\mathcal{P}(\Forms_{\Lan^{+}})\times\Forms_{\Lan^{+}}$ as follows:
	{\em\[
		\begin{array}{rl}
		X\vdash_{(\mathcal{S})^{+}}\alpha\stackrel{\text{df}}{\Longleftrightarrow}
		\!\!\!&\textit{there is a set $Y\cup\lbrace\beta\rbrace\subseteq\Forms$ and an $\Lan^{+}$\!-substitution}\\
		&\textit{$\sigma$ such that $\sigma(Y)\subseteq X$, $\alpha=\sigma(\beta)$ and $Y\vdash_{\mathcal{S}}\beta$}.
		\end{array}
		\]}
\end{defn}

We find a justification for the last definition in the following proposition.
\begin{prop}\label{P:(S)^+}
	Let a language $\Lan^{+}$ be an extension of a language $\Lan$ and $\mathcal{S}$ be a structural abstract logic in $\Lan$. If the relation $\vdash_{(\mathcal{S})^{+}}$ is a consequence relation, then $(\mathcal{S})^{+}$ is the least structural conservative extension of $\mathcal{S}$ in $\Lan^{+}$. In addition, $(\mathcal{S})^{+}$ is nontrivial if, and only if, $\mathcal{S}$ is nontrivial.
\end{prop}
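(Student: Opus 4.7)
The plan is to verify four claims in turn: (i) $(\mathcal{S})^+$ is structural; (ii) it extends $\mathcal{S}$; (iii) it is conservative over $\mathcal{S}$; (iv) any structural extension of $\mathcal{S}$ in $\mathcal{L}^+$ contains it; and (v) the triviality equivalence. Claim (i) is immediate from Definition~\ref{D:(S)^+}: if $Y,\beta,\sigma$ witness $X\vdash_{(\mathcal{S})^+}\alpha$ and $\tau$ is any $\mathcal{L}^+$\!-substitution, then $Y,\beta,\tau\circ\sigma$ witness $\tau(X)\vdash_{(\mathcal{S})^+}\tau(\alpha)$. For (ii), given $X\cup\{\alpha\}\subseteq\textit{\textbf{Fm}}_{\mathcal{L}}$ with $X\vdash_{\mathcal{S}}\alpha$, take $Y=X$, $\beta=\alpha$, and the identity $\mathcal{L}^+$\!-substitution $\iota$; then monotonicity of the assumed consequence relation $\vdash_{(\mathcal{S})^+}$ pushes this through. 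Claim (iv) is similarly direct: if $\mathcal{S}'$ is any structural extension of $\mathcal{S}$ in $\mathcal{L}^+$ and $Y,\beta,\sigma$ witness $X\vdash_{(\mathcal{S})^+}\alpha$, then $Y\vdash_{\mathcal{S}}\beta$ gives $Y\vdash_{\mathcal{S}'}\beta$, structurality of $\mathcal{S}'$ yields $\sigma(Y)\vdash_{\mathcal{S}'}\sigma(\beta)=\alpha$, and monotonicity ($\sigma(Y)\subseteq X$) finishes the job.

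The main obstacle is claim (iii), the nontrivial direction of conservativity: given $X\cup\{\alpha\}\subseteq\textit{\textbf{Fm}}_{\mathcal{L}}$ and witnesses $Y,\beta,\sigma$ for $X\vdash_{(\mathcal{S})^+}\alpha$, I must produce an $\mathcal{L}$\!-witness to $X\vdash_{\mathcal{S}}\alpha$. The crucial observation is that although $\sigma$ is an $\mathcal{L}^+$\!-substitution, for each variable $p$ that actually occurs in a formula of $Y\cup\{\beta\}$, the formula $\sigma(p)$ appears as a subformula of some formula of $\sigma(Y)\cup\{\sigma(\beta)\}\subseteq X\cup\{\alpha\}\subseteq\textit{\textbf{Fm}}_{\mathcal{L}}$; subformulas of $\mathcal{L}$\!-formulas are $\mathcal{L}$\!-formulas, so $\sigma(p)\in\textit{\textbf{Fm}}_{\mathcal{L}}$. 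I then define $\sigma':\mathcal{V}_{\mathcal{L}}\longrightarrow\textit{\textbf{Fm}}_{\mathcal{L}}$ by $\sigma'(p):=\sigma(p)$ for $p$ occurring in $Y\cup\{\beta\}$ and $\sigma'(p):=p$ elsewhere. Its unique extension (Proposition~\ref{P:substitution}) is an $\mathcal{L}$\!-substitution that agrees with $\sigma$ on $Y\cup\{\beta\}$, so structurality of $\mathcal{S}$ applied to $Y\vdash_{\mathcal{S}}\beta$ gives $\sigma(Y)\vdash_{\mathcal{S}}\alpha$, and monotonicity yields $X\vdash_{\mathcal{S}}\alpha$.

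Finally, for (v), if $\mathcal{S}$ is trivial then $(\mathcal{S})^+$ is trivial by Proposition~\ref{P:trivial-extension} together with (i) and (ii). Conversely, if $(\mathcal{S})^+$ is trivial, fix an $\mathcal{L}$\!-variable $p$; then $\emptyset\vdash_{(\mathcal{S})^+}p$, so there are witnesses $Y,\beta,\sigma$ with $\sigma(Y)\subseteq\emptyset$, forcing $Y=\emptyset$, and $\sigma(\beta)=p$. Since substitution fixes constants and preserves the leading connective, $\sigma(\beta)$ being a variable forces $\beta$ to be some $\mathcal{L}$\!-variable $q$. Hence $\emptyset\vdash_{\mathcal{S}}q$, and by structurality of $\mathcal{S}$ every $\mathcal{L}$\!-variable is an $\mathcal{S}$\!-theorem, so by Exercise~\ref{section:consequence-defining}.\ref{EX:trivial-1} the logic $\mathcal{S}$ is trivial.
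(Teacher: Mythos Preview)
Your proof is correct and follows essentially the same approach as the paper's. The paper declares conservativity ``obvious'' with only the hint that structurality of $\mathcal{S}$ is needed; your subformula argument in (iii) is exactly the natural way to cash out that hint, and your structurality and minimality arguments match the paper's verbatim. For the triviality equivalence the paper simply appeals to conservativeness (if $(\mathcal{S})^+$ is trivial then $\textbf{Cn}_{\mathcal{S}}(\emptyset)=\textbf{Cn}_{(\mathcal{S})^+}(\emptyset)\cap\Forms_{\mathcal{L}}=\Forms_{\mathcal{L}}$), which is shorter than your direct unwinding of the witnesses, but both are fine.
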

\begin{proof}
	It is obvious that $(\mathcal{S})^{+}$ is a conservative extension of $\mathcal{S}$.
	(The structurality of $\mathcal{S}$ should be used.)
	
	Next, we prove the structurality of $(\mathcal{S})^{+}$.
	For this, assume that $X\vdash_{(\mathcal{S})^{+}}\alpha$. By definition, there are a set $Y\cup\lbrace\beta\rbrace\subseteq\Forms$ and an $\Lan^{+}$-substitution $\sigma$ such that $Y\vdash_{\mathcal{S}}\beta$, $\sigma(Y)\subseteq X$ and $\sigma(\beta)=\alpha$.
	Let $\xi$ be any $\Lan^{+}$-substitution. We observe: $\xi\circ\sigma(Y)\subseteq\xi(X)$ and $\xi\circ\sigma(\beta)=\xi(\alpha)$. Then, by definition, $\xi(X)\vdash_{(\mathcal{S})^{+}}\xi(\alpha)$.
	
	Now, let $\mathcal{S}^{+}$ be an arbitrary structural conservative extension of $\mathcal{S}$ in $\Lan^{+}$. Suppose that $X\vdash_{(\mathcal{S})^{+}}\alpha$. Then there is a set
	$Y\cup\lbrace\beta\rbrace\subseteq\Forms$ and an $\Lan^{+}$-substitution $\sigma$ such that $Y\vdash_{\mathcal{S}}\beta$, $\sigma(Y)\subseteq X$ and $\sigma(\beta)=\alpha$. By premise, $Y\vdash_{\mathcal{S}^{+}}\beta$. Since 
	$\mathcal{S}^{+}$ is structural, we also have that $\sigma(Y)\vdash_{\mathcal{S}^{+}}\sigma(\beta)$. This implies that $X\vdash_{\mathcal{S}^{+}}\alpha$.
	
	Finally, the last claim of the proposition follows from the conservativeness of $(\mathcal{S})^{+}$ over $\mathcal{S}$.
\end{proof}

In the next proposition, we give a sufficient condition for $(\mathcal{S})^{+}$ to be a consequence relation.

\begin{prop}\label{P:(S)^{+}-existence}
	Let $\Lan$ be a language with $\kappa=\card{\Var_{\Lan}}\ge\aleph_{0}$ and $\Lan^{+}$ be a primitive extension of $\Lan$. Assume that $\mathcal{S}$ is a structural abstract logic in $\Lan$. Then the relation $\vdash_{(\mathcal{S})^{+}}$ is a consequence relation in $\Lan^{+}$. Moreover, the abstract logic $(\mathcal{S})^{+}$ is $\kappa$-compact; further, if $\mathcal{S}$ is finitary, then  $(\mathcal{S})^{+}$ is also finitary. 
\end{prop}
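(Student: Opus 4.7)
The plan is to verify the three axioms of a consequence relation (Definition~\ref{D:consequnce-relation-single}) for $\vdash_{(\mathcal{S})^{+}}$, and then to read off the cardinality statements from how witnesses behave under the definition. Reflexivity follows by taking, for any $\alpha\in X$, the triple $(\lbrace p\rbrace, p, \sigma)$ where $p\in\Var_{\Lan}$ is any variable (available since $\kappa\ge\aleph_{0}$) and $\sigma$ is the $\Lan^{+}$-substitution sending $p$ to $\alpha$; the sequent $\lbrace p\rbrace\vdash_{\mathcal{S}} p$ holds by reflexivity of $\mathcal{S}$. Monotonicity is direct because enlarging $X$ preserves $\sigma(Y)\subseteq X$.

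For transitivity, assume $X\vdash_{(\mathcal{S})^{+}}\beta$ for every $\beta\in Z$ and $Z\cup W\vdash_{(\mathcal{S})^{+}}\alpha$. Fix a witness $(Y_{0},\gamma,\tau)$ for the latter and, for each $y\in Y_{0}$ with $\tau(y)\in Z$, a witness $(Z_{y},\delta_{y},\sigma_{y})$ for $X\vdash_{(\mathcal{S})^{+}}\tau(y)$; partition $Y_{0}=Y_{1}\cup Y_{2}$ according to whether $\tau(y)$ lies in $Z$ or in $W$. The aim is to assemble a single $\mathcal{S}$-derivation that combines $Y_{0}\vdash_{\mathcal{S}}\gamma$ with all the $Z_{y}\vdash_{\mathcal{S}}\delta_{y}$, together with an $\Lan^{+}$-substitution routing its premises into $X\cup W$ and its conclusion to $\alpha$. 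The hypothesis $\kappa\ge\aleph_{0}$ enters in two steps. First, by applying an injective $\Lan$-renaming to the derivation of $\gamma$ (allowed by structurality of $\mathcal{S}$), one may arrange that $\Var(Y_{0}\cup\lbrace\gamma\rbrace)$ lies in a proper subset $V^{1}\subsetneq\Var_{\Lan}$ whose complement still has cardinality $\kappa$, providing a reservoir of fresh variables. Second, within $\Var_{\Lan}\setminus V^{1}$ one coordinates $\Lan$-substitutions $\rho_{y}$ acting on $\Var(Z_{y}\cup\lbrace\delta_{y}\rbrace)$ with a single $\Lan$-substitution $\lambda$ acting on $\Var(Y_{1})$ so that $\lambda(y)=\rho_{y}(\delta_{y})$ holds simultaneously for every $y\in Y_{1}$. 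Consistency across shared variables in $Y_{1}$ is forced by the equation $\sigma_{y}(\delta_{y})=\tau(y)$: an induction on formula structure shows that at every position either at least one side is a propositional variable (which can absorb any target via the corresponding substitution) or the two sides share the same outer connective and one recurses on immediate subformulas. Once $\lambda$ and the $\rho_{y}$ are in place, structurality gives $\lambda(Y_{0})\vdash_{\mathcal{S}}\lambda(\gamma)$ and $\rho_{y}(Z_{y})\vdash_{\mathcal{S}}\lambda(y)$ for each $y\in Y_{1}$; iterated cut in $\mathcal{S}$ then yields $Y_{2}\cup\bigcup_{y\in Y_{1}}\rho_{y}(Z_{y})\vdash_{\mathcal{S}}\lambda(\gamma)$. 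Finally, define $\mu$ to equal $\tau$ on $V^{1}$ and $\sigma_{y}\circ\rho_{y}^{-1}$ on the fresh block used by each $\rho_{y}$; disjointness of the blocks makes $\mu$ well defined, and $\mu$ carries the combined premises into $X\cup W$ and the combined conclusion to $\alpha$, producing the required witness.

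The remaining claims fall out of the definition. Any witness $(Y,\beta,\sigma)$ of $X\vdash_{(\mathcal{S})^{+}}\alpha$ already exhibits $\sigma(Y)\subseteq X$ as a reducing subset with $\card{\sigma(Y)}\le\card{Y}\le\card{\Forms_{\Lan}}\le\kappa$, which establishes the $\kappa$-compactness claim; when $\mathcal{S}$ is finitary, one first shrinks $Y$ to a finite $Y_{0}\Subset Y$ with $Y_{0}\vdash_{\mathcal{S}}\beta$ and then $\sigma(Y_{0})\Subset X$ is a finite reducing subset, giving finitariness of $(\mathcal{S})^{+}$. The main obstacle throughout is the coordinated renaming/matching argument for transitivity: guaranteeing that the constraints $\lambda(y)=\rho_{y}(\delta_{y})$ can be solved simultaneously for all $y\in Y_{1}$ requires the structural induction driven by $\sigma_{y}(\delta_{y})=\tau(y)$, and the $\kappa\ge\aleph_{0}$ assumption is precisely what supplies enough fresh variables to carry it out uniformly across all $y$.
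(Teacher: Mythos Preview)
Your reflexivity, monotonicity, and the compactness/finitariness arguments are fine. The transitivity argument, however, has a genuine gap.

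The problem is the claim that one can choose variable-renamings $\rho_{y}$ into pairwise disjoint ``fresh blocks'' while simultaneously satisfying $\lambda(y)=\rho_{y}(\delta_{y})$ for all $y\in Y_{1}$. This fails already in simple cases. Take $y_{1}=p$ and $y_{2}=F(p,q)$ in $Y_{1}$ with $\delta_{y_{2}}$ a single variable $s$ (which is perfectly possible, since $\sigma_{y_{2}}(s)=\tau(F(p,q))$ can hold for an $\Lan^{+}$-substitution $\sigma_{y_{2}}$). If $\rho_{y_{2}}$ is a renaming into a fresh block, then $\rho_{y_{2}}(\delta_{y_{2}})$ is a single fresh variable, but $\lambda(y_{2})=F(\lambda(p),\lambda(q))$ is not a variable, so $\lambda(y_{2})=\rho_{y_{2}}(\delta_{y_{2}})$ is impossible. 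If instead you let $\rho_{y_{2}}$ be a general substitution, then $\rho_{y_{2}}(s)=F(\lambda(p),\lambda(q))$ forces $\rho_{y_{2}}$'s image to contain $\lambda(p)$, which by the $y_{1}$-constraint already lies in $\rho_{y_{1}}$'s block; the blocks are no longer disjoint and $\rho_{y}^{-1}$ is ill-defined, so your formula $\mu=\sigma_{y}\circ\rho_{y}^{-1}$ does not produce a well-defined substitution. The structural induction you invoke does show that at each node one side or the other is a variable, but it does not deliver the disjointness you later rely on.

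The paper sidesteps this unification problem entirely. Its key observation is a cardinality count: because $\Lan^{+}$ is a primitive extension, the total set of $\Lan^{+}$-variables appearing in $\sigma_{0}(Y_{0})$, $\sigma_{0}(Z_{0})$, $\bigcup_{i}\sigma_{i}(X_{i})$, and $\alpha$ has cardinality at most $\card{\Var_{\Lan}}=\kappa$. Hence there is an injection $f$ of this variable set into $\Var_{\Lan}$, and the induced renaming $\mu$ turns each $\Lan^{+}$-substitution $\sigma_{i},\sigma_{0}$ into an honest $\Lan$-substitution $\mu\circ\sigma_{i},\mu\circ\sigma_{0}$. Structurality of $\mathcal{S}$ then gives $\mathcal{S}$-derivations $\mu(\sigma_{i}(X_{i}))\vdash_{\mathcal{S}}\mu(\gamma_{i})$ and $\{\mu(\gamma_{i})\},\mu(\sigma_{0}(Z_{0}))\vdash_{\mathcal{S}}\mu(\alpha)$, transitivity of $\mathcal{S}$ combines them, and the inverse renaming $\nu$ (with $\nu\circ\mu$ the identity on the relevant variables) packages the result as a single witness for $X,Z\vdash_{(\mathcal{S})^{+}}\alpha$. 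No matching of formula shapes is needed; the whole argument is a renaming trick enabled by the cardinality bound. This is where the hypothesis $\kappa\ge\aleph_{0}$ and primitivity of the extension are actually used.
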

\begin{proof}
	First, we have to show that $\vdash_{(\mathcal{S})^{+}}$ satisfies the properties of (a)--(c) of Definition~\ref{D:consequnce-relation-single}.
	Leaving the check of the properties (a)--(b) to the reader, we turn to the property (c). (Exercise~\ref{section:extensions}.\ref{EX:(S)_plus})
	
	Assume that for a set $X\cup Y\cup Z\cup\lbrace\alpha\rbrace\subseteq\Forms_{\Lan^{+}}$,
	\[
	\begin{array}{cl}
	(\text{P1}) &X\vdash_{(\mathcal{S})^{+}}\gamma,~\text{for all $\gamma\in Y$};\\
	(\text{P2}) &Y,Z\vdash_{(\mathcal{S})^{+}}\alpha.
	\end{array}
	\]
	
	In virtue of (P2) and  Definition~\ref{D:(S)^+} (further in this proof, simply definition),  there are sets $Y_0$ and $Z_0$ with $Y_{0}\cup Z_{0}\cup\lbrace\beta_{0}\rbrace\subseteq\Forms$ and an $\Lan^{+}$-substitution $\sigma_0$ such that $\sigma_0(Y_0)\subseteq Y$, $\sigma_0(Z_0)\subseteq Z$, $\sigma_0(\beta_{0})=\alpha$ and $Y_0, Z_0\vdash_{\mathcal{S}}\beta_{0}$. 
	
	We denote:
	\[
	\lbrace\gamma_i\rbrace_{i\in I}:=\sigma_0(Y_0).
	\]
	
	In virtue of (P1) and definition, for each $i\in I$, there is a set $X_{i}\cup\lbrace\beta_i\rbrace\subseteq\Forms$ and an $\Lan^{+}$-substitution $\sigma_i$ such that $\sigma_i(X_i)\subseteq X$, $\sigma_i(\beta_i)=\gamma_i$ and
	$X_i\vdash_{\mathcal{S}}\beta_i$.
	
	Now the assumptions (P1)--(P2) can be refined as follows.
	\[
	\begin{array}{cl}
	(\text{P1}^{\ast}) &\bigcup_{i\in I}\sigma_i(X_i)\vdash_{(\mathcal{S})^{+}}\gamma_i,
	~\text{for all $i\in I$};\\
	(\text{P2}^{\ast}) &\lbrace\gamma_i\rbrace_{i\in I},\sigma_0(Z_0)\vdash_{(\mathcal{S})^{+}}\alpha.
	\end{array}
	\]
	
	We denote:
	\[
	X^{\ast}:=\bigcup_{i\in I}\sigma_i(X_i)~\text{and}~Z^{\ast}:=\sigma_0(Z_0).
	\]
	
	We aim to show that $X^{\ast}, Z^{\ast}\vdash_{(\mathcal{S})^{+}}\alpha$. Since $X^{\ast}\subseteq X$ and $Z^{\ast}\subseteq Z$, we will thus obtain that $X,Z\vdash_{(\mathcal{S})^{+}}\alpha$, for we have already proved that $\vdash_{(\mathcal{S})^{+}}$ is monotone.
	
	Let us make notes about the cardinalities of the $\Lan^+$-variables of some formula sets. (This is where we use that the extension $\Lan^{+}$ is primitive.)
	\begin{itemize}
		\item $\card{\Var(\lbrace\gamma_i\rbrace_{i\in I})}\le\card{\Var_{\Lan}}$;
		\item $\card{\Var(Z^\ast)}=\card{\Var(\sigma_0(Z_0))}\le\card{\Var_{\Lan}}$;
		\item $\card{\Var(X^\ast)}=\card{\Var(\bigcup_{i\in I}\sigma_i(X_i))}\le\card{\Var_{\Lan}}$.
	\end{itemize}
	
	Thus we have: $\card{\Var(\lbrace\gamma_i\rbrace_{i\in I}\cup X^{\ast}\cup Z^{\ast}\cup\lbrace\alpha\rbrace)}\le\card{\Var_{\Lan}}$. This allows us to claim that there is a one-one map
	\[
	f:\Var(\lbrace\gamma_i\rbrace_{i\in I}\cup X^{\ast}\cup Z^{\ast}\cup\lbrace\alpha\rbrace)
	\stackrel{\text{1--1}}{\longrightarrow}\Var_{\Lan}.
	\]
	
	For simplicity, we denote:
	\[
	\Var^{\ast}:=\Var(\lbrace\gamma_i\rbrace_{i\in I}\cup X^{\ast}\cup Z^{\ast}\cup\lbrace\alpha\rbrace).
	\]
	
	Next we define two $\Lan^+$-substitutions. Let $q$ be an arbitrary fixed $\Lan$-variable.
	Then, we define:
	\[
	\mu(p):= \begin{cases}
	\begin{array}{cl}
	f(p) &\text{if $p\in\Var^\ast$}\\
	q &\text{otherwise};
	\end{array}
	\end{cases}
	\]
	\[
	\nu(p):=\begin{cases}
	\begin{array}{cl}
	f^{-1}(p) &\text{if $p\in f(\Var^\ast)$}\\
	p &\text{otherwise}.
	\end{array}
	\end{cases}
	\]
	
	We observe the following:
	\begin{itemize}
		\item $\mu\circ\sigma_i$, for all $i\in I$, and $\mu\circ\sigma_0$ are $\Lan$-substitutions;
		\item $\nu\circ\mu\circ\sigma_i(X_i)=\sigma_i(X_i)$ and $\nu\circ\mu(\gamma_i)=\gamma_i$, for all $i\in I$, $\nu\circ\mu\circ\sigma_0(Z_0)=\sigma_0(Z_0)$ and 
		$\nu\circ\mu\circ\sigma_0(\beta_0)=\sigma_0(\beta_0)$;
		\item thus $\nu(\bigcup_{i\in I}\mu\circ\sigma_i(X_i))=\nu(\mu(X^\ast))=X^\ast$, $\nu(\mu\circ\sigma_0(Z_0))\nu(\mu(Z^\ast))=Z^\ast$ and $\nu(\mu\circ\sigma_0(\beta_0))=\nu(\mu(\alpha))=\alpha$. And, by definition, we conclude that $X^{\ast},Z^{\ast}\vdash_{(\mathcal{S})^{+}}\alpha$.
	\end{itemize}
	
	From this, since $\mathcal{S}$ is structural, we obtain:
	\begin{itemize}
		\item $\mu\circ\sigma_i(X_i)\vdash_{\mathcal{S}}\mu\circ\sigma_i(\beta_i)$, for each $i\in I$, which implies $\mu(\bigcup_{i\in I}\sigma_i(X_i))\vdash_{\mathcal{S}}\mu(\gamma_i)$, for each $i\in I$;
		\item $\lbrace\mu(\gamma_i)\rbrace_{i\in I},\mu\circ\sigma_0(Z_0)\vdash_{\mathcal{S}}\mu\circ\sigma_0(\beta_{0})$.
	\end{itemize}
	Then, from the two last assertions and the transitivity of $\mathcal{S}$, we derive:
	$\mu(X^{\ast}),\mu(Z^{\ast})\vdash_{\mathcal{S}}\mu(\alpha)$. 
\end{proof}

\paragraph{Exercises~\ref{section:extensions}}
\begin{enumerate}
	\item \label{EX:intersection-conservative-extensions} Prove Proposition~\ref{P:intersection-conservative-extensions}
	\item \label{EX:S^+-conservativity}Prove the equivalence~\eqref{E:S^+-conservativity}, that is for any set $X\cup\lbrace\beta\rbrace\subseteq\Forms$,
	\[
	X\models_{\langle\FormAl[\alpha]^{+},\Sigma_{\mathcal{S}}\rangle}\beta
	~\Longleftrightarrow~X\models_{\langle\FormAl,\Sigma_{\mathcal{S}}\rangle}\beta.
	\]

	(\textit{Hint}: Show that for any valuation $v$ in $\FormAl$, there is a valuation $v^{\ast}$ in $\FormAl[\alpha]^{+}$ such that $v^{\ast}\!\!\upharpoonright\!\!\Forms=v$.)
	\item\label{EX:(S)_plus} Let $\Lan^{+}$ be a primitive extensions of a language $\Lan$ and let $\mathcal{S}$ be an abstract logic in $\Lan$. Show that the relation $\vdash_{(\mathcal{S})^{+}}$ of Definition~\ref{D:(S)^+} satisfies the properties (a)--(b) of Definition~\ref{D:consequnce-relation-single}.
	\item \label{EX:kappa-compactness_(S)-plus} Prove that $(\mathcal{S})^{+}$ in Proposition~\ref{P:(S)^{+}-existence} is $\kappa$-compact.
\end{enumerate}

\chapter[Matrix Consequence]{Matrix Consequence}\label{chapter:matrix-consequence}	

\section{Single-matrix consequence}\label{section:single-matrix-consequence}
Two circumstances draw our attention to a single matrix consequence. The first one is Lindenbaum's theorem (Proposition~\ref{P:lindenbaum-theorem}) which provides a universal way of definition of a weakly adequate logical matrix, namely the Lindenbaum matrix, which being defined for a given consequence relation, validates all of the theses of this consequence relation and only them. The other circumstance is related to the consequence relation $\vdash_2$ (Section~\ref{section:consequence-defining}) which turned out to be equal to the single-matrix relation $\models_{\textbf{B}_{2}}$. These circumstances lead to the the following concept.

We recall (Section~\ref{section:consequence-defining}) that
a logical matrix $\mat{M}$ is called adequate for an abstract logic $\mathcal{S}$ if $\vdash_{\mathcal{S}}\,=\,\models_{\textbf{M}}$; that is, for any set $X$ of $\Lan$-formulas and any $\Lan$-formula $\alpha$,
\[
X\vdash_{\mathcal{S}}\alpha\Longleftrightarrow X\models_{\textbf{M}}\alpha.
\]

In connection with the notion of an adequate matrix, two questions arise. Is it true that any abstract logic has an adequate matrix? And if the answer to the first question is negative, what are the properties of an abstract logic that make it have an adequate matrix.

The first question can be answered in the negative as follows. Suppose $\mathcal{S}$ is a nontrivial abstract logic in $\Lan$ with $\card{\Var_{\Lan}}\ge\aleph_{0}$, which has a nonempty consistent set, say $X_0$, and such that $\bm{T}_{\mathcal{S}}\neq\emptyset$. Now, let
$\vdash_{\mathcal{S}}^{\circ}$ be a relation obtained from $\vdash_{\mathcal{S}}$ according to~\eqref{E:nonempty-consequence}. In virtue of Proposition~\ref{P:nonempty-consequence}, $\vdash_{\mathcal{S}}^{\circ}$ is a consequence relation. We claim that there is no adequate matrix for $\vdash_{\mathcal{S}}^{\circ}$. (Such a relation $\vdash_{\mathcal{S}}^{\circ}$ can be exemplified, e.g., by $\models_{\textbf{B}_2}^{\circ}$ where $\models_{\textbf{B}_2}$ is a single-matrix consequence of Section~\ref{section:consequence-defining}.)

Indeed, for contradiction, assume that a matrix $\mat{M}=\langle\alg{A}, D\rangle$ is adequate for $\vdash_{\mathcal{S}}^{\circ}$. Since $X_0$ is consistent w.r.t. $\mathcal{S}$, it is also consistent w.r.t. $\vdash_{\mathcal{S}}^{\circ}$.
This and the assumption that $X_0$ is nonempty imply that $D\neq\emptyset$.
Assume that $a\in D$.
By premise, there is a thesis $\alpha\in\bm{T}_{\mathcal{S}}$. Let us take a variable $p\in\Var_{\Lan}\setminus\Var(\alpha)$. By definition, $\emptyset\not\vdash_{\mathcal{S}}^{\circ}\alpha$ and $p\vdash_{\mathcal{S}}^{\circ}\alpha$. The former implies that there is a valuation $v$ in $\alg{A}$ such that $v[\alpha]\notin D$. Now we define a valuation:
\[
v^{\prime}[q]:=\begin{cases}
\begin{array}{cl}
v[q] &\text{if $q\neq p$}\\
a &\text{if $q=p$}.
\end{array}
\end{cases}
\]
It is clear that $v^{\prime}[\alpha]=v[\alpha]$ and hence $p\not\models_{\textbf{M}}\alpha$.\\

Trying to find a characteristic for abstract logic to have an adequate matrix, we consider any logical matrix $\mat{M}:=\langle\alg{A},D\rangle$ of type $\Lan$.

Now, let $\alpha, \beta$ and $\gamma$ be $\Lan$-formulas with $(\Var(\alpha)\cup\Var(\beta))\cap\Var(\gamma)=\emptyset$. Assume that for some valuation $v_0$, $v_0(\gamma)\in D$. (This assumption is equivalent to the condition that the set $\lbrace\gamma\rbrace$ is consistent w.r.t. the consequence relation $\models_{\textbf{M}}$.) We prove that
\[
\gamma,\alpha\models_{\textbf{M}}\beta~~\textit{implies}~~\alpha\models_{\textbf{M}}\beta.
\]

Indeed, for contrapositive, suppose that $\alpha\not\models_{\textbf{M}}\beta$;
that is, for some valuation $v$, $v(\alpha)\in D$, but $v(\beta)\notin D$.

Let us define a valuation
\[
v_1(p)=\begin{cases}
\begin{array}{cl}
v_0[p] &\text{if $p\in\Var(\gamma)$}\\
v[p] &\text{otherwise}.
\end{array}
\end{cases}
\]

We note that $v_1[\gamma]=v_0[\gamma]$, $v_1[\alpha]=v[\alpha]$ and  $v_1[\beta]=v[\beta]$. This implies that $\gamma,\alpha\not\models_{\textbf{M}}\beta$.\\

The last consideration leads to the following definition.
\begin{defn}[uniform abstract logic]\label{D:uniform-algebraic-logic}
	An abstract logic $\mathcal{S}$ is said to be \textbf{uniform} if 	for any set
	$X\cup Y\cup\lbrace\alpha\rbrace$ of $\Lan$-formulas with $\Var(X\cup\lbrace\alpha\rbrace)\cap\Var(Y)=\emptyset$ and {\em$\textbf{Cn}_{\mathcal{S}}(Y)\neq\Forms_{\mathcal{L}}$}, if $X,Y\vdash_{\mathcal{S}}\alpha$, then $X\vdash_{\mathcal{S}}\alpha$.
\end{defn}

We note that the consequence relation $\models_{\textbf{B}_{2}}^{\circ}$ mentioned above is finitary but not uniform. (Exercise~\ref{section:single-matrix-consequence}.\ref{EX:b-two-circle})
The importance of the last definition becomes clear from Proposition~\ref{P:los-suszko} below. We prove this proposition by means of two lemmas.

\begin{lem}\label{L:los-suszko-1}
	If an abstract logic $\mathcal{S}$	has an adequate matrix, then $\mathcal{S}$ is uniform.
\end{lem}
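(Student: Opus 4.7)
The plan is to argue by contrapositive, using the given adequate matrix to convert both hypotheses into statements about valuations, then splice two valuations together on disjoint sets of variables.

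Fix a matrix $\mat{M} = \langle \alg{A}, D\rangle$ adequate for $\mathcal{S}$, so that $\vdash_{\mathcal{S}}\,=\,\models_{\mat{M}}$. Take $X, Y, \alpha$ satisfying $\Var(X \cup \{\alpha\}) \cap \Var(Y) = \emptyset$ and $\textbf{Cn}_{\mathcal{S}}(Y) \neq \Forms_{\mathcal{L}}$. I want to show the contrapositive: if $X \not\vdash_{\mathcal{S}} \alpha$, then $X, Y \not\vdash_{\mathcal{S}} \alpha$.

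The key translation step is to unpack consistency in semantic terms. Since $\textbf{Cn}_{\mathcal{S}}(Y) \neq \Forms_{\mathcal{L}}$, there is some $\gamma$ with $Y \not\models_{\mat{M}} \gamma$, and this forces the existence of a valuation $v_0$ in $\alg{A}$ with $v_0[Y] \subseteq D$ (otherwise the implication $v[Y]\subseteq D \Rightarrow v[\gamma]\in D$ would hold vacuously for every $\gamma$, making $Y$ inconsistent). Similarly, from $X \not\models_{\mat{M}} \alpha$, pick a valuation $v$ with $v[X] \subseteq D$ and $v[\alpha] \notin D$.

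Now define the hybrid valuation
\[
v_1(p) = \begin{cases} v_0(p) & \text{if } p \in \Var(Y), \\ v(p) & \text{otherwise.} \end{cases}
\]
Because $\Var(X \cup \{\alpha\})$ and $\Var(Y)$ are disjoint, $v_1$ agrees with $v_0$ on every variable appearing in $Y$ and with $v$ on every variable appearing in $X$ or $\alpha$. An induction on formula degree then gives $v_1[Y] = v_0[Y] \subseteq D$, $v_1[X] = v[X] \subseteq D$, and $v_1[\alpha] = v[\alpha] \notin D$. Hence $X, Y \not\models_{\mat{M}} \alpha$, which by adequacy means $X, Y \not\vdash_{\mathcal{S}} \alpha$, completing the contrapositive.

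There is really no hard step here; the only thing worth flagging is the translation of the syntactic consistency condition $\textbf{Cn}_{\mathcal{S}}(Y) \neq \Forms_{\mathcal{L}}$ into the semantic statement ``$Y$ is satisfiable in $\mat{M}$,'' since without this observation the variable-disjointness hypothesis alone gives nothing. Once that bridge is made, the disjointness of the variable sets makes the splicing of $v_0$ and $v$ into a single refuting valuation essentially automatic.
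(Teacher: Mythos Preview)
Your proof is correct and follows essentially the same approach as the paper's: both use the adequate matrix to extract a valuation satisfying $Y$ (from consistency) and a valuation refuting $\alpha$ over $X$, then splice them along the disjoint variable sets to refute $X,Y\vdash_{\mathcal{S}}\alpha$. The only cosmetic difference is that you frame it as a direct contrapositive while the paper phrases it as a proof by contradiction.
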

\begin{proof}
	Let a matrix {\em$\mat{M}=\langle\alg{A},D\rangle$} be adequate for $\mathcal{S}$.
	Suppose $X,Y\vdash_{\mathcal{S}}\alpha$, where $\Var(X,\alpha)\cap\Var(Y)=\emptyset$
	and $\textbf{Cn}_{\mathcal{S}}(Y)\neq\Forms_{\mathcal{L}}$. We will show that $X\models_{\textbf{M}}\alpha$, which, in virtue of the adequacy of $\mat{M}$ for $\mathcal{S}$, will in turn imply that $X\vdash_{\mathcal{S}}\alpha$.
	
	For contradiction, assume that for some valuation $v$ in $\alg{A}$, $v[X]\subseteq D$ but $v[\alpha]\notin D$. Since $Y$ is consistent w.r.t. $\mathcal{S}$, there is a valuation $v^{\prime}$ such that $v^{\prime}[Y]\subseteq D$. Then, we define a valuation $v^{\prime\prime}$ as follows:
	\[
	v^{\prime\prime}[p]:=\begin{cases}
	\begin{array}{cl}
	v[p] &\text{if $p\in\Var(X,\alpha)$}\\
	v^{\prime}[p] &\text{if $p\in\Var(Y)$}.
	\end{array}
	\end{cases}
	\]
	
	It must be clear that $v^{\prime\prime}[\beta]=v[\beta]$, whenever $\beta\in X\cup\lbrace\alpha\rbrace$, and $v^{\prime\prime}[\beta]=v^{\prime}[\beta]$, if $\beta\in Y$. Therefore, $X,Y\not\models_{\textbf{M}}\alpha$; that is $X,Y\not\vdash_{\mathcal{S}}\alpha$. A contradiction.
\end{proof}

As preparation to the next proposition we observe the following.
\begin{lem}\label{L:(S)_plus-uniformity}
	Let $\mathcal{S}$ be a uniform structural abstract logic in a language $\Lan$ with $\card{\Var_{\Lan}}\ge\aleph_{0}$ and $\Lan^{+}$ be a primitive extension of $\Lan$. Then the abstract logic $(\mathcal{S})^{+}$ is also uniform.
\end{lem}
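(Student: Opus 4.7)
The plan is to unfold the definition of $\vdash_{(\mathcal{S})^+}$ into a concrete $\mathcal{S}$-derivation in $\Lan$, reshape it by a renaming so that $\mathcal{S}$'s uniformity applies, and then package the result back into the definition of $(\mathcal{S})^+$. Suppose $X,Y\vdash_{(\mathcal{S})^+}\alpha$ with $\Var(X\cup\{\alpha\})\cap\Var(Y)=\emptyset$ and $\textbf{Cn}_{(\mathcal{S})^+}(Y)\neq\Forms_{\Lan^+}$. By Definition~\ref{D:(S)^+} fix a witness $Z\cup\{\beta\}\subseteq\Forms_{\Lan}$ and an $\Lan^+$-substitution $\sigma$ with $Z\vdash_{\mathcal{S}}\beta$, $\sigma(Z)\subseteq X\cup Y$, $\sigma(\beta)=\alpha$. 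Partition $Z=Z_X\sqcup Z_Y$ by placing $z\in Z_X$ iff $\sigma(z)\in X$, so $\sigma(Z_X)\subseteq X$ and $\sigma(Z_Y)\subseteq Y$.

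The crucial observation is the following. Let $V_X:=\Var(Z_X\cup\{\beta\})$ and $V_Y:=\Var(Z_Y)$. For every $p\in V_X$ we have $\Var(\sigma(p))\subseteq\Var(X\cup\{\alpha\})$, and for every $p\in V_Y$, $\Var(\sigma(p))\subseteq\Var(Y)$; hence, for $p\in V_X\cap V_Y$, $\Var(\sigma(p))\subseteq\Var(X\cup\{\alpha\})\cap\Var(Y)=\emptyset$, i.e., $\sigma(p)$ is a closed $\Lan^+$-formula. This closedness is what makes it safe to treat the occurrences of such a $p$ inside $Z_Y$ as independent of those inside $Z_X\cup\{\beta\}$.

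Exploiting $\card{\Var_{\Lan}}\ge\aleph_{0}$, pick an injection of $V_Y$ into $\Var_{\Lan}\setminus(V_X\cup V_Y)$ and let $\tau$ be the $\Lan$-substitution that sends each $p\in V_Y$ to its fresh copy $p^{\ast}$ and fixes every other variable. By structurality of $\mathcal{S}$, $\tau(Z_X)\cup\tau(Z_Y)\vdash_{\mathcal{S}}\tau(\beta)$. Define an $\Lan^+$-substitution $\sigma'$ by $\sigma'(p^{\ast}):=\sigma(p)$ for $p\in V_Y$ and $\sigma'(q):=\sigma(q)$ otherwise; then $\sigma'\circ\tau$ agrees with $\sigma$ on $V_X\cup V_Y$, so $\sigma'(\tau(Z_X))\subseteq X$, $\sigma'(\tau(Z_Y))\subseteq Y$, and $\sigma'(\tau(\beta))=\alpha$. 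The main obstacle lies here: because $\tau$ must act globally, variables in $V_X\cap V_Y$ get moved inside $Z_X$ and $\beta$ as well; this is resolved by invoking the closedness from the previous paragraph, which guarantees that those occurrences in $\tau(Z_X\cup\{\beta\})$ may be identified with the fresh copies used in $\tau(Z_Y)$ without changing any $\sigma'$-image, so one can choose the fresh names separately on the $Z_X$-side and on the $Z_Y$-side and obtain $\Var(\tau(Z_X\cup\{\beta\}))\cap\Var(\tau(Z_Y))=\emptyset$.

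Next I would verify that $\tau(Z_Y)$ is $\mathcal{S}$-consistent. Otherwise $\tau(Z_Y)\vdash_{\mathcal{S}}q$ for every $q\in\Var_{\Lan}$; choosing such a $q\notin\Var(\tau(Z_Y))$ (available since $\card{\Var_{\Lan}}\ge\aleph_{0}$) and, for an arbitrary $\delta\in\Forms_{\Lan^+}$, extending $\sigma'$ by $\sigma'(q):=\delta$, Definition~\ref{D:(S)^+} gives $\sigma'(\tau(Z_Y))\vdash_{(\mathcal{S})^+}\delta$, so $Y\vdash_{(\mathcal{S})^+}\delta$ by monotonicity, contradicting the $(\mathcal{S})^+$-consistency of $Y$. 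With disjointness and $\mathcal{S}$-consistency in hand, uniformity of $\mathcal{S}$ applied to $\tau(Z_X)\cup\tau(Z_Y)\vdash_{\mathcal{S}}\tau(\beta)$ delivers $\tau(Z_X)\vdash_{\mathcal{S}}\tau(\beta)$, and this derivation together with the substitution $\sigma'$ constitutes, by Definition~\ref{D:(S)^+}, a witness for $X\vdash_{(\mathcal{S})^+}\alpha$, completing the proof.
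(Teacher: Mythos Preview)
Your overall strategy mirrors the paper's: pull back to an $\mathcal{S}$-derivation, establish variable-disjointness and consistency of the $Y$-part at the $\mathcal{S}$-level, apply uniformity of $\mathcal{S}$, and push forward. You are in fact more careful than the paper at the disjointness step: the paper simply asserts $\Var(X_0\cup\{\beta\})\cap\Var(Y_0)=\emptyset$, which is only automatic when $\Lan$ has no closed formulas (i.e.\ $\Cons=\emptyset$), whereas you correctly isolate the obstruction that any shared $p\in V_X\cap V_Y$ forces $\Var(\sigma(p))=\emptyset$.

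The gap is in how you discharge that obstruction. Your $\tau$ sends every $p\in V_Y$ to a fresh $p^\ast$, so each $p\in V_X\cap V_Y$ becomes the \emph{same} $p^\ast$ in both $\tau(Z_X\cup\{\beta\})$ and $\tau(Z_Y)$; the intersection of the two variable sets still contains $\{p^\ast:p\in V_X\cap V_Y\}$. The phrase ``choose the fresh names separately on the $Z_X$-side and on the $Z_Y$-side'' is not a legal move: a substitution acts on variables, not on occurrences, so it cannot send the same $p$ (or $p^\ast$) to different targets in different formulas, and structurality of $\mathcal{S}$ only licenses global substitutions. Your closedness observation does yield a fix, but a different one. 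Since $\Lan^+$ is a \emph{primitive} extension, every closed $\Lan^+$-formula is already an $\Lan$-formula; hence the $\Lan$-substitution $\rho$ defined by $\rho(p):=\sigma(p)$ for $p\in V_X\cap V_Y$ and $\rho(p):=p$ otherwise is well-defined. By structurality $\rho(Z_X),\rho(Z_Y)\vdash_{\mathcal{S}}\rho(\beta)$, and now $\Var(\rho(Z_X\cup\{\beta\}))\subseteq V_X\setminus V_Y$ and $\Var(\rho(Z_Y))\subseteq V_Y\setminus V_X$ are genuinely disjoint; moreover $\sigma\circ\rho$ agrees with $\sigma$ on $Z_X\cup Z_Y\cup\{\beta\}$ because any substitution fixes closed formulas. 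With $\rho$ in place of your $\tau$, no further renaming is needed, and your consistency argument for the $Y$-part and the final appeal to uniformity go through verbatim.
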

\begin{proof}
	Suppose for a set $X\cup Y\cup\lbrace\alpha\rbrace\subseteq\Forms_{\mathcal{L}^{+}}$,
	$X,Y\vdash_{(\mathcal{S})^{+}}\alpha$, providing that $\Var(X\cup\lbrace\alpha
	\rbrace)\cap\Var(Y)=\emptyset$ and $Y$ is consistent w.r.t. $(\mathcal{S})^{+}$.
	Then, by definition, there is $X_0\cup Y_0\cup\lbrace\beta\rbrace\subseteq\Forms_{\mathcal{L}}$ and an $\Lan^{+}$-substitution $\sigma$ such that $\sigma(X_0)\subseteq X$, $\sigma(Y_0)\subseteq Y$, $\sigma(\beta)=\alpha$ and $X_0,Y_0\vdash_{\mathcal{S}}\beta$. It is clear that
	$\Var(X_0\cup\lbrace\beta\rbrace)\cap\Var(Y_0)=\emptyset$. Also, $Y_0$ is consistent w.r.t. $\mathcal{S}$, for if it were otherwise, in virtue of conservativity of $(\mathcal{S})^{+}$ over $\mathcal{S}$, we would have $\sigma(Y_0)\vdash_{(\mathcal{S})^{+}}\gamma$, for any arbitrary $\Lan^{+}$-formula $\gamma$. This would imply the inconsistency of $Y$ in $(\mathcal{S})^{+}$.
	
	Since $\mathcal{S}$ is uniform, $X_0\vdash_{\mathcal{S}}\beta$ and hence $X_0\vdash_{(\mathcal{S})^{+}}\beta$, for $(\mathcal{S})^{+}$ is a conservative extension of $\mathcal{S}$. And since $(\mathcal{S})^{+}$ is structural (Proposition~\ref{P:(S)_plus}), $\sigma(X_0)\vdash_{(\mathcal{S})^{+}}\alpha$ and hence
	$X\vdash_{(\mathcal{S})^{+}}\alpha$.
\end{proof}

\begin{prop}[Lo\'{s}-Suszko-W\'{o}jcicki theorem]\label{P:los-suszko}
	Let $\mathcal{S}$ be a finitary structural abstract logic in a language $\Lan$ with $\card{\Var_{\Lan}}\ge\aleph_{0}$. Then $\mathcal{S}$ has an adequate matrix if, and only if, $\mathcal{S}$ is uniform.
\end{prop}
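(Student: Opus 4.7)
The $\Rightarrow$-direction is exactly Lemma~\ref{L:los-suszko-1}. For the converse, assume $\mathcal{S}$ is finitary, structural, and uniform. The idea is to realise every refutation of $\vdash_{\mathcal{S}}$ simultaneously in one matrix, by passing to a large primitive extension $\Lan^{+}$ of $\Lan$ in which pairwise disjoint ``copies'' of the variable set are available, one for each consistent $\Lan$-formula set.

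Let $\mathcal{I}:=\set{X\subseteq\Forms}{\ConS{X}\neq\Forms}$. (If $\mathcal{I}=\emptyset$ then $\mathcal{S}$ is trivial and any matrix whose filter is its entire carrier is adequate, so assume $\mathcal{I}\neq\emptyset$.) Choose a primitive extension $\Lan^{+}$ of $\Lan$ with $\card{\Var^{+}}\geq\card{\Var}\cdot\card{\mathcal{I}}$, so that $\Var^{+}$ decomposes as a disjoint union of copies $\lbrace\Var_{X}\rbrace_{X\in\mathcal{I}}$ of $\Var$; fix bijections $\tau_{X}\colon\Var\to\Var_{X}$ and extend each to an $\Lan^{+}$-substitution $\tau_{X}\colon\FormAl\to\mathfrak{F}_{\mathcal{L}^{+}}$. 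By Proposition~\ref{P:(S)^{+}-existence} the abstract logic $(\mathcal{S})^{+}$ is a finitary, structural, conservative extension of $\mathcal{S}$ in $\Lan^{+}$, and by Lemma~\ref{L:(S)_plus-uniformity} it is also uniform. Put $X^{\ast}:=\bigcup_{X\in\mathcal{I}}\tau_{X}(X)$, $D:=\textbf{Cn}_{(\mathcal{S})^{+}}(X^{\ast})$, and take as the candidate adequate matrix the $\Lan$-reduct $\mat{M}:=\langle\mathfrak{F}_{\mathcal{L}^{+}}\!\upharpoonright\!\Lan,\,D\rangle$.

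Soundness ($X\vdash_{\mathcal{S}}\alpha\Rightarrow X\models_{\mat{M}}\alpha$) is immediate: for any $\Lan$-valuation $v\colon\Var\to\Forms_{\mathcal{L}^{+}}$, structurality of $(\mathcal{S})^{+}$ lifts $X\vdash_{(\mathcal{S})^{+}}\alpha$ to $v[X]\vdash_{(\mathcal{S})^{+}}v[\alpha]$, so $v[X]\subseteq D$ forces $v[\alpha]\in D$. For adequacy, suppose $X\not\vdash_{\mathcal{S}}\alpha$; then $X\in\mathcal{I}$, and taking $v:=\tau_{X}$ gives $v[X]=\tau_{X}(X)\subseteq X^{\ast}\subseteq D$, reducing the task to proving $\tau_{X}(\alpha)\notin D$.

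Towards a contradiction, assume $\tau_{X}(\alpha)\in D$; finitariness of $(\mathcal{S})^{+}$ furnishes a finite $Y\Subset X^{\ast}$ with $Y\vdash_{(\mathcal{S})^{+}}\tau_{X}(\alpha)$. Partition $Y=Y_{X}\cup Y'$, where $Y_{X}:=Y\cap\tau_{X}(X)$ and $Y':=Y\setminus Y_{X}$. By construction the variables of $Y_{X}\cup\lbrace\tau_{X}(\alpha)\rbrace$ lie in $\Var_{X}$ while those of $Y'$ lie in $\bigcup_{Z\neq X}\Var_{Z}$; hence the two variable sets are disjoint. The crucial subsidiary lemma asserts that a finite union of $(\mathcal{S})^{+}$-consistent sets on pairwise disjoint variable sets is itself $(\mathcal{S})^{+}$-consistent. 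Its proof combines structurality (a fresh variable $p\notin\Var(A)$ substituted by an arbitrary formula shows $p\notin\textbf{Cn}_{(\mathcal{S})^{+}}(A)$ whenever $A$ is consistent) with uniformity (if $A,B\vdash_{(\mathcal{S})^{+}}p$ for $p$ fresh, $B$ consistent, and $\Var(A,p)\cap\Var(B)=\emptyset$, then uniformity yields $A\vdash_{(\mathcal{S})^{+}}p$, contradicting consistency of $A$). Applied to $Y'$, which is a finite union of subsets of the pairwise-variable-disjoint consistent sets $\tau_{Z}(Z)$ for $Z\neq X$, the lemma gives that $Y'$ is consistent. A final appeal to uniformity of $(\mathcal{S})^{+}$ on $Y_{X},Y'\vdash_{(\mathcal{S})^{+}}\tau_{X}(\alpha)$ erases $Y'$, producing $\tau_{X}(X)\vdash_{(\mathcal{S})^{+}}\tau_{X}(\alpha)$; applying the inverse renaming substitution (structurality) followed by conservativity collapses this to $X\vdash_{\mathcal{S}}\alpha$, the desired contradiction. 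The principal obstacles are the bookkeeping of the disjoint variable copies and the clean formulation and proof of the consistency-union lemma; once these are in place, uniformity does the real work.
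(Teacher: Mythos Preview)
Your proof is correct and follows essentially the same route as the paper's: build a primitive extension $\Lan^{+}$ with pairwise disjoint variable copies indexed by consistent sets, pass to $(\mathcal{S})^{+}$, and take as adequate matrix $\langle\mathfrak{F}_{\mathcal{L}^{+}},\textbf{Cn}_{(\mathcal{S})^{+}}(\bigcup\tau_{X}(X))\rangle$. The only cosmetic differences are that the paper indexes by consistent \emph{theories} rather than all consistent sets, and carries out the repeated application of uniformity inline (peeling off one $\sigma_{X_i}(X_i)$ at a time after invoking finitariness) rather than packaging it as your consistency-union lemma.
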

\begin{proof}
	The ``only-if'' part follows from Lemma~\ref{L:los-suszko-1}. So we continue with the proof of the ``if'' part.
	
	We distinguish two cases: $\mathcal{S}$ is trivial and $\mathcal{S}$ is nontrivial. If the former is the case, then the matrix $\langle\Forms_{\mathcal{L}},\Forms_{\mathcal{L}}\rangle$ is adequate for $\mathcal{S}$.
	
	Now, we assume that $\mathcal{S}$ is nontrivial; that is, the set $\theoryC$ of consistent sets w.r.t. $\mathcal{S}$ is nonempty. 
	
	We introduce for each $X\in\theoryC$ and each $p\in\Var_{\Lan}$, a new sentential variable $p_X$ so that for any $p,q\in\Var_{\Lan}$ and any $X,Y\in\theoryC$,
	\[
	p_X=q_Y\Longleftrightarrow p=q~\text{and}~X=Y.
	\]
	Accordingly, for an arbitrary $X\in\theoryC$, we denote:
	\[
	\Var_X=\set{p_X}{p\in\Var_{\Lan}}.
	\]
	Thus for any $X,Y\in\theoryC$,
	\begin{equation}\label{E:implication-1}
		X\neq Y\Longrightarrow\Var_X\cap\Var_Y=\emptyset.
	\end{equation}
	
	A language $\Lan^{+}$ is defined as a primitive extension of $\Lan$, by adding the new sentential variables to $\Var_{\Lan}$. Then, we introduce $(\mathcal{S})^{+}$ according to Definition~\ref{D:(S)^+}.
	
	Next, for each $X\in\theoryC$, we define two $\Lan^{+}$-substitutions:
	\[
	\begin{array}{l}
	\sigma_X:p\mapsto p_{X},~\text{for any $p\in\Var_{\Lan}$},~\text{and}~\sigma_X:p_Y\mapsto p_Y,
	~\text{for any $p_Y\in\Var_{\Lan^{+}}\setminus\Var_{\Lan}$};\\
	\xi_X: p_X\mapsto p,~\text{for any $p_X\in\Var_X$ and $\xi_X:q\mapsto q$, for any $q\in\Var_{\Lan^{+}}\setminus\Var_X$}.
	\end{array}
	\]
	
	We observe: for any $X\in\theoryC$,
	\begin{equation}\label{E:zig-zag}
		\xi_{X}\circ\sigma_X(\beta)=\beta,~\text{for all $\beta\in\Forms_{\mathcal{L}}$}.
	\end{equation}
	
	Then, we make two observations.
	First, since for any $X\in\theoryC$, $\Var(\sigma_X(X))\subseteq\Var_X$, \eqref{E:implication-1} yields that for any $X,Y\in\theoryC$,
	\begin{equation}\label{E:implication-2}
		X\neq Y\Longrightarrow\Var(\sigma_X(X))\cap\Var(\sigma_Y(Y))=\emptyset.
	\end{equation}
	
	Second, 
	\begin{equation}\label{E:second-observation}
		\textit{for each $X\in\theoryC$, the set $\sigma_X(X)$ is consistent w.r.t. $(\mathcal{S})^{+}$.}
	\end{equation}
	
	Indeed, for contradiction, assume that $\sigma_{X}(X)$ is inconsistent. Let $\beta$ be any formula of $\Forms_{\mathcal{L}}$. By assumption, $\sigma_{X}(X)\vdash_{(\mathcal{S})^{+}}\sigma_{X}(\beta)$. Since $(\mathcal{S})^{+}$ is structural (Proposition~\ref{P:(S)^+}), we have that $\xi_{X}(\sigma_{X}(X))\vdash_{(\mathcal{S})^{+}}\xi_{X}(\sigma_{X}(\beta))$ and hence, in virtue of \eqref{E:zig-zag}, $X\vdash_{(\mathcal{S})^{+}}\beta$. Since $(\mathcal{S})^{+}$ is conservative over $\mathcal{S}$, we obtain that $X\vdash_{\mathcal{S}}\beta$. A contradiction.
	
	Now, we define:
	\[
	\mat{M}:=\langle\Forms_{\mathcal{L}^{+}},\textbf{Cn}_{(\mathcal{S})^{+}}(\bigcup
	\set{\sigma_X(X)}{X\in\theoryC})\rangle.
	\]
	
	We aim to show that $\mat{M}$ is adequate for $\mathcal{S}$; that is, for any set $Y\cup\lbrace\alpha\rbrace\subseteq\Forms_{\mathcal{L}}$,
	\[
	Y\vdash_{\mathcal{S}}\alpha\Longleftrightarrow Y\models_{\textbf{M}}\alpha.
	\]
	
	Suppose $Y\vdash_{\mathcal{S}}\alpha$. Since $(\mathcal{S})^{+}$ is conservative over $\mathcal{S}$, this implies that $Y\vdash_{(\mathcal{S})^{+}}\alpha$. Now, let $\sigma$ be any $\Lan^{+}$-substitution.
	Assume that $\sigma(Y)\subseteq\textbf{Cn}_{(\mathcal{S})^{+}}(\bigcup
	\set{\sigma_X(X)}{X\in\theoryC})$. Since $(\mathcal{S})^{+}$ is structural, we have: $\sigma(\alpha)\in\textbf{Cn}_{(\mathcal{S})^{+}}(\sigma(Y))$ and hence
	$\sigma(\alpha)\in\textbf{Cn}_{(\mathcal{S})^{+}}(\bigcup
	\set{\sigma_X(X)}{X\in\theoryC})$.
	
	Conversely, suppose that $Y\not\vdash_{\mathcal{S}}\alpha$. We denote:
	\[
	X_0:=\textbf{Cn}_{\mathcal{S}}(Y).
	\]
	Thus $\alpha\notin X_0$ and hence $X_0\in\theoryC$. The latter implies that
	$\sigma_{X_0}(X_0)\subseteq\textbf{Cn}_{(\mathcal{S})^{+}}(\bigcup
	\set{\sigma_X(X)}{X\in\theoryC})$.
	
	For contradiction, we assume that
	\[
	\sigma_{X_0}(\alpha)\in\textbf{Cn}_{(\mathcal{S})^{+}}(\bigcup
	\set{\sigma_X(X)}{X\in\theoryC}). \tag{$\ast$}
	\]
	In virtue of finitariness of $(\mathcal{S})^{+}$ (Proposition~\ref{P:(S)^{+}-existence}), there is a finite number of sets $X_1,\ldots, X_n\in\theoryC$ such that 
	\[
	\sigma_{X_0}(\alpha)\in\textbf{Cn}_{(\mathcal{S})^{+}}(\sigma_{X_0}(X_0)\cup\sigma_{X_1}
	(X_1)\cup\ldots\cup\sigma_{X_n}(X_n)).
	\]
	Since, according to~\eqref{E:second-observation}, $\sigma_{X_n}(X_n)$ is consistent and
	\[
	\Var(\sigma_{X_0}(X_0)\cup\sigma_{X_1}
	(X_1)\cup\ldots\cup\sigma_{X_{n-1}}(X_{n-1}))\cap\Var(\sigma_{X_{n}}(X_{n}))=\emptyset,
	\]
	in virtue of the uniformity of $(\mathcal{S})^{+}$ (Lemma~\ref{L:(S)_plus-uniformity}),
	\[
	\sigma_{X_0}(X_0),\sigma_{X_1}
	(X_1),\ldots\sigma_{X_{n-1}}(X_{n-1})\vdash_{(\mathcal{S})^{+}}\sigma_{X_0}(\alpha)
	\]
	Repeating this argument several times, we arrive at the conclusion that 
	$\sigma_{X_0}(X_0)\vdash_{(\mathcal{S})^{+}}\sigma_{X_0}(\alpha)$. Then, applying the substitution $\xi_{X_0}$, we get $X_0\vdash_{(\mathcal{S})^{+}}\alpha$. Finally, the conservativity of $(\mathcal{S})^{+}$ over $\mathcal{S}$ yields $X_0\vdash_{\mathcal{S}}\alpha$. A contradiction.
\end{proof}

The case of nonfinitary abstract logics needs an additional investigation.
\begin{defn}[couniform abstract logic]
	An abstract logic $\mathcal{S}$ in a language $\Lan$ is called couniform if for any collection $\lbrace X_i\rbrace_{i\in I}$ of formula sets with $\Var(X_i)\cap\Var(X_j)=\emptyset$, providing that $i\neq j$, and $\Var(\bigcup_{i\in I}\lbrace X_i\rbrace) \neq\Var_{\Lan}$, there is at least one $X_i$ which is inconsistent, whenever the union $\bigcup_{i\in I}\lbrace X_i\rbrace$ is inconsistent.	
\end{defn}

The next lemma shows that couniformity is necessary for an abstract logic to have an adequate matrix.
\begin{lem}\label{L:couniform}
	If and abstract logic $\mathcal{S}$ has an adequate matrix, then $\mathcal{S}$ is couniform.
\end{lem}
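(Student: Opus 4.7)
The plan is to prove the contrapositive: assuming each $X_i$ is consistent, I will show $\bigcup_{i \in I} X_i$ is consistent. Let $\mat{M} = \langle\alg{A}, D\rangle$ be adequate for $\mathcal{S}$. First I would observe that, by adequacy, a set $Y$ of formulas is consistent precisely when some valuation $v$ in $\alg{A}$ satisfies both $v[Y] \subseteq D$ and $v[\alpha] \notin D$ for some formula $\alpha$ (an unsatisfiable $Y$ would vacuously satisfy $Y \models_{\mat{M}} \alpha$ for every $\alpha$, hence be inconsistent by adequacy). In particular, the mere consistency of any single $X_i$ forces $|\alg{A}| \setminus D \neq \emptyset$; pick $a$ in this complement. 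Consistency of each $X_i$ also yields a valuation $v_i$ with $v_i[X_i] \subseteq D$.

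Next I would glue the $v_i$ into a single valuation $v$. Using the pairwise disjointness $\Var(X_i) \cap \Var(X_j) = \emptyset$ for $i \neq j$, define $v(p) := v_i(p)$ whenever $p \in \Var(X_i)$; this is unambiguous. Using the hypothesis $\Var(\bigcup_{i \in I} X_i) \neq \Var_{\mathcal{L}}$, pick a variable $p^{\ast} \notin \bigcup_{i \in I} \Var(X_i)$, and set $v(p^{\ast}) := a$; on any remaining variables let $v$ be arbitrary. Since $v[\beta]$ depends only on the values of $v$ on $\Var(\beta)$, we have $v[\beta] = v_i[\beta]$ for every $\beta \in X_i$, hence $v\bigl[\bigcup_{i \in I} X_i\bigr] \subseteq D$, while $v[p^{\ast}] = a \notin D$. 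So $\bigcup_{i \in I} X_i \not\models_{\mat{M}} p^{\ast}$, and by adequacy $\bigcup_{i \in I} X_i \not\vdash_{\mathcal{S}} p^{\ast}$, which means $\bigcup_{i \in I} X_i$ is consistent, completing the contrapositive.

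The main obstacle is recognising precisely where each side condition on the family $\{X_i\}_{i \in I}$ is used. The pairwise disjointness of the variable sets is what permits a conflict-free combination of the $v_i$ into one valuation $v$, while the condition $\Var(\bigcup_{i \in I} X_i) \neq \Var_{\mathcal{L}}$ reserves a fresh variable $p^{\ast}$ outside all $\Var(X_i)$ on which to place the non-designated value $a$ without disturbing any constraint $v[X_i] \subseteq D$. Without such a fresh variable, the glued valuation might send every sentential variable into $D$, leaving us with no refuting witness to certify consistency of the union; this is exactly the reason the definition of couniformity was formulated with that side condition.
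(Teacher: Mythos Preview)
Your proof is correct and follows essentially the same strategy as the paper: prove the contrapositive by using adequacy to obtain valuations $v_i$ satisfying each $X_i$, then exploit the pairwise disjoint variable sets to combine them, and use the spare variable to carry a non-designated value witnessing consistency of the union. Your presentation is in fact slightly cleaner than the paper's, since you explicitly glue the $v_i$ into a single valuation $v$, whereas the paper defines modified valuations $v_i^{\ast}$ for each $i$ and leaves the passage to a single refuting valuation somewhat implicit.
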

\begin{proof}
	Let a matrix $\mat{M}=\langle\alg{A},D\rangle$ be adequate for $\mathcal{S}$. We assume that a family $\lbrace X_i\rbrace_{i\in I}$ of formula sets satisfies all the premises of couniformity. Suppose each $X_i$ is consistent w.r.t. $\mathcal{S}$. Then for each $i\in I$, there is a formula $\alpha_i$ such that $X_i\not\vdash_{\mathcal{S}}\alpha_i$. This implies that there is a valuation $v_i$ in $\alg{A}$ such that $v_i[X_i]\subseteq D$ but
	$v_i[\alpha_i]\notin D$. Let us select either one of $v_i[\alpha_i]$ and denote
	\[
	a:=v_i[\alpha_i].
	\]
	Then, we select any variable $p\in\Var_{\Lan}\setminus\Var(\bigcup_{i\in I}\lbrace X_i\rbrace)$ and for each $v_i$, define:
	\[
	v_{i}^{\ast}[q]:=\begin{cases}
	\begin{array}{cl}
	v_i[q] &\text{if $q\in\Var(\bigcup_{i\in I}\lbrace X_i\rbrace)$}\\
	a &\text{if $q\in\Var_{\Lan}\setminus\Var(\bigcup_{i\in I}\lbrace X_i\rbrace)$}.
	\end{array}
	\end{cases}
	\]
	
	It must be clear that for each $i\in I$, $v_{i}^{\ast}[X_i]\subseteq D$. And since $\Var(X_i)\cap\Var(X_j)=\emptyset$, providing $i\neq j$, we also have that $v_{i}^{\ast}[\bigcup_{i\in I}\lbrace X_i\rbrace]
	\subseteq D$. Thus $\bigcup_{i\in I}\lbrace X_i\rbrace\not\vdash_{\mathcal{S}} p$, that is the set $\bigcup_{i\in I}\lbrace X_i\rbrace$ is consistent.
\end{proof}

The next lemma is rather technical and we will need it for the proof of Proposition~\ref{P:wojcicki} below.
\begin{lem}\label{L:(S)^{+}-couniform}
	Let $\mathcal{S}$ be a couniform structural abstract logic in a language $\Lan$ with $\card{\Var_{\Lan}}\ge\aleph_{0}$ and $\Lan^{+}$ be a primitive extension of $\Lan$. Then the abstract logic $(\mathcal{S})^{+}$ is also couniform.
\end{lem}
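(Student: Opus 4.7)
The plan is to mimic the argument of Lemma~\ref{L:(S)_plus-uniformity}: assuming $\bigcup_{i\in I} Y_i$ is $(\mathcal{S})^{+}$-inconsistent under the couniformity hypotheses, pull this inconsistency back along the substitution witnessing $\vdash_{(\mathcal{S})^{+}}$ to a statement in $\Forms_{\mathcal{L}}$, apply couniformity of $\mathcal{S}$ there, and then push the resulting $\mathcal{S}$-inconsistency of one part forward to an $(\mathcal{S})^{+}$-inconsistency of the corresponding $Y_{i_0}$. The case in which $\mathcal{S}$ is trivial can be disposed of at once by Proposition~\ref{P:trivial-extension}, so I assume $\mathcal{S}$ is nontrivial.

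Pick a variable $p\in\Var_{\mathcal{L}^{+}}\setminus\Var(\bigcup_i Y_i)$, which exists by hypothesis. From $\bigcup_i Y_i\vdash_{(\mathcal{S})^{+}}p$ and Definition~\ref{D:(S)^+}, extract $W\cup\{\beta\}\subseteq\Forms_{\mathcal{L}}$ and an $\mathcal{L}^{+}$-substitution $\sigma$ with $W\vdash_{\mathcal{S}}\beta$, $\sigma(W)\subseteq\bigcup_i Y_i$ and $\sigma(\beta)=p$; since $p$ is a single variable, $\beta$ must itself be a variable $q$ with $\sigma(q)=p$. Choose for each $w\in W$ an index $i(w)$ with $\sigma(w)\in Y_{i(w)}$ and set $W_i:=\{w\in W:i(w)=i\}$. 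Two observations are central: (a) $q\notin\Var(W)$, for otherwise $p=\sigma(q)$ would occur as a subformula-variable of some $\sigma(w)\in Y_{i(w)}$, contradicting $p\notin\Var(\bigcup_i Y_i)$; and (b) if a variable $r$ is \emph{shared}, i.e.\ lies in $\Var(W_i)\cap\Var(W_j)$ for some $i\neq j$, then $\Var(\sigma(r))\subseteq\Var(Y_i)\cap\Var(Y_j)=\emptyset$, so $\sigma(r)$ is variable-free and, because $\mathcal{L}^{+}$ is a \emph{primitive} extension of $\mathcal{L}$, actually belongs to $\Forms_{\mathcal{L}}$.

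I then define an $\mathcal{L}$-substitution $\tau$ by $\tau(r):=\sigma(r)$ for shared $r$ and $\tau(r):=r$ otherwise, and put $W_{\tau}:=\tau(W)$, $(W_{\tau})_{i}:=\tau(W_i)$. A short computation using the homomorphism property of $\sigma$ gives $\sigma\circ\tau(w)=\sigma(w)$ for every $w\in W$, so $\sigma((W_{\tau})_{i})\subseteq Y_i$; meanwhile $\tau$ removes the shared variables without introducing any new ones, so the family $\{(W_{\tau})_{i}\}_{i\in I}$ is variable-disjoint. Structurality of $\mathcal{S}$ applied to $W\vdash_{\mathcal{S}}q$ under $\tau$ (noting $\tau(q)=q$, since $q\notin\Var(W)$ is not shared) yields $W_{\tau}\vdash_{\mathcal{S}}q$, and because $q\notin\Var(W_{\tau})\subseteq\Var(W)$, a further substitution $q\mapsto\gamma$ shows $W_{\tau}$ is $\mathcal{S}$-inconsistent while also confirming $\Var(W_{\tau})\neq\Var_{\mathcal{L}}$. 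Couniformity of $\mathcal{S}$ now produces an $i_0$ for which $(W_{\tau})_{i_0}$ is $\mathcal{S}$-inconsistent, and in particular $(W_{\tau})_{i_0}\vdash_{\mathcal{S}}q$; Definition~\ref{D:(S)^+} (applied with the original $\sigma$) upgrades this to $Y_{i_0}\vdash_{(\mathcal{S})^{+}}p$, and since $p\notin\Var(Y_{i_0})$ structurality of $(\mathcal{S})^{+}$ (Proposition~\ref{P:(S)^+}) finally gives $(\mathcal{S})^{+}$-inconsistency of $Y_{i_0}$.

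The main obstacle is observation (b) together with the choice of $\tau$. The naturally arising partition $\{W_i\}$ need not satisfy the variable-disjointness condition of couniformity, so one must manufacture a uniform $\mathcal{L}$-substitution that simultaneously trivializes \emph{every} shared variable at once, preserves $\sigma\circ\tau=\sigma$, and keeps $\tau(q)=q$ so that the derivation $W\vdash_{\mathcal{S}}q$ survives. The primitive-extension hypothesis is essential precisely here: it is what guarantees that $\sigma(r)$ for shared $r$ lies back in $\Forms_{\mathcal{L}}$, making $\tau$ an admissible $\mathcal{L}$-substitution to which structurality of $\mathcal{S}$ can be applied.
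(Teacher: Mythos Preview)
Your proof is correct and follows the same skeleton as the paper: pick a fresh variable $p$, unwind Definition~\ref{D:(S)^+} to obtain $W\vdash_{\mathcal{S}}q$ with $\sigma(q)=p$ and $\sigma(W)\subseteq\bigcup_i Y_i$, partition $W$ according to where $\sigma$ lands, apply couniformity of $\mathcal{S}$ to find a bad index $i_0$, and push the inconsistency forward through $\sigma$ and structurality of $(\mathcal{S})^{+}$.

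Where your argument diverges is in the handling of the variable-disjointness hypothesis of couniformity. The paper simply sets $Y_i:=\{\gamma\in Y:\sigma(\gamma)\in X_i\}$ and then invokes couniformity of $\mathcal{S}$ on the family $\{Y_i\}_{i\in I}$ without checking that $\Var(Y_i)\cap\Var(Y_j)=\emptyset$ for $i\neq j$; in the presence of sentential constants this condition need not hold for the preimage partition. Your extra device --- the $\mathcal{L}$-substitution $\tau$ sending every shared variable $r$ to the closed $\mathcal{L}$-formula $\sigma(r)$ and fixing all others --- is precisely what restores pairwise disjointness while preserving $\sigma\circ\tau=\sigma$ on $W$ and $\tau(q)=q$. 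So your proof is not merely a reproduction of the paper's; it actually patches a point the paper glosses over, and your remark that the primitive-extension hypothesis is used exactly to guarantee $\sigma(r)\in\Forms_{\mathcal{L}}$ for shared $r$ is on the mark.
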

\begin{proof}
	Let $\lbrace X_i\rbrace_{i\in I}$ be a nonempty family of nonempty sets of $\Lan^{+}$-formulas, about which we assume that the union $\bigcup_{i\in I}\lbrace X_i\rbrace$ is inconsistent and $\Var(\bigcup_{i\in I}\lbrace X_i\rbrace)\neq\Var_{\Lan^{+}}$. Let us select any $p\in\Var_{\Lan^{+}}\setminus\Var(\bigcup_{i\in I}\lbrace X_i\rbrace)$. Since $\bigcup_{i\in I}\lbrace X_i\rbrace$ is inconsistent, we have that $\bigcup_{i\in I}\lbrace X_i\rbrace\vdash_{(\mathcal{S})^{+}}p$. By definition of $(\mathcal{S})^{+}$ (Definition~\ref{D:(S)^+}), there are a set $Y$ of $\Lan$-formulas, an $\Lan$-variable $q$ and an $\Lan$-substitution $\sigma$ such that
	$\sigma(Y)\subseteq\bigcup_{i\in I}\lbrace X_i\rbrace$, $\sigma(q)=p$ and $Y\vdash_{\mathcal{S}}q$. 
	
	Next we define: for each $i\in I$,
	\[
	Y_i:=\set{\gamma\in Y}{\sigma(\gamma)\in X_i}.
	\]
	
	Firstly, we notice that for each $i\in I$, $\sigma(Y_i)\subseteq X_i$, and $\bigcup_{i\in I}\lbrace Y_i\rbrace\vdash_{\mathcal{S}}q$.
	(The latter is true because $\bigcup_{i\in I}\lbrace Y_i\rbrace=Y$.)
	Secondly, $q\in\Var_{\Lan}\setminus\Var(\bigcup_{i\in I}\lbrace Y_i\rbrace)$. Indeed, if it were the case that $q\in\Var(\bigcup_{i\in I}\lbrace Y_i\rbrace)$, then we would have that $p\in
	\Var(\bigcup_{i\in I}\lbrace X_i\rbrace)$.
	
	Thus we obtain that $\bigcup_{i\in I}\lbrace Y_i\rbrace$ is inconsistent w.r.t. $\mathcal{S}$. Since, by definition, $\mathcal{S}$ is couniform, there is $i_0\in I$ such that $Y_{i_0}$ is inconsistent w.r.t. $\mathcal{S}$. This implies that  Since $(\mathcal{S})^{+}$ is a conservative extension of $\mathcal{S}$, we receive that $Y_{i_0}\vdash_{(\mathcal{S})^{+}}q$. And since 
	$(\mathcal{S})^{+}$ is structural (Proposition~\ref{P:(S)^+}), $\sigma(Y_{i_0})\vdash_{(\mathcal{S})^{+}}p$ and hence $X_{i_0}\vdash_{(\mathcal{S})^{+}}p$. We recall that $p\notin\Var(X_{i_0})$. Therefore, the structurality of $(\mathcal{S})^{+}$ implies that $X_{i_0}$ is inconsistent.
\end{proof}

\begin{prop}[W\'{o}jcicki theorem]\label{P:wojcicki}
	Let $\mathcal{S}$ be a structural abstract logic in a language $\Lan$ with $\card{\Var_{\Lan}}\ge\aleph_{0}$. Then $\mathcal{S}$ has an adequate matrix if, and only if, $\mathcal{S}$ is both uniform and couniform.
\end{prop}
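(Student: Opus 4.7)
The $\Rightarrow$ direction is immediate from the two existing lemmas: Lemma~\ref{L:los-suszko-1} shows that having an adequate matrix forces uniformity, and Lemma~\ref{L:couniform} shows it forces couniformity. So I concentrate on $\Leftarrow$.

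The plan is to adapt the construction used in Proposition~\ref{P:los-suszko}, replacing the appeal to finitariness with one to couniformity. If $\mathcal{S}$ is trivial, use the matrix $\langle \mathfrak{F}_{\mathcal{L}}, \mathit{Fm}_{\mathcal{L}}\rangle$. Otherwise, for each consistent theory $X \in \theoryC$ introduce a disjoint copy $\mathcal{V}_X = \{p_X : p \in \Var\}$ of the variables; let $\Lan^{+}$ be the primitive extension of $\Lan$ obtained by adjoining all these variables, and form $(\mathcal{S})^{+}$ via Definition~\ref{D:(S)^+} (which, by Propositions~\ref{P:(S)^+} and~\ref{P:(S)^{+}-existence}, is a structural conservative extension). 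Define the substitutions $\sigma_X$ and $\xi_X$ exactly as before, and set
\[
\mat{M} := \langle \mathfrak{F}_{\mathcal{L}^{+}}, \textbf{Cn}_{(\mathcal{S})^{+}}\bigl(\textstyle\bigcup_{X \in \theoryC}\sigma_X(X)\bigr)\rangle.
\]
As in the proof of Proposition~\ref{P:los-suszko}, the sets $\sigma_X(X)$ are pairwise disjoint in their variable occurrences (each is contained in $\mathcal{V}_X$), each $\sigma_X(X)$ is consistent in $(\mathcal{S})^{+}$ (by the structurality/conservativity argument of~\eqref{E:second-observation}), and identity~\eqref{E:zig-zag} holds. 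The soundness direction $Y \vdash_{\mathcal{S}} \alpha \Rightarrow Y \models_{\mat{M}}\alpha$ copies over verbatim from Proposition~\ref{P:los-suszko}: conservativity gives $Y \vdash_{(\mathcal{S})^{+}}\alpha$, and structurality of $(\mathcal{S})^{+}$ propagates this through every $\Lan^{+}$-substitution (i.e., every valuation in $\mathfrak{F}_{\mathcal{L}^{+}}$).

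The crux is completeness. Assume $Y \not\vdash_{\mathcal{S}} \alpha$, let $X_0 := \textbf{Cn}_{\mathcal{S}}(Y) \in \theoryC$, and suppose for contradiction
\[
\sigma_{X_0}(\alpha) \in \textbf{Cn}_{(\mathcal{S})^{+}}\bigl(\sigma_{X_0}(X_0) \cup \textstyle\bigcup_{X \in \theoryC \setminus \{X_0\}}\sigma_X(X)\bigr).
\]
Here I deviate from the finitary peeling argument. By Lemma~\ref{L:(S)^{+}-couniform} the extension $(\mathcal{S})^{+}$ is couniform; the family $\{\sigma_X(X)\}_{X \in \theoryC\setminus\{X_0\}}$ has pairwise disjoint variable sets, each member is consistent in $(\mathcal{S})^{+}$, and the union of their variables is contained in $\bigcup_{X \neq X_0}\mathcal{V}_X$, which misses every variable of $\Var_{\mathcal{L}}$, so $\Var(\bigcup_{X\neq X_0}\sigma_X(X)) \subsetneq \Var_{\mathcal{L}^{+}}$. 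Couniformity (read contrapositively) therefore yields that $\bigcup_{X\neq X_0}\sigma_X(X)$ is itself consistent in $(\mathcal{S})^{+}$. Since its variables are disjoint from $\Var(\sigma_{X_0}(X_0) \cup \{\sigma_{X_0}(\alpha)\}) \subseteq \mathcal{V}_{X_0}$, Lemma~\ref{L:(S)_plus-uniformity} together with the uniformity hypothesis on $\mathcal{S}$ lets us strip this consistent piece off in one shot, leaving $\sigma_{X_0}(X_0) \vdash_{(\mathcal{S})^{+}}\sigma_{X_0}(\alpha)$. Applying $\xi_{X_0}$, invoking structurality of $(\mathcal{S})^{+}$ and~\eqref{E:zig-zag}, then conservativity, produces $X_0 \vdash_{\mathcal{S}}\alpha$, contradicting $\alpha \notin X_0$.

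The main obstacle I expect is ensuring the bookkeeping for couniformity is clean: specifically, that the disjointness of the $\mathcal{V}_X$ translates into the variable-disjointness required by the definition, and that the ``proper subset of all variables'' clause of couniformity is met. Both are achieved precisely because $\Lan^{+}$ is a \emph{primitive} extension, so the original variables $\Var_{\mathcal{L}}$ sit inside $\Var_{\mathcal{L}^{+}}$ untouched by any $\sigma_X(X)$ and serve as the witnesses. Beyond that, the whole argument is a one-step replacement of the ``finite subfamily + iterated uniformity'' move in Proposition~\ref{P:los-suszko} by a single application of couniformity followed by a single application of uniformity.
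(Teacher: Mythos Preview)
Your proposal is correct and follows essentially the same approach as the paper's proof: both reuse the construction from Proposition~\ref{P:los-suszko} verbatim up to the contradiction assumption, then replace the finitary peeling by a single application of couniformity of $(\mathcal{S})^{+}$ (Lemma~\ref{L:(S)^{+}-couniform}) to obtain consistency of the big union, followed by one application of uniformity of $(\mathcal{S})^{+}$ (Lemma~\ref{L:(S)_plus-uniformity}) to strip it off. The only cosmetic difference is that the paper applies couniformity to the full family $\{\sigma_X(X)\}_{X\in\theoryC}$ and then notes the sub-union over $X\neq X_0$ is consistent by monotonicity, whereas you apply couniformity directly to the family indexed by $\theoryC\setminus\{X_0\}$; both are immediately equivalent.
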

\begin{proof}
	The ``only-if'' part follows from Lemmas~\ref{L:los-suszko-1} and~\ref{L:couniform}. 
	
	The proof of the ``if'' part repeats the steps of the ``if'' part of the proof of the Lo\'{s}-Suszko theorem (Proposition~\ref{P:los-suszko}) up to the assumption $(\ast)$.
	Then, we continue as follows.
	
	First, we notice that, in virtue of~\eqref{E:second-observation} and of the couniformity 
	of $(\mathcal{S})^{+}$ (Lemma~\ref{L:(S)^{+}-couniform}), the set $\bigcup\set{\sigma_{X}(X)}{X\in\theoryC}$ is consistent. Thus, we have:
	\[
	\sigma_{X_0}(X_0),\bigcup\set{\sigma_{X}(X)}{X\in\theoryC~\text{and}~X\neq X_0}
	\vdash_{(\mathcal{S})^{+}}\sigma_{X_0}(\alpha),
	\]
	where the set $\bigcup\set{\sigma_{X}(X)}{X\in\theoryC~\text{and}~X\neq X_0}$ is consistent w.r.t. $(\mathcal{S})^{+}$.
	
	Let us denote
	\[
	Y_0:=\bigcup\set{\sigma_{X}(X)}{X\in\theoryC~\text{and}~X\neq X_0}.
	\]
	With the help of~\eqref{E:implication-1} and~\eqref{E:implication-2}, we conclude that $\Var(\sigma_{X_0}(X_0)\cup\lbrace\sigma_{X_0}(\alpha)\rbrace)\cal\Var(Y_0)=\emptyset$.
	Then, using the uniformity of $(\mathcal{S})^{+}$, we derive that $\sigma_{X_0}(\alpha)
	\vdash_{(\mathcal{S})^{+}}\sigma_{X_0}(\alpha)$. This, in virtue of structurality of $(\mathcal{S})^{+}$ (Proposition~\ref{P:(S)^+}), implies that
	$\xi_{X_0}(\sigma_{X_0}(\alpha))
	\vdash_{(\mathcal{S})^{+}}\xi_{X_0}(\sigma_{X_0}(\alpha))$ and hence, according to~\eqref{E:zig-zag}, $X_0\vdash_{(\mathcal{S})^{+}}\alpha$. Because of the conservativeness of $(\mathcal{S})^{+}$ over $\mathcal{S}$, the latter leads to $X_0\vdash_{\mathcal{S}}\alpha$. A contradiction.
\end{proof}

\paragraph{Exercises~\ref{section:single-matrix-consequence}}
\begin{enumerate}
	\item\label{EX:b-two-circle} Prove that the relation $\models_{\textbf{B}_{2}}^{\circ}$ obtained from $\models_{\textbf{B}_2}$ according to \eqref{EX:nonempty-consequence} is finitary but not uniform.
\end{enumerate}

\section{Finitary matrix consequence}\label{section:finitary-matrix-consequence}
Given a (nonempty) class $\mathcal{M}$ of $\Lan$-matrices, an abstract logic $\mathcal{S}_{\mathcal{M}}$, according to Proposition~\ref{P:matrix-con-is-con-relation},
is structural and, according to Corollary~\ref{C:S-matrix-completeness}, is determined by the class of all $\mathcal{S}_{\mathcal{M}}$-models. The last class definitely contains $\mathcal{M}$. For the purpose of this section, it will not be an exaggeration to assume that $\mathcal{M}$ coincides with the class of all $\mathcal{S}_{\mathcal{M}}$-models.
Under this assumption, the question arises: What characteristics should $\mathcal{M}$ have in order for $\mathcal{S}_{\mathcal{M}}$ to be finitary? Proposition~\ref{P:ultraclosedness} below proposes an answer to this question.

However, first we discuss the property that the matrix consequence of any finite matrix is finitary. To prove this, we use topological methods discussed in Section~\ref{section:topology}. 

Let $\mat{M}=\langle\alg{A},D\rangle$ be a finite logical matrix of type $\Lan$ and let $\valA:=\alg{A}^{\Var_{\Lan}}$ (the set of all valuations in algebra $\alg{A}$). We regard $|\alg{A}|$ as a finite topological space with discrete topology and $\valA$ as the cartesian power of the space $|\alg{A}|$ relative to the product topology. According to~Corollary~\ref{C:product-finite-discrete-spaces}, the space $\valA$ is compact.

For any formula $\alpha\in\Forms_{\mathcal{L}}$, we denote:
\[
\valA(\alpha):=\set{v\in\valA}{v[\alpha]\in D}.
\]

We aim to show that each $\valA(\alpha)$ is open and closed in $\valA$. The idea is that, since an arbitrary formula $\alpha$ contains finitely many sentential variables and the set $|\alg{A}|$ is finite, there are only finitely many restricted assignments that validate $\alpha$ and there are finitely many restricted assignments that refute it in $\mat{M}$.

We show that each $\valA(\alpha)$ is an open set in the product topology. The closedness of it, that is the openness of its complement, is similar.

Given an arbitrary formula $\alpha$, we define a binary relation on $\valA(\alpha)$ as follows:
\[
v\equiv w~\stackrel{\text{df}}{\Longleftrightarrow}~v(p)=w(p),~\text{for any $p\in\Var(\alpha)$}.
\] 

It is clear that the relation $\equiv$ is an equivalence. For any $v\in\valA(\alpha)$, we denote:
\[
|v|:=\set{w\in\valA(\alpha)}{w\equiv v}. \tag{the equivalence class generated by $v$}
\]

It is well know that $\set{|v|}{v\in\valA(\alpha)}$ is a partition of $\valA(\alpha)$. If we show that each $|v|$ is open in the product topology, so will be $\valA(\alpha)$.

We observe that
\[
|v|=\!\!\!\prod_{~p\in\Var_{\Lan}}Z_p,
\]
where
\[
Z_p=\begin{cases}
\begin{array}{cl}
\lbrace v(p)\rbrace &\text{if $p\in\valA(\alpha)$}\\
|\alg{A}| &\text{otherwise}.
\end{array}
\end{cases}
\]

According to the description~\eqref{E:representation} of the product topology in Section~\ref{section:topology}, this demonstrates that each $|v|$ is open. In a similar manner, we prove that each
\[
\valA\setminus\valA(\alpha)=\set{v\in\valA}{v[\alpha]\notin D}
\]
is also open.\\

Now, assume that for a set $X\cup\lbrace\alpha\rbrace\subseteq\Forms_{\mathcal{L}}$,
$X\models_{\textbf{M}}\alpha$. This implies (is even equivalent to) that $\bigcap_{\beta\in X}\valA(\beta)\subseteq\valA(\alpha)$. The last inclusion in turn is equivalent to the following:
\begin{equation}\label{E:open-cover}
	\valA(\alpha)\cup\bigcup_{\beta\in X}(\valA\setminus\valA(\beta))=
	\valA.
\end{equation}

This means that $\lbrace\valA(\alpha\rbrace\cup\lbrace(\valA\setminus\valA(\beta))
\rbrace_{\beta\in X}$ is an open cover of $\valA$. Since the space 
$\valA$ is compact, it has a finite subcover 
$\lbrace\valA(\alpha\rbrace\cup\lbrace(\valA\setminus\valA(\beta))
\rbrace_{\beta\in Y}$, for some $Y\Subset X$. The latter is equivalent to $Y\models_{\textbf{M}}\alpha$. 

Thus we have proved the following.
\begin{prop}
	A single matrix consequence relative to a finite matrix is finitary.
\end{prop}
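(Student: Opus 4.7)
The topological machinery needed is already in place: the finite carrier $|\alg{A}|$ carries the discrete topology, $\valA = |\alg{A}|^{\Var_{\mathcal{L}}}$ carries the Tychonoff product topology, and by Corollary~\ref{C:product-finite-discrete-spaces} the space $\valA$ is compact. The text has already verified that for every $\Lan$-formula $\beta$ both $\valA(\beta) = \{v \in \valA : v[\beta] \in D\}$ and its complement $\valA \setminus \valA(\beta)$ are open sets, by expressing each equivalence class under the relation ``agreeing on $\Var(\beta)$'' as a basic open rectangle of the form $\prod_{p} Z_p$ with $Z_p = |\alg{A}|$ for all but finitely many $p$.

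My plan is to reduce the finitariness assertion directly to compactness. Assume $X \models_{\textbf{M}} \alpha$; by definition this means that every valuation $v$ which sends $X$ into $D$ also sends $\alpha$ into $D$, i.e.\ $\bigcap_{\beta \in X} \valA(\beta) \subseteq \valA(\alpha)$. Passing to complements in $\valA$, this inclusion is equivalent to the covering equality
\[
\valA(\alpha) \cup \bigcup_{\beta \in X}\bigl(\valA \setminus \valA(\beta)\bigr) = \valA,
\]
which is precisely equation~\eqref{E:open-cover} in the text. The collection $\{\valA(\alpha)\} \cup \{\valA \setminus \valA(\beta)\}_{\beta \in X}$ is therefore an open cover of the compact space $\valA$.

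By compactness of $\valA$ there exists a finite subcover, which must still include $\valA(\alpha)$ (otherwise we may freely add it), so it has the form $\{\valA(\alpha)\} \cup \{\valA \setminus \valA(\beta)\}_{\beta \in Y}$ for some $Y \Subset X$. Unwinding the same equivalence in the reverse direction, this finite covering equality means $\bigcap_{\beta \in Y} \valA(\beta) \subseteq \valA(\alpha)$, i.e.\ $Y \models_{\textbf{M}} \alpha$. Hence whenever $X \models_{\textbf{M}} \alpha$ there is a finite $Y \Subset X$ with $Y \models_{\textbf{M}} \alpha$, which is exactly finitariness of $\models_{\textbf{M}}$.

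There is no real obstacle: the essential content was established before the proposition statement, and the remaining step is the one-line translation between the semantic condition ``$X \models_{\textbf{M}} \alpha$'' and the covering condition on the open--closed sets $\valA(\beta)$. The only thing worth a moment's care is the benign observation that we may always insist $\valA(\alpha)$ itself appears in the finite subcover, so that after discarding the rest the remaining complements $\valA \setminus \valA(\beta)$ are indexed by a finite subset $Y$ of $X$.
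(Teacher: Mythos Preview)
Your proof is correct and follows the paper's own argument essentially verbatim: translate $X \models_{\textbf{M}} \alpha$ into the open-cover equation~\eqref{E:open-cover}, invoke compactness of $\valA$ to extract a finite subcover indexed by some $Y \Subset X$, and translate back to $Y \models_{\textbf{M}} \alpha$. Your remark about ensuring $\valA(\alpha)$ appears in the finite subcover is a harmless clarification the paper leaves implicit.
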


\begin{cor}\label{C:finitariness-M-consequence}
	Let $\mathcal{M}$ be a nonempty finite family of finite matrices of type $\Lan$. Then the $\mathcal{M}$-consequence is finitary.
\end{cor}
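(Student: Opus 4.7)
The plan is to reduce the corollary to the immediately preceding proposition (finitariness of a single-matrix consequence for a finite matrix) together with Proposition~\ref{P:con-relation-intersection}, which says that a finite intersection of finitary consequence relations is finitary. No new topological work is needed here; the topological machinery has already been spent in establishing the single-matrix case.

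Concretely, I would begin by observing that the very definition \eqref{E:matrix-consequence} of matrix consequence yields
\[
\models_{\mathcal{M}}\,=\,\bigcap_{\mat{M}\in\mathcal{M}}\models_{\mat{M}}.
\]
By Proposition~\ref{P:matrix-con-is-con-relation}, each single-matrix consequence $\models_{\mat{M}}$ is a (structural) consequence relation, and by the preceding proposition each $\models_{\mat{M}}$ is finitary since every $\mat{M}\in\mathcal{M}$ is finite. Then I would invoke Proposition~\ref{P:con-relation-intersection}: because the index set $\mathcal{M}$ is finite and each $\models_{\mat{M}}$ is a finitary consequence relation, the intersection $\models_{\mathcal{M}}$ is again a finitary consequence relation.

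If one prefers to see the finitariness step in the clear without appealing to Proposition~\ref{P:con-relation-intersection}, the same argument unfolds directly: enumerate $\mathcal{M}=\lbrace\mat{M}_1,\ldots,\mat{M}_n\rbrace$, suppose $X\models_{\mathcal{M}}\alpha$, and for each $i\le n$ apply the preceding proposition to $X\models_{\mat{M}_i}\alpha$ to obtain $Y_i\Subset X$ with $Y_i\models_{\mat{M}_i}\alpha$. The set $Y:=Y_1\cup\cdots\cup Y_n$ is a finite subset of $X$, and by monotonicity of each $\models_{\mat{M}_i}$ one has $Y\models_{\mat{M}_i}\alpha$ for every $i$, hence $Y\models_{\mathcal{M}}\alpha$. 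There is no real obstacle; the only point to watch is that finiteness of the family $\mathcal{M}$ is essential, precisely because Proposition~\ref{P:con-relation-intersection} requires the index set to be finite in its finitariness clause (equivalently, the union $Y_1\cup\cdots\cup Y_n$ remains finite only when $n$ is).
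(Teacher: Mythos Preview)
Your proposal is correct and is exactly the intended argument: the paper leaves this proof to the reader, and the natural route is precisely to combine the preceding proposition (finitariness of $\models_{\mat{M}}$ for finite $\mat{M}$) with the finitariness clause of Proposition~\ref{P:con-relation-intersection} applied to the finite family $\mathcal{M}$. Your optional unfolded version is also fine and simply reproves that clause in situ.
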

\noindent\textit{Proof}~is left to the reader. (Exercise~\ref{section:single-matrix-consequence}.\ref{EX:finitariness-M-consequence})\\

Now we turn to a more general case.

\begin{prop}[Bloom-Zygmunt criterion]\label{P:ultraclosedness}
	Let $\mathcal{M}$ be the class of all models of an abstract logic $\mathcal{S}$ in $\Lan$. Then $\mathcal{S}$ is finitary if, and only if, the class $\mathcal{M}$ is closed w.r.t. ultraproducts. 
\end{prop}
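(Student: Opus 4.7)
The plan is to establish the two implications separately, working throughout with the ultraproduct of logical matrices $\prod_{i\in I}\mat{M}_i/\mathcal{U}$ whose underlying algebra is the ultraproduct of the component algebras (with operations computed on pointwise representatives modulo $\mathcal{U}$) and whose designated set is $D/\mathcal{U}=\set{[f]}{\lbrace i\in I : f(i)\in D_i\rbrace\in\mathcal{U}}$. The key semantic tool is a matrix analogue of {\L}o\'{s}'s theorem: any valuation $v$ in $\prod_{i}\mat{M}_i/\mathcal{U}$ is represented by a family $(v_i)_{i\in I}$ of component valuations via $v(p)=[(v_i(p))_{i\in I}]$, and a routine induction on formula degree gives $v[\alpha]=[(v_i[\alpha])_{i\in I}]$, so that $v[\alpha]\in D/\mathcal{U}$ iff $\lbrace i\in I : v_i[\alpha]\in D_i\rbrace\in\mathcal{U}$. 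This pointwise transfer lemma is the one nontrivial piece of apparatus on which both directions rest.

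For the easy direction (finitariness implies closure), suppose $\mathcal{S}$ is finitary and let $\lbrace\mat{M}_i\rbrace_{i\in I}\subseteq\mathcal{M}$ with $\mathcal{U}$ an ultrafilter on $I$; set $\mat{N}:=\prod_{i\in I}\mat{M}_i/\mathcal{U}$. To show $\mat{N}\in\mathcal{M}$, assume $X\vdash_{\mathcal{S}}\alpha$ and pick by finitariness a finite $Y\Subset X$ with $Y\vdash_{\mathcal{S}}\alpha$; then $Y\models_{\mat{M}_i}\alpha$ for every $i\in I$ because each $\mat{M}_i$ is an $\mathcal{S}$-model. Given any valuation $v$ in $\mat{N}$ with $v[X]\subseteq D/\mathcal{U}$, decompose $v$ into component valuations $v_i$. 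For each $\beta\in Y$ the set $B_\beta:=\lbrace i\in I : v_i[\beta]\in D_i\rbrace$ lies in $\mathcal{U}$, and finiteness of $Y$ ensures $\bigcap_{\beta\in Y}B_\beta\in\mathcal{U}$; at every index in this intersection the component valuation forces $v_i[\alpha]\in D_i$, so $v[\alpha]\in D/\mathcal{U}$ by the transfer lemma.

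For the converse, suppose $\mathcal{M}$ is closed under ultraproducts and $X\vdash_{\mathcal{S}}\alpha$, and assume for contradiction that no finite $Y\Subset X$ derives $\alpha$. Since $\mathcal{S}=\mathcal{S}_{\mathcal{M}}$ by Corollary~\ref{C:S-matrix-completeness}, for each such $Y$ there exist $\mat{M}_Y\in\mathcal{M}$ and a valuation $v_Y$ in $\mat{M}_Y$ with $v_Y[Y]\subseteq D_Y$ but $v_Y[\alpha]\notin D_Y$. Let $I:=\set{Y}{Y\Subset X}$ and for each $\beta\in X$ let $\widehat{\beta}:=\set{Y\in I}{\beta\in Y}$; the family $\lbrace\widehat{\beta}\rbrace_{\beta\in X}$ has the finite intersection property because $\lbrace\beta_1,\ldots,\beta_n\rbrace$ belongs to $\widehat{\beta_1}\cap\cdots\cap\widehat{\beta_n}$. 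Extend to an ultrafilter $\mathcal{U}$ on $I$ and form $\mat{N}:=\prod_{Y\in I}\mat{M}_Y/\mathcal{U}$, which lies in $\mathcal{M}$ by hypothesis. Define $v$ in $\mat{N}$ by $v(p):=[(v_Y(p))_{Y\in I}]$. For every $\beta\in X$, $\lbrace Y\in I : v_Y[\beta]\in D_Y\rbrace\supseteq\widehat{\beta}\in\mathcal{U}$, so $v[\beta]\in D/\mathcal{U}$; in contrast, $\lbrace Y\in I : v_Y[\alpha]\in D_Y\rbrace=\emptyset\notin\mathcal{U}$, so $v[\alpha]\notin D/\mathcal{U}$. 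Thus $X\not\models_{\mat{N}}\alpha$, contradicting $\mat{N}\in\mathcal{M}$ together with $X\vdash_{\mathcal{S}}\alpha$.

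The principal obstacle is not the high-level strategy but the semantic transfer lemma itself: one must carefully verify the matrix version of {\L}o\'{s}'s theorem, attending both to the behaviour of the algebraic operations under pointwise representatives modulo $\mathcal{U}$ and to the characterisation of $D/\mathcal{U}$ via the ultrafilter. Once this lemma is in place, the ultrafilter construction on the directed set of finite subsets of $X$ and the pigeonhole step in the easy direction are entirely routine, and no appeal beyond Corollary~\ref{C:S-matrix-completeness} is required.
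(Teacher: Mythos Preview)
Your argument is correct, and both directions are sound; the transfer lemma you isolate is indeed the heart of the matter. However, your route differs from the paper's in both halves. For the forward direction the paper does not argue directly via {\L}o\'{s}: instead it invokes Corollary~\ref{C:logic-by-modus-rules-criterion} to write a finitary structural $\mathcal{S}$ as $\mathcal{S}_{\mathcal{R}}$ for a set $\mathcal{R}$ of modus rules, then appeals to Corollary~\ref{C:S-models-modus-rules}, which in turn rests on the preservation of basic Horn sentences under reduced products. Your direct finite-intersection argument is more elementary and avoids this detour through the first-order translation $R\mapsto R^{\ast}$. For the converse the paper also indexes over $I=\set{Y}{Y\Subset X}$ and builds an ultrafilter containing the cones $\langle Y\rangle$, but rather than invoking $\mathcal{S}=\mathcal{S}_{\mathcal{M}}$ to choose refuting matrices, it takes the canonical choice $\mat{M}_Y=\Lin_{\,\mathcal{S}}[Y]=\langle\FormAl,\ConS{Y}\rangle$ with the identity valuation $\iota$; the ultraproduct valuation is then the diagonal map $\beta\mapsto\overline{\beta}/\nabla$. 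The two constructions are interchangeable---your appeal to Corollary~\ref{C:S-matrix-completeness} is precisely what the Lindenbaum matrices witness---but the paper's version is slightly more concrete, while yours makes clearer that any family of refuting $\mathcal{S}$-models would do.
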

\begin{proof}
	Suppose $\mathcal{S}$ is finitary. Then, according to Corollary~\ref{C:logic-by-modus-rules-criterion}, there is a set $\mathcal{R}$ of modus rules such that $\mathcal{S}=\mathcal{S}_{\mathcal{R}}$. Thus, $\mathcal{M}$ is the class of all $\mathcal{S}_{\mathcal{R}}$-models. Then, in virtue of Corollary~\ref{C:S-models-modus-rules}, $\mathcal{M}$ is closed under ultraproducts.
	
	Next, let the class $\mathcal{M}$ be closed w.r.t. ultraproducts. For contradiction, assume that $\mathcal{S}$ is not finitary. That is, for some set $X\cup\lbrace\alpha\rbrace\subseteq\Forms_{\mathcal{L}}$, $\alpha\in\ConS{X}$ but for any $Y\Subset X$, $\alpha\notin Y$. 
	
	We denote:
	\[
	I:=\set{Y}{Y\Subset X}
	\]
	and for any $Y\in I$,
	\[
	\langle Y\rangle:=\set{Z\in I}{Y\subseteq Z}.
	\]
	
	We observe that for any $Y_1,\ldots,Y_n\in I$,
	\[
	\langle Y_1\rangle\cap\ldots\cap\langle Y_n\rangle=\langle Y_1\cap\ldots\cap Y_n\rangle.
	\]
	That is, the set $\set{\langle Y\rangle}{Y\in I}$ has the finite intersection property. According to the Ultrafilter Theorem (\cite{chang-keisler1990}, proposition 4.1.3), there is an ultrafilter $\nabla$ over $I$ such that
	\begin{equation}\label{E:ultrafilter-1}
		\set{\langle Y\rangle}{Y\in I}\subseteq\nabla.
	\end{equation}
	
	Given an $\Lan$-formula $\beta\in X$, we note that
	\[
	\langle\lbrace\beta\rbrace\rangle=\set{Y\in I}{\beta\in Y}\subseteq\set{Y\in I}{\beta\in\ConS{Y}}.
	\]
	This, in view of~\eqref{E:ultrafilter-1}, implies that for arbitrary formula $\beta\in X$,
	\begin{equation}\label{E:ultrafilter-2}
		\set{Y\in I}{\beta\in\ConS{Y}}\in\nabla.
	\end{equation}
	
	Using the notation 	$\Lin_{\,\mathcal{S}}[Y]:=\langle\FormAl,\ConS{Y}\rangle$ (Section~\ref{section:con-via-matrices}), we define:
	\[
	\mat{M}:=\prod_{F}\Lin_{\,\mathcal{S}}[Y].
	\]
	Thus
	\[
	\mat{M}=\langle(\FormAl)^{I}\slash\nabla,D\rangle,
	\]
	where a set $D\subseteq |(\FormAl)^{I}\slash\nabla|$ and such that for any $\theta\in|(\FormAl)^{I}|$,
	\begin{equation}\label{E:ultrafilter-3}
		\theta\slash\nabla\in D~\Longleftrightarrow~\set{Y\in I}{\theta_{Y}\in\ConS{Y}}\in\nabla.
	\end{equation}
	
	For any $\Lan$-formula $\beta$, we define:
	\[
	\overline{\beta}:I\longrightarrow\lbrace\beta\rbrace
	\]
	and then, for any $\beta\in\Forms_{\mathcal{L}}$,
	\[
	v:\beta\mapsto\overline{\beta}\slash\nabla.
	\]
	
	It must be clear that $v:\FormAl\longrightarrow(\FormAl)^{I}\slash\nabla$ is a homomorphism and, hence, can be regarded as an assignment of the $\Lan$-formulas in $(\FormAl)^{I}\slash\nabla$.
	
	We aim to show that $v[X]\subseteq D$ but $v[\alpha]\notin D$. This will imply that $\mat{M}$ is not an $\mathcal{S}$-model and hence does not belong to $\mathcal{M}$. The latter in turn will implies that $\mathcal{M}$ is not closed under ultraproducts, which is contrary to the assumption.
	
	Indeed, it follows from \eqref{E:ultrafilter-3} and \eqref{E:ultrafilter-2} that for any $\beta\in X$,
	\[
	v[\beta]=\overline{\beta}\slash\nabla\in D.
	\]
	On the other hand, 
	\[
	\set{Y\in I}{\alpha\in\ConS{Y}}=\emptyset.
	\]
	Therefore, in view of~\eqref{E:ultrafilter-3}, $v[\alpha]=\overline{\alpha}\slash\nabla\notin D$. 
\end{proof}

\paragraph{Exercises~\ref{section:finitary-matrix-consequence}}
\begin{enumerate}
	\item \label{EX:finitariness-M-consequence} Prove Corollary~\ref{C:finitariness-M-consequence}.
\end{enumerate}

\section{The conception of separating means}\label{section:separating-means}
Considering an abstract logic $\mathcal{S}$ as a consequence relation $\vdash_{\mathcal{S}}$, there are two main tasks: to confirm $X\vdash_{\mathcal{S}}\alpha$ or to refute it. Given a finitary abstract logic $\mathcal{S}$, these two tasks can be regarded as one twofold problem (of extreme importance): Is  there an algorithm that for any set $X\cup\lbrace\alpha\rbrace\Subset\Forms_{\mathcal{L}}$, decides whether $X\vdash_{\mathcal{S}}\alpha$ is true or false.

Trying to confirm $X\vdash_{\mathcal{S}}\alpha$, one can use rules of inference (sound w.r.t. $\mathcal{S}$) or a suitable adequate matrix or a set of matrices which determine $\mathcal{S}$. On the other hand, to refute $X\vdash_{\mathcal{S}}\alpha$, we need ``means'' that would separate $X$ from $\alpha$, for any $X\cup\lbrace\alpha\rbrace\subseteq\Forms_{\mathcal{L}}$ or at least for any $X\cup\lbrace\alpha\rbrace\Subset\Forms_{\mathcal{L}}$. According to Corollary~\ref{C:Lindenbaum-completeness}, the Lindenbaum atlas for $\mathcal{S}$ is a universal tool, for it works, at least in principal, for both tasks. However, it is too complicated and can be used mainly to prove very general theorems. For the refutation task, the Lindenbaum atlas is very inefficient. Nevertheless, in many cases, as we will see, it can be simplified to an extent to be useful.

A general view on the conception of \textit{separating means} was formulated by A. Kuznetsov~\cite{kuznetsov79}. According to him, if all formulas of a set $X$ stand in a relation $R$ to an object $\mathfrak{A}$, but a formula $\alpha$ does not stand in $R$ to $\mathfrak{A}$, we say that $\mathfrak{A}$ \textit{separates} $X$ from $\alpha$ w.r.t. $R$.
In this case, the character of the relation $R$ must be such that from the separation of $X$ from $\alpha$ it follows that $X\vdash_{\mathcal{S}}\alpha$ does not hold.\footnote{The conception of separating means, proposed by Kuznetsov, was intended to be applied not only to the issue of derivability, but also to that of expressibility understood within the framework of a given calculus.} He also emphasized that the object $\mathfrak{A}$ which was employed as a separating means in the above sense has often been of algebraic nature.

While the Lindenbaum atlas relative to an abstract logic $\mathcal{S}$ can play a role of universal separating means, but for a particular pair $X$ and $\alpha$, in case 
$X\vdash_{\mathcal{S}}\alpha$ does not hold, potentially any $\mathcal{S}$-matrix can be a separating means. Therefore, it would be interesting to find an algebraic connection between the Lindenbaum atlas or Lindenbaum matrix relative to $\mathcal{S}$ and any $\mathcal{S}$-matrix. This will be the topic of the next chapter.

\chapter[Unital Logics]{Unital Abstract Logics}
\label{chapter:lindenbaum-tarski}

\section{Unital algebraic expansions}	\label{section:chapter-unital-preliminaries}
From the discussion in Section~\ref{section:separating-means}, the following questions arise. 
\begin{description}
	\item[(1)]~\,Is there a regular method of simplifying the Lindenbaum atlas (Lindenbaum matrix) relative to an abstract logic $\mathcal{S}$ to obtain an adequate atlas (or matrix), or at least weakly adequate matrix that would be a more convenient separating means than $\Lin[\Sigma_{\mathcal{S}}]$ or $\LinS$?
	\item[(2)]~\,Is there a regular method of simplifying the Lindenbaum atlas (Lindenbaum matrix) relative to an abstract logic $\mathcal{S}$ to have a connection between these simplified forms of $\Lin[\Sigma_{\mathcal{S}}]$ or $\LinS$ and other $\mathcal{S}$-atlases and $\mathcal{S}$-matrices, respectively?
\end{description}

As to the question the question $\bm{(1)}$, even any finite matrix generates abstract logic, the Lindenbaum matrix of which is always infinite, for the cardinality of any Lindenbaum matrix equals the cardinality of the set of formulas. Take, for instance, the matrix $\booleTwo$ of Section~\ref{S:two-valued}. We showed (Section~\ref{section:inference-rules}) that $\booleTwo$ is adequate for the consequence relation $\vdash_{2}$. We denote the corresponding abstract logic by $\Cl$. Even a rough comparison between $\Lin_{\textsf{Cl}}$ and $\booleTwo$ demonstrates the difference between their complexities.

One thing is remarkable about the logical matrices discussed in Sections~\ref{S:two-valued}--\ref{section:dummett}. The logical filters of all of them consist of a single element. A logical matrix $\mat{M}=\langle\alg{A},D\rangle$ is called \textit{\textbf{unital}} if $D$ is a one-element set. We denote the designated element of a unital matrix by $\one$ (perhaps with subscript). Thus, if $\mat{M}=\langle\alg{A},\lbrace\one\rbrace\rangle$, we can consider  
the algebra $\langle\alg{A},\one\rangle$ as a semantics
for $\Lan$-formulas.  We call $\langle\alg{A},\one\rangle$ a \textit{\textbf{unital expansion}} of $\alg{A}$. Very often, such an expansion is not needed. Namely, when $D=\lbrace c\rbrace$, where $c$ is a constant from the signature of {\alg{A}}, we count that $\one=c$.   

Thus, in case \mat{M} is unital, instead of saying that an $\Lan$-formula $\alpha$ is valid in $\mat{M}$, we say that $\alpha$ is \textit{\textbf{valid}} in $\langle\alg{A},\one\rangle$, meaning that for any valuation $v$ in $\alg{A}$, $v[\alpha]=\one$. Further, given a unital expansion $\langle\alg{A},\one\rangle$, for any set $X\cup\lbrace\alpha\rbrace\subseteq\Forms_{\mathcal{L}}$, we define: 
\[
X\models_{\langle\textbf{A},\one\rangle}\alpha\stackrel{\text{df}}{\Longleftrightarrow}v[X]\subseteq\lbrace\one\rbrace\Rightarrow v[\alpha]=\one,~\text{for any valuation $v$ in $\alg{A}$}.
\]

The last definition allows us to adopt the definition of $\mathcal{S}$-model to algebras
$\langle\alg{A},\one\rangle$. Namely such an algebra is an (\textit{\textbf{algebraic}}) $\mathcal{S}$-\textit{\textbf{model}} if for any set $X\cup\lbrace\alpha\rbrace\subseteq\Forms_{\mathcal{L}}$,
\[
X\vdash_{\mathcal{S}}\alpha~\Longrightarrow~X\models_{\langle\textbf{A},\one\rangle}\alpha.
\]

Further, we say that an algebra $\langle\alg{A},\one\rangle$ is \textit{\textbf{adequate }}for an abstract logic $\mathcal{S}$ if for any set $X\cup\lbrace\alpha\rbrace\subseteq\Forms_{\mathcal{L}}$,
\[
X\vdash_{\mathcal{S}}\alpha~\Longleftrightarrow~X\models_{\langle\textbf{A},\one\rangle}\alpha;
\]
and that $\alg{A}$ is \textit{\textbf{weakly adequate}} for $\mathcal{S}$ if for any $\alpha\in\Forms_{\mathcal{L}}$,
\[
\alpha\in\ThmS~\Longleftrightarrow\text{for any valuation $v$ in $\alg{A}$},
v[\alpha]=\one.
\]

Finally, the definition of $\mathcal{M}$-consequence (Section~\ref{section:con-via-matrices}) can be adopted for the case when $\mathcal{M}=\lbrace\langle\alg{A}_i,\one_i\rangle\rbrace_{i\in I}$. Namely for such a class $\mathcal{M}$, we define:
\[
X\models_{\mathcal{M}}\alpha~\stackrel{\text{df}}{\Longleftrightarrow}~
X\models_{\langle\textbf{A}_i,\one_i\rangle}\alpha,~\text{for each $\langle\alg{A}_i,\one_i\rangle$}.
\]
Abstract logics which can be defined as the $\mathcal{M}$-consequence of this type are called \textit{assertional}; see~\cite{font2016}, definition 3.5.

It will be convenient to treat the validity of an $\Lan$-formula $\alpha$ in 
$\langle\alg{A},\one\rangle$ as the validity of a first order formula in a first order model.

Namely, given a sentential language $\Lan$, we define a first order language $\FOstar$ whose individual variables are the sentential variables of $\Lan$ and the connectives of $\Func_{\mathcal{L}}$ and the constants of $\Cons_{\mathcal{S}}$ are the functional symbols. (The constant symbols are treated as 0-ary functional symbols). In addition,  $\FOstar$ has a 0-ary functional symbol $\one$ which is interpreted in the $\FOstar$-models by a constant $\one$. Also, $\FOstar$ has symbol `$\approx$' which is interpreted in the $\FOstar$-models as equality, the first order logical connectives
$\&$ (conjunction), $\Rightarrow$ (implication) and universal quantifier $\forall$.
The parentheses `(' and `)' are used in the usual way.

Thus each $\Lan$-formula becomes in $\FOstar$ a term. In addition, $\FOstar$ has terms like
$\alpha(p,\ldots,\one,\ldots, q)$ obtained from $\alpha(p,\ldots,r,\ldots, q)$ by replacement of a variable $r$ with the constant $\one$.

The atomic formulas of $\FOstar$, called also \textit{\textbf{equalities}}, are the expressions of the form:
\[
\mathfrak{r}\approx\mathfrak{t},
\]
where $\mathfrak{r}$ and $\mathfrak{t}$ are $\FOstar$-terms.

Thus $\FOstar$ as a logical system is a first order logic with equality.

Given an $\FOstar$-formula $\phi$, its universal closure is denoted by
\[
\forall\ldots\forall\phi.
\]

The universal closure of an equality is often referred to as an \textit{identity}.
Given equalities $\phi,\phi_1,\ldots,\phi_n$, the formula
\[
\forall\ldots\forall((\phi_1\&\ldots\&\phi_n)\Rightarrow\phi)
\]
is called a \textit{quasiidentity}.

With each expansion $\langle\alg{A},\one\rangle$ we associate an $\FOstar$-model $\fA=\langle\alg{A},\one,\approx\rangle$ so that,
talking about validity of identities and quasiidentities in $\fA$, we, abusing notation, will be writing
\[
\fA\models\alpha(p,\one,\ldots)\approx\beta(q,\one,\ldots)~~\text{and}~~
\fA\models(\phi_1\&\ldots\&\phi_n)\Rightarrow\phi
\]
instead of, respectively, $\fA\models\forall\ldots\forall\alpha(p,\one,\ldots)\approx\beta(q,\one,\ldots)$
and\\ $\fA\models\forall\ldots\forall((\phi_1\&\ldots\&\phi_n)\Rightarrow\phi)$.
Thus, it must be clear that for any $\Lan$-formula $\alpha$ and any $\FOstar$-model
$\fA=\langle\alg{A},\one,\approx\rangle$,
\begin{equation}\label{E:model=algebra-validity-1}
\models_{\langle\textbf{A},\one\rangle}\alpha~\Longleftrightarrow~\fA\models\alpha\approx\one.
\end{equation}

Let $X:=\lbrace\alpha_1,\ldots,\alpha_n\rbrace$. We denote:
\[
X\approx\one\stackrel{\text{df}}{\Longleftrightarrow}\alpha_1\approx\one\&\ldots\&\alpha_n\approx\one.
\]
Thus, in this notation, for any set $X\cup\lbrace\alpha\rbrace\Subset\Forms_{\mathcal{L}}$, we can observe that
\begin{equation}\label{E:model=algebra-validity-2}
X\models_{\langle\textbf{A},\one\rangle}\alpha~\Longleftrightarrow~\fA\models (X\approx\one\Rightarrow\alpha\approx\one).
\end{equation}
(If $X=\emptyset$, we understand the last equivalence as~\eqref{E:model=algebra-validity-1}.)

\begin{prop}\label{P:S-algebraic-models=q-variety}
Let $\mathcal{S}$ be a finitary abstract logic. Then all algebraic $\mathcal{S}$-models form a quasivariety.
\end{prop}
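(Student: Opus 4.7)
The plan is to reduce this to the translation already set up in Section~\ref{section:modus-rules} (Lemma~\ref{L:R-and-R*-in-matrix} and Proposition~\ref{P:S-models-modus-rules}), but rephrased for unital matrices in the equational language $\FOstar$. First I would invoke Corollary~\ref{C:logic-by-modus-rules-criterion}: since $\mathcal{S}$ is finitary (and structural, as any abstract logic defined from matrices is), there exists a set $\mathcal{R}$ of modus rules with $\mathcal{S}=\mathcal{S}_{\mathcal{R}}$, so that $\vdash_{\mathcal{S}}\,=\,\vdash_{\mathcal{R}}$.

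Next, I would translate each modus rule into a first-order sentence over $\FOstar$. To a rule $R=\psi_1,\ldots,\psi_n\slash\phi$ with $n\ge 1$ I would assign the quasiidentity
\[
\Phi_R:=\forall\ldots\forall\bigl((\psi_1^{\ast}\approx\one\,\&\,\ldots\,\&\,\psi_n^{\ast}\approx\one)\Rightarrow\phi^{\ast}\approx\one\bigr),
\]
where $\psi^{\ast}$ is obtained from the metaformula $\psi$ by replacing every metavariable $\bm{\alpha}$ with the corresponding individual variable $\bm{x_\alpha}$; to a premiseless rule $T=\emptyset\slash\phi$ I would assign the identity $\Phi_T:=\forall\ldots\forall\,\phi^{\ast}\approx\one$. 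The claim, which is the algebraic analog of Proposition~\ref{P:S-models-modus-rules}, is that an algebra $\langle\alg{A},\one\rangle$ is an $\mathcal{S}$-model iff the associated $\FOstar$-structure $\fA$ satisfies $\Phi_R$ for every $R\in\mathcal{R}$.

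For the ``only if'' direction, I would fix $R=\psi_1,\ldots,\psi_n\slash\phi\in\mathcal{R}$ and a valuation $v$ in $\alg{A}$ (equivalently, an assignment to the individual variables $\bm{x_{\alpha_i}}$). By restricting to the metavariables occurring in $R$ and composing with a suitable simple instantiation $\bm{\xi}_0\colon\Mvar\to\Var_{\mathcal{L}}$ (exactly as in the remark preceding Lemma~\ref{L:R-and-R*-in-matrix}), $v$ can be read off as a valuation of the formulas $\bm{\xi}_0(\psi_i),\bm{\xi}_0(\phi)$, with $v[\bm{\xi}_0(\psi_i)]=\psi_i^{\ast}[\ldots]$ and similarly for $\phi$. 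Since $R$ is sound w.r.t.~$\mathcal{S}$ (Proposition~\ref{P:modus-rules}) and $\langle\alg{A},\one\rangle$ is an $\mathcal{S}$-model, $v[\bm{\xi}_0(\psi_i)]=\one$ for all $i$ forces $v[\bm{\xi}_0(\phi)]=\one$; via~\eqref{E:model=algebra-validity-2} this yields $\fA\models\Phi_R$. For the ``if'' direction, I would take $X\vdash_{\mathcal{S}}\alpha$, choose an $\mathcal{R}$-derivation $\alpha_1,\ldots,\alpha_n,\alpha$, fix any valuation $v$ in $\alg{A}$ with $v[X]\subseteq\{\one\}$, and show by induction on the position in the derivation that $v[\alpha_j]=\one$: if $\alpha_j\in X$ this is immediate, and otherwise $\alpha_j=\bm{\sigma}(\phi)$ with premises $\bm{\sigma}(\psi_1),\ldots,\bm{\sigma}(\psi_k)$ already known to evaluate to $\one$, and the assumption $\fA\models\Phi_R$ applied to the valuation $v\circ\bm{\sigma}$ closes the step.

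Granting the claim, the class of algebraic $\mathcal{S}$-models is axiomatized by the set $\{\Phi_R\mid R\in\mathcal{R}\}$ of quasiidentities (including possibly identities from premiseless rules), hence is a quasivariety by definition. The main obstacle I anticipate is purely bookkeeping: correctly matching metavariables $\bm{\alpha}$ used in rules with the individual variables $\bm{x_\alpha}$ of $\FOstar$ and with valuations in $\alg{A}$, i.e.~establishing cleanly that $v\bigl[\bm{\xi}(\psi)\bigr]=\psi^{\ast}\bigl[v[\bm{\xi}(\bm{x_{\alpha_1}})],\ldots\bigr]$, so that satisfaction of $\Phi_R$ in $\fA$ is genuinely equivalent to soundness of $R$ w.r.t.~$\models_{\langle\alg{A},\one\rangle}$. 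Once this translation lemma (the unital analog of Lemma~\ref{L:R-and-R*-in-matrix}) is in place, the remainder is a direct induction on $\mathcal{R}$-derivations together with an appeal to Proposition~\ref{P:modus-rules}.
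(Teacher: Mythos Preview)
Your approach is correct but takes a longer detour than the paper's proof. The paper never passes through modus rules or Corollary~\ref{C:logic-by-modus-rules-criterion}: it simply observes that, by finitariness, $\langle\alg{A},\one\rangle$ is an $\mathcal{S}$-model iff $X\models_{\langle\alg{A},\one\rangle}\alpha$ for every \emph{finite} $X$ with $X\vdash_{\mathcal{S}}\alpha$, and by~\eqref{E:model=algebra-validity-2} this is precisely $\fA\models(X\approx\one\Rightarrow\alpha\approx\one)$. Thus the class is axiomatized directly by the set of quasiidentities $\{X\approx\one\Rightarrow\alpha\approx\one : X\cup\{\alpha\}\Subset\Forms_{\mathcal{L}},\ X\vdash_{\mathcal{S}}\alpha\}$, one for each finite instance of the consequence relation, with no intermediate representation by rules. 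Your route first encodes $\mathcal{S}$ by a set $\mathcal{R}$ of modus rules (which additionally requires $\mathcal{S}$ to be structural, a hypothesis the paper's argument does not use), then proves a unital analog of Lemma~\ref{L:R-and-R*-in-matrix}, and finally runs an induction on $\mathcal{R}$-derivations. This works and yields a potentially smaller axiomatizing set (one quasiidentity per rule rather than per instance), but for the bare statement of the proposition the paper's one-line reduction via finitariness and~\eqref{E:model=algebra-validity-2} is all that is needed.
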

\begin{proof}
Indeed, let $\left\langle \alg{A},\one\right\rangle $ be a unital algebraic expansion. Then, since $\mathcal{S}$ is finitary, $\left\langle \alg{A},\one\right\rangle$ is an $\mathcal{S}$-model if, and only if,
$\left\langle \alg{A},\one,\approx\right\rangle\models X\approx\one\Rightarrow \alpha\approx\one$, for any $X\cup\lbrace\alpha\rbrace\Subset\Forms_{\mathcal{L}}$ such that $X\vdash_{\mathcal{S}}\alpha$.	
\end{proof}

\noindent\textit{Remark}. We note that there are finitary abstract logics $\mathcal{S}$ which are not determined by their algebraic $\mathcal{S}$-models. For instance, let $\mathcal{S}^{\to}$ be the abstract logic in a language containing only one connective $\to$ and no other constants, defined by premiseless rule
$\bm{\alpha}\to\bm{\alpha}$ and modus ponens. Then $\mathcal{S}^{\to}$ has no adequate finite matrix and is not determined by any nonempty set of unital matrices; see~\cite{mckinsey-tarski1948}, p. 6, and~\cite{mendelson2015}, exercise 1.52, and~\cite{font2016}, example 6.78.
 
\section{Unital abstract logics}\label{section:unital-logics}
Given a set $X\subseteq\Forms_{\mathcal{L}}$, we denote by $\theta(X)$ the congruence on $\FormAl$, generated by the set $X\times X$. We will be interested in congruences $\theta(D)$, where $D$ is a theory of an abstract logic  $\mathcal{S}$, in particular in congruences $\theta(\ThmS)$.  

The following observation will be useful in the sequel.
\begin{prop}\label{P:motivating}
	A structural abstract logic $\mathcal{S}$ is nontrivial if, and only if, it satisfies the following property: For any $p,q\in\Var_{\mathcal{L}}$,
	\begin{equation}\label{E:motivating}
	p\neq q~\Longrightarrow~p\slash\theta(\ThmS)\neq q\slash\theta(\ThmS).
	\end{equation}
\end{prop}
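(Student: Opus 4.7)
The plan is to prove both implications by contraposition, relying on Exercise~\ref{EX:trivial-1}, which for structural logics characterizes triviality by the membership of some (equivalently, any) propositional variable in $\ThmS$.

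For $(\Leftarrow)$: if $\mathcal{S}$ is trivial, then $\ThmS = \Forms_{\mathcal{L}}$, so the generator $\ThmS \times \ThmS$ already equals $\Forms_{\mathcal{L}} \times \Forms_{\mathcal{L}}$; hence $\theta(\ThmS)$ is the total congruence on $\FormAl$, and any two distinct variables are identified by it, contradicting \eqref{E:motivating}.

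For $(\Rightarrow)$, I would argue contrapositively: assume distinct $p, q \in \Var_{\mathcal{L}}$ satisfy $\langle p, q\rangle \in \theta(\ThmS)$ and aim to derive $p \in \ThmS$; triviality of $\mathcal{S}$ then follows from Exercise~\ref{EX:trivial-1}. The strategy is to present $\theta(\ThmS)$ as a stagewise closure and trace how a variable can come to be $\theta(\ThmS)$-related to a different formula. Set $\theta_0 := (\ThmS \times \ThmS) \cup \set{\langle\alpha,\alpha\rangle}{\alpha \in \Forms_{\mathcal{L}}}$, and define $\theta_{n+1}$ by extending $\theta_n$ with all pairs obtainable from it by one application of symmetry, transitivity, or the congruence rule (if $\langle \alpha_i, \beta_i\rangle \in \theta_n$ for $i = 1, \ldots, k$ and $F \in \Func$ has arity $k$, then $\langle F\alpha_1\ldots\alpha_k, F\beta_1\ldots\beta_k\rangle$ is added). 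A standard verification yields $\theta(\ThmS) = \bigcup_n \theta_n$, with each $\theta_n$ symmetric.

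I would then prove by induction on $n$ the key fact: for any variable $p \notin \ThmS$ and any formula $\beta$, $\langle p, \beta\rangle \in \theta_n$ implies $\beta = p$. The base case $n = 0$ is immediate, since pairs in $\theta_0$ with first coordinate $p$ come either from the diagonal (forcing $\beta = p$) or from $\ThmS \times \ThmS$ (excluded because $p \notin \ThmS$). At the inductive step, the symmetry-case collapses because $\theta_n$ is already symmetric; the transitivity-case uses the inductive hypothesis at an intermediate $\gamma$ to force $\gamma = p$, then applies the hypothesis again to $\langle p, \beta\rangle \in \theta_n$; and the congruence-case is vacuous, since a variable cannot be written as $F\alpha_1\ldots\alpha_k$. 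Applying this fact to $\langle p, q\rangle$ with $p \neq q$ forces $p \in \ThmS$, completing the contrapositive. The sole delicate point is the congruence-case, whose vacuity rests on the unique readability of $\Lan$-formulas, guaranteeing that a propositional variable cannot simultaneously be a composite term; the remainder is routine closure-bookkeeping.
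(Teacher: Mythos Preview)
Your proof is correct, but the only-if direction takes a genuinely different route from the paper. The paper argues top-down: given a variable $p$, it refines the partition induced by $\theta(\ThmS)$ by splitting the class $M$ containing $p$ into $\{p\}$ and $M\setminus\{p\}$, checks that the resulting equivalence is still a congruence (because composite terms $F\alpha_1\ldots\alpha_n$ never land in $\{p\}$), observes that it still contains $\ThmS\times\ThmS$ (since $p\notin\ThmS$), and then invokes the minimality of $\theta(\ThmS)$ to conclude that $\{p\}$ was already a congruence class. Your argument is bottom-up: you build $\theta(\ThmS)$ as an increasing union of stages and prove by induction that a variable outside $\ThmS$ can only be related to itself at every stage. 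Both arguments ultimately rest on the same syntactic observation---a variable is never the output of the congruence rule---but the paper's version is shorter and exploits the universal property of the generated congruence directly, while yours is more explicit about the mechanics and avoids appealing to minimality. For the if direction, your contrapositive via the total congruence is essentially the same as the paper's.
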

\begin{proof}
	To prove the `only-if' part, we assume that $\mathcal{S}$ is nontrivial. Then, since $\mathcal{S}$ is structural, $\ThmS$ does not contain sentential variables. Let $M$ be the congruence class w.r.t. $\theta(\ThmS)$, which contains the variable $p$. Now, we define: $M_1:=\lbrace p\rbrace$ and $M_2:=M\setminus\lbrace p\rbrace$. The new partition of $\Forms_{\mathcal{L}}$ induces the equivalence $\theta$.
	It is obvious that $\theta\subset\theta(\ThmS)$. We will show that $\theta$ is a congruence on $\FormAl$.
	
	Let $F$ be an arbitrary $n$-ary connective with $n\ge 1$. Assume that for formulas $\alpha_1,\ldots,\alpha_n,\beta_1,\ldots,\beta_n$, each pair $(\alpha_i,\beta_i)\in\theta$.
	Then each pair $(\alpha_i,\beta_i)\in\theta(\ThmS)$ and hence 
	$(F\alpha_1\ldots\alpha_n,F\beta_1\ldots\beta_n)\in\theta(\ThmS)$. This means that either both $F\alpha_1\ldots\alpha_n$ and $F\beta_1\ldots\beta_n$ belong to $M_2$ or they belong to a class of $\theta(\ThmS)$ that is different from $M$ and hence is a class of $\theta$.
	This implies that $\lbrace p\rbrace$ and $\lbrace q\rbrace$ are two distinct congruence classes w.r.t. $\theta(\ThmS)$.
	
	Conversely, suppose the implication \eqref{E:motivating} holds. Then, since one of the congruence classes w.r.t. $\theta(\ThmS)$ contains $\ThmS$ and there are more than two classes,
	$\ThmS\neq\Forms_{\mathcal{L}}$.
\end{proof}

\subsection{Definition and some properties}

\begin{defn}[unital abstract logic]\label{D:unital-logic}
A structural abstract logic $\mathcal{S}$ is called \textbf{unital} if any $\mathcal{S}$-theory $D$ is a congruence class w.r.t. the congruence $\theta(D)$.
\end{defn}

Since any congruence class cannot be empty, from Definition~\ref{D:unital-logic} and Proposition~\ref{P:motivating}, we immediately obtain the following.
\begin{prop}\label{P:lindenbaum-connection}
For any unital abstract logic $\mathcal{S}$, $\ThmS\neq\emptyset$; in addition, $\mathcal{S}$ is nontrivial if, and only,  \eqref{E:motivating} holds.
\end{prop}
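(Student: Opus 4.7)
The plan is to derive both assertions directly from Definition~\ref{D:unital-logic} and Proposition~\ref{P:motivating}, so essentially no new machinery is needed.

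For the first claim, I would note that $\ThmS = \textbf{Cn}_{\mathcal{S}}(\emptyset)$ is itself an $\mathcal{S}$-theory, since by $(\text{c}^{\dagger})$ of Definition~\ref{D:consequence-operator} one has $\textbf{Cn}_{\mathcal{S}}(\textbf{Cn}_{\mathcal{S}}(\emptyset)) = \textbf{Cn}_{\mathcal{S}}(\emptyset)$. Applying Definition~\ref{D:unital-logic} to the particular theory $D := \ThmS$, we conclude that $\ThmS$ is a congruence class of $\theta(\ThmS)$ on $\FormAl$. Since the congruence classes of any equivalence relation are by definition nonempty, it follows that $\ThmS \neq \emptyset$.

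For the second claim, I would simply invoke Proposition~\ref{P:motivating}, whose statement gives the biconditional ``$\mathcal{S}$ is nontrivial $\Longleftrightarrow$ \eqref{E:motivating} holds'' for every structural abstract logic. By Definition~\ref{D:unital-logic}, every unital abstract logic is structural, so Proposition~\ref{P:motivating} applies verbatim and yields precisely the desired equivalence.

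The only real obstacle is bookkeeping: verifying that the hypotheses of Proposition~\ref{P:motivating} are inherited from unitality (which they are, by definition), and recording the convention that congruence classes are taken to be nonempty. Once these points are acknowledged, the proposition follows as an immediate corollary, which matches the remark preceding it in the text.
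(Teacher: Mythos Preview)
Your proposal is correct and follows exactly the approach indicated in the paper: the remark preceding the proposition states that it follows immediately from Definition~\ref{D:unital-logic} (congruence classes are nonempty, hence $\ThmS\neq\emptyset$) and Proposition~\ref{P:motivating} (which supplies the biconditional for structural logics, and unital logics are structural by definition). Your write-up simply spells out these two observations in slightly more detail than the paper does.
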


As we will see below (Corollary~\ref{C:compatible-congruence-2}), the unitality of an abstract logic is connected to the following concept.

\begin{defn}\label{D:compatible-congruence}
Let $X\subseteq\Forms_{\mathcal{L}}$ and $X\neq\emptyset$. A congruence $\theta$ on $\FormAl$ is said to be \textbf{compatible} with $X$
if for any $(\alpha,\beta)\in\theta$,
\[  
\alpha\in X~\Longleftrightarrow~\beta\in X;
\]
in other words, $X$ is the union of congruence classes w.r.t. $\theta$.
\end{defn}

It is obvious that 
\[
\theta_0:=\set{(\alpha,\alpha)}{\alpha\in\FormAl}
\]
is a congruence of $\FormAl$, which is compatible with any nonempty set $X\subseteq\Forms$.

\begin{prop}\label{P:compatible-congruence}
Let $X\subseteq\Forms_{\mathcal{L}}$ and $X\neq\emptyset$.
For any congruence $\theta$ on $\FormAl$, $X$ is a congruence class w.r.t. $\theta$ if, and only if, the following conditions are satisfied$\,:$
{\em\[
\begin{array}{cl}
(\text{a}) &\theta(X)\subseteq\theta;\\
(\text{b}) &\theta~\textit{is compatible with}~ X.
\end{array}
\]}
\end{prop}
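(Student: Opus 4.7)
The plan is to prove both directions directly, pivoting on the fact that $\theta(X)$ is, by definition, the smallest congruence on $\FormAl$ containing $X \times X$. Since $X$ is assumed nonempty, one can fix a witness $\alpha_0 \in X$ and work with the congruence class $[\alpha_0]_\theta := \{\beta \in \Forms_{\mathcal{L}} \mid (\alpha_0,\beta)\in\theta\}$ throughout.

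For the ``only if'' direction, suppose $X$ is a congruence class of $\theta$. Then any two formulas in $X$ are $\theta$-related, so $X \times X \subseteq \theta$; by minimality of $\theta(X)$, this yields (a). Compatibility (b) is immediate: any two $\theta$-related formulas sit in the same congruence class, and $X$ is a single class, so membership in $X$ is preserved.

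For the ``if'' direction, assume (a) and (b). I would show $X = [\alpha_0]_\theta$. The inclusion $X \subseteq [\alpha_0]_\theta$ follows from (a): for every $\beta \in X$, we have $(\alpha_0,\beta) \in X \times X \subseteq \theta(X) \subseteq \theta$. The reverse inclusion uses (b): for every $\beta \in [\alpha_0]_\theta$, the pair $(\alpha_0,\beta)\in\theta$ and $\alpha_0 \in X$ force $\beta \in X$ by compatibility.

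There is no genuine obstacle here; the statement is essentially bookkeeping about the interaction between generated congruences and saturation. The only point that requires a line of justification is the use of nonemptiness of $X$ to pick $\alpha_0$, which is needed to identify $X$ with a specific class rather than arguing abstractly that it is a union of classes reduced to one.
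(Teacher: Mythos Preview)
Your proof is correct and follows essentially the same approach as the paper's own proof: both directions use that (a) forces all elements of $X$ to be $\theta$-related, while (b) prevents any element outside $X$ from being $\theta$-related to an element of $X$. The paper's version is terser (it does not explicitly fix a witness $\alpha_0$), but the logical content is identical.
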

\begin{proof}
Suppose $X$ is a congruence class w.r.t. $\theta$. Then (a) and (b) are obviously true.

Conversely, suppose both (a) and (b) are fulfilled. Then, according to (a), all formulas in $X$ are $\theta$-congruent to one another. In addition, in virtue of (b), each formula in $X$ cannot be $\theta$-congruent to a formula beyond $X$.
\end{proof}

The next two corollaries follow immediately from Proposition~\ref{P:compatible-congruence}.
\begin{cor}\label{C:compatible-congruence-1}
Let $\mathcal{S}$ be an abstract logic with $\ThmS\neq\emptyset$ and $D$ be an $\mathcal{S}$-theory. Then $D$ is a congruence class w.r.t. $\theta(D)$ if, and only if, 
$\theta(D)$ is compatible with $D$.
\end{cor}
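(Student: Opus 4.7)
The plan is to derive the corollary as an immediate specialization of Proposition~\ref{P:compatible-congruence}, with the set $X$ taken to be $D$ and the ambient congruence $\theta$ taken to be $\theta(D)$ itself. The only preliminary step is to verify that the hypotheses of that proposition are met, namely that $D \neq \emptyset$.

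First I would observe that since $D$ is an $\mathcal{S}$-theory, $D = \ConS{D}$, and by monotonicity of $\textbf{Cn}_{\mathcal{S}}$ we have $\ThmS = \ConS{\emptyset} \subseteq \ConS{D} = D$. Combined with the hypothesis $\ThmS \neq \emptyset$, this yields $D \neq \emptyset$, so Proposition~\ref{P:compatible-congruence} is applicable to the pair $(D, \theta(D))$.

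Next I would specialize: the proposition says that $D$ is a congruence class w.r.t.\ $\theta(D)$ if, and only if, both (a) $\theta(D) \subseteq \theta(D)$ and (b) $\theta(D)$ is compatible with $D$ hold. Condition (a) is trivially satisfied, so the conjunction of (a) and (b) collapses to (b) alone. Hence $D$ is a congruence class w.r.t.\ $\theta(D)$ if, and only if, $\theta(D)$ is compatible with $D$, which is precisely the statement of the corollary.

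There is no real obstacle here; the entire content is the recognition that taking $\theta := \theta(X)$ in Proposition~\ref{P:compatible-congruence} makes the inclusion requirement vacuous, together with the small bookkeeping point that nonemptiness of $D$ needs to be deduced from nonemptiness of $\ThmS$ via the theory-inclusion $\ThmS \subseteq D$.
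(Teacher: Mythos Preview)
Your proposal is correct and takes essentially the same approach as the paper, which simply states that the corollary follows immediately from Proposition~\ref{P:compatible-congruence}. Your explicit verification that $D\neq\emptyset$ via $\ThmS\subseteq D$ is a nice bit of bookkeeping that the paper leaves implicit.
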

\begin{cor}\label{C:compatible-congruence-2}
Let $\mathcal{S}$ be a structural abstract logic with $\ThmS\neq\emptyset$. Then the following conditions are equivalent$\,:$
{\em\[
\begin{array}{cl}
(\text{a}) & \textit{$\mathcal{S}$ is unital};\\
(\text{b}) & \textit{$\theta(D)$ is compatible with $D$, for each $\mathcal{S}$-theory $D$};\\
(\text{c}) & \textit{for any $\mathcal{S}$-theory $D$, there is a congruence $\theta$ on $\FormAl$}\\
&\textit{such that $\theta(D)\subseteq\theta$ and $\theta$ is compatible with $D$}.
\end{array}
\]}
\end{cor}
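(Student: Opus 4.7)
The plan is to prove the equivalence in a cycle, leveraging the already-established Corollary~\ref{C:compatible-congruence-1} and Proposition~\ref{P:compatible-congruence}; none of the three implications requires real work, so the sketch is mostly a matter of wiring the earlier results together in the right order.

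First, I would show $(\text{a})\Leftrightarrow(\text{b})$ directly from Corollary~\ref{C:compatible-congruence-1}. By Definition~\ref{D:unital-logic}, $\mathcal{S}$ is unital exactly when every $\mathcal{S}$-theory $D$ is a congruence class with respect to $\theta(D)$. Since $\ThmS\neq\emptyset$ guarantees $D\neq\emptyset$ for every theory, Corollary~\ref{C:compatible-congruence-1} rephrases ``$D$ is a congruence class w.r.t. $\theta(D)$'' as ``$\theta(D)$ is compatible with $D$,'' giving the equivalence at once.

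Next, the implication $(\text{b})\Rightarrow(\text{c})$ is immediate: given a theory $D$ for which $\theta(D)$ is compatible with $D$, one simply takes $\theta:=\theta(D)$, and the inclusion $\theta(D)\subseteq\theta$ holds trivially.

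The remaining implication $(\text{c})\Rightarrow(\text{b})$ is the only one that needs a brief argument, and it is where I would concentrate care. Fix an $\mathcal{S}$-theory $D$ and, by hypothesis, a congruence $\theta$ on $\FormAl$ with $\theta(D)\subseteq\theta$ and $\theta$ compatible with $D$. To check that $\theta(D)$ itself is compatible with $D$, take any $(\alpha,\beta)\in\theta(D)$; then $(\alpha,\beta)\in\theta$ by the inclusion, so the compatibility of $\theta$ with $D$ yields $\alpha\in D\Leftrightarrow\beta\in D$, which is exactly what compatibility of $\theta(D)$ with $D$ demands. I do not anticipate any genuine obstacle: the only thing to be careful about is making sure the hypothesis $\ThmS\neq\emptyset$ is invoked (implicitly through Corollary~\ref{C:compatible-congruence-1}) to exclude the empty theory case, and that the definition of compatibility is applied to $\theta(D)$ rather than to the ambient $\theta$ in the last step.
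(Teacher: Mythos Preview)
Your proof is correct and matches the paper's approach: the paper simply records this corollary as an immediate consequence of Proposition~\ref{P:compatible-congruence} (and Corollary~\ref{C:compatible-congruence-1}) without spelling out any details. Your cycle $(\text{a})\Leftrightarrow(\text{b})\Rightarrow(\text{c})\Rightarrow(\text{b})$ is exactly the intended unwinding, and the one non-trivial step you identify---that compatibility with $D$ passes from $\theta$ down to any sub-congruence, in particular $\theta(D)$---is precisely the point.
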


\subsection{Some subclasses of unital logics}\label{section:subclasses-unital}
In this subsection, we introduce several classes of abstract logics, which were considered in the literature and which turned out to be included in the class of unital logics.

\begin{defn}[implicative logic]\label{D:weakly-implicative}
A structural logic $\mathcal{S}$ in a language $\Lan$ is called \textbf{implicative} if $\Lan$ contains a binary connective $ \rightarrow $ such that for any $\mathcal{S}$-theory $D$, the following conditions are satisfied$\,:$
{\em\[
\begin{array}{cl}
(\text{a}) &\text{$\alpha\rightarrow\alpha\in D$, for any formula $\alpha$};\\
 (\text{b}) &\text{$\alpha\rightarrow\beta\in D$ and $\beta\rightarrow\gamma\in D$ imply $\alpha\rightarrow\gamma\in D$, for any $\alpha,\beta, \gamma\in\Forms_{\mathcal{L}}$};\\
(\text{c}) &\text{$\alpha\in D$ and $\alpha\rightarrow\beta\in D$ imply $\beta\in D$, for any $\alpha,\beta\in\Forms_{\mathcal{L}}$};\\
(\text{d}) &\text{$\beta\in D$ implies $\alpha\rightarrow\beta\in D$, for any $\alpha,\beta\in\Forms_{\mathcal{L}}$};\\
(\text{e}) &\text{if $\alpha_i\rightarrow\beta_i\in D$, 
	 $\beta_i\rightarrow\alpha_i\in D$, $1\le i\le n$, and $F$ is any $n$-ary connective},\\
 &\text{then $F\alpha_1\ldots\alpha_n\rightarrow F\beta_1\ldots\beta_n\in D$, for any formulas $\alpha_1,\ldots,\alpha_n,\beta_1,\ldots,\beta_n$}.\\
\end{array}
\]}
\end{defn}

We note that if a logic $\mathcal{S}$ is implicative, then $\bm{T}_{\mathcal{S}}\neq\emptyset$.

Let $\mathcal{S}$ be (at least) an implicative logic and $D$ be an $\mathcal{S}$-theory. We define:
\[
(\alpha,\beta)\in\theta_{D}~\stackrel{\text{df}}{\Longleftrightarrow}~
\text{$\alpha\rightarrow\beta\in D$ and $\beta\rightarrow\alpha\in D$}.
\]
\begin{prop}
Let $D$ be a theory of an implicative logic $\mathcal{S}$. Then $\theta_{D}$ is a congruence on $\FormAl$
and $D$ is a congruence class w.r.t. $\theta_{D}$.
\end{prop}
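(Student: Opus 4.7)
The plan is to unpack the definition of $\theta_D$ directly against the five conditions (a)--(e) in Definition~\ref{D:weakly-implicative}. There are three things to verify: that $\theta_D$ is an equivalence, that it respects every connective, and that $D$ is exactly one $\theta_D$-class.

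First I would check that $\theta_D$ is an equivalence relation on $\Forms_{\mathcal{L}}$. Reflexivity follows from (a): $\alpha\rightarrow\alpha\in D$, so $(\alpha,\alpha)\in\theta_D$. Symmetry is immediate because the defining condition is symmetric in $\alpha$ and $\beta$. For transitivity, suppose $(\alpha,\beta)\in\theta_D$ and $(\beta,\gamma)\in\theta_D$; then $\alpha\rightarrow\beta,\beta\rightarrow\gamma,\gamma\rightarrow\beta,\beta\rightarrow\alpha\in D$, and applying (b) twice (once for the forward direction and once for the reverse) gives $\alpha\rightarrow\gamma\in D$ and $\gamma\rightarrow\alpha\in D$, i.e. $(\alpha,\gamma)\in\theta_D$.

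Next I would verify that $\theta_D$ is a congruence on $\FormAl$. Let $F\in\Func$ with $\#(F)=n$, and suppose $(\alpha_i,\beta_i)\in\theta_D$ for $1\le i\le n$. Then both $\alpha_i\rightarrow\beta_i\in D$ and $\beta_i\rightarrow\alpha_i\in D$ for every $i$, so condition (e) applied once yields $F\alpha_1\ldots\alpha_n\rightarrow F\beta_1\ldots\beta_n\in D$, and applied a second time (swapping the roles of $\alpha_i$ and $\beta_i$) yields $F\beta_1\ldots\beta_n\rightarrow F\alpha_1\ldots\alpha_n\in D$. Hence $(F\alpha_1\ldots\alpha_n,F\beta_1\ldots\beta_n)\in\theta_D$.

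Finally, to show $D$ is a congruence class I would prove two complementary facts. \emph{Any two elements of $D$ are $\theta_D$-equivalent:} if $\alpha,\beta\in D$, then by (d) (applied with $\beta$ respectively $\alpha$ as the ``premise in $D$'') we get both $\alpha\rightarrow\beta\in D$ and $\beta\rightarrow\alpha\in D$, so $(\alpha,\beta)\in\theta_D$. \emph{No element outside $D$ is $\theta_D$-equivalent to an element of $D$:} if $\alpha\in D$ and $(\alpha,\beta)\in\theta_D$, then $\alpha\rightarrow\beta\in D$, and condition (c) (modus ponens inside $D$) gives $\beta\in D$. Combining these two facts, the $\theta_D$-class of any formula in $D$ is precisely $D$ itself.

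There is no real obstacle here: every step reduces to reading off one of the clauses (a)--(e), and each clause is used exactly in the place it was tailor-made for. The only minor point worth flagging is that clause (e) must be invoked twice in the congruence step, because as stated it only delivers one direction of the implication.
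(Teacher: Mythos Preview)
Your proof is correct and follows essentially the same route as the paper: clauses (a), (b), (e) give the congruence, and clauses (d), (c) give that $D$ is a single $\theta_D$-class. The paper is more terse and wraps up the last step by invoking Proposition~\ref{P:compatible-congruence} (compatibility plus $\theta(D)\subseteq\theta_D$), whereas you argue it directly, but the content is identical.
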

\begin{proof}
That $\theta_{D}$ is a congruence follows straightforwardly from (a), (b) and (e) of Definition~\ref{D:weakly-implicative}.

Next, we first note that, by premise, $D\neq\emptyset$. Secondly, if both
$\alpha$ and $\beta$ belong to $D$, then, according to (d) of Definition~\ref{D:weakly-implicative}, both $\alpha\rightarrow\beta$ and $\beta\rightarrow\alpha $ belong to $D$, that is $(\alpha,\beta)\in\theta_{D}$. And, thirdly, let $(\alpha,\beta)\in\theta_{D}$ and $\alpha\in D$. Then $\beta\in D$. In virtue of Proposition~\ref{P:compatible-congruence}, this implies that $D$ is a congruence class w.r.t. $\theta_{D}$. 
\end{proof}
\begin{cor}
	If an abstract logic $\mathcal{S}$ is implicative, then $\mathcal{S}$ is unital.
\end{cor}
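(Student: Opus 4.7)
The plan is to invoke the preceding proposition together with Corollary~\ref{C:compatible-congruence-2} essentially as a black box, and just verify that the hypotheses line up. Specifically, by Definition~\ref{D:unital-logic} we must show that every $\mathcal{S}$-theory $D$ is a congruence class with respect to $\theta(D)$. First I would note that implicativity gives $\ThmS\neq\emptyset$ (condition (a) of Definition~\ref{D:weakly-implicative} forces $\alpha\rightarrow\alpha\in\ThmS$), and more generally every $\mathcal{S}$-theory $D$ is nonempty for the same reason; this puts us in position to apply Corollary~\ref{C:compatible-congruence-2}.

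Next I would fix an arbitrary $\mathcal{S}$-theory $D$ and take $\theta:=\theta_{D}$ as supplied by the previous proposition. That proposition already tells us two crucial facts: $\theta_D$ is a congruence on $\FormAl$, and $D$ is a congruence class with respect to $\theta_D$. Feeding the latter into Proposition~\ref{P:compatible-congruence} yields both $\theta(D)\subseteq\theta_D$ (since $\theta(D)$ is by definition the smallest congruence containing $D\times D$, and $D\times D\subseteq\theta_D$) and compatibility of $\theta_D$ with $D$. This exactly matches clause (c) of Corollary~\ref{C:compatible-congruence-2}, whose equivalence with clause (a) delivers that $\mathcal{S}$ is unital.

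There is no real obstacle here: the work has all been done in the previous proposition, and the corollary is basically a two-line bookkeeping argument. The only thing worth being careful about is the nonemptiness of $D$, since Corollary~\ref{C:compatible-congruence-2} and Proposition~\ref{P:compatible-congruence} were stated with that standing assumption; but as observed, clause (a) of Definition~\ref{D:weakly-implicative} guarantees it for every $\mathcal{S}$-theory.
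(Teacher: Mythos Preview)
Your proposal is correct and follows exactly the route the paper intends: the corollary is stated without proof because it is immediate from the preceding proposition (giving that each $D$ is a congruence class with respect to $\theta_D$) together with Corollary~\ref{C:compatible-congruence-2}, and you have spelled out precisely that deduction.
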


The calculus in the language $\Lan_A$, which is defined by the inference rules ax1--ax10 and (b-$iii$) (modus ponens) of Section~\ref{section:inference-rules}, is called the \textit{\textbf{classical propositional logic}} and denoted by $\Cl$.
The calculus in $\Lan_A$, which is  defined by the rules ax1--ax9, ax11 (Section~\ref{section:modus-rules-vs-rules}) and modus ponens, is called the \textit{\textbf{intuitionistic propositional logic}} and denoted by $\Int$.
Also, it will be convenient to consider another calculus in the language $\Lan_A$, \textsf{P}, which is defined by the inference rules ax1--ax8 and (b-$iii$) (modus ponens) of Section~\ref{section:inference-rules}. We call $\Pos$ \textit{\textbf{positive logic}}.

It must be clear that for any set $X\cup\lbrace\alpha\rbrace\subseteq\Forms_{\mathcal{L}_A}$
\begin{equation}\label{E:two-implications}
	X\vdash_{\textsf{P}}\alpha~\textit{implies both}~X\vdash_{\textsf{Int}}\alpha
	~\textit{and}~X\vdash_{\textsf{Cl}}\alpha.
\end{equation}

Further, it can be easily shown that {\Pos}, $\Cl$ and $\Int$ are implicative. (See Exercise~\ref{section:unital-logics}.\ref{EX:Cl-In-implicative}.)\\

Another class of abstract logics, the class of Fregean logics, stems from 
\textit{Frege's principle of compositionality} (FPC). 

An abstract logic $\mathcal{S}$ has the FPC if for for any set $X\cup\lbrace\gamma(p,\ldots),\alpha,\beta\rbrace\subseteq\Forms_{\mathcal{L}}$,
\[
\text{$X,\alpha\vdash_{\mathcal{S}}\beta$ and $X,\beta\vdash_{\mathcal{S}}\alpha$ imply $X,\gamma(\alpha,\ldots)\vdash_{\mathcal{S}}\gamma(\beta,\ldots)$}.
\]

This leads to the following (equivalent) definition.
\begin{defn}[Fregean relation, Fregean logic]
Given a structural logic $\mathcal{S}$ and an $\mathcal{S}$-theory $D$, the relation on $\FormAl$ defined by the equivalence
{\em\[
(\alpha,\beta)\in\Lambda D~\stackrel{\text{df}}{\Longleftrightarrow}~
\text{$D,\alpha\vdash_{\mathcal{S}}\beta$ and $D,\beta\vdash_{\mathcal{S}}\alpha$}
\]}
is called a \textbf{Fregean relation relative to} $D$. A structural logic
is \textbf{Fregean} if for every $\mathcal{S}$-theory $D$, $\Lambda D$ is a congruence on $\FormAl$.
\end{defn}

We leave the reader to prove that for any structural logic $\mathcal{S}$, the properties of satisfying the FPC and being Fregean are equivalent. (See Exercise~\ref{section:lindenbaum-algebra}.\ref{EX:Fregean-equivalence}.)
\begin{prop}
If $\mathcal{S}$ is a Fregean logic with $\ThmS\neq\emptyset$, then $\mathcal{S}$ is unital.
\end{prop}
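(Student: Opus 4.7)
My plan is to apply the equivalence (a)$\Leftrightarrow$(c) of Corollary~\ref{C:compatible-congruence-2}, using the Fregean relation $\Lambda D$ itself as the witnessing congruence for each $\mathcal{S}$-theory $D$. Concretely, I will fix an arbitrary $\mathcal{S}$-theory $D$ and verify that $\Lambda D$ is a congruence on $\FormAl$ satisfying $\theta(D)\subseteq \Lambda D$ and compatible with $D$.

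First, I would handle the non-emptiness hypothesis required by Definition~\ref{D:compatible-congruence} and by Corollary~\ref{C:compatible-congruence-2}. Since $\ThmS\neq\emptyset$ by assumption, and since for any $\mathcal{S}$-theory $D$ monotonicity $(\text{b}^{\dag})$ gives $\ThmS=\ConS{\emptyset}\subseteq\ConS{D}=D$, every $\mathcal{S}$-theory is nonempty. In particular the empty set is not an $\mathcal{S}$-theory, so the hypothesis ``$D\neq\emptyset$'' that appears in Definition~\ref{D:compatible-congruence} is automatically met.

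Next I would verify compatibility of $\Lambda D$ with $D$. Suppose $(\alpha,\beta)\in\Lambda D$ and $\alpha\in D$. Since $D=\ConS{D}$, the sequent $D\vdash_{\mathcal{S}}\alpha$ holds; together with $D,\alpha\vdash_{\mathcal{S}}\beta$ and the cut property $(\text{c}^{\ast})$ (specializing transitivity), this yields $D\vdash_{\mathcal{S}}\beta$, i.e. $\beta\in D$. By the symmetric clause in the definition of $\Lambda D$, $\beta\in D$ implies $\alpha\in D$ as well. Then I would show $\theta(D)\subseteq \Lambda D$: if $\alpha,\beta\in D$, then reflexivity and monotonicity give $D,\alpha\vdash_{\mathcal{S}}\beta$ and $D,\beta\vdash_{\mathcal{S}}\alpha$, so $D\times D\subseteq \Lambda D$; since $\Lambda D$ is already a congruence by the Fregean hypothesis, the \emph{smallest} congruence containing $D\times D$ — namely $\theta(D)$ — is contained in it.

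Having established these three facts, Corollary~\ref{C:compatible-congruence-2} (part (c)$\Rightarrow$(a)) immediately yields that $\mathcal{S}$ is unital. I do not anticipate any serious obstacle: the only delicate point is ensuring that $D\neq\emptyset$, which is precisely where the hypothesis $\ThmS\neq\emptyset$ is used, and confirming that the definition of $\Lambda D$ via the sequents $D,\alpha\vdash_{\mathcal{S}}\beta$ interacts correctly with the defining equation $D=\ConS{D}$ of a theory — both of which are routine once one has cut and monotonicity in hand.
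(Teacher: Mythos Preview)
Your proof is correct and follows exactly the same route as the paper: for each $\mathcal{S}$-theory $D$, use the Fregean congruence $\Lambda D$ as the witness in part (c) of Corollary~\ref{C:compatible-congruence-2}, checking that $\theta(D)\subseteq\Lambda D$ and that $\Lambda D$ is compatible with $D$. The paper states these two observations without justification, whereas you spell out the details (including why $D\neq\emptyset$ and how cut gives compatibility), so your argument is in fact a fleshed-out version of theirs.
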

\begin{proof}
Let $D$ be an arbitrary $\mathcal{S}$-theory. We observe that
$\theta(D)\subseteq\Lambda D$. Also, $\Lambda D$ is compatible with $D$.
Applying Corollary~\ref{C:compatible-congruence-2}, we obtain that $\mathcal{S}$ is unital.
\end{proof}

\paragraph{Exercises~\ref{section:unital-logics}}
	\begin{enumerate}
	\item \label{EX:Cl-In-implicative}Prove that the abstract logics $\Pos$, $\Cl$ and $\Int$ are implicative.
	\item Prove that the abstract logics $\Pos$, $\Cl$ and $\Int$ are Fregean.
	\item\label{EX:Fregean-equivalence} Prove that a structural logic $\mathcal{S}$ is Fregean if, and only if, it satisfies the FPC.
	\end{enumerate}

\section{Lindenbaum-Tarski algebra}\label{section:lindenbaum-algebra}

\subsection{Definition and properties}
The concept of unital abstract logic in turn induces the following important notion.
\begin{defn}[Lindenbaum-Tarski algebra]\label{D:LT-algebra}
	Let $\mathcal{S}$ be a unital logic and $D$ be an $\mathcal{S}$-theory. The algebra
	{\em$\LT[D]:=\langle\FormAl\slash\theta(D),\one_D\rangle$}, where $\one_{D}$ is $D$ understood as a congruence class w.r.t. $\theta(D)$, $($or very often only the first component $\FormAl\slash\theta(D)$$)$ is called the \textbf{Lindenbaum-Tarski algebra} $($of $\mathcal{S}$$)$ \textbf{relative to} $D$. The algebra {\em$\LT:=\langle\FormAl\slash\theta(\bm{T}_{\mathcal{S}}),\one_{\bm{T}_{\mathcal{S}}}\rangle$} $($or the first component $\FormAl\slash\theta(\bm{T}_{\mathcal{S}})$$)$ is called simply the \textbf{Lindenbaum-Tarski algebra} of $\mathcal{S}$.
\end{defn}

We observe that, given a unital logic $\mathcal{S}$ and any $\mathcal{S}$-theories $D$ and $D^{\prime}$, if $D^{\prime}\subseteq D$, then
$\theta(D)\subseteq\theta(D^{\prime})$ and hence the algebra $\LT[D]$ is a homomorphic image of $\LT[D^{\prime}]$. Thus, for any $\mathcal{S}$-theory $D$, the algebra $\LT[D]$ is a homomorphic image of $\LT$.

\begin{lem}\label{L:congruence-substitution}
	Let $\theta$ be a congruence on $\FormAl$. Then for any valuation $v$ in \mbox{$\FormAl\slash\theta$},  there is an $\Lan$-substitution $\sigma$ such that for any formula $\alpha$,  \mbox{$v[\alpha]=\sigma(\alpha)\slash\theta$}.
\end{lem}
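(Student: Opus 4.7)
The plan is to build the substitution $\sigma$ directly from the valuation $v$ by choosing, for each sentential variable, a representative of the congruence class assigned to it, and then to verify by induction on formula degree that the resulting $\sigma$ has the claimed property.

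First I would invoke the fact that, by Proposition~\ref{P:valuation}, a valuation $v$ in the algebra $\FormAl\slash\theta$ is the same thing as a homomorphism $v:\FormAl\longrightarrow\FormAl\slash\theta$, and so is completely determined by its restriction to $\Var$. For each $p\in\Var$, the value $v(p)$ is a congruence class of $\theta$, and thus nonempty; using the axiom of choice, select a representative $\gamma_p\in v(p)$, i.e., an $\Lan$-formula such that $\gamma_p\slash\theta=v(p)$. Define $\sigma_0:\Var\longrightarrow\Forms_{\mathcal{L}}$ by $\sigma_0(p):=\gamma_p$. By Proposition~\ref{P:substitution}, $\sigma_0$ extends uniquely to an $\Lan$-substitution $\sigma:\FormAl\longrightarrow\FormAl$.

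Next I would verify the equality $v[\alpha]=\sigma(\alpha)\slash\theta$ by induction on the degree of $\alpha$. For the base case, if $\alpha=p\in\Var$, then by construction $v[p]=v(p)=\gamma_p\slash\theta=\sigma(p)\slash\theta$. If $\alpha=c\in\Cons$, then by clause (b) of Definition~\ref{D:valuation}, $v[c]$ is the interpretation of $c$ in $\FormAl\slash\theta$, which (since $c$ is a $0$-ary operation in $\FormAl$ sending to the element $c\in\Forms_{\mathcal{L}}$) is $c\slash\theta$; on the other hand, by clause (a) of Definition~\ref{D:substitution}, $\sigma(c)=c$, so $\sigma(c)\slash\theta=c\slash\theta$, as required. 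For the inductive step, let $\alpha=F\alpha_1\ldots\alpha_n$, and assume the claim for each $\alpha_i$. Using clause (c) of Definition~\ref{D:valuation} together with the fact that $F$ is interpreted in $\FormAl\slash\theta$ as the quotient of the $F$-operation of $\FormAl$, and using clause (b) of Definition~\ref{D:substitution}, we compute
\[
v[\alpha]=F\,v[\alpha_1]\ldots v[\alpha_n]=F(\sigma(\alpha_1)\slash\theta)\ldots(\sigma(\alpha_n)\slash\theta)=(F\sigma(\alpha_1)\ldots\sigma(\alpha_n))\slash\theta=\sigma(\alpha)\slash\theta.
\]

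The argument is essentially routine and there is no serious obstacle: the only delicate point is the initial appeal to choice to pick representatives $\gamma_p$ from each class $v(p)$, which is unavoidable in full generality since $\Var$ may be infinite and no canonical representatives are distinguished. Everything else follows by the standard observation that both $v$ and the composition $\alpha\mapsto\sigma(\alpha)\slash\theta$ are homomorphisms $\FormAl\longrightarrow\FormAl\slash\theta$ agreeing on $\Var\cup\Cons$, and hence, by the uniqueness part of Proposition~\ref{P:substitution} (applied to the pair $\mathfrak{F}_{\mathcal{L}}$ and $\mathfrak{F}_{\mathcal{L}}\slash\theta$), they must coincide on all of $\Forms_{\mathcal{L}}$.
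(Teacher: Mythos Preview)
Your proof is correct and follows essentially the same approach as the paper: choose a representative $\gamma_p$ (the paper writes $\beta_p$) in each class $v(p)$, define $\sigma$ on variables accordingly, and then check that $v[\alpha]=\sigma(\alpha)/\theta$ for all $\alpha$. Your version is slightly more detailed (you treat the constant case explicitly and spell out the induction), whereas the paper compresses the verification into the single line $v[\alpha]=\alpha(v[p_1],\ldots,v[p_n])=\alpha(\sigma(p_1)/\theta,\ldots,\sigma(p_n)/\theta)=\sigma(\alpha)/\theta$.
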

\begin{proof}
	Let $v$ be any valuation in $\FormAl\slash\theta$; that is, $v$ is a homomorphism of
	$\FormAl$ to $\FormAl\slash\theta$. Suppose $v[p]=\beta_p\slash\theta$, where $p\in\Var_{\mathcal{L}}$. Then we define a substitution as follows:
	\[
	\sigma:p\mapsto\beta_p,
	\]
	for all $p\in\Var_{\mathcal{L}}$, with any selection of $\beta_p$ from the class $\beta_p\slash\theta$. Thus, we have: $v[p]=\sigma(p)\slash\theta$, for all $p\in\Var_{\mathcal{L}}$.
	
	Now let us take any formula $\alpha$ and assume that 
	$p_1,\ldots,p_n$ are all variables  that occur in $\alpha$. Then we have:
	\[
	v[\alpha]=\alpha(v[p_1],\ldots,v[p_n])=\alpha(\sigma(p_1)\slash\theta,\ldots,\sigma(p_n)\slash\theta)=\sigma(\alpha)\slash\theta.
	\]
\end{proof}

\begin{prop}\label{P:S-algebraic-models}
Let $\mathcal{S}$ be a unital logic. Each {\em$\LT[D]$} is an $\mathcal{S}$-model.
\end{prop}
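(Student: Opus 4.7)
The plan is to unpack the definition of an algebraic $\mathcal{S}$-model in the specific case of $\LT[D] = \langle \FormAl/\theta(D), \one_D\rangle$ and then exploit the unitality of $\mathcal{S}$ together with its structurality. Concretely, I need to show that for every set $X \cup \{\alpha\} \subseteq \Forms_{\mathcal{L}}$ with $X \vdash_{\mathcal{S}} \alpha$, and every valuation $v$ in $\FormAl/\theta(D)$, the inclusion $v[X] \subseteq \{\one_D\}$ forces $v[\alpha] = \one_D$.

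The first step is to fix such a valuation $v$ and invoke Lemma~\ref{L:congruence-substitution} to obtain an $\Lan$-substitution $\sigma$ satisfying $v[\beta] = \sigma(\beta)/\theta(D)$ for every $\Lan$-formula $\beta$. This reduces a semantic question about $v$ to a syntactic question about $\sigma$ and membership in $D$.

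The second step is to translate the premise $v[X] \subseteq \{\one_D\}$. Since $\mathcal{S}$ is unital, Definition~\ref{D:unital-logic} tells us that $D$ is precisely the congruence class $\one_D$ of $\theta(D)$; hence $\sigma(\beta)/\theta(D) = \one_D$ is equivalent to $\sigma(\beta) \in D$. Therefore the assumption rewrites as $\sigma(X) \subseteq D$.

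The third step closes the argument using structurality and closedness of $D$. From $X \vdash_{\mathcal{S}} \alpha$ and structurality of $\mathcal{S}$ we get $\sigma(X) \vdash_{\mathcal{S}} \sigma(\alpha)$; combining this with $\sigma(X) \subseteq D$ and the fact that $D = \ConS{D}$ yields $\sigma(\alpha) \in D$, i.e.\ $v[\alpha] = \sigma(\alpha)/\theta(D) = \one_D$, as required. I do not foresee any serious obstacle: the only point that demands care is remembering that unitality is exactly what guarantees the equivalence $\sigma(\beta)/\theta(D) = \one_D \Longleftrightarrow \sigma(\beta) \in D$, without which the translation between the semantic condition in $\LT[D]$ and membership in the theory $D$ would fail.
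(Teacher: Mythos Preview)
Your proof is correct and follows essentially the same approach as the paper's own proof: both invoke Lemma~\ref{L:congruence-substitution} to replace the valuation $v$ by a substitution $\sigma$, use unitality to identify the condition $v[\gamma]=\one_D$ with $\sigma(\gamma)\in D$, and then apply structurality together with the closedness of $D$ to conclude. Your write-up is in fact slightly more explicit than the paper's in spelling out that $D=\ConS{D}$ is what allows the passage from $\sigma(X)\subseteq D$ and $\sigma(X)\vdash_{\mathcal{S}}\sigma(\alpha)$ to $\sigma(\alpha)\in D$.
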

\begin{proof}
Suppose that $X\vdash_{\mathcal{S}}\alpha$. Let us take any $\LT[D]$ and consider a valuation $v$ in it. In virtue of Lemma~\ref{L:congruence-substitution}, there is a substitution $\sigma$ such that for any formula $\beta$, $v[\beta]=\sigma(\beta)\slash\theta(D)$. Further, we note that, since $\mathcal{S}$ is structural, $\sigma(X)\vdash_{\mathcal{S}}\sigma(\alpha)$. 

Now, suppose that for any $\gamma\in X$, $v[\gamma]=\one_{D}$. Then, since $\mathcal{S}$ is unital, for any $\gamma\in X$, $\sigma(\gamma)\in D$. This implies that $\sigma(\alpha)\in D$, that is $v[\alpha]=\one_{D}$.
\end{proof}
\begin{cor}\label{C:quasiidentity-sound}
Let $\mathcal{S}$ be a unital logic. Then for any set \mbox{$X\cup\lbrace\alpha\rbrace\Subset\Forms_{\mathcal{L}}$},
and for any $\mathcal{S}$-theory $D$, 
{\em\[
X\vdash_{\mathcal{S}}\alpha~\Longrightarrow~\LT[D]\models X\approx\one_{D}\Rightarrow\alpha\approx\one_{D}.
\]}
\end{cor}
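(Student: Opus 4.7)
The plan is to reduce the statement directly to Proposition~\ref{P:S-algebraic-models} together with the translation between matrix-style validity and $\FOstar$-style validity given by the equivalence~\eqref{E:model=algebra-validity-2}. Since the corollary is tagged as a corollary of that proposition, essentially no new idea is required; the work is purely notational.

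First, I would fix an $\mathcal{S}$-theory $D$ and the associated Lindenbaum-Tarski algebra $\LT[D]=\langle\FormAl\slash\theta(D),\one_D\rangle$. By Proposition~\ref{P:S-algebraic-models}, $\LT[D]$ is an $\mathcal{S}$-model, which by the definition of algebraic $\mathcal{S}$-model adopted in Section~\ref{section:chapter-unital-preliminaries} means precisely that
\[
X\vdash_{\mathcal{S}}\alpha~\Longrightarrow~X\models_{\LT[D]}\alpha,
\]
for every $X\cup\lbrace\alpha\rbrace\subseteq\Forms_{\mathcal{L}}$; in particular for every finite such $X\cup\lbrace\alpha\rbrace$.

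Next, I would invoke the equivalence~\eqref{E:model=algebra-validity-2}, which was stated for finite $X$ and reads
\[
X\models_{\langle\textbf{A},\one\rangle}\alpha~\Longleftrightarrow~\fA\models(X\approx\one\Rightarrow\alpha\approx\one),
\]
with $\fA=\langle\alg{A},\one,\approx\rangle$ the associated $\FOstar$-model. Applying this to $\alg{A}=\FormAl\slash\theta(D)$ and $\one=\one_D$ rewrites the conclusion $X\models_{\LT[D]}\alpha$ as $\LT[D]\models X\approx\one_D\Rightarrow\alpha\approx\one_D$, which is exactly the conclusion of the corollary. Composing the two implications finishes the proof.

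There is no real obstacle here; the only delicate point is making sure the hypothesis $X\Subset\Forms_{\mathcal{L}}$ is used exactly where it is needed, namely to justify writing the finite conjunction $X\approx\one_D$ appearing on the left-hand side of the quasiidentity (so that the resulting $\FOstar$-formula is well formed). The case $X=\emptyset$ is handled by~\eqref{E:model=algebra-validity-1} in place of~\eqref{E:model=algebra-validity-2}, exactly as noted after~\eqref{E:model=algebra-validity-2}.
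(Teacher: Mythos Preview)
Your proposal is correct and follows exactly the paper's approach: the paper's proof simply reads ``We apply successively Proposition~\ref{P:S-algebraic-models} and~\eqref{E:model=algebra-validity-2},'' which is precisely the two-step argument you spelled out in detail.
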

\begin{proof}
	We apply successively Proposition~\ref{P:S-algebraic-models} and~\eqref{E:model=algebra-validity-2}.
\end{proof}

\begin{prop}\label{P:preLindenbaum-algebra}
	Let $\mathcal{S}$ be a unital logic and let {\em$D\!:=\ConS{X}$}. Then for any formula $\alpha$,
	{\em\[
		\models_{\LT[D]}\alpha\Longleftrightarrow~\models_{\LinS[X]}\alpha.
		\]}
	Hence, {\em$\LT$} is weakly adequate for $\mathcal{S}$.
\end{prop}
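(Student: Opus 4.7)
The plan is to prove the biconditional by treating each direction separately, and then derive weak adequacy of $\LT$ from the corresponding known property of $\LinS$ (Corollary~\ref{C:lindenbaum-matrix}). The key technical tools will be Lemma~\ref{L:congruence-substitution} (which lets us represent any valuation in a quotient algebra $\FormAl/\theta$ as the composition of a substitution with the canonical projection) and the unitality of $\mathcal{S}$ (which, via Corollary~\ref{C:compatible-congruence-2}, tells us that $\theta(D)$ is compatible with the $\mathcal{S}$-theory $D$, so that $\beta\in D \Longleftrightarrow \beta/\theta(D)=\one_D$).

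For the direction $\models_{\LT[D]}\alpha \Rightarrow \models_{\LinS[X]}\alpha$, I would take an arbitrary $\Lan$-substitution $\sigma$ (which is exactly a valuation in $\FormAl$ by Proposition~\ref{P:substitution-as-homomorphism}) and compose it with the canonical projection $\pi\colon\FormAl\longrightarrow\FormAl\slash\theta(D)$. The result $\pi\circ\sigma$ is a valuation in $\LT[D]$, so by hypothesis $\pi(\sigma(\alpha))=\one_D$, i.e.\ $\sigma(\alpha)/\theta(D)=D/\theta(D)$. Compatibility of $\theta(D)$ with $D$ then forces $\sigma(\alpha)\in D=\ConS{X}$, which is exactly what is needed for $\models_{\LinS[X]}\alpha$.

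For the reverse direction, I would start with an arbitrary valuation $v$ in $\LT[D]$. Applying Lemma~\ref{L:congruence-substitution}, I obtain a substitution $\sigma$ with $v[\beta]=\sigma(\beta)/\theta(D)$ for every formula $\beta$; in particular $v[\alpha]=\sigma(\alpha)/\theta(D)$. From $\models_{\LinS[X]}\alpha$ we get $\sigma(\alpha)\in\ConS{X}=D$, and compatibility again gives $\sigma(\alpha)/\theta(D)=\one_D$, whence $v[\alpha]=\one_D$. Since $v$ was arbitrary, $\models_{\LT[D]}\alpha$.

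The weak adequacy of $\LT$ then follows by specialising $X=\emptyset$, so that $D=\ThmS$ and $\LT[D]=\LT$, $\LinS[X]=\LinS$. Chaining the just-established equivalence with Corollary~\ref{C:lindenbaum-matrix} yields, for every $\alpha$, the equivalences $\alpha\in\ThmS \Longleftrightarrow \,\models_{\LinS}\alpha \Longleftrightarrow\,\models_{\LT}\alpha$. I do not anticipate any real obstacle here: the entire argument is the translation, via the projection $\pi$ and Lemma~\ref{L:congruence-substitution}, between valuations in $\FormAl$ and valuations in the quotient $\FormAl/\theta(D)$; the only place where unitality is truly used is the step $\beta\in D\Leftrightarrow \beta/\theta(D)=\one_D$, and without it the implications would break in both directions.
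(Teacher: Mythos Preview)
Your proposal is correct and follows essentially the same approach as the paper: both directions hinge on Lemma~\ref{L:congruence-substitution} to pass between valuations in $\LT[D]$ and substitutions, together with the unitality-based equivalence $\beta\in D\Leftrightarrow\beta/\theta(D)=\one_D$. The only cosmetic difference is that the paper handles the implication $\models_{\LT[D]}\alpha\Rightarrow\models_{\LinS[X]}\alpha$ by contrapositive (picking a bad $\sigma$ and building the refuting valuation $v[p]:=\sigma(p)/\theta(D)$), whereas you argue it directly via $\pi\circ\sigma$; and you make the appeal to compatibility (Corollary~\ref{C:compatible-congruence-2}) and to Corollary~\ref{C:lindenbaum-matrix} explicit where the paper leaves them implicit.
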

\begin{proof}
	First, assume that $\models_{\LinS[X]}\alpha$; that is for any substitution $\sigma$, $\sigma(\alpha)\in D$.
	Let $v$ be a valuation in $\LT[D]$. By Lemma~\ref{L:congruence-substitution}, there is a substitution $\sigma_v$ such that $v[\alpha]=\sigma_v(\alpha)\slash\theta(D)$. By premise, this implies that $v[\alpha]=\one_{D}$.
	
	Conversely, suppose for some substitution $\sigma$, $\sigma(\alpha)\notin D$. We define a valuation in $\LT[D]$ as follows: $v[p]:=\sigma(p)\slash\theta(D)$, for any $p\in\Var_{\mathcal{L}}$. Then $v[\alpha]=\sigma(\alpha)\slash\theta(D)\neq\one_D$.
\end{proof}

\begin{cor}
	Let $D$ be an $\mathcal{S}$-theory of a unital logic $\mathcal{S}$. Then for any
	$\mathcal{S}$-thesis $\alpha$, the algebras {\em$\LT[D]$} validate $\alpha$.
\end{cor}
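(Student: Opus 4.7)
The plan is to deduce this immediately from Proposition~\ref{P:S-algebraic-models}, which already tells us that every Lindenbaum--Tarski algebra $\LT[D]$ of a unital abstract logic $\mathcal{S}$ is an (algebraic) $\mathcal{S}$-model. So the only thing left to observe is that validation of an $\mathcal{S}$-thesis in an algebraic $\mathcal{S}$-model is essentially built into the definition of ``model''.

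In detail, I would argue as follows. Fix an $\mathcal{S}$-theory $D$ and any $\alpha\in\bm{T}_{\mathcal{S}}$. By definition of $\bm{T}_{\mathcal{S}}$ we have $\emptyset\vdash_{\mathcal{S}}\alpha$. By Proposition~\ref{P:S-algebraic-models}, $\LT[D]$ is an $\mathcal{S}$-model, so applying the definition of algebraic $\mathcal{S}$-model to the sequent $\emptyset\vdash_{\mathcal{S}}\alpha$ yields $\emptyset\models_{\LT[D]}\alpha$. Unpacking the definition of $\models_{\LT[D]}$ when the left-hand side is empty, this says that for every valuation $v$ in $\LT[D]$, $v[\alpha]=\one_{D}$, which is exactly the statement that $\LT[D]$ validates~$\alpha$.

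There is essentially no obstacle here: the work has already been done in Proposition~\ref{P:S-algebraic-models}, whose proof uses Lemma~\ref{L:congruence-substitution} to represent an arbitrary valuation in $\LT[D]$ as (the composite of) a substitution followed by the canonical projection, together with the fact that $\mathcal{S}$ is structural and unital. A purely cosmetic alternative, avoiding an appeal to the ``empty-premise'' case of the model definition, would be to note that $\bm{T}_{\mathcal{S}}\subseteq D$ (since $\bm{T}_{\mathcal{S}}$ is the least $\mathcal{S}$-theory), hence for every substitution $\sigma$ we have $\sigma(\alpha)\in\bm{T}_{\mathcal{S}}\subseteq D$ by structurality, and then invoke Lemma~\ref{L:congruence-substitution} together with the unitality of $\mathcal{S}$ (so that $D$ is the congruence class $\one_{D}$) to conclude that $v[\alpha]=\sigma_v(\alpha)\slash\theta(D)=\one_{D}$ for every valuation $v$.
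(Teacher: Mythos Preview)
Your argument is correct, but it differs from the paper's route. The paper does not invoke Proposition~\ref{P:S-algebraic-models} here; instead it uses the remark following Definition~\ref{D:LT-algebra} that $\LT[D]$ is a homomorphic image of $\LT$ (since $\ThmS\subseteq D$ implies $\theta(\ThmS)\subseteq\theta(D)$), together with the last clause of Proposition~\ref{P:preLindenbaum-algebra} (that $\LT$ is weakly adequate for $\mathcal{S}$). Weak adequacy gives $\LT\models\alpha\approx\one$ for every $\alpha\in\ThmS$, and identities of the form $\alpha\approx\one$ transfer along the surjective homomorphism $\LT\twoheadrightarrow\LT[D]$ (which sends $\one_{\bm{T}_{\mathcal{S}}}$ to $\one_{D}$).

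Your approach via Proposition~\ref{P:S-algebraic-models} is more direct: once one knows each $\LT[D]$ is an $\mathcal{S}$-model, the corollary is the trivial empty-premise case. The paper's approach, on the other hand, highlights the structural point that all the $\LT[D]$ sit under $\LT$, so validity in $\LT$ propagates downward; this viewpoint is what gets exploited later when $\LT$ is identified as a free algebra. Your ``cosmetic alternative'' at the end (using $\ThmS\subseteq D$ and Lemma~\ref{L:congruence-substitution} directly) is essentially a hands-on unwinding of the paper's homomorphic-image argument.
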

\begin{proof}
	Having noticed that $\LT[D]$ is a homomorphic image of $\LT$, we apply the last part of Proposition~\ref{P:preLindenbaum-algebra}.
\end{proof}

We recall that $\Sigma_{\mathcal{S}}$ denotes the set of all $\mathcal{S}$-theories.
\begin{cor}\label{C:LT-determination}
Any unital logic $\mathcal{S}$ is determined by the class \mbox{\em$\set{\LT[D]}{D\in\Sigma_{\mathcal{S}}}$}.
\end{cor}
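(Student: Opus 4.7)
The plan is to establish the two directions of the determination equivalence, namely that for any $X\cup\{\alpha\}\subseteq\Forms_{\mathcal{L}}$,
\[
X\vdash_{\mathcal{S}}\alpha\Longleftrightarrow X\models_{\textsf{\textbf{LT}}[D]}\alpha\text{ for every }D\in\Sigma_{\mathcal{S}}.
\]
The soundness direction is immediate: by Proposition~\ref{P:S-algebraic-models} each $\LT[D]$ is an $\mathcal{S}$-model, so if $X\vdash_{\mathcal{S}}\alpha$ then $X\models_{\textsf{\textbf{LT}}[D]}\alpha$ for every $\mathcal{S}$-theory $D$.

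For the completeness direction, the strategy is to exhibit a specific theory and a specific valuation that refute the sequent whenever it fails syntactically. Assume $X\not\vdash_{\mathcal{S}}\alpha$, and set $D:=\textbf{Cn}_{\mathcal{S}}(X)$, which is an $\mathcal{S}$-theory. Consider the canonical quotient valuation $v:\FormAl\longrightarrow\LT[D]$ defined by $v[\beta]:=\beta\slash\theta(D)$ for every $\Lan$-formula $\beta$ (this is the quotient homomorphism, hence a valuation in the sense of Proposition~\ref{P:valuation}).

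The key step is to invoke the unitality of $\mathcal{S}$: by Definition~\ref{D:unital-logic}, $D$ is precisely the congruence class $\one_{D}$ of $\theta(D)$. Therefore, for any $\beta\in X\subseteq D$ we have $v[\beta]=\beta\slash\theta(D)=\one_{D}$, so $v[X]\subseteq\{\one_{D}\}$. On the other hand, since $X\not\vdash_{\mathcal{S}}\alpha$ means $\alpha\notin D$, unitality again (via compatibility of $\theta(D)$ with $D$, cf.\ Corollary~\ref{C:compatible-congruence-2}) yields $\alpha\slash\theta(D)\neq \one_{D}$, i.e.\ $v[\alpha]\neq\one_{D}$. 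Thus $X\not\models_{\textsf{\textbf{LT}}[D]}\alpha$, and in particular $X$ is not a semantic consequence of $\alpha$ in the whole class $\set{\LT[D]}{D\in\Sigma_{\mathcal{S}}}$.

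This argument is essentially routine once the right theory $D=\textbf{Cn}_{\mathcal{S}}(X)$ and the canonical quotient valuation are chosen; there is no serious obstacle, since unitality of $\mathcal{S}$ is exactly the property needed to pass from ``$\beta\in D$'' to ``$\beta\slash\theta(D)=\one_{D}$'' and from ``$\alpha\notin D$'' to ``$\alpha\slash\theta(D)\neq\one_{D}$''. The only point deserving emphasis is that this completeness proof does \emph{not} need a detour through simple instantiations (as in Proposition~\ref{P:preLindenbaum-algebra}), because the refuting valuation can be taken in the single Lindenbaum–Tarski algebra $\LT[\textbf{Cn}_{\mathcal{S}}(X)]$ attached to the theory generated by the premises.
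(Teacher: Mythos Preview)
Your proof is correct and follows essentially the same strategy as the paper: for soundness invoke Proposition~\ref{P:S-algebraic-models}, and for completeness take $D=\ConS{X}$ and use the canonical (Lindenbaum) valuation $\beta\mapsto\beta/\theta(D)$ to refute the sequent in $\LT[D]$.

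The only difference is packaging. The paper routes the completeness direction through Proposition~\ref{P:preLindenbaum-algebra}, first observing that $\not\models_{\LinS[X]}\alpha$ (via the identity substitution) and then transferring this to $\not\models_{\LT[D]}\alpha$; the refuting valuation obtained this way is exactly your quotient valuation, and it does satisfy $v[X]\subseteq\{\one_D\}$, though the paper does not spell this out. Your argument unwinds that proposition in place and makes the verification $v[X]\subseteq\{\one_D\}$ explicit, which is arguably cleaner for the statement at hand. One small quibble: Proposition~\ref{P:preLindenbaum-algebra} does not really involve ``simple instantiations'' as you suggest---it goes through arbitrary substitutions via Lemma~\ref{L:congruence-substitution}---but this does not affect the correctness of your argument.
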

\begin{proof}
Indeed, if $X\vdash_{\mathcal{S}}\alpha$, then, in virtue of Proposition~\ref{P:S-algebraic-models}, $X\models_{\LT[D]}\alpha$, for each $D\in\Sigma_{\mathcal{S}}$.

On the other hand, if $X\not\vdash_{\mathcal{S}}\alpha$, then $\not\models_{\LinS[X]}\alpha$ and hence, according to Proposition~\ref{P:preLindenbaum-algebra}, $\not\models_{\LT[D]}\alpha$, where $D=\ConS{X}$.
\end{proof}

\begin{cor}\label{C:preLindenbaum-algebra}
	Let $\mathcal{S}$ be a unital logic.  If a set $X\subseteq
	\Forms_{\mathcal{L}}$ is closed under arbitrary substitutions, then for any formula $\alpha$,
	{\em\[
		X\vdash_{\mathcal{S}}\alpha~\Longleftrightarrow~\models_{\LT[D]}
		\alpha,
		\]}
	where {\em$D=\ConS{X}$}.
\end{cor}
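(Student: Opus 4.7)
The plan is to reduce this corollary to two previously established facts by combining them in a single chain of equivalences, so no new construction is really needed.

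First, I would observe that Proposition~\ref{P:preLindenbaum-algebra} already gives, for $D = \ConS{X}$, the equivalence
\[
\models_{\LT[D]} \alpha \;\Longleftrightarrow\; \models_{\LinS[X]} \alpha.
\]
Therefore the task reduces to proving that, under the hypothesis that $X$ is closed under arbitrary $\Lan$-substitutions,
\[
X \vdash_{\mathcal{S}} \alpha \;\Longleftrightarrow\; \models_{\LinS[X]} \alpha.
\]
But this is precisely the content of the second part of Lindenbaum's theorem (Proposition~\ref{P:lindenbaum-theorem}), which requires only that $\mathcal{S}$ be structural — a hypothesis automatically satisfied here because every unital logic is structural by Definition~\ref{D:unital-logic}. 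Splicing the two equivalences together yields the claim.

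For completeness, I would sketch the two directions directly as well, to make the argument self-contained. For $(\Rightarrow)$, if $X \vdash_{\mathcal{S}} \alpha$ and $\sigma$ is any $\Lan$-substitution, then structurality of $\mathcal{S}$ gives $\sigma(X) \vdash_{\mathcal{S}} \sigma(\alpha)$; closure of $X$ under substitution yields $\sigma(X) \subseteq X$, so by monotonicity $\sigma(\alpha) \in \ConS{X} = D$, which is exactly $\models_{\LinS[X]} \alpha$. For $(\Leftarrow)$, specialize to the identity substitution $\iota$ to obtain $\alpha = \iota(\alpha) \in \ConS{X}$, i.e., $X \vdash_{\mathcal{S}} \alpha$.

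There is no real obstacle in this proof; the only subtle point is just recognizing that the hypothesis ``$X$ closed under substitution'' is exactly what is needed to close the loop between derivability from $X$ and validity in the Lindenbaum-style matrix on the theory $D$, and that Proposition~\ref{P:preLindenbaum-algebra} then passes this property across the quotient by $\theta(D)$ without any further work.
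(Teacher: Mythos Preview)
Your proposal is correct and follows exactly the same approach as the paper: chain the equivalence from Lindenbaum's theorem (Proposition~\ref{P:lindenbaum-theorem}), which uses the closure of $X$ under substitutions, with the equivalence from Proposition~\ref{P:preLindenbaum-algebra}. The additional self-contained sketch you provide is fine but not needed, as it merely reproves the cited proposition.
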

\begin{proof}
We have:
\[
\begin{array}{rl}
X\vdash_{\mathcal{S}}\alpha \Longleftrightarrow \!\!\!&\models_{\LinS[X]}\alpha~~[\text{Lindenbaum's theorem (Proposition~\ref{P:lindenbaum-theorem})}]\\
\Longleftrightarrow \!\!\!&\models_{\LT[D]}\alpha.~~[\text{Proposition~\ref{P:preLindenbaum-algebra}}]\\
\end{array}
\]
\end{proof}

Now we define a special valuation in $\LT[D]$:
\[
\bm{v}_{D}: p\mapsto p\slash\theta(D),\tag{\textit{Lindenbaum valuation}}
\]
for any $p\in\Var_{\mathcal{L}}$. 
\begin{prop}\label{P:valuations-in-LT}
	Let $\mathcal{S}$ be a unital logic and {\em$D:=\ConS{X}$}.
	Then for any formula $\alpha$, the following conditions are equivalent$\,:$
	{\em\[
		\begin{array}{cl}
		(\text{a}) & X\vdash_{\mathcal{S}}\alpha;\\
		(\text{b}) &X\models_{\LT[D]}\alpha;\\
		(\text{c}) &\bm{v}_D[\alpha]=\one_{D}.
		\end{array}
		\]}
	In particular, the following conditions are equivalent$\,:$
	{\em\[
		\begin{array}{cl}
		(\text{a}^{\ast}) & \alpha\in\ThmS;\\
		(\text{b}^{\ast}) &v[\alpha]=\one_{\bm{T}_{\mathcal{S}}},~\textit{for any valuation $v$ in}~ \LT;\\
		(\text{c}^{\ast}) &\bm{v}_{\bm{T}_{\mathcal{S}}}[\alpha]=\one_{\bm{T}_{\mathcal{S}}}.
		\end{array}
		\]}
\end{prop}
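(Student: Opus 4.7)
The plan is to prove the cyclic chain (a) $\Rightarrow$ (b) $\Rightarrow$ (c) $\Rightarrow$ (a), and then derive the ``in particular'' clauses by specializing to $X = \emptyset$.

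The single workhorse observation I will need is that the Lindenbaum valuation satisfies
\[
\bm{v}_{D}[\alpha] = \alpha\slash\theta(D) \quad \text{for every } \alpha \in \Forms_{\mathcal{L}}.
\]
This follows by a routine induction on the degree of $\alpha$, using only that $\bm{v}_D$ is (uniquely determined by its definition on $\Var_{\mathcal{L}}$ as) a homomorphism $\FormAl \longrightarrow \LT[D]$, combined with the definition of the quotient operations on $\FormAl\slash\theta(D)$. Together with the unitality of $\mathcal{S}$, which says that $D$ is exactly the congruence class $\one_{D}$ of $\theta(D)$, this gives the crucial equivalence
\[
\bm{v}_{D}[\beta] = \one_{D} \iff \beta\slash\theta(D) = \one_{D} \iff \beta \in D.
\]

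For (a) $\Rightarrow$ (b): since $\mathcal{S}$ is unital, Proposition~\ref{P:S-algebraic-models} already tells us that $\LT[D]$ is an $\mathcal{S}$-model, so $X \vdash_{\mathcal{S}} \alpha$ immediately yields $X \models_{\LT[D]} \alpha$. For (b) $\Rightarrow$ (c): I apply the boxed equivalence to each $\beta \in X$; since $X \subseteq \ConS{X} = D$, I get $\bm{v}_{D}[\beta] = \one_{D}$ for every $\beta \in X$, i.e., $\bm{v}_{D}[X] \subseteq \lbrace \one_{D} \rbrace$. Then (b) directly yields $\bm{v}_{D}[\alpha] = \one_{D}$. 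For (c) $\Rightarrow$ (a): the boxed equivalence converts $\bm{v}_{D}[\alpha] = \one_{D}$ into $\alpha \in D = \ConS{X}$, which is simply $X \vdash_{\mathcal{S}} \alpha$ by~\eqref{E:Cn-S-connection}.

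The ``in particular'' equivalences (a$^{\ast}$), (b$^{\ast}$), (c$^{\ast}$) follow by specializing $X := \emptyset$, so that $D = \ConS{\emptyset} = \bm{T}_{\mathcal{S}}$. The equivalence of (a$^{\ast}$) and (c$^{\ast}$) is then (a) $\Leftrightarrow$ (c) of the first part. It remains to observe that (b$^{\ast}$) is a priori stronger than its counterpart (b) (which only demands preservation of truth from $X = \emptyset$, trivially satisfied) but is in fact equivalent to (a$^{\ast}$): the implication (a$^{\ast}$) $\Rightarrow$ (b$^{\ast}$) is just weak adequacy of $\LT$ for $\mathcal{S}$ (Proposition~\ref{P:preLindenbaum-algebra}), and (b$^{\ast}$) $\Rightarrow$ (c$^{\ast}$) is trivial by instantiating $v := \bm{v}_{\bm{T}_{\mathcal{S}}}$.

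There is no serious obstacle here; the only thing that must be handled carefully is the unitality step converting membership in $D$ to equality with $\one_D$, and the verification (by induction on formula structure) that $\bm{v}_D$ really does send $\alpha$ to its $\theta(D)$-class. Both rely squarely on Definition~\ref{D:unital-logic} and Lemma~\ref{L:congruence-substitution}-style reasoning already established.
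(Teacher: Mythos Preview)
Your proof is correct and follows essentially the same cyclic argument as the paper's: (a)$\Rightarrow$(b) via $\LT[D]$ being an $\mathcal{S}$-model, (b)$\Rightarrow$(c) by feeding the Lindenbaum valuation (which sends every $\beta\in X\subseteq D$ to $\one_D$), and (c)$\Rightarrow$(a) by reading $\bm{v}_D[\alpha]=\one_D$ as $\alpha\in D$. One small remark on the ``in particular'' clause: when $X=\emptyset$, the statement $X\models_{\LT[D]}\alpha$ unpacks to ``for every valuation $v$, $v[\alpha]=\one_D$'' (the hypothesis $v[\emptyset]\subseteq\{\one_D\}$ is vacuous), which \emph{is} (b$^{\ast}$); so (b$^{\ast}$) is not a priori stronger than (b) specialized to $X=\emptyset$, and the second triple follows immediately from the first without invoking Proposition~\ref{P:preLindenbaum-algebra} separately.
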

\begin{proof}
	The implication (\text{a})$\Rightarrow$(\text{b}) follows from Corollary~\ref{C:quasiidentity-sound}.
	
	Assume (b). Let us take any $\beta\in X$. We have: $\bm{v}_{D}[\beta]=\beta\slash\theta(D)=\one_{D}$. Hence (c). 
	
	Now assume (c), that is $\bm{v}_D[\alpha]=\one_{D}$. This means that $\alpha\slash\theta(D)=\one_{D}$, that is $\alpha\in D$. Hence (a).
\end{proof}

We aim to show that the Lindenbaum-Tarski algebras are good as separating means (Section~\ref{section:separating-means}); or, better to say, they are useful for creating separating means. With this in mind, given a unital logic $\mathcal{S}$ in a language $\Lan$, we define the following classes of unital expansions.
\begin{itemize}
	\item $\PS$ is the class of all algebraic $\mathcal{S}$-models; according to Proposition~\ref{P:S-algebraic-models=q-variety}, $\PS$ is a quasivariety.
	\item $\QS$ is the quasivariety generated by the set $\set{\LT[D]}{D\in\Sigma_{\mathcal{S}}}$; according to Proposition~\ref{P:S-algebraic-models}, $\QS\subseteq\PS$; also, if $\mathcal{S}$ is finitary, then, in virtue of Corollary~\ref{C:quasiidentity-sound}, $\QS$ is determined by the quasiidentities $X\approx\one\Rightarrow\alpha\approx\one$, for any $X\cup\lbrace\alpha\rbrace\Subset\FormAl$ such that $X\vdash_{\mathcal{S}}\alpha$.
	\item $\TS$ is the class of all unital expansions $\langle\alg{A},\one\rangle$ such that
	 $\langle\alg{A},\one\rangle\models\alpha\approx\one$, for any $\alpha\in\ThmS$; it is obvious that
	 $\set{\LT[D]}{D\in\Sigma_{\mathcal{S}}}\subseteq\TS$.
	
\end{itemize}

Summing up, we conclude this subsection as follows.
\begin{theo}
Let $\mathcal{S}$ be a unital logic. Then the following conditions are equivalent$\,:$
{\em\[
\begin{array}{cl}
(\text{a})	&X\vdash_{\mathcal{S}}\alpha;\\
(\text{b})	&\text{$X\models_{\langle\textbf{A},\one\rangle}\alpha$, for any 
$\langle\textbf{A},\one\rangle\in\PS$};\\
(\text{c})	&\text{$X\models_{\langle\textbf{A},\one\rangle}\alpha$, for any 
	$\langle\textbf{A},\one\rangle\in\QS$};\\
(\text{d})	&\text{$X\models_{\LT[D]}\alpha$, for any $D\in\Sigma_{\mathcal{S}}$}.
\end{array}
\]}
\end{theo}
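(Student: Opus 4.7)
The plan is to establish the cycle $(\text{a}) \Rightarrow (\text{b}) \Rightarrow (\text{c}) \Rightarrow (\text{d}) \Rightarrow (\text{a})$, relying on the class inclusions $\set{\LT[D]}{D\in\Sigma_{\mathcal{S}}} \subseteq \QS \subseteq \PS$ already recorded in the discussion preceding the theorem, together with the characterization of derivability via a single well-chosen Lindenbaum-Tarski algebra.

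The implication $(\text{a}) \Rightarrow (\text{b})$ is essentially the definition of an algebraic $\mathcal{S}$-model: if $\langle\alg{A},\one\rangle \in \PS$, then by the definition of $\PS$ we have $X \vdash_{\mathcal{S}} \alpha \Rightarrow X \models_{\langle\textbf{A},\one\rangle} \alpha$, which is exactly what is needed. The step $(\text{b}) \Rightarrow (\text{c})$ is immediate from the inclusion $\QS \subseteq \PS$ noted after Proposition~\ref{P:S-algebraic-models}. For $(\text{c}) \Rightarrow (\text{d})$, I would invoke the fact that each $\LT[D]$, being a generator of $\QS$, belongs to $\QS$; so validity over all members of $\QS$ specializes to validity over every $\LT[D]$.

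The only nontrivial direction is $(\text{d}) \Rightarrow (\text{a})$, and the plan is to argue by contrapositive using the Lindenbaum valuation. Assume $X \not\vdash_{\mathcal{S}} \alpha$ and set $D := \textbf{Cn}_{\mathcal{S}}(X)$, so that $D \in \Sigma_{\mathcal{S}}$ and $\alpha \notin D$. Consider the Lindenbaum valuation $\bm{v}_D : p \mapsto p\slash\theta(D)$ in $\LT[D]$. For every $\beta \in X$ we have $\beta \in D$, so $\bm{v}_D[\beta] = \beta\slash\theta(D) = \one_D$ because $\mathcal{S}$ is unital and $D$ is exactly the congruence class $\one_D$ of $\theta(D)$. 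On the other hand, $\alpha \notin D$ forces $\bm{v}_D[\alpha] = \alpha\slash\theta(D) \neq \one_D$, precisely by the compatibility of $\theta(D)$ with $D$ supplied by Corollary~\ref{C:compatible-congruence-2}. Hence $X \not\models_{\LT[D]} \alpha$, contradicting $(\text{d})$. This step, already isolated in Proposition~\ref{P:valuations-in-LT} as the equivalence of (a) and (c) there, is the only place where the unital hypothesis on $\mathcal{S}$ is actually exploited; everything else is a formal pass through the class inclusions.

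The main obstacle, such as it is, lies in making sure the Lindenbaum-valuation argument is correctly pinned to the theory $D = \textbf{Cn}_{\mathcal{S}}(X)$ rather than to an arbitrary theory containing $X$: one needs $\alpha \notin D$, and that is exactly what the assumption $X \not\vdash_{\mathcal{S}} \alpha$ gives for this canonical choice. Apart from that, the whole argument is a routine chaining of the previously established results.
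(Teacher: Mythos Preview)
Your proposal is correct and follows essentially the same cycle $(\text{a}) \Rightarrow (\text{b}) \Rightarrow (\text{c}) \Rightarrow (\text{d}) \Rightarrow (\text{a})$ as the paper, with the same justifications for the first three implications. For $(\text{d}) \Rightarrow (\text{a})$ the paper simply invokes Corollary~\ref{C:LT-determination}, whereas you unfold its content via the Lindenbaum valuation (equivalently, Proposition~\ref{P:valuations-in-LT}); this is the same argument, just made explicit.
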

\begin{proof}
The implication (a)$\Rightarrow$(b) is due to definition of $\PS$; (b)$\Rightarrow$(c) is due to inclusion $\QS\subseteq\PS$; (c)$\Rightarrow$(d) is due to definition of $\QS$; and
(d)$\Rightarrow$(a) is due to Corollary~\ref{C:LT-determination}.
\end{proof}

Thus, we see that if $X\not\vdash_{\mathcal{S}}\alpha$, an algebra $\langle\alg{A},\one\rangle$ separating $X$ from $\alpha$ 
 can be found among the algebras of the class 
$\QS=\Su\Pred\lbrace\LT[D]\rbrace_{D\in\Sigma_{\mathcal{S}}}$; cf.~\cite{malcev1973}, {\S} 11, theorem 4. As we will see in the next subsection, under a restriction on $\mathcal{S}$, separating algebras (in the above sense) can be reached as homomorphic images of $\LT$. In the proof of this below, the class $\TS$ plays a key role. The importance of the class $\TS$ also lies in the fact that if $X\not\vdash_{\mathcal{S}}\alpha$, a separating algebra
$\langle\alg{A},\one\rangle$ is likely to belong to $\QS\cap\TS$.

\subsection{Implicational unital logics}
In view of~\eqref{E:model=algebra-validity-1}, it is clear that the class $\TS$ is a variety. We aim to show that, under above restriction on $\mathcal{S}$, $\LT$ is a free algebra over $\TS$.

\begin{defn}[implicational abstract logic]
Let $\mathcal{S}$ be an abstract logic  in a language $\Lan$. Assume that 
there is an $\Lan$-formula of two sentential variables, $\im(p,q)$, such that for any $\Lan$-formulas $\alpha$ and $\beta$, any algebra {\em$\langle\alg{A},\one\rangle\in\TS$} and any valuation $v$ in $\alg{A}$,
\[
v(\alpha)=v(\beta)~\Longleftrightarrow~v(\im(\alpha,\beta))=\one~\text{and}~v(\im(\beta,\alpha))=\one.
\]
Then we call $\mathcal{S}$ \textbf{implicational} w.r.t. $\im(p,q)$.
\end{defn}

The abstract logics $\Pos$, $\Cl$ and $\Int$ are examples of implicational logics. (Exercise~\ref{section:lindenbaum-algebra}.\ref{EX:Cl-Int-implicational})

Below we will be considering implicational logics which are also unital. Since for such a logic $\mathcal{S}$, $\ThmS\neq\emptyset$ (Proposition~\ref{P:lindenbaum-connection}), we select an arbitrary $\Lan$-formula $\bigstar\in\ThmS$.

In virtue of Proposition~\ref{P:valuations-in-LT}, for any valuation $v$ in $\LT$,
\begin{equation}\label{E:T_S-star-thesis}
v(\bigstar)=\one_{\bm{T}_{\mathcal{S}}}.
\end{equation}
(Exercise~\ref{section:lindenbaum-algebra}.\ref{EX:T_S-star-thesis})

We recall the signature of any unital expansion $\langle\alg{A},\one\rangle$ is of type $\Lan$ expanded by an additional constant symbol $\one$ which is interpreted by the 0-ary operation $\one$ in $\langle\alg{A},\one\rangle$ (denoted is denoted by $\one_{\bm{T}_{\mathcal{S}}}$ in $\LT$). We denote this extended type by $\Lan^{\star}$.

Let $\mathbf{t}$ be a term of type $\Lan^{\star}$. The \textbf{\textit{conversion}}
of $\mathbf{t}$ is the $\Lan$-formula (or term of type $\Lan$), denoted by $\mathbf{t}^{\star}$, which is obtained from $\mathbf{t}$ by simultaneous replacement of all occurrences of $\one$, if any, in $\mathbf{t}$ by $\bigstar$. 

Consequently, any valuation $v$ in an algebra $\alg{A}$ of type $\Lan$ is extended to
the valuation $v^{\star}$ in $\langle\alg{A},\one\rangle$ such that $v^{\star}(p)=v(p)$, for any $p\in\Var_{\mathcal{L}}$, and $v^{\star}(\one)=\one$, where the first occurrence of `$\one$' is a 0-ary constant of $\Lan^{\star}$, and the second is the 0-ary operation $\one$ of $\langle\alg{A},\one\rangle$. We refer to $v^{\star}$ as a $\star$-\textit{\textbf{extension}} of $v$. We note that the valuation $v$ can be restored from $v^{\star}$ in the sense that if $w$ is any valuation in $\langle\alg{A},\one\rangle$, there is a unique valuation $v$ in $\alg{A}$ such that $v^{\star}=w$. (Exercise~\ref{section:lindenbaum-algebra}.\ref{EX:v-restored})

Using~\eqref{E:T_S-star-thesis}, we observe that for any $\Lan^{\star}$-terms $\mathbf{t}$ and $\mathbf{r}$ and any valuation $v^{\star}$ in $\LT$,
\begin{equation}\label{E:LT-term-valuation}
v^{\star}(\mathbf{t})=v^{\star}(\mathbf{r})~\Longleftrightarrow~v(\mathbf{t}^{\star})=v(\mathbf{r}^{\star}).
\end{equation} 
(Exercise~\ref{section:lindenbaum-algebra}.\ref{EX:LT-term-valuation})

For any terms $\mathbf{t}$ and $\mathbf{r}$ of type $\Lan^{\star}$, we denote:
\begin{center}
	$\TS\models\mathbf{t}\approx\mathbf{r}~\stackrel{\text{df}}{\Longleftrightarrow}~$
	for any $\left\langle \alg{A},\one\right\rangle \in\TS$, $\models_{\langle\textbf{A},\one\rangle}\mathbf{t}\approx\mathbf{r}$.
\end{center}

The equivalence~\eqref{E:LT-term-valuation} can be generalized as follows.
\begin{lem}\label{L:LT=free-algebra}
Let $\langle \alg{A},\one\rangle\in\TS$. Then for any term $\mathbf{t}$
of type $\Lan^{\star}$ and any valuation $v^{\star}$ in $\langle \alg{A},\one\rangle$,
$v^{\star}(\mathbf{t})=v(\mathbf{t}^{\star})$.
\end{lem}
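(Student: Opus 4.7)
The plan is to proceed by structural induction on the $\Lan^{\star}$-term $\mathbf{t}$. The statement is purely compositional, and the only non-trivial feature of the $\star$-conversion is what it does to the constant $\one$; every other symbol is preserved, so the induction essentially reduces to checking that base case.

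For the base step I would split into three sub-cases. First, if $\mathbf{t}=p\in\Var_{\mathcal{L}}$, then $\mathbf{t}^{\star}=p$ and $v^{\star}(p)=v(p)$ directly from the definition of a $\star$-extension. Second, if $\mathbf{t}=c$ is a constant of $\Lan$ (an element of $\Cons_{\mathcal{L}}$), then $\mathbf{t}^{\star}=c$ and the conclusion is immediate, because $c$ is interpreted by the same nullary operation of $\alg{A}$ both in $\langle\alg{A},\one\rangle$ and in $\alg{A}$. The crucial case is the third one: $\mathbf{t}=\one$, the distinguished constant of $\Lan^{\star}$. Then $\mathbf{t}^{\star}=\bigstar$, and I must show $v^{\star}(\one)=v(\bigstar)$. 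The left-hand side equals $\one$ by definition of $v^{\star}$, and the right-hand side equals $\one$ because $\bigstar\in\ThmS$ and $\langle\alg{A},\one\rangle\in\TS$, so by definition of $\TS$ the identity $\bigstar\approx\one$ is valid in $\langle\alg{A},\one\rangle$, in particular under $v$. This is exactly the analogue of equality~\eqref{E:T_S-star-thesis} in an arbitrary member of $\TS$.

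For the inductive step, suppose $\mathbf{t}=F\mathbf{t}_{1}\ldots\mathbf{t}_{n}$ with $F\in\Func$ of arity $n$. Since the conversion $\star$ only touches occurrences of the constant $\one$, it commutes with the connective: $\mathbf{t}^{\star}=F\mathbf{t}_{1}^{\star}\ldots\mathbf{t}_{n}^{\star}$. The interpretation of $F$ in $\langle\alg{A},\one\rangle$ coincides with its interpretation in $\alg{A}$ (adding the constant $\one$ does not change the $\Func$-operations), so applying the inductive hypothesis to each $\mathbf{t}_{i}$ yields
\[
v^{\star}(\mathbf{t})=F^{\alg{A}}(v^{\star}(\mathbf{t}_{1}),\ldots,v^{\star}(\mathbf{t}_{n}))=F^{\alg{A}}(v(\mathbf{t}_{1}^{\star}),\ldots,v(\mathbf{t}_{n}^{\star}))=v(\mathbf{t}^{\star}).
\]

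No step here should be a real obstacle; the only substantive ingredient is the defining property of $\TS$ invoked in the $\mathbf{t}=\one$ case, which guarantees that the chosen thesis $\bigstar$ is evaluated to $\one$ under every valuation in any algebra of $\TS$. This is why the selection of $\bigstar\in\ThmS$ and the restriction to $\langle\alg{A},\one\rangle\in\TS$ both enter essentially; in particular, specialising the lemma to $\langle\alg{A},\one\rangle=\LT$ recovers exactly the previously stated equivalence~\eqref{E:LT-term-valuation}.
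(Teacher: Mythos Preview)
Your argument is correct and is exactly the induction the paper has in mind; the paper merely states that the proof ``can be carried out by induction on the number of functional symbols of the language $\Lan^{\star}$'' and leaves the details to the reader. Your identification of the only substantive case, $\mathbf{t}=\one$, and its resolution via the defining property of $\TS$, is precisely the intended point.
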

\noindent\textit{Proof}~can be carried out by induction on the number of functional symbols of the language $\Lan^{\star}$. We leave it to the reader. (Exercise~\ref{section:lindenbaum-algebra}.\ref{EX:LT=free-algebra})\\

\begin{prop}\label{P:LT=free-algebra}
Let $\mathcal{S}$ be nontrivial unital logic which is implicational w.r.t.
$\im(p,q)$. Then for terms $\mathbf{t}$ and $\mathbf{r}$ of type $\Lan^{\star}$,
\[
\TS\models\mathbf{t}\approx\mathbf{r}~\Longleftrightarrow~\bm{v}_{0}^{\star}(\mathbf{t})=\bm{v}_{0}^{\star}(\mathbf{r}),
\]
where $\bm{v}_{0}^{\star}$ is the $\star$-extension of $\bm{v}_{0}$. Hence {\em$\LT$} is a free algebra of rank $\card{\Lan}$ over the variety $\TS$ and also over the quasivariety $\QS\cap\TS$.
\end{prop}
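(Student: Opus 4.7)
The plan is to establish the equivalence first, and then read off freeness from it. The forward direction is immediate: since $\LT \in \TS$ (by Proposition~\ref{P:valuations-in-LT}, every thesis evaluates to $\one_{\bm{T}_{\mathcal{S}}}$ in $\LT$), if $\TS\models \mathbf{t}\approx\mathbf{r}$ then in particular $\bm{v}_{0}^{\star}(\mathbf{t}) = \bm{v}_{0}^{\star}(\mathbf{r})$.

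For the reverse direction, assume $\bm{v}_{0}^{\star}(\mathbf{t}) = \bm{v}_{0}^{\star}(\mathbf{r})$. Applying Lemma~\ref{L:LT=free-algebra} inside $\LT$ gives $\bm{v}_{0}(\mathbf{t}^{\star}) = \bm{v}_{0}(\mathbf{r}^{\star})$, i.e.\ $(\mathbf{t}^{\star},\mathbf{r}^{\star})\in\theta(\bm{T}_{\mathcal{S}})$. Now invoke the implicational property in $\LT$: the equality of images under $\bm{v}_{0}$ is equivalent to $\bm{v}_{0}(\im(\mathbf{t}^{\star},\mathbf{r}^{\star})) = \one_{\bm{T}_{\mathcal{S}}}$ and $\bm{v}_{0}(\im(\mathbf{r}^{\star},\mathbf{t}^{\star})) = \one_{\bm{T}_{\mathcal{S}}}$. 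By the equivalence $(\text{a}^{\ast})\Leftrightarrow(\text{c}^{\ast})$ of Proposition~\ref{P:valuations-in-LT}, both $\im(\mathbf{t}^{\star},\mathbf{r}^{\star})$ and $\im(\mathbf{r}^{\star},\mathbf{t}^{\star})$ are $\mathcal{S}$-theses. Pick any $\langle\alg{A},\one\rangle\in\TS$ and any valuation $v$ in $\alg{A}$; by definition of $\TS$ the two theses evaluate to $\one$ under $v$, so applying the implicational property once more (now in $\langle\alg{A},\one\rangle$) yields $v(\mathbf{t}^{\star}) = v(\mathbf{r}^{\star})$. Lemma~\ref{L:LT=free-algebra} then gives $v^{\star}(\mathbf{t}) = v^{\star}(\mathbf{r})$, and since every valuation in $\langle\alg{A},\one\rangle$ is the $\star$-extension of a unique valuation in $\alg{A}$, we conclude $\TS\models\mathbf{t}\approx\mathbf{r}$.

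The freeness now follows quickly. Let $G := \lbrace p\slash\theta(\bm{T}_{\mathcal{S}})\mid p\in\Var_{\mathcal{L}}\rbrace$, which by Proposition~\ref{P:motivating} (valid since $\mathcal{S}$ is nontrivial and unital, cf.\ Proposition~\ref{P:lindenbaum-connection}) has cardinality $\card{\Var_{\mathcal{L}}} = \card{\Lan}$. Given $\langle\alg{A},\one\rangle\in\TS$ and any map $f:G\longrightarrow|\alg{A}|$, set $v(p) := f(p\slash\theta(\bm{T}_{\mathcal{S}}))$ and define $\hat{f}(\alpha\slash\theta(\bm{T}_{\mathcal{S}})) := v(\alpha)$. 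Well-definedness is exactly the content of the equivalence just proven applied to the case $\mathbf{t},\mathbf{r}\in\Forms_{\mathcal{L}}$ (where $\mathbf{t}^{\star} = \mathbf{t}$): $\alpha\slash\theta(\bm{T}_{\mathcal{S}}) = \beta\slash\theta(\bm{T}_{\mathcal{S}})$ means $\bm{v}_{0}(\alpha) = \bm{v}_{0}(\beta)$, hence $\TS\models\alpha\approx\beta$, hence $v(\alpha) = v(\beta)$. That $\hat{f}$ is a homomorphism preserving $\one_{\bm{T}_{\mathcal{S}}}$ is routine from the term-wise definition of operations in $\LT$; uniqueness is forced by the fact that $G$ generates $\LT$. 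Finally, $\LT \in \QS\cap\TS$ (it belongs to $\QS$ by definition and to $\TS$ as noted above), so the universal property descends to the subclass $\QS\cap\TS$ without further work.

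The main obstacle is the reverse direction of the equivalence: one must transport an identity holding only in $\LT$ to every member of $\TS$. The trick — and the reason the implicational hypothesis is indispensable — is to use $\im$ to convert the equality $\mathbf{t}^{\star}\approx\mathbf{r}^{\star}$ into statements of the form ``this particular formula is a thesis'', since theses are precisely what the class $\TS$ is designed to preserve; then one reverses the conversion in the target algebra. Lemma~\ref{L:LT=free-algebra} is the glue that lets us oscillate between $\Lan^{\star}$-terms and their conversions on both sides of this argument.
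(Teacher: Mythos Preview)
Your proof is correct and follows essentially the same route as the paper: convert the equality $\mathbf{t}\approx\mathbf{r}$ to its $\star$-version via Lemma~\ref{L:LT=free-algebra}, use the implicational hypothesis to recast it as thesis-hood of $\im(\mathbf{t}^{\star},\mathbf{r}^{\star})$ and $\im(\mathbf{r}^{\star},\mathbf{t}^{\star})$, and exploit that $\TS$ is defined precisely by validation of theses. The paper packages this as a single chain of bi-implications and then cites Mal'cev's characterization of free algebras, whereas you split the two directions and spell out the universal mapping property by hand; the logical content is identical.
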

\begin{proof}
We successively obtain:
\[
\begin{array}{rcl}
\TS\models\mathbf{t}\approx\mathbf{r}
&\Longleftrightarrow &\TS\models\mathbf{t}^{\star}\approx\mathbf{r}^{\star}\quad[\text{Lemma~\ref{L:LT=free-algebra}}]\\
&\Longleftrightarrow &\TS\models\im(\mathbf{t}^{\star},\mathbf{r}^{\star})\approx\one_{\bm{T}_{\mathcal{S}}}~\text{and}~\TS\models\im(\mathbf{r}^{\star},\mathbf{t}^{\star})\approx\one_{\bm{T}_{\mathcal{S}}}\\
&&[\text{for $\mathcal{S}$ is implicational w.r.t. $\im(p,q)$}]\\
&\Longleftrightarrow &\im(\mathbf{t}^{\star},\mathbf{r}^{\star})\in\ThmS~
\text{and}~\im(\mathbf{r}^{\star},\mathbf{t}^{\star})\in\ThmS\\
&&[\text{since $\LT\in\TS$ and Proposition~\ref{P:preLindenbaum-algebra}, part 2}]\\
&\Longleftrightarrow &\bm{v}_{0}(\im(\mathbf{t}^{\star},\mathbf{r}^{\star}))=\one_{\bm{T}_{\mathcal{S}}}~
\text{and}~\bm{v}_{0}(im(\mathbf{r}^{\star},\mathbf{t}^{\star}))=\one_{\bm{T}_{\mathcal{S}}}\\
&&[\text{Proposition~\ref{P:preLindenbaum-algebra}, part 2}]\\
&\Longleftrightarrow &\bm{v}_{0}(\mathbf{t}^{\star})=\bm{v}_{0}(\mathbf{r}^{\star})
\quad[\text{for $\mathcal{S}$ is implicational w.r.t. $\im(p,q)$}]\\
&\Longleftrightarrow &\bm{v}_{0}^{\star}(\mathbf{t}^{\star})=\bm{v}_{0}^{\star}(\mathbf{r}^{\star})
\quad[\text{in virtue of~\eqref{E:LT-term-valuation}}].
\end{array}
\]

According to~\cite{malcev1973}, {\S} 12.2, theorem 1, this implies that $\LT$ is a free algebra over the class $\TS$. In virtue of Proposition~\ref{P:motivating}, the rank of $\LT$ equals $\card{\Lan}$.

Since the quasivariety $\QS\cap\TS$ contains all algebras $\LT[D]$ and because of Corollary~\ref{C:LT-determination}, $\mathcal{S}$ is determined by
$\QS\cap\TS$. It is obvious that $\LT$ is a free algebra over $\QS\cap\TS$.
\end{proof}

\subsection{Examples of Lindenbaum-Tarski algebras}\label{section:some-lindenbaum-algebras}
Let $\mathcal{S}$ be a nontrivial unital logic in a language $\Lan$ with $\card{\Var_{\mathcal{L}}}=\kappa$. Then, according to~\eqref{E:motivating},
$\card{|\LT|}\ge\kappa$. In particular, $\card{|\LTCl|}=\card{|\LTInt|}=\card{\Var_{\mathcal{L}_A}}$.
It must be clear that $\set{p\slash\theta(\ThmS)}{p\in\Var_{\mathcal{L}}}$ is the set of free generators of $\LT$, if the latter can be considered as a free algebra of $\TS$.
For some questions like decision problem for the consequence relation or theoremhood of $\mathcal{S}$, it suffices to consider the formulas built of a limited set of variables. Thus, if $\Var\subseteq\Var_{\mathcal{L}}$ with $\card{\Var}=\kappa$ and our consideration of $\mathcal{S}$ is limited to the formulas built from $\Var$, we denote by $\LT(\kappa)$ a subalgebra of $\LT$, generated by the set $\set{p\slash\theta(\ThmS)}{p\in\Var}$. The notation is justified by the fact if $\Var^{\prime}\subseteq\Var_{\mathcal{L}}$ and
$\card{\Var^{\prime}}=\card{\Var}$, then the subalgebras of $\LT$, generated by
$\set{p\slash\theta(\ThmS)}{p\in\Var}$ and, respectively, by 
$\set{p\slash\theta(\ThmS)}{p\in\Var^{\prime}}$ are isomorphic; cf.~\cite{gratzer2008}, {\S}24, corollary 2. Since each such $\LT(\kappa)$ is a free algebra in $\QS$ (or even in $\TS$), it is called the free algebra of rank $\kappa$. We note that, since $\LT$ is always has at least one 0-ary operation, $\one$, even it makes sense to talk about a free algebra of rank 0.

We consider the free algebras $\LTCl(0)$, $\LTInt(0)$, $\LTCl(1)$
$\LTCl(2)$ and $\LTInt(1)$, in order. The first two algebras should be understood as subalgebras of $\LTCl$ and $\LTInt$, generated by their unit, $\one$. The resulting algebras are isomorphic (see Section~\ref{section:boolean-algebra} and Section~\ref{section:heyting-algebra}) and can be depicted by the diagram:

\begin{figure}[!ht]	
	\[
	\ctdiagram{
		\ctnohead
		\ctinnermid
		\ctel 0,0,0,20:{}
		\ctv 0,0:{\bullet}
		\ctv 0,20:{\color{red}\bullet}
		\ctv 0,27:{\mathbf{1}}
	}
	\]
	\caption{Lindenbaum-Tarski algebra $\LTCl(0)$, or equivalently $\LTInt(0)$}
\end{figure}

Before discussing the structure of the other just mentioned Lindenbaum-Tarski algebras, we first consider the varieties
$\mathbb{T}_{\textsf{Cl}}$ and $\mathbb{T}_{\textsf{Int}}$. Namely, we aim to show that these varieties coincide with the varieties of Boolean and Heyting algebras, respectively.

Since the classes $\mathbb{T}_{\textsf{Cl}}$ and $\mathbb{T}_{\textsf{Int}}$ consist of the algebras validating the theses of $\Cl$ and $\Int$, respectively, (defined in Section~\ref{section:subclasses-unital}) we take a look at these calculi more closely. 
\begin{lem}\label{L:rules-sound-wrt-P-Int-Cl}
In addition to modus ponens, the inference rules {\em(\text{a}--$i$)--(\text{a}--$iii$)} and 
{\em(\text{b}--$i$)--(\text{b}--$ii$)} of Section~\ref{section:inference-rules} are sound w.r.t. {\em\Pos} and hence w.r.t. {\em\Int} and {\em\Cl}.
\end{lem}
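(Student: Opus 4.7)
The plan is to verify each of the five rules directly by exhibiting a short derivation in $\Pos$ that uses only the axiom schemata ax1--ax8 together with modus ponens (the rule (b-$iii$)). Once soundness w.r.t.\ $\Pos$ is established, the ``hence'' part is immediate: by \eqref{E:two-implications}, every $\Pos$-derivation yields both an $\Int$-derivation and a $\Cl$-derivation, so any rule sound w.r.t.\ $\Pos$ is automatically sound w.r.t.\ $\Int$ and $\Cl$.

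To treat the rules in turn, I would use the following instances of the axioms, each combined with modus ponens. For (a-$i$), assume $X\vdash_{\Pos}\alpha$ and $X\vdash_{\Pos}\beta$; apply ax3 in the form $\alpha\rightarrow(\beta\rightarrow(\alpha\wedge\beta))$, detach with $\alpha$ to obtain $X\vdash_{\Pos}\beta\rightarrow(\alpha\wedge\beta)$, then detach with $\beta$ to get $X\vdash_{\Pos}\alpha\wedge\beta$. For (a-$ii$), assume $X\vdash_{\Pos}\alpha$; from the ax6 instance $\alpha\rightarrow(\alpha\vee\beta)$ and modus ponens conclude $X\vdash_{\Pos}\alpha\vee\beta$. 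For (a-$iii$), analogously use ax7 instead of ax6. For (b-$i$) and (b-$ii$), assume $X\vdash_{\Pos}\alpha\wedge\beta$ and detach, respectively, from the ax4 instance $(\alpha\wedge\beta)\rightarrow\alpha$ and the ax5 instance $(\alpha\wedge\beta)\rightarrow\beta$. In each case the verification uses only transitivity of $\vdash_{\Pos}$ and that the displayed axiom instance is an $\Pos$-thesis (being a conclusion of a premiseless rule).

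There is essentially no obstacle here: the axioms ax3--ax7 were deliberately chosen as the ``introduction'' and ``elimination'' schemata for $\wedge$ and $\vee$, so each of the five rules collapses to a one- or two-step detachment argument. The only thing to be careful about is that the derivations produce sequents of the form $X\vdash_{\Pos}\cdot$ rather than $\emptyset\vdash_{\Pos}\cdot$, which follows from the monotonicity property (b) of Definition~\ref{D:consequnce-relation-single} applied to the axiom-conclusions. Finally, to conclude soundness w.r.t.\ $\Int$ and $\Cl$, invoke \eqref{E:two-implications}: since each of the constructed $\Pos$-derivations is simultaneously an $\Int$-derivation and a $\Cl$-derivation (the calculi $\Int$ and $\Cl$ extend $\Pos$ by additional premiseless modus rules only), the rules remain sound in those stronger logics.
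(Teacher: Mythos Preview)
Your proposal is correct and follows exactly the approach the paper intends: the paper's own proof simply states that one must show $\alpha,\beta\vdash_{\Pos}\alpha\wedge\beta$, $\alpha\vdash_{\Pos}\alpha\vee\beta$, $\beta\vdash_{\Pos}\alpha\vee\beta$, $\alpha\wedge\beta\vdash_{\Pos}\alpha$, and $\alpha\wedge\beta\vdash_{\Pos}\beta$, then invoke~\eqref{E:two-implications}, leaving the actual derivations to the reader as an exercise. You have supplied precisely those derivations via ax3--ax7 and modus ponens, which is what the exercise asks for.
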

\begin{proof}
To prove the main assertion, we have to show that for any $\Lan_A$-formulas $\alpha$ and $\beta$, the following hold:
\[
\alpha,\beta\vdash_{\textsf{P}}\alpha\wedge\beta,\quad\alpha\vdash_{\textsf{P}}
\alpha\vee\beta,\quad\beta\vdash_{\textsf{P}}\alpha\vee\beta,\quad\alpha\wedge\beta
\vdash_{\textsf{P}}\alpha~\text{and}~\alpha\wedge\beta
\vdash_{\textsf{P}}\beta.
\]
When it's done, we apply~\eqref{E:two-implications}.
We leave this task to the reader. (Exercise~\ref{section:lindenbaum-algebra}.\ref{EX:lemma-rules-sound-wrt-P-Int-Cl})
\end{proof}

The next lemma is known as the deduction theorem.
\begin{lem}\label{L:deduction-theorem}
The hyperule {\em(\text{c}--$i$)} of Section~\ref{section:inference-rules} is sound w.r.t. {\em\Pos} and hence w.r.t. {\em\Int} and {\em\Cl}.
\end{lem}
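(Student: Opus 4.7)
The plan is to establish the classical deduction theorem for $\textsf{P}$ by induction on the length of a formal derivation, and then to transfer the result to $\textsf{Int}$ and $\textsf{Cl}$ via the implication~\eqref{E:two-implications}. Concretely, assume $X,\alpha\vdash_{\textsf{P}}\beta$ and fix a derivation $\gamma_1,\ldots,\gamma_n=\beta$ of $\beta$ from $X\cup\{\alpha\}$ in the calculus $\textsf{P}$. We will prove by induction on $i$ that $X\vdash_{\textsf{P}}\alpha\rightarrow\gamma_i$ for every $i\le n$; the case $i=n$ then yields the desired conclusion.

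The inductive step splits into four cases according to how $\gamma_i$ is justified in the given derivation. If $\gamma_i$ is an instance of one of the axiom schemata ax1--ax8, or if $\gamma_i\in X$, then $X\vdash_{\textsf{P}}\gamma_i$ directly, and combining this with the instance $\gamma_i\rightarrow(\alpha\rightarrow\gamma_i)$ of ax1 and one application of modus ponens gives $X\vdash_{\textsf{P}}\alpha\rightarrow\gamma_i$. If $\gamma_i=\alpha$, we need the preliminary observation that $\alpha\rightarrow\alpha$ is a theorem of $\textsf{P}$; this is the familiar three-line proof from ax1 (twice) and ax2 followed by two applications of modus ponens. Finally, if $\gamma_i$ is obtained by modus ponens from some $\gamma_j$ and $\gamma_k=\gamma_j\rightarrow\gamma_i$ with $j,k<i$, then by the induction hypothesis we already have both $X\vdash_{\textsf{P}}\alpha\rightarrow\gamma_j$ and $X\vdash_{\textsf{P}}\alpha\rightarrow(\gamma_j\rightarrow\gamma_i)$; together with the ax2 instance
\[
(\alpha\rightarrow\gamma_j)\rightarrow\bigl((\alpha\rightarrow(\gamma_j\rightarrow\gamma_i))\rightarrow(\alpha\rightarrow\gamma_i)\bigr)
\]
and two applications of modus ponens we deduce $X\vdash_{\textsf{P}}\alpha\rightarrow\gamma_i$.

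Since $\textsf{P}$-derivations use only ax1--ax8 and modus ponens, the four cases above exhaust all possibilities, and the induction is complete. To extend the conclusion to $\textsf{Int}$ and $\textsf{Cl}$, note that the only schemata used in the argument are ax1 and ax2, which are also among the axioms of $\textsf{Int}$ and $\textsf{Cl}$; thus the same derivation-by-induction works verbatim with $\vdash_{\textsf{P}}$ replaced by $\vdash_{\textsf{Int}}$ or $\vdash_{\textsf{Cl}}$. Alternatively, one invokes~\eqref{E:two-implications} case by case: any $\textsf{P}$-derivation of $\alpha\rightarrow\gamma_i$ from $X$ is at the same time a derivation in $\textsf{Int}$ and in $\textsf{Cl}$.

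No real obstacle is expected; the only subtle point is the need to prove $\alpha\rightarrow\alpha\in\bm{T}_{\textsf{P}}$ as an auxiliary lemma before carrying out the induction, since without it the case $\gamma_i=\alpha$ cannot be dispatched. Everything else is the standard Hilbert-style bookkeeping, and the soundness of the additional rules (a-$i$)--(a-$iii$), (b-$i$)--(b-$ii$) noted in Lemma~\ref{L:rules-sound-wrt-P-Int-Cl} is not required here, since the definition of $\textsf{P}$ restricts derivations to ax1--ax8 plus modus ponens.
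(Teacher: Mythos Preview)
Your proof is correct and follows essentially the same approach as the paper's: first establish $\vdash_{\textsf{P}}\alpha\rightarrow\alpha$ from ax1, ax2 and modus ponens, then run the standard induction on the length of a derivation, handling the axiom/hypothesis cases via ax1 and the modus ponens case via ax2. Your case analysis is in fact slightly more explicit than the paper's (you separate the axiom-instance case, which the paper's base case leaves implicit). One small remark: your ``alternatively'' clause invoking~\eqref{E:two-implications} does not quite work as stated, since an $\textsf{Int}$- or $\textsf{Cl}$-derivation of $\beta$ from $X,\alpha$ may use ax9--ax11 and hence need not be a $\textsf{P}$-derivation; but your primary argument---rerunning the induction in the larger calculus, using only that ax1, ax2 and modus ponens are present---is the right one and suffices.
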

\begin{proof}
First of all, we notice that for any $\Lan_A$-formulas $\alpha$ and $\beta$,
\[
\begin{array}{cll}
1. &\vdash_{\textsf{P}}\alpha\rightarrow(\beta\rightarrow\alpha) &[\text{an instance of ax1}]\\
2. &\vdash_{\textsf{P}}\alpha\rightarrow((\beta\rightarrow\alpha)\rightarrow\alpha) &[\text{an instance of ax1}]\\
3. &\vdash_{\textsf{P}}(\alpha\rightarrow(\beta\rightarrow\alpha))\rightarrow
((\alpha\rightarrow((\beta\rightarrow\alpha)\rightarrow\alpha))
\rightarrow(\alpha\rightarrow\alpha)) &[\text{an instance of ax2}]\\
4. &\vdash_{\textsf{P}}(\alpha\rightarrow((\beta\rightarrow\alpha)\rightarrow\alpha))
\rightarrow(\alpha\rightarrow\alpha) &[\text{from 1 and 3 by (b--$iii$)}]\\
5. &\vdash_{\textsf{P}}\alpha\rightarrow\alpha. &[\text{from 2 and 4 by (b--$iii$)}]
\end{array}
\]

Thus for any formula $\alpha$,
\[
\vdash_{\textsf{P}}\alpha\rightarrow\alpha. \tag{$\ast$}
\]
Now assume that $X,\alpha\vdash_{\textsf{P}}\beta$. That is, there is a \Pos-derivation from $X,\alpha$:
\[
\alpha_1,\ldots,\beta.\tag{$\ast\ast$}
\]
Let us form a finite sequence:
\[
\alpha\rightarrow\alpha_1,\ldots,\alpha\rightarrow\beta
\]
and denote:
$\gamma_i:=\alpha\rightarrow\alpha_i$, where $1\le i\le n$; so that $\gamma_n:=\alpha\rightarrow\beta$. By induction on the number $n$ of the formulas of $(\ast\ast)$, we prove that $\vdash_{\textsf{P}}\gamma_n$.

If $n=1$, then either $\beta=\alpha$ or $\beta\in X$. If the former is the case, then we apply ($\ast$). If $\beta\in X$, then we successively obtain:
$\vdash_{\textsf{P}}\beta\rightarrow(\alpha\rightarrow\beta)$ and $X\vdash_{\textsf{P}}\alpha\rightarrow\beta$.

Next, assume that $\beta$ is obtained from $\alpha_i$ and $\alpha_j=\alpha_i\rightarrow\beta$. By induction hypothesis,
$\vdash_{\textsf{P}}\alpha\rightarrow\alpha_i$ and 
$\vdash_{\textsf{P}}\alpha\rightarrow(\alpha_i\rightarrow\beta)$. Using ax2 and
(b--$iii$) twice, we receive that $\vdash_{\textsf{P}}\alpha\rightarrow\beta$.

We obtain the last part of the statement with the help of~\eqref{E:two-implications}.
\end{proof}

\begin{lem}\label{L:hyperrules-c_ii-c_iii}
The hyperrules {\em(c--$ii$)} and {\em(c--$iii$)} of Section~\ref{section:inference-rules} are sound w.r.t. {\em\Int} and {\em\Cl}.
Hence the rule {\em(a--$iv$)}is sound w.r.t. {\em\Int} and {\em\Cl}.
\end{lem}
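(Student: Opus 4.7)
The plan is to reduce each hyperrule to a short application of the deduction theorem (Lemma~\ref{L:deduction-theorem}) together with the appropriate instance of ax8 or ax9 and modus ponens, and then to obtain (a--$iv$) as an immediate corollary of (c--$ii$). Since the deduction theorem has already been established for both $\Int$ and $\Cl$ in Lemma~\ref{L:deduction-theorem}, and since the axiom schemata ax8 and ax9 and the rule (b--$iii$) belong to the defining rules of both calculi, every step will apply verbatim to $\Int$ and $\Cl$ simultaneously, so I do not have to treat the two logics separately.

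For (c--$ii$), starting from the two hypotheses $X,\alpha\vdash_{\textsf{Int}}\beta$ and $X,\alpha\vdash_{\textsf{Int}}\neg\beta$, I would first apply Lemma~\ref{L:deduction-theorem} to get $X\vdash_{\textsf{Int}}\alpha\rightarrow\beta$ and $X\vdash_{\textsf{Int}}\alpha\rightarrow\neg\beta$, and then combine these with the instance $(\alpha\rightarrow\beta)\rightarrow((\alpha\rightarrow\neg\beta)\rightarrow\neg\alpha)$ of ax9, applying modus ponens twice, to conclude $X\vdash_{\textsf{Int}}\neg\alpha$. The argument for (c--$iii$) has the same shape: from $X,\alpha\vdash_{\textsf{Int}}\gamma$ and $X,\beta\vdash_{\textsf{Int}}\gamma$ the deduction theorem delivers $X\vdash_{\textsf{Int}}\alpha\rightarrow\gamma$ and $X\vdash_{\textsf{Int}}\beta\rightarrow\gamma$; the instance $(\alpha\rightarrow\gamma)\rightarrow((\beta\rightarrow\gamma)\rightarrow((\alpha\vee\beta)\rightarrow\gamma))$ of ax8 and two applications of modus ponens give $X\vdash_{\textsf{Int}}(\alpha\vee\beta)\rightarrow\gamma$; a final use of monotonicity (to add the hypothesis $\alpha\vee\beta$) and one more modus ponens yield $X,\alpha\vee\beta\vdash_{\textsf{Int}}\gamma$.

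To deduce (a--$iv$) I would specialise the freshly-established (c--$ii$) by setting $X:=\{\alpha\}$ and interpreting its metavariables ``$\bm{\alpha}$'' and ``$\bm{\beta}$'' as $\neg\alpha$ and $\alpha$, respectively. The two premises needed, $\alpha,\neg\alpha\vdash_{\textsf{Int}}\alpha$ and $\alpha,\neg\alpha\vdash_{\textsf{Int}}\neg\alpha$, hold by reflexivity (clause (a) of Definition~\ref{D:consequnce-relation-single}), so the conclusion $\alpha\vdash_{\textsf{Int}}\neg\neg\alpha$ is immediate, and the same derivation works for $\Cl$. I do not foresee any real obstacle: the whole argument is a mechanical book-keeping once the deduction theorem is in hand; the only place where mild care is needed is the renaming of metavariables when specialising (c--$ii$) to obtain (a--$iv$), so as to be sure that the two reflexive premises match the form required by the hyperrule.
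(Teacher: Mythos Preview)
Your proposal is correct and matches the paper's approach: the paper leaves the soundness of (c--$ii$) and (c--$iii$) as an exercise (for which your deduction-theorem-plus-ax9/ax8 argument is exactly the intended solution), and its derivation of (a--$iv$) from (c--$ii$) via $\alpha,\neg\alpha\vdash_{\mathcal{S}}\alpha$ and $\alpha,\neg\alpha\vdash_{\mathcal{S}}\neg\alpha$ is identical to yours.
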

\begin{proof}
The proof of the first part is straightforward and is left to the reader. (Exercise~\ref{section:lindenbaum-algebra}.\ref{EX:lemma-hyperrules-c_ii-c_iii})

The proof of the second part is as follows. Let $\mathcal{S}$ be either {\Int} or {\Cl}. Since $\alpha,\neg\alpha\vdash_{\mathcal{S}}\alpha$ and 
$\alpha,\neg\alpha\vdash_{\mathcal{S}}\neg\alpha$, by (c-$ii$), we conclude that $\alpha\vdash_{\mathcal{S}}\neg\neg\alpha$.
\end{proof}
\begin{cor}
The logic {\em\Cl} is an extension of {\em\Int}.
\end{cor}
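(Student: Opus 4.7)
The plan is to establish that $\Int$ and $\Cl$ share almost all of their axiomatic machinery, and that the one schema they do not share -- namely \text{ax11}, which is $\beta\rightarrow(\neg\beta\rightarrow\alpha)$ -- is a theorem of $\Cl$. Once this is done, the corollary follows by a routine step-by-step simulation of $\Int$-derivations inside $\Cl$.

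More concretely, I would first note that both calculi use modus ponens as their only (proper) modus rule, and both include the premiseless rules \text{ax1}--\text{ax9}. Thus the only $\Int$-rule that is not already literally a $\Cl$-rule is \text{ax11}. So it suffices to show
\[
\vdash_{\textsf{Cl}}\beta\rightarrow(\neg\beta\rightarrow\alpha)
\]
for arbitrary formulas $\alpha,\beta$. By the deduction theorem (Lemma~\ref{L:deduction-theorem}), which is available for $\Cl$ since $\Cl$ extends $\Pos$, this reduces to showing $\beta,\neg\beta\vdash_{\textsf{Cl}}\alpha$. To derive the latter, apply \text{ax1} (with modus ponens) to $\beta$ to obtain $\neg\alpha\rightarrow\beta$, and similarly from $\neg\beta$ obtain $\neg\alpha\rightarrow\neg\beta$. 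Then an instance of \text{ax9}, namely $(\neg\alpha\rightarrow\beta)\rightarrow((\neg\alpha\rightarrow\neg\beta)\rightarrow\neg\neg\alpha)$, together with two applications of modus ponens, yields $\neg\neg\alpha$. Finally, \text{ax10} ($\neg\neg\alpha\rightarrow\alpha$) and one more modus ponens give $\alpha$.

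Having shown that \text{ax11} is a $\Cl$-thesis, I would conclude by induction on the length of an $\Int$-derivation from $X$: every line that is either an element of $X$, a conclusion of modus ponens, or an instance of one of \text{ax1}--\text{ax9} is immediately also a $\Cl$-derivation line; every line that is an instance of \text{ax11} can be justified by splicing in the short $\Cl$-derivation constructed above (after uniform substitution, which is legitimate by structurality). Hence $X\vdash_{\textsf{Int}}\alpha$ implies $X\vdash_{\textsf{Cl}}\alpha$, i.e.\ $\textbf{Cn}_{\textsf{Int}}(X)\subseteq\textbf{Cn}_{\textsf{Cl}}(X)$ for every $X\subseteq\Forms_{\mathcal{L}_A}$, which is precisely the definition (Definition~\ref{D:conservative-extension}) of $\Cl$ being an extension of $\Int$.

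There is really no hard step here; the only thing one has to be a little careful about is invoking the deduction theorem for $\Cl$ (which is not among the given axioms) -- this is licensed because $\Cl$ contains all of \text{ax1}--\text{ax2} and modus ponens, so the proof of Lemma~\ref{L:deduction-theorem} for $\Pos$ transfers verbatim.
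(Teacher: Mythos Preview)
Your proof is correct and follows essentially the same route as the paper: both reduce the claim to showing $\vdash_{\textsf{Cl}}\beta\rightarrow(\neg\beta\rightarrow\alpha)$, establish $\beta,\neg\beta\vdash_{\textsf{Cl}}\neg\neg\alpha$, apply ax10 to obtain $\alpha$, and then use the deduction theorem twice. The only cosmetic difference is that the paper packages the derivation of $\neg\neg\alpha$ via the hyperrule (c--$ii$) (whose soundness was just proved in Lemma~\ref{L:hyperrules-c_ii-c_iii}), whereas you unfold that step explicitly using ax1 and ax9; these amount to the same argument.
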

\begin{proof}
We show that $\vdash_{\textsf{Cl}}\text{ax11}$, that is, for any $\Lan_A$-formulas $\alpha$ and $\beta$,
\begin{equation}\label{E:Int-axiom-valid-in-Cl}
\vdash_{\textsf{Cl}}\beta\rightarrow(\neg\beta\rightarrow\alpha).
\end{equation}

Indeed, since $\beta,\neg\beta,\neg\alpha\vdash_{\textsf{Cl}}\beta$ and
$\beta,\neg\beta,\neg\alpha\vdash_{\textsf{Cl}}\neg\beta$, by the hyperrrule (c--$ii$), we derive that $\beta,\neg\beta\vdash_{\textsf{Cl}}\neg\neg\alpha$.
Also, since, in virtue of Lemma~\ref{L:hyperrules-c_ii-c_iii}, $\neg\neg\alpha\vdash_{\textsf{Cl}}\alpha$, by cut, we obtain that
$\beta,\neg\vdash_{\textsf{Cl}}\alpha$. Then, we apply twice the hyperrule
(c--$i$).
\end{proof}

\begin{lem}\label{L:distributive-lettice-sound}
For any $\Lan_A$-formulas $\alpha$, $\beta$ and $\gamma$, the following formulas and their converses are theses of {\em\Pos}:
{\em
	\[
\begin{array}{cl}
(\text{a}) &(\alpha\wedge\beta)\rightarrow(\beta\wedge\alpha),\\
(\text{b}) &(\alpha\wedge(\beta\wedge\gamma))\rightarrow((\alpha\wedge\beta)\wedge\gamma),\\
(\text{c}) &((\alpha\wedge\beta)\wedge\gamma)\rightarrow(\alpha\wedge(\beta\wedge\gamma)),\\
(\text{d}) &((\alpha\wedge\beta)\vee\beta)\rightarrow\beta,\\
(\text{e}) &(\alpha\vee\beta)\rightarrow(\beta\vee\alpha),\\
(\text{f}) &(\alpha\vee(\beta\vee\gamma))\rightarrow((\alpha\vee\beta)\vee\gamma),\\
(\text{g}) &((\alpha\wedge\beta)\vee\beta)\rightarrow\beta,\\
(\text{h}) &\alpha\wedge(\alpha\vee\beta)\rightarrow\alpha,\\
(\text{j}) &(\alpha\wedge(\beta\vee\gamma))\rightarrow((\alpha\wedge\beta)\vee(\alpha\wedge\gamma)),\\
(\text{i}) &(\alpha\vee(\beta\wedge\gamma))\rightarrow((\alpha\vee\beta)\wedge(\alpha\vee\gamma)).
\end{array}
\]}
\end{lem}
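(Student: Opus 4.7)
The plan is to apply the deduction theorem (Lemma~\ref{L:deduction-theorem}) uniformly to reduce each thesis $\varphi\rightarrow\psi$ in the list to a derivability claim $\varphi\vdash_{\Pos}\psi$, and then build the derivations out of modus ponens, the axioms ax1--ax8, and the conjunction/disjunction rules (a--$i$)--(a--$iii$), (b--$i$)--(b--$ii$) whose soundness for $\Pos$ was established in Lemma~\ref{L:rules-sound-wrt-P-Int-Cl}. The converses are handled by running the same argument from the other side.

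Before touching the list, I would derive the hyperrule of disjunction elimination (c--$iii$) for $\Pos$, since Lemma~\ref{L:hyperrules-c_ii-c_iii} only records it for $\Int$ and $\Cl$. Given $X,\alpha\vdash_{\Pos}\gamma$ and $X,\beta\vdash_{\Pos}\gamma$, applying the deduction theorem twice yields $X\vdash_{\Pos}\alpha\rightarrow\gamma$ and $X\vdash_{\Pos}\beta\rightarrow\gamma$; combining these with ax8 via modus ponens gives $X\vdash_{\Pos}(\alpha\vee\beta)\rightarrow\gamma$, and a final modus ponens yields $X,\alpha\vee\beta\vdash_{\Pos}\gamma$. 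This derived rule will be the decisive tool for the disjunctive items.

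Next I would dispatch the purely conjunctive items (a), (b), (c) and the absorption (h). For (a), assume $\alpha\wedge\beta$; extract $\alpha$ and $\beta$ by (b--$i$) and (b--$ii$) and reassemble by (a--$i$) to get $\beta\wedge\alpha$. Associativity (b) and (c) are the same pattern with two extractions and one regrouping; (h) is a single use of (b--$i$), while its converse uses (a--$i$) together with (a--$ii$) to reintroduce the absorbed disjunct. For the disjunctive rearrangements (d)/(g), (e) and (f), I would appeal to the derived (c--$iii$). For example, for (e), from $\alpha\vee\beta$ do a case split: $\alpha\vdash_{\Pos}\beta\vee\alpha$ by (a--$iii$), and $\beta\vdash_{\Pos}\beta\vee\alpha$ by (a--$ii$); the hyperrule then delivers $\alpha\vee\beta\vdash_{\Pos}\beta\vee\alpha$. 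Associativity of $\vee$ needs two nested case splits; (d) and (g) follow by case split where one case uses (b--$ii$) and the other is trivial.

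The main obstacle is the distributivity pair (j) and (i), and in particular the converse of (i), namely $(\alpha\vee\beta)\wedge(\alpha\vee\gamma)\vdash_{\Pos}\alpha\vee(\beta\wedge\gamma)$. Having extracted $\alpha\vee\beta$ and $\alpha\vee\gamma$ from the conjunction by (b--$i$) and (b--$ii$), I would case-split on $\alpha\vee\beta$: the $\alpha$-branch gives the conclusion at once by (a--$ii$); the $\beta$-branch is handled by a further nested case split on $\alpha\vee\gamma$, whose $\alpha$-branch again uses (a--$ii$), while its $\gamma$-branch combines $\beta$ and $\gamma$ by (a--$i$) and then applies (a--$iii$). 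Two appeals to the derived (c--$iii$) then close the argument. The forward direction of (j), $\alpha\wedge(\beta\vee\gamma)\vdash_{\Pos}(\alpha\wedge\beta)\vee(\alpha\wedge\gamma)$, is a single case split on $\beta\vee\gamma$ after extracting $\alpha$; the forward direction of (i) and the converse of (j) are similar but simpler. Once the nested case splits are carried out, all ten items and their converses are established.
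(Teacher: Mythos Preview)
Your approach is essentially the same as the paper's: reduce each implication to a derivability claim via the deduction theorem (hyperrule (c--$i$)), and build the derivations from the conjunction/disjunction rules of Lemma~\ref{L:rules-sound-wrt-P-Int-Cl} together with disjunction elimination (c--$iii$). The paper only spells out item (j) and its converse, leaving the rest as an exercise, and your sketches of the remaining items follow exactly the pattern the paper would expect.

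One point where you are actually more careful than the paper: you explicitly derive the soundness of (c--$iii$) for $\Pos$ from the deduction theorem and ax8. The paper's own proof of (j) invokes (c--$iii$) for $\vdash_{\Pos}$ while citing Lemma~\ref{L:hyperrules-c_ii-c_iii}, which as stated only covers $\Int$ and $\Cl$; your preliminary derivation closes that small gap.
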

\begin{proof}
We show about one of these formulas, namely (j), that it and its converse are theses of \Pos. First, we prove that 
\[
\vdash_{\textsf{P}}(\alpha\wedge(\beta\vee\gamma))\rightarrow((\alpha\wedge\beta)\vee(\alpha\wedge\gamma)). 
\]

We obtain:
\[
\begin{array}{rll}
1a. &\alpha\wedge(\beta\vee\gamma)\vdash_{\textsf{P}}\alpha
&[\text{Lemma~\ref{L:rules-sound-wrt-P-Int-Cl} and rule (b--$i$)}]\\
2a. &\alpha\wedge(\beta\vee\gamma)\vdash_{\textsf{P}}\beta\vee\gamma
&[\text{Lemma~\ref{L:rules-sound-wrt-P-Int-Cl} and rule (b--$ii$)}]\\
3a. &\alpha,\beta\vdash_{\textsf{P}}\alpha\wedge\beta 
&[\text{Lemma~\ref{L:rules-sound-wrt-P-Int-Cl} and rule (a--$i$)}]\\
4a. &\alpha\wedge(\beta\vee\gamma),\beta\vdash_{\textsf{P}}\alpha\wedge\beta
&[\text{from 1$a$ and 3$a$ by cut (Section~\ref{section:consequence-relation})}]\\
5a. &\alpha\wedge\beta\vdash_{\textsf{P}}(\alpha\wedge\beta)\vee(\alpha\wedge\gamma) &[\text{Lemma~\ref{L:rules-sound-wrt-P-Int-Cl} and rule (a--$ii$)}]\\
6a. &\alpha\wedge(\beta\vee\gamma),\beta\vdash_{\textsf{P}}(\alpha\wedge\beta)
\vee(\alpha\wedge\gamma) &[\text{from 4$a$ and 5$a$ by cut}]\\
7a. &\alpha,\gamma\vdash_{\textsf{P}}\alpha\wedge\gamma
&[\text{Lemma~\ref{L:rules-sound-wrt-P-Int-Cl} and rule (a--$i$)}]\\
8a. &\alpha\wedge(\beta\vee\gamma),\gamma\vdash_{\textsf{P}}\alpha\wedge\gamma
&[\text{from 1$a$ and 7$a$ by cut}]\\
9a. &\alpha\wedge\gamma\vdash_{\textsf{P}}(\alpha\wedge\beta)\vee(\alpha\wedge\gamma) &[\text{Lemma~\ref{L:rules-sound-wrt-P-Int-Cl} and rule (a--$iii$)}]\\
10a. &\alpha\wedge(\beta\vee\gamma),\gamma\vdash_{\textsf{P}}
(\alpha\wedge\beta)\vee(\alpha\wedge\gamma) &[\text{from 8$a$ and 9$a$ by cut}]\\
11a. &\alpha\wedge(\beta\vee\gamma),\beta\vee\gamma\vdash_{\textsf{P}}
(\alpha\wedge\beta)\vee(\alpha\wedge\gamma) 
&[\text{from 6$a$ and 10$a$ by hyperrule (c--$iii$)}]\\
12a. &\alpha\wedge(\beta\vee\gamma)\vdash_{\textsf{P}}
(\alpha\wedge\beta)\vee(\alpha\wedge\gamma) &[\text{from 2$a$ and 11$a$ by cut}]\\
13a. &\vdash_{\textsf{P}}(\alpha\wedge(\beta\vee\gamma))\rightarrow((\alpha\wedge\beta)\vee(\alpha\wedge\gamma)). &[\text{from 12$a$ by hyperrule (c--$i$)}]
\end{array}
\]

Further, we have:
\[
\begin{array}{rll}
1b. &\alpha\wedge\beta\vdash_{\textsf{P}}\alpha &[\text{Lemma~\ref{L:rules-sound-wrt-P-Int-Cl} and rule (b--$i$)}]\\
2b. &\alpha\wedge\beta\vdash_{\textsf{P}}\beta &[\text{Lemma~\ref{L:rules-sound-wrt-P-Int-Cl} and rule (b--$ii$)}]\\
3b. &\beta\vdash_{\textsf{P}}\beta\vee\gamma &[\text{Lemma~\ref{L:rules-sound-wrt-P-Int-Cl} and rule (a--$ii$)}]\\
4b. &\alpha\wedge\beta\vdash_{\textsf{P}}\beta\vee\gamma
&[\text{from 2$b$ and 3$b$ by cut}]\\
5b. &\alpha\wedge\gamma\vdash_{\textsf{P}}\alpha &[\text{Lemma~\ref{L:rules-sound-wrt-P-Int-Cl} and rule (b--$i$)}]\\
6b. &\alpha\wedge\gamma\vdash_{\textsf{P}}\gamma &[\text{Lemma~\ref{L:rules-sound-wrt-P-Int-Cl} and rule (b--$ii$)}]\\
7b. &\gamma\vdash_{\textsf{P}}\beta\vee\gamma &[\text{Lemma~\ref{L:rules-sound-wrt-P-Int-Cl} and rule (a--$iii$)}]\\
8b. &\alpha\wedge\gamma\vdash_{\textsf{P}}\beta\vee\gamma
&[\text{from 6$b$ and 7$b$ by cut}]\\
9b. &(\alpha\wedge\beta)\vee(\alpha\wedge\gamma)\vdash_{\textsf{P}}\beta\vee\gamma
&[\text{Lemma~\ref{L:hyperrules-c_ii-c_iii} and hyperrule (c--$iii$)}]\\
10b. &(\alpha\wedge\beta)\vee(\alpha\wedge\gamma)\vdash_{\textsf{P}}\alpha
&[\text{Lemma~\ref{L:hyperrules-c_ii-c_iii} and hyperrule (c--$iii$)}]\\
11b. &\alpha,\beta\vee\gamma\vdash_{\textsf{P}}\alpha\wedge(\beta\vee\gamma) &[\text{Lemma~\ref{L:rules-sound-wrt-P-Int-Cl} and rule (a--$i$)}]\\
12b. &(\alpha\wedge\beta)\vee(\alpha\wedge\gamma),\alpha\vdash_{\textsf{P}}
\alpha\wedge(\beta\vee\gamma) &[\text{from 9$b$ and 11$b$ by cut}]\\
13b. &(\alpha\wedge\beta)\vee(\alpha\wedge\gamma)\vdash_{\textsf{P}}
\alpha\wedge(\beta\vee\gamma) &[\text{from 10$b$ and 12$b$ by cut}]\\
14b. &\vdash_{\textsf{P}}((\alpha\wedge\beta)\vee(\alpha\wedge\gamma))\rightarrow
(\alpha\wedge(\beta\vee\gamma)). &\text{from 13$b$ by hyperrule (c--$i$)}
\end{array}
\]

We leave for the reader to complete the proof of this lemma. (Exercise~\ref{section:lindenbaum-algebra}.\ref{EX:distributive-lettice-sound})
\end{proof}

\begin{lem}\label{L:Cl-implications}
For any $\Lan_A$-formulas $\alpha$ and $\beta$, the following formulas and their converses are theses of {\em\Cl}:
{\em\[
	\begin{array}{cl}
(\text{k}) &((\alpha\wedge\neg\alpha)\vee\beta)\rightarrow\beta,\\
(\text{l}) &((\alpha\vee\neg\alpha)\wedge\beta)\rightarrow\beta.
	\end{array}
	\]}
\end{lem}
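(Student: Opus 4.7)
The plan is to handle the four implications in order of increasing difficulty, reducing each to an application of the deduction theorem (Lemma \ref{L:deduction-theorem}) together with the rules and hyperrules already known to be sound with respect to \textsf{Cl} (Lemmas \ref{L:rules-sound-wrt-P-Int-Cl} and \ref{L:hyperrules-c_ii-c_iii}). The main obstacle will be the converse of (l), which requires the law of the excluded middle as a thesis of \textsf{Cl}, and that is where the bulk of the work lies.

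The converse of (k), namely $\beta\rightarrow((\alpha\wedge\neg\alpha)\vee\beta)$, is an immediate instance of ax7. For (k) itself, I will observe that $\alpha\wedge\neg\alpha\vdash_{\textsf{Cl}}\alpha$ and $\alpha\wedge\neg\alpha\vdash_{\textsf{Cl}}\neg\alpha$ by rules (b--$i$) and (b--$ii$), and then use the already established thesis ax11 (see \eqref{E:Int-axiom-valid-in-Cl}) together with modus ponens twice to obtain $\alpha\wedge\neg\alpha\vdash_{\textsf{Cl}}\beta$. Combined with the trivial $\beta\vdash_{\textsf{Cl}}\beta$, hyperrule (c--$iii$) yields $(\alpha\wedge\neg\alpha)\vee\beta\vdash_{\textsf{Cl}}\beta$, and the deduction theorem finishes the job. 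The formula (l) is similarly easy: $(\alpha\vee\neg\alpha)\wedge\beta\vdash_{\textsf{Cl}}\beta$ by rule (b--$ii$), and then the deduction theorem gives the thesis.

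The hard part is the converse of (l), namely $\beta\rightarrow((\alpha\vee\neg\alpha)\wedge\beta)$. Once I have the law of the excluded middle $\vdash_{\textsf{Cl}}\alpha\vee\neg\alpha$, I can combine it with $\beta\vdash_{\textsf{Cl}}\beta$ via rule (a--$i$) to get $\beta\vdash_{\textsf{Cl}}(\alpha\vee\neg\alpha)\wedge\beta$ and conclude by the deduction theorem. So the whole matter reduces to establishing $\vdash_{\textsf{Cl}}\alpha\vee\neg\alpha$. I will do this by the standard double-negation argument already sketched for $\vdash_3$ in Section \ref{section:consequence-defining}: starting from the auxiliary assumption $\neg(\alpha\vee\neg\alpha)$, one derives both $\lbrace\neg(\alpha\vee\neg\alpha)\rbrace,\alpha\vdash_{\textsf{Cl}}\alpha\vee\neg\alpha$ (by ax6) and $\lbrace\neg(\alpha\vee\neg\alpha)\rbrace,\alpha\vdash_{\textsf{Cl}}\neg(\alpha\vee\neg\alpha)$, whence hyperrule (c--$ii$) yields $\neg(\alpha\vee\neg\alpha)\vdash_{\textsf{Cl}}\neg\alpha$; an analogous derivation using ax7 gives $\neg(\alpha\vee\neg\alpha)\vdash_{\textsf{Cl}}\neg\neg\alpha$; a second application of (c--$ii$) then produces $\vdash_{\textsf{Cl}}\neg\neg(\alpha\vee\neg\alpha)$, and modus ponens with the instance $\neg\neg(\alpha\vee\neg\alpha)\rightarrow(\alpha\vee\neg\alpha)$ of ax10 delivers $\vdash_{\textsf{Cl}}\alpha\vee\neg\alpha$.

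The real obstacle here is essentially conceptual rather than technical: the argument for the converse of (l) uses the classical axiom ax10 in an essential way (the same step fails in \textsf{Int}, which is why (l) and its converse are classical but not intuitionistic), and care must be taken to invoke hyperrule (c--$ii$) on the correct sets of premises so that the resulting sequent has empty antecedent. Once the excluded middle is available, the remainder of the lemma is a brief assembly via the deduction theorem and (a--$i$).
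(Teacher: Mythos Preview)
Your proof is correct and matches the paper's approach essentially line for line: the paper proves (k) exactly as you do (rules (b--$i$), (b--$ii$), \eqref{E:Int-axiom-valid-in-Cl}, then hyperrule (c--$iii$) and the deduction theorem), handles the converse of (k) by ax7, and leaves (l) and its converse to the reader---for which your argument via excluded middle (established by the same double-negation trick used for $\vdash_3$ in Section~\ref{section:inference-rules}) is the intended one. One minor slip in your commentary: formula (l) itself \emph{is} an \textsf{Int}-thesis (it is just an instance of ax5 after the deduction theorem); only its converse genuinely requires ax10.
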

\begin{proof}
We prove that (k) and its converse are theses of {\Cl} and leave the rest to the reader. (Exercise~\ref{section:lindenbaum-algebra}.\ref{EX:Cl-implications})

First, we prove that
\[
\vdash_{\textsf{Cl}}((\alpha\wedge\neg\alpha)\vee\beta)\rightarrow\beta.
\]

Indeed, we have:
\[
\begin{array}{cll}
1a. &\alpha\wedge\neg\alpha\vdash_{\textsf{Cl}}\beta &[\text{Lemma~\ref{L:rules-sound-wrt-P-Int-Cl}, rules (b-$i$), (b--$ii$), \eqref{E:Int-axiom-valid-in-Cl} and cut}]\\
2a. &\beta\vdash_{\textsf{Cl}}\beta\\
3a. &(\alpha\wedge\neg\alpha)\vee\beta\vdash_{\textsf{Cl}}\beta
&[\text{Lemma~\ref{L:hyperrules-c_ii-c_iii} and hyperule (c--$iii$)}]\\
4a. &\vdash_{\textsf{Cl}}((\alpha\wedge\neg\alpha)\vee\beta)\rightarrow\beta.
&[\text{Lemma~\ref{L:hyperrules-c_ii-c_iii} and hyperule (c--$i$)}]
\end{array}
\]

For the converse, we have:
\[
\begin{array}{cll}
1b. &\vdash_{\textsf{Cl}}\beta\rightarrow((\alpha\wedge\neg\alpha)\vee\beta).
&[\text{rule ax7}]
\end{array}
\]
\end{proof}

\begin{prop}
{\em$\mathbb{T}_{\textsf{Cl}}$} coincides with the class of all Boolean algebras. 
\end{prop}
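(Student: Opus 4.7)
The strategy is to prove both inclusions, and in both cases the key leverage is that $\textsf{Cl}$ is implicational with $\im(p,q) := p\rightarrow q$ (Exercise~\ref{section:lindenbaum-algebra}.\ref{EX:Cl-Int-implicational}). By implicationality, whenever $\alpha\rightarrow\beta$ and $\beta\rightarrow\alpha$ both belong to $\bm{T}_{\textsf{Cl}}$, the equation $\alpha\approx\beta$ holds as an identity throughout $\mathbb{T}_{\textsf{Cl}}$; hence every biconditional Cl-thesis translates to a valid identity in every algebra of $\mathbb{T}_{\textsf{Cl}}$.

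For $\mathbb{T}_{\textsf{Cl}}\subseteq$ Boolean algebras, I will verify axioms $(\text{l}_1)$--$(\text{l}_4)$ and $(\text{b}_1)$--$(\text{b}_2)$ of Section~\ref{section:boolean-algebra} in every $\langle\alg{A},\one\rangle\in\mathbb{T}_{\textsf{Cl}}$. Lemma~\ref{L:distributive-lettice-sound} (combined with \eqref{E:two-implications}) supplies the biconditionals that yield the distributive-lattice identities $(\text{l}_1)$--$(\text{l}_4)$; Lemma~\ref{L:Cl-implications} analogously supplies $(\text{b}_2)$. The identities $(\text{b}_1)$ involve the $0$-ary constant $\one$, which does not belong to $\Lan_A$; these I handle by fixing any Cl-thesis $\bigstar$ (say $\bigstar := p\rightarrow p$), using that $\bigstar\approx\one$ holds throughout $\mathbb{T}_{\textsf{Cl}}$, and deriving $(p\wedge\bigstar)\leftrightarrow p$ and $(p\vee\bigstar)\leftrightarrow\bigstar$ as Cl-theses via ax4, ax7, the deduction theorem (Lemma~\ref{L:deduction-theorem}), and the rules (a-i) and (c-iii). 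Finally, to reach the identification with Boolean algebras in the sense of Proposition~\ref{P:boolean-algebra-as-heyting}, I must additionally verify $(p\rightarrow q)\leftrightarrow(\neg p\vee q)\in\bm{T}_{\textsf{Cl}}$; this rests on $\vdash_{\textsf{Cl}} p\vee\neg p$, which I will obtain from $\vdash_{\textsf{Int}}\neg\neg(p\vee\neg p)$ together with ax10 and modus ponens, after which both directions of the biconditional follow by case analysis via (c-iii).

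For the reverse inclusion, let $\alg{B}$ be any Boolean algebra; by Proposition~\ref{P:boolean-algebra-as-heyting}, adjoining $x\rightarrow y := \neg x\vee y$ produces a Heyting algebra. I will then show by induction on derivation length that every Cl-thesis evaluates to $\one$ under any valuation $v$ in $\alg{B}$. The axioms ax1--ax9 all have the shape $\varphi\rightarrow\psi$ with $\varphi\le\psi$ in every Heyting algebra, which I read off from Proposition~\ref{P:Int-properties} using \eqref{E:less-than=implication}. The classical axiom ax10 ($\neg\neg p\rightarrow p$) evaluates to $\one$ because $\neg\neg x = x$ in every Boolean algebra by \eqref{E:double-negation-equality}. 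Modus ponens preserves the property $v[\cdot] = \one$ by one final application of \eqref{E:less-than=implication}: if $v[\alpha] = \one$ and $v[\alpha\rightarrow\beta] = \one$, then $v[\alpha]\le v[\beta]$, and hence $v[\beta] = \one$.

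The main obstacle I anticipate is the bookkeeping around the constant $\one$ together with the reduction $x\rightarrow y = \neg x\vee y$: this reduction is precisely what makes the algebras of $\mathbb{T}_{\textsf{Cl}}$ genuinely Boolean in their $\langle\wedge,\vee,\neg,\one\rangle$-reduct rather than a priori richer structures, and deriving $\vdash_{\textsf{Cl}} p\vee\neg p$ along the way is where classicality first enters in a nontrivial way. Everything else reduces to routine applications of the deduction theorem and the two prepared lemmas.
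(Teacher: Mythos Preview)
Your proposal is correct and follows essentially the same two-inclusion strategy as the paper: Lemma~\ref{L:distributive-lettice-sound} plus implicationality for $(\text{l}_1)$--$(\text{l}_4)$, Lemma~\ref{L:Cl-implications} for $(\text{b}_2)$, a fixed thesis $\bigstar$ for $(\text{b}_1)$, and for the reverse inclusion the expansion $x\rightarrow y:=\neg x\vee y$ together with Proposition~\ref{P:Int-properties} and the double-negation law.

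One point worth noting: you explicitly verify $(p\rightarrow q)\leftrightarrow(\neg p\vee q)\in\bm{T}_{\textsf{Cl}}$, which the paper's proof does not. You are right that this is needed, since an algebra $\langle\alg{A},\one\rangle\in\mathbb{T}_{\textsf{Cl}}$ carries $\rightarrow$ as a primitive operation of type $\Lan_A$, and without this identity one only knows that the $\langle\wedge,\vee,\neg,\one\rangle$-reduct is Boolean, not that the full algebra is the canonical Boolean expansion; the paper silently identifies the two. Your derivation of $p\vee\neg p$ via $\vdash_{\textsf{Int}}\neg\neg(p\vee\neg p)$ and ax10 is fine, though you could also cite~\eqref{E:excluded-middle} directly once the calculi are identified. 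For ax10 and modus ponens in the reverse inclusion you invoke \eqref{E:double-negation-equality} and \eqref{E:less-than=implication} where the paper cites Corollary~\ref{C:Cl-property} and $(\text{h}_2)$; these are interchangeable.
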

\begin{proof}
Let $\alg{B}=\left\langle \textsf{B};\wedge,\vee,\neg,\one\right\rangle$ be a Boolean algebra. Expanding this algebra with an operation
\[
x\rightarrow y:=\neg x\vee y, \tag{$\ast$}
\]
according to Proposition~\ref{P:boolean-algebra-as-heyting}, we obtain a Heyting algebra $\alg{H}=\left\langle \textsf{B};\wedge,\vee,\rightarrow,\neg,\one\right\rangle$. In virtue of
Proposition~\ref{P:Int-properties}, the rules ax1--ax9 are valid in $\alg{H}$ and hence in \alg{B}. In view of Corollary~\ref{C:Cl-property}, the rule ax10 is also valid in \alg{B}. And, because of the property $(\text{h}_2)$
(Section~\ref{section:heyting-algebra}), modus ponens preserves this validity over any \Cl-derivation.

Now, assume that an algebra $\alg{A}=\left\langle \textsf{A};\wedge,\vee,\neg,\one\right\rangle\in\mathbb{T}_{\textsf{Cl}}$. That is, for any $\Lan_A$-formula $\alpha\in\bm{T}_{\textsf{Cl}}$,
$\alg{A}\models\alpha\approx\one$. This means that for any $\alpha\in\bm{T}_{\textsf{Cl}}$ and any $\Lan_A$-valuation $v$ in \alg{A}, $v(\alpha)=\one$. Therefore, in virtue of~Lemma~\ref{L:distributive-lettice-sound} and the fact that $\Pos$ is implicational w.r.t $\rightarrow$, the properties $(\text{l}_1)$--$(\text{l}_4)$ are valid in \alg{A}. Namely, we have:
(a) and its converse prove the validity of ($\text{l}_1$--$i$); (b) and its converse prove the validity of ($\text{l}_1$--$ii$); (c) and its converse prove the validity of ($\text{l}_2$--$i$);
(f) and its converse prove the validity of ($\text{l}_2$--$ii$);
(g) and its converse prove the validity of ($\text{l}_3$--$i$);
(h) and its converse prove the validity of ($\text{l}_3$--$ii$);
(j) and its converse prove the validity of ($\text{l}_4$--$i$);
(i) and its converse prove the validity of ($\text{l}_4$--$ii$).
Thus $\langle \textsf{A};\wedge,\vee\rangle$ is a distributive lattice.

Further, if $\bigstar$ is an arbitrary {\Cl}-thesis and $p$ is an arbitrary variable, then
\begin{equation}\label{E:Cl-thesis}
\vdash_{\textsf{Cl}}p\rightarrow\bigstar.
\end{equation}
This implies that for any element $x\in\textsf{A}$, $x\le\one$, that is
the equalities $(\text{b}_{1})$ hold in $\langle \textsf{A};\wedge,\vee,\one\rangle$,
where $\one$ turns to be a greatest element in the sense of $\le$ (Section~\ref{section:boolean-algebra}).

Finally, we use Lemma~\ref{L:Cl-implications} and the fact that {\Cl} is and implicational abstract logic w.r.t. $\rightarrow$ to conclude that $\alg{A}$ is a Boolean algebra.
\end{proof}
\begin{cor}\label{C:LT_Cl(k)}
{\em$\LTCl$} is a free Boolean algebra of rank $\card{\Var_{\mathcal{L}_A}}$.
Hence, {\em$\LTCl(\kappa)$}, where $\kappa\le\card{\Var_{\mathcal{L}_A}}$,  is a free Boolean algebras of rank $\kappa$.
\end{cor}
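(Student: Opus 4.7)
The plan is to assemble several ingredients already established in the excerpt and funnel them into Proposition~\ref{P:LT=free-algebra}, the general theorem which identifies the Lindenbaum--Tarski algebra as a free algebra over $\TS$ whenever the underlying abstract logic is nontrivial, unital, and implicational. Once the hypotheses are verified for $\mathcal{S}=\textsf{Cl}$, the corollary will drop out by combining that proposition with the characterization of $\mathbb{T}_{\textsf{Cl}}$ as the variety of Boolean algebras, which was proved immediately before.

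First I would check that $\textsf{Cl}$ satisfies all three hypotheses. Nontriviality is easy: the matrix $\booleTwo$ refutes any sentential variable $p$, so $p\notin\bm{T}_{\textsf{Cl}}$, whence $\bm{T}_{\textsf{Cl}}\neq\Forms_{\mathcal{L}_A}$. Unitality follows by first noting (Exercise~\ref{section:unital-logics}.\ref{EX:Cl-In-implicative}) that $\textsf{Cl}$ is implicative in the sense of Definition~\ref{D:weakly-implicative} with respect to $\to$, so by the Corollary after that definition it is unital. Finally, $\textsf{Cl}$ is implicational with respect to $\im(p,q):=(p\to q)\wedge(q\to p)$ (Exercise~\ref{section:lindenbaum-algebra}.\ref{EX:Cl-Int-implicational}): in any algebra of $\mathbb{T}_{\textsf{Cl}}$, i.e.\ any Boolean algebra, a valuation satisfies $v(\im(\alpha,\beta))=\one$ on both sides exactly when $v(\alpha)=v(\beta)$.

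Having checked the hypotheses, Proposition~\ref{P:LT=free-algebra} supplies that $\LTCl$ is free over $\mathbb{T}_{\textsf{Cl}}$, with the images $p/\theta(\bm{T}_{\textsf{Cl}})$ of the sentential variables $p\in\Var_{\mathcal{L}_A}$ playing the role of free generators. By Proposition~\ref{P:motivating}, the nontriviality of $\textsf{Cl}$ guarantees that distinct variables go to distinct congruence classes, so the free generating set has cardinality exactly $\card{\Var_{\mathcal{L}_A}}$. Since the preceding proposition identifies $\mathbb{T}_{\textsf{Cl}}$ with the variety of Boolean algebras, this already gives the first assertion: $\LTCl$ is a free Boolean algebra of rank $\card{\Var_{\mathcal{L}_A}}$.

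For the second assertion, I would invoke the discussion at the start of Section~\ref{section:some-lindenbaum-algebras}: for any cardinal $\kappa\le\card{\Var_{\mathcal{L}_A}}$, the subalgebra $\LTCl(\kappa)$ is by definition generated inside $\LTCl$ by any $\kappa$ of the free generators $p/\theta(\bm{T}_{\textsf{Cl}})$, and, by the standard universal property of free algebras in a variety, a subalgebra of a free algebra generated by a subset of its free generators is itself free over the same variety, of rank equal to that subset's cardinality. I do not foresee any substantial obstacle: the entire argument is a matter of verifying the three hypotheses of Proposition~\ref{P:LT=free-algebra} and then citing the Boolean characterization of $\mathbb{T}_{\textsf{Cl}}$; the only point requiring even mild care is making the rank count precise, which is handled by Proposition~\ref{P:motivating}.
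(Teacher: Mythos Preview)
Your proposal is correct and matches the paper's intended (implicit) derivation: the corollary is stated without proof precisely because it follows immediately from Proposition~\ref{P:LT=free-algebra} once the preceding proposition identifies $\mathbb{T}_{\textsf{Cl}}$ with the variety of Boolean algebras, and the second assertion follows from the discussion opening Section~\ref{section:some-lindenbaum-algebras}. One cosmetic remark: the natural choice for $\im(p,q)$ here is simply $p\rightarrow q$, since the definition of implicational already asks for both $v(\im(\alpha,\beta))=\one$ and $v(\im(\beta,\alpha))=\one$; your biconditional works too but is slightly redundant.
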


According to Corollary~\ref{C:LT_Cl(k)} and Proposition~\ref{P:finitely-generated-boolean}, any $\LTCl(\kappa)$ of a finite rank $\kappa$ is finite. Let us take $\kappa=1$, assuming that $\Var=\lbrace p\rbrace$. For simplicity, we denote
\[
[p]:=p\slash\theta(\bm{T}_{\textsf{Cl}}).
\]

It is not difficult to show that the subalgebra of $\LTCl$ generated by $[p]$ consists of the four pairwise distinct elements: $[p]$, $[\neg p]$, $[p\vee\neg p]$ and $[p\wedge\neg p]$. (Exercise~\ref{section:some-lindenbaum-algebras}.\ref{EX:four-elements})This subalgebra is $\LTCl(1)$. The relation $\le$ between $[\alpha]$ and $[\beta]$ of this algebra is defined, according to \eqref{E:ordering-in-lattice}, as follows:
\[
[\alpha]\le[\beta]\stackrel{\text{df}}{\Longleftrightarrow}[\alpha]\wedge[\beta]=[\alpha].
\]

Thus, we arrive at the following diagram:

\begin{figure}[!ht]	
	\[
	\ctdiagram{
		\ctnohead
		\ctinnermid
		\ctel 0,0,20,20:{}
		\ctel 0,0,-20,20:{}
		\ctel 0,40,20,20:{}
		\ctel 0,40,-20,20:{}
		\ctv 0,0:{\bullet}
		\ctv 20,20:{\color{red}\bullet}
		\ctv 28,20:{[p]}
		\ctv -20,20:{\bullet}
		\ctv -32,20:{[\neg p]}
		\ctv 0,40:{\bullet}
		\ctv 0,47:{\mathbf{1}=[p\vee\neg p]}
		\ctv 0,-8:{\mathbf{0}=[p\wedge\neg p]}
	}
	\]
	\caption{Lindenbaum-Tarski algebra $\LTCl(1)$}
\end{figure}

If $\Var=\lbrace p,q\rbrace$, we obtain $\LTCl(2)$ as a subalgebra of $\LTCl$ generated by the classes $[p]$ and $[q]$. If we identify any congruence class of this subalgebra with its representative, we can depict 	$\LTCl(2)$ by the following diagram:
\pagebreak
\begin{figure}[!ht]
	
	\[
	\ctdiagram{
		\ctnohead
		\ctinnermid
		\ctel 0,0,30,30:{}
		\ctel 0,0,-30,30:{}
		\ctel 0,60,30,30:{}
		\ctel 0,60,-30,30:{}
		\ctel 0,30,30,60:{}
		\ctel 0,30,-30,60:{}
		\ctel 0,90,30,60:{}
		\ctel 0,90,-30,60:{}
		\ctel 0,0,0,30:{}
		\ctel -30,30,-30,60:{}
		\ctel 30,30,30,60:{}
		\ctel 0,60,0,90:{}
		\ctel 120,30,90,60:{}
		\ctel 120,30,150,60:{}
		\ctel 90,60,120,90:{}
		\ctel 150,60,120,90:{}
		\ctel 120,60,150,90:{}
		\ctel 120,60,90,90:{}
		\ctel 150,90,120,120:{}
		\ctel 90,90,120,120:{}
		\ctel 120,30,120,60:{}
		\ctel 90,60,90,90:{}
		\ctel 150,60,150,90:{}
		\ctel 120,90,120,120:{}
		\ctel 0,0,120,30:{}
		\ctel -30,30,90,60:{}	
		\ctel 30,30,150,60:{}	
		\ctel 0,60,120,90:{}	
		\ctel 0,30,120,60:{}		
		\ctel 30,60,150,90:{}
		\ctel -30,60,90,90:{}
		\ctel 0,90,120,120:{}
		\ctv 0,0:{\bullet}
		\ctv 30,30:{\bullet}
		\ctv -30,30:{\bullet}
		\ctv 0,60:{\color{red}\bullet}
		\ctv 0,53:{p}
		\ctv 0,30:{\bullet}
		\ctv -45,30:{p\land q}
		\ctv 30,60:{\bullet}
		\ctv -30,60:{\bullet}
		\ctv -80,60:{(p \land q) \lor (\neg p \land \neg q)}
		\ctv 0,90:{\bullet}
		\ctv -22,92:{\neg p \lor \neg q}
		\ctv 120,30:{\bullet}
		\ctv 136,31:{\neg p \land q}
		\ctv 150,60:{\bullet}
		\ctv 200,60:{ (p \land \neg q) \lor (\neg p \land q)}
		\ctv 90,60:{\color{red}\bullet}
		\ctv 120,90:{\bullet}
		\ctv 120,60:{\bullet}
		\ctv 128,60:{\neg p}
		\ctv 150,90:{\bullet}
		\ctv 170,90:{\neg p \lor \neg q}
		\ctv 90,90:{\bullet}
		\ctv 120,120:{\bullet}
		\ctv 46,27:{p \land \neg q}
		\ctv -4,23:{\neg p \land \neg q}
		\ctv 72,95:{\neg p \lor q}
		\ctv 122,96:{p \lor q}
		\ctv 38,58:{\neg q}
		\ctv 85,64:{q}
		\ctv 120,127:{\mathbf{1}}
		\ctv 0,-10:{\mathbf{0}}		
	}
	\]
	\caption{Lindenbaum-Tarski algebra $\LTCl(2)$}
\end{figure}	

Now we turn to Heyting algebras.
\begin{lem}\label{L:Int-implications}
Let $\bigstar$ be an arbitrary {\em\Int}-thesis. For any $\Lan_A$-formulas $\alpha$, $\beta$ and $\gamma$, the following implications and their converses are {\em\Int}-theses:
{\em\[
\begin{array}{cl}
(\text{a}) &(\alpha\wedge(\alpha\rightarrow\beta))\rightarrow(\alpha\wedge\beta),\\
(\text{b}) &((\alpha\rightarrow\beta)\wedge\beta)\rightarrow\beta,\\
(\text{c}) &((\alpha\rightarrow\beta)\wedge(\alpha\rightarrow\gamma)
\rightarrow(\alpha\rightarrow(\beta\wedge\gamma))\\
(\text{d}) &(\alpha\wedge(\beta\rightarrow\beta))\rightarrow\alpha,\\
(\text{e}) &(\neg\bigstar\vee\alpha)\rightarrow\alpha,\\
(\text{f}) &\neg\alpha\rightarrow(\alpha\rightarrow\neg\bigstar).
\end{array}
\]}
\end{lem}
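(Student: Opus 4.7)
My plan is to verify each of the six pairs of implications by producing a formal Int-derivation inside the consequence relation $\vdash_{\textsf{Int}}$ and then converting to an implication via the deduction theorem (Lemma~\ref{L:deduction-theorem}, applicable since Int extends \Pos). The tools I will rely on throughout are: modus ponens (rule b-$iii$), the conjunction/disjunction rules (a-$i$)–(a-$iii$), (b-$i$)–(b-$ii$) which are sound for Int by Lemma~\ref{L:rules-sound-wrt-P-Int-Cl}, the hyperrules (c-$i$)–(c-$iii$) which are sound for Int by Lemmas~\ref{L:deduction-theorem} and~\ref{L:hyperrules-c_ii-c_iii}, and the axiom schemata ax1--ax9, ax11. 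Throughout, ``$\bigstar$'' is an arbitrary fixed thesis of Int, so $\vdash_{\textsf{Int}}\bigstar$ and therefore $X\vdash_{\textsf{Int}}\bigstar$ for any $X$.

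For (a) I would derive $\alpha,\alpha\rightarrow\beta\vdash_{\textsf{Int}}\alpha\wedge\beta$ by modus ponens followed by (a-$i$); then using (b-$i$)–(b-$ii$) on $\alpha\wedge(\alpha\rightarrow\beta)$ and cut, conclude $\alpha\wedge(\alpha\rightarrow\beta)\vdash_{\textsf{Int}}\alpha\wedge\beta$, and apply (c-$i$). The converse goes $\alpha\wedge\beta\vdash_{\textsf{Int}}\alpha$, $\alpha\wedge\beta\vdash_{\textsf{Int}}\beta$, then $\beta\vdash_{\textsf{Int}}\alpha\rightarrow\beta$ using ax1 and modus ponens, and finally (a-$i$) and (c-$i$). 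For (b), the forward direction is immediate from (b-$ii$) and (c-$i$); the converse uses ax1+mp to get $\beta\vdash_{\textsf{Int}}\alpha\rightarrow\beta$, then (a-$i$) and (c-$i$). For (c), the key step is to apply (c-$i$) to the sequent obtained by combining $\alpha,\alpha\rightarrow\beta\vdash_{\textsf{Int}}\beta$ and $\alpha,\alpha\rightarrow\gamma\vdash_{\textsf{Int}}\gamma$ into $\alpha,(\alpha\rightarrow\beta)\wedge(\alpha\rightarrow\gamma)\vdash_{\textsf{Int}}\beta\wedge\gamma$ via (a-$i$), (b-$i$)–(b-$ii$) and cut; the converse is analogous, using ax3 or direct deduction. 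For (d), the forward direction is just (b-$i$); the converse exploits that $\vdash_{\textsf{Int}}\beta\rightarrow\beta$ (shown in the proof of Lemma~\ref{L:deduction-theorem}) and (a-$i$).

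The two cases where $\bigstar$ really matters are (e) and (f). For (e), from $\neg\bigstar$ together with the thesis $\bigstar$ we obtain $\alpha$ by ax11 and two applications of modus ponens, hence $\neg\bigstar\vdash_{\textsf{Int}}\alpha$; combined with $\alpha\vdash_{\textsf{Int}}\alpha$ and hyperrule (c-$iii$) this yields $\neg\bigstar\vee\alpha\vdash_{\textsf{Int}}\alpha$ and then (c-$i$); the converse is just ax7. For (f), the forward direction is obtained from $\neg\alpha,\alpha\vdash_{\textsf{Int}}\neg\bigstar$ (again by ax11 and modus ponens) followed by two applications of (c-$i$). For the converse, starting from the premise $\alpha\rightarrow\neg\bigstar$, I combine it with $\alpha\rightarrow\bigstar$ (which follows from ax1 and modus ponens since $\vdash_{\textsf{Int}}\bigstar$) and use axiom ax9 to conclude $\neg\alpha$; then (c-$i$) gives $\vdash_{\textsf{Int}}(\alpha\rightarrow\neg\bigstar)\rightarrow\neg\alpha$.

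The main obstacle is strictly bookkeeping: each item requires a short but delicate derivation that interleaves primitive rules, axiom instances, and repeated invocations of the deduction theorem, and one must be careful that each hyperrule application has its side-set ``$X$'' correctly managed. The genuinely subtle cases are (e) and (f), because they depend on the fact that $\neg\bigstar$ plays the role of the falsum constant in the intuitionistic calculus; once one observes that ax11 together with $\vdash_{\textsf{Int}}\bigstar$ yields intuitionistic explosion $\neg\bigstar\vdash_{\textsf{Int}}\alpha$ for every $\alpha$, and that ax9 gives the standard ``reductio'' form, these cases reduce to the same derivation pattern as the rest. No new ideas are needed beyond what has already been developed in Lemmas~\ref{L:rules-sound-wrt-P-Int-Cl}--\ref{L:hyperrules-c_ii-c_iii}.
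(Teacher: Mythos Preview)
Your proposal is correct and follows essentially the same approach as the paper, which only spells out case~(e) and leaves the remaining cases as an exercise. The only cosmetic difference is that for~(e) the paper stays at the level of implications, using ax8 directly with $\vdash_{\textsf{Int}}\neg\bigstar\rightarrow\alpha$ and $\vdash_{\textsf{Int}}\alpha\rightarrow\alpha$, whereas you pass through the sequent $\neg\bigstar\vdash_{\textsf{Int}}\alpha$ and invoke the hyperrule~(c--$iii$) before applying~(c--$i$); these are interchangeable routes to the same conclusion.
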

\begin{proof}
We prove that $\vdash_{\textsf{Int}}(\neg\bigstar\vee\alpha)\rightarrow\alpha$
and $\vdash_{\textsf{Int}}\alpha\rightarrow(\neg\bigstar\vee\alpha)$ and leave for the reader to complete this proof. (Exercise~\ref{section:lindenbaum-algebra}.\ref{EX:Int-implications})

Indeed, we obtain:
\[
\begin{array}{cll}
1. &\vdash_{\textsf{Int}}\bigstar &[\text{by premise $\bigstar$ is an {\Int}-thesis}]\\
2. &\vdash_{\textsf{Int}}\bigstar\rightarrow(\neg\bigstar\rightarrow\alpha)
&[\text{rule ax11}]\\
3. &\vdash_{\textsf{Int}}\neg\bigstar\rightarrow\alpha
&[\text{from $1$ and $2$ by modus ponens]}\\
4. &\vdash_{\textsf{Int}}\alpha\rightarrow\alpha
&[\text{because of $(\ast)$ in the proof of Lemma~\ref{L:deduction-theorem}}]\\
5. &\vdash_{\textsf{Int}}(\neg\bigstar\vee\alpha)\rightarrow\alpha.
&[\text{from ax8, $3$ and $4$ by modus ponens}] 
\end{array}
\]

Further, $\alpha\rightarrow(\neg\bigstar\vee\alpha)$ is an instantiation of ax7.
\end{proof}

\begin{prop}
{\em$\mathbb{T}_{\textsf{Int}}$} coincides with the class of all Heyting algebras.
\end{prop}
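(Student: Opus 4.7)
The plan is to adapt the proof strategy used for $\mathbb{T}_{\textsf{Cl}}$ to the intuitionistic setting: both inclusions must be established, and the machinery is in place through Proposition~\ref{P:Int-properties}, Lemma~\ref{L:distributive-lettice-sound}, and Lemma~\ref{L:Int-implications}.

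For the inclusion that every Heyting algebra $\alg{H}=\langle\textsf{H};\wedge,\vee,\rightarrow,\neg,\one\rangle$ belongs to $\mathbb{T}_{\textsf{Int}}$, I would first invoke Proposition~\ref{P:Int-properties} together with~\eqref{E:less-than=implication} to verify that ax1--ax9 all evaluate to $\one$ under every valuation in $\alg{H}$. Axiom ax11 is handled by the chain $\beta\wedge\neg\beta=\zero\le\alpha$ (from~\eqref{E:zero-in-heyting}), applied twice with~\eqref{E:pseudo-complementation}. Modus ponens preserves validity because $(\text{h}_1)$ yields $\one\wedge(\one\rightarrow x)=\one\wedge x$, whence $v[\alpha]=v[\alpha\rightarrow\beta]=\one$ forces $v[\beta]=\one$.

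For the converse inclusion, let $\alg{A}=\langle\textsf{A};\wedge,\vee,\rightarrow,\neg,\one\rangle\in\mathbb{T}_{\textsf{Int}}$. Since {\Int} is an extension of {\Pos}, every {\Pos}-thesis is also an {\Int}-thesis, and hence valid in $\alg{A}$; thus by Lemma~\ref{L:distributive-lettice-sound} together with the fact that {\Pos} is implicational w.r.t.\ $\rightarrow$, each of the identities $(\text{l}_1)$--$(\text{l}_4)$ holds in $\alg{A}$, exactly as in the $\mathbb{T}_{\textsf{Cl}}$ argument. Next, pick any {\Int}-thesis $\bigstar$; then $v(\bigstar)=\one$ for every valuation, so $\bigstar\wedge y=\one\wedge y=y$ (after checking that $\one$ is the greatest element via ax1 applied as $\vdash_{\textsf{Int}} p\to\bigstar$), giving $(\text{b}_1)$. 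Now Lemma~\ref{L:Int-implications} provides, in each item (a)--(f), a pair of {\Int}-theses $\phi\rightarrow\psi$ and $\psi\rightarrow\phi$; since {\Int} is implicational w.r.t.\ $\rightarrow$, this translates to $v[\phi]=v[\psi]$ for every valuation $v$ in $\alg{A}$, i.e.\ to the identities $(\text{h}_1)$--$(\text{h}_6)$ of a Heyting algebra (noting that $v(\neg\bigstar)=\neg\one$, which plays the role of $\zero$ in the identity $(\text{h}_6)$).

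The main technical obstacle is the bookkeeping around the constant $\bigstar$: the Heyting axioms $(\text{h}_5)$ and $(\text{h}_6)$ are stated in terms of the distinguished element $\neg\one$, whereas Lemma~\ref{L:Int-implications} only yields the corresponding {\Int}-theses in terms of $\neg\bigstar$. This is resolved by observing that, in any $\alg{A}\in\mathbb{T}_{\textsf{Int}}$, the interpretation of $\bigstar$ under every valuation is forced to be $\one$, so $\neg\bigstar$ always evaluates to $\neg\one$, and the equalities obtained transfer verbatim to $(\text{h}_5)$ and $(\text{h}_6)$. A second, minor obstacle is confirming that the ``converse'' direction of each item of Lemma~\ref{L:Int-implications} is indeed an {\Int}-thesis (for those items not explicitly proved in the lemma's text); in each case this is either an instance of one of the axiom schemata or an easy consequence of the deduction theorem (Lemma~\ref{L:deduction-theorem}) and the rules (a)--(b). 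With both inclusions established, the proposition follows.
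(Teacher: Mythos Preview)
Your proposal is correct and follows essentially the same route as the paper's proof: Proposition~\ref{P:Int-properties} and \eqref{E:zero-in-heyting}--\eqref{E:less-than=implication} for the inclusion of Heyting algebras in $\mathbb{T}_{\textsf{Int}}$, then Lemma~\ref{L:distributive-lettice-sound} (via the implicational property) for $(\text{l}_1)$--$(\text{l}_4)$, the thesis $p\rightarrow\bigstar$ for $(\text{b}_1)$, and Lemma~\ref{L:Int-implications} for $(\text{h}_1)$--$(\text{h}_6)$. Your explicit remark that $\neg\bigstar$ evaluates to $\neg\one$ in every $\alg{A}\in\mathbb{T}_{\textsf{Int}}$, so that items~(e)--(f) of Lemma~\ref{L:Int-implications} really do yield $(\text{h}_5)$--$(\text{h}_6)$, fills in a detail the paper leaves implicit.
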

\begin{proof}
Let {\em$\alg{H}=\langle\textsf{H};\wedge,\vee,\rightarrow,\neg,\one\rangle$}
be a Heyting algebra. In virtue of
Proposition~\ref{P:Int-properties}, the rules ax1--ax9 are valid in $\alg{H}$.
In virtue of the properties \eqref{E:zero-in-heyting}, \eqref{E:pseudo-complementation} and~\eqref{E:less-than=implication}, we receive that ax11 is also valid in \alg{H}. Noticing, that modus ponens preserves validity in any Heyting algebra, we conclude that if $\alpha\in\bm{T}_{\textsf{Int}}$, then $\alg{H}\models\alpha\approx\one$.

Next, assume that an algebra $\alg{H}=\langle\textsf{H};\wedge,\vee,\rightarrow,\neg,\one\rangle\in\mathbb{T}_{\textsf{Int}}$. That is, for any $\alpha\in\bm{T}_{\textsf{Int}}$ and any
valuation $v$ in \alg{H}, $v(\alpha)=\one$. We show that the properties
$(\text{l}_1)$--$(\text{l}_4)$, $(\text{b}_1)$, and $(\text{h}_1)$--$(\text{h}_6)$ are valid in \alg{H}.

In virtue of Lemma~\ref{L:distributive-lettice-sound} and the fact that $\Pos$ is implicational w.r.t $\rightarrow$, the properties $(\text{l}_1)$--$(\text{l}_4)$ are valid in \alg{H}. Thus $\langle\textsf{H};\wedge,\vee\rangle$ is a distributive lattice.

Further, similarly to~\eqref{E:Cl-thesis}, we derive that 
	\[
	\vdash_{\textsf{Int}}p\rightarrow\bigstar,
	\]
	for any variable $p$ and \textsf{Int}-thesis $\bigstar$. This implies that the equalities $(\text{b}_1)$ are true in \alg{H}.
	
	To prove the properties $(\text{h}_1)$--$(\text{h}_6)$, we apply Lemma~\ref{L:Int-implications}.
\end{proof}
 \begin{cor}
 	{\em$\LTInt$} is a free Heyting algebra of rank $\card{\Var_{\mathcal{L}_A}}$.
 	Hence, {\em$\LTInt(\kappa)$}, where $\kappa\le\card{\Var_{\mathcal{L}_A}}$,  is a free Heyting algebras of rank $\kappa$.
 \end{cor}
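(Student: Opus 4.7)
The plan is to mirror the argument used for the classical case (Corollary~\ref{C:LT_Cl(k)}) and exploit Proposition~\ref{P:LT=free-algebra}, with the variety $\mathbb{T}_{\textsf{Int}}$ now identified with the variety of all Heyting algebras by the proposition just proved. All that is needed is therefore to check that the hypotheses of Proposition~\ref{P:LT=free-algebra} are satisfied for $\mathcal{S}=\textsf{Int}$: that $\textsf{Int}$ is a nontrivial unital logic that is implicational with respect to some two-variable formula $\im(p,q)$.

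First I would verify that $\textsf{Int}$ is unital. By Exercise~\ref{section:unital-logics}.\ref{EX:Cl-In-implicative}, $\textsf{Int}$ is implicative with respect to $\rightarrow$, and every implicative logic is unital by the discussion following Definition~\ref{D:weakly-implicative}. Nontriviality of $\textsf{Int}$ follows from the existence of a proper $\textsf{Int}$-theory; for instance, from the fact that $\bm{T}_{\textsf{Int}}$ does not contain a sentential variable $p$ (which can be seen by valuation in the two-element Heyting algebra, which, as noted just before the statement, is also an $\textsf{Int}$-model, so in particular $\bm{T}_{\textsf{Int}}\subseteq L\booleTwo$, and $p\notin L\booleTwo$). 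Next I would show that $\textsf{Int}$ is implicational with respect to $\im(p,q):=p\rightarrow q$: for any Heyting algebra $\langle\alg{A},\one\rangle\in\mathbb{T}_{\textsf{Int}}$ and any valuation $v$, the equivalence
\[
v(\alpha)=v(\beta)\Longleftrightarrow v(\alpha\rightarrow\beta)=\one\text{ and }v(\beta\rightarrow\alpha)=\one
\]
follows at once from \eqref{E:less-than=implication} and the identification $\mathbb{T}_{\textsf{Int}}=$ class of Heyting algebras that was established in the preceding proposition.

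Granted these three facts, Proposition~\ref{P:LT=free-algebra} applies: $\LTInt$ is a free algebra over the variety $\mathbb{T}_{\textsf{Int}}$, with free generators $\set{p\slash\theta(\bm{T}_{\textsf{Int}})}{p\in\Var_{\mathcal{L}_A}}$, whose cardinality equals $\card{\Var_{\mathcal{L}_A}}=\card{\mathcal{L}_A}$ by Proposition~\ref{P:motivating} (since $\textsf{Int}$ is nontrivial and structural). Because the preceding proposition identifies $\mathbb{T}_{\textsf{Int}}$ with the variety of all Heyting algebras, $\LTInt$ is a free Heyting algebra of rank $\card{\Var_{\mathcal{L}_A}}$.

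For the second claim, $\LTInt(\kappa)$ was defined in Section~\ref{section:some-lindenbaum-algebras} as the subalgebra of $\LTInt$ generated by any subset of size $\kappa$ of the free generators. Since a subalgebra of a free algebra generated by a $\kappa$-sized subset of the free basis is itself free of rank $\kappa$ in the same variety (a standard fact about free algebras, cf.\ the remark in Section~\ref{section:some-lindenbaum-algebras} citing \cite{gratzer2008}), $\LTInt(\kappa)$ is a free Heyting algebra of rank $\kappa$ for every $\kappa\le\card{\Var_{\mathcal{L}_A}}$. The only potentially delicate point is the implicationality check, but it reduces immediately to \eqref{E:less-than=implication} once we know $\mathbb{T}_{\textsf{Int}}$ consists of Heyting algebras; thus there is no real obstacle and the corollary follows by direct transcription of the classical argument.
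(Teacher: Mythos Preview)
Your proposal is correct and follows exactly the approach the paper intends: the corollary is stated without proof because, just as with Corollary~\ref{C:LT_Cl(k)}, it is an immediate consequence of Proposition~\ref{P:LT=free-algebra} together with the preceding identification of $\mathbb{T}_{\textsf{Int}}$ with the variety of Heyting algebras. Your verification of the hypotheses (unitality via implicativity, nontriviality, implicationality via~\eqref{E:less-than=implication}) is precisely what the paper relies on implicitly, with the relevant facts delegated to Exercises~\ref{section:unital-logics}.\ref{EX:Cl-In-implicative} and~\ref{section:lindenbaum-algebra}.\ref{EX:Cl-Int-implicational}.
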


Below we give a description of $\LTInt(1)$. For this, we consider the following formulas of an arbitrary variable $p$.

First, we define:
\[
P_0:=p\land\neg p, \qquad P_1:=\neg p, \qquad P_2:=p, \qquad P_{\infty}:=p\to p.
\]
Then for any $n\ge 0$, we define:
\[
P_{2n+3}:=P_{2n+1}\to P_{2n}~~\text{and}~~P_{2n+4}:= P_{2n+1}\lor P_{2n+2}.
\]

The elements of $\LTInt(1)$ are the congruence classes w.r.t. $\theta(\bm{T}_{\textsf{Int}})$. It will be convenient, in writing, to depict each congruence class by its representative; in other words we will treat $P_n$, on the one hand, as an $\Lan_A$-formula and, on the other, as the congruence classes it generates. In accordance with this agreement, we have:
\[
\one:= P_{\infty}~~\text{and}~~\zero:=P_0.
\]
Also, we will be writing $P_i\le P_j$ in the sense that $P_{i}\to P_{j}=\one$, that is $\vdash_{\textsf{Int}}P_{i}\to P_{j}$; compare with~\eqref{E:pseudo-complementation}.

We conclude with the following observation, due to Iwao Nishimura~\cite{nishimura1960}; see also~L. Rieger~\cite{rieger1957}.

\begin{prop}
Every $\Lan_A$-formula of one variable $p$ is equivalent in {\em\Int} to one and only one formula $P_i$, which by themselves are not equivalent to each other and related to each other as shown in the following diagram.
\end{prop}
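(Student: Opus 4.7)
The plan is to combine two directions of argument: a completeness claim (every one-variable $\Lan_A$-formula is \textsf{Int}-equivalent to some $P_i$) and a non-redundancy claim (distinct $P_i$ and $P_j$ are not \textsf{Int}-equivalent, and the ordering is exactly as displayed). Recalling that $\LTInt$ is the free Heyting algebra of rank $\card{\Var_{\mathcal{L}_A}}$, its subalgebra $\LTInt(1)$ generated by $[p]:=p\slash\theta(\bm{T}_{\textsf{Int}})$ is the free Heyting algebra on one generator, so the statement is equivalent to describing $\LTInt(1)$ as a poset (identifying each $P_i$ with $[P_i]$).

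First, I would establish closure of the family $\{[P_i]\}$ under the Heyting operations by structural induction on $\Lan_A$-formulas in $p$. The base case is $p = P_2$, and the inductive step requires exhibiting, for every pair $i,j$, indices $k$ with $[P_i]\wedge[P_j]=[P_k]$, $[P_i]\vee[P_j]=[P_k]$, $[P_i]\to[P_j]=[P_k]$, and $\neg[P_i]=[P_k]$ (the last being a special case of implication to $P_0$). To keep the bookkeeping manageable, I would not compute each pair by hand inside $\LTInt(1)$; instead, I would introduce the abstract \emph{Rieger--Nishimura lattice} $N$ as the distributive lattice whose Hasse diagram is precisely the one displayed in the proposition. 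A direct check (every principal downset of $N$ is a finite distributive lattice, hence a Heyting algebra, and the pseudo-complement operation $a\to b:=\bigvee\{c\in N\mid c\wedge a\le b\}$ is well-defined on all of $N$) shows $N$ is a Heyting algebra. Verifying the defining recursion $P_{2n+3}=P_{2n+1}\to P_{2n}$ and $P_{2n+4}=P_{2n+1}\vee P_{2n+2}$ directly in $N$ — using the explicit order — then proves by induction that every $[P_i]\in\LTInt(1)$ corresponds under the unique Heyting homomorphism $h:\LTInt(1)\to N$ determined by $[p]\mapsto P_2^N$ to the element of $N$ labelled $P_i$, and dually that $h$ is surjective.

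Second, distinctness and the stated order relations fall out once $N$ is established as a Heyting algebra separating all $[P_i]$. Because $N$ is a Heyting algebra, it belongs to $\mathbb{T}_{\textsf{Int}}$, so the valuation $v:[p]\mapsto P_2^N$ extends to a homomorphism from $\LTInt(1)$ into $N$; since, by construction, the elements $P_i^N$ are pairwise distinct in $N$, their \textsf{Int}-preimages $[P_i]$ must also be pairwise distinct, and their order in $\LTInt(1)$ is at most the order in $N$. The reverse inequalities, $P_i\le P_j$ in the diagram implies $\vdash_{\textsf{Int}} P_i\to P_j$, are verified by exhibiting explicit \textsf{Int}-derivations (or, equivalently, by using that every Heyting algebra validates $\alpha\to\beta\approx\one$ whenever $\alpha\le\beta$ in $N$ for all valuations, which is the soundness direction already proved in the excerpt). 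Combining these two inclusions shows that $\LTInt(1)\cong N$ as a Heyting algebra, and in particular as a poset.

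The main obstacle is verifying that $N$ really is a Heyting algebra — concretely, that the formula $a\to b=\bigvee\{c\mid c\wedge a\le b\}$ always produces an element of $N$ (the supremum could fail to exist in an infinite lattice) and satisfies the pseudo-complementation property $c\le a\to b \Leftrightarrow c\wedge a\le b$. This is where the particular ``staircase'' shape of the diagram is essential: $N$ has only finitely many elements above any given $P_i$, so every relevant set of $c$'s has a maximum, and a careful case analysis based on the parity of indices handles the verification cleanly. Once $N$ is in hand, the rest is routine induction plus soundness.
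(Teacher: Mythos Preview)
The paper does not actually prove this proposition; it is stated with attribution to Nishimura~\cite{nishimura1960} and Rieger~\cite{rieger1957} and followed only by the diagram, so there is nothing to compare your argument against. Your outline is essentially the standard proof and is sound in strategy: build the Rieger--Nishimura lattice $N$ abstractly, check it is a Heyting algebra, and use the unique homomorphism $\LTInt(1)\to N$ sending $[p]$ to the generator to separate the $[P_i]$ and pin down the order.

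One slip to fix: you justify the existence of $a\to b$ in $N$ by saying that ``$N$ has only finitely many elements above any given $P_i$,'' which is false --- every $P_i$ with finite index has infinitely many elements above it. What is true, and what you need, is that every element other than $\one$ has only finitely many elements \emph{below} it. Then for $a\not\le b$ (so $\one\notin\{c\mid c\wedge a\le b\}$), observe that for all sufficiently large $n$ one has $a\le P_n$, hence $P_n\wedge a=a\not\le b$; thus the set $\{c\mid c\wedge a\le b\}$ is finite. Distributivity makes this set closed under binary joins, so it has a maximum, which is $a\to b$. With that correction the rest of your plan goes through.
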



\begin{figure}[ht]
	\[	
	\ctdiagram{
		\ctnohead
		\ctinnermid
		\ctel 0,0,40,40:{}
		\ctel 0,0,-20,20:{}
		\ctel -20,20,40,80:{}
		\ctel 20,20,-20,60:{}
		\ctel -20,100,40,40:{}
		\ctel 40,120,-20,60:{}
		\ctel -10,130,40,80:{}
		\ctel 30,150,-20,100:{}
		\ctel 10,150,40,120:{}
		\ctv 0,0:{\bullet}
		\ctv 20,20:{\color{red}\bullet}
		\ctv 54,20:{P_2 = p}		
		\ctv -20,20:{\bullet}
		\ctv -60,20:{P_1 = \neg p}		
		\ctv 0,40:{\bullet}
		\ctv -50,40:{P_4 = p \lor \neg p}
		\ctv 40,40:{\bullet}
		\ctv 84,40:{P_3 = \neg\neg p}
		\ctv 20,60:{\bullet}
		\ctv 74,60:{P_6=\neg p \lor \neg\neg p}
		\ctv -20,60:{\bullet}
		\ctv -70,60:{P_5=\neg\neg p \to p}
		\ctv 0,80:{\bullet}
		\ctv -50,80:{P_8 = P_5 \lor P_3}		
		\ctv 40,80:{\bullet}
		\ctv 92,80:{P_7 = P_5 \to P_4}		
		\ctv 20,100:{\bullet}
		\ctv 74,100:{P_{10}=P_5 \lor P_7}		
		\ctv 0,120:{\bullet}
		\ctv -48,120:{P_{12} = P_7 \lor P_9}
		\ctv -20,100:{\bullet}
		\ctv -70,100:{P_9 = P_7 \to P_6}
		\ctv 40,120:{\bullet}
		\ctv 90,120:{P_{11} = P_9 \to P_8}
		\ctv 20,140:{\bullet}
		\ctv 15,160:{\dots\dots\dots\dots\dots\dots}
		\ctv 15,165:{\dots\dots\dots\dots\dots\dots}
		\ctv 15,170:{\dots\dots\dots\dots\dots\dots}
		\ctv 10,180:{\bullet}
		\ctv 10,189:{\mathbf{1}=p\to p}
		\ctv 0,-8:{\mathbf{0}=p\land\neg p}		
	}
	\]
	\caption{Lindenbaum-Tarski algebra $\LTInt(1)$}
\end{figure}

\paragraph{Exercises~\ref{section:lindenbaum-algebra}}
\begin{enumerate}
		\item \label{EX:Cl-Int-implicational} Show that the abstract logics $\Pos$, $\Cl$ and $\Int$ are implicational.
	\item \label{EX:T_S-star-thesis}Prove~\eqref{E:T_S-star-thesis}.
	\item \label{EX:v-restored}Prove that for any valuation $w$ in $\langle\alg{A},\one\rangle$, there is a unique valuation $v$ in $\alg{A}$ such that $v^{\star}=w$.
	\item \label{EX:LT-term-valuation}Prove~\eqref{E:LT-term-valuation}.
	\item \label{EX:LT=free-algebra} Prove Lemma~\ref{L:LT=free-algebra}.
	\item \label{EX:lemma-rules-sound-wrt-P-Int-Cl}Complete the proof of Lemma~\ref{L:rules-sound-wrt-P-Int-Cl}.
	\item \label{EX:lemma-hyperrules-c_ii-c_iii}Show that the hyperrules (c--$ii$) and (c--$iii$) are sound w.r.t. {\Cl} and {\Int}.
	\item \label{EX:distributive-lettice-sound} Complete the proof of Lemma~\ref{L:distributive-lettice-sound}.
	\item\label{EX:Cl-implications} Complete the proof of Lemma~\ref{L:Cl-implications}.
	\item \label{EX:four-elements} Show that the elements $[p]$, $[\neg p]$, $[p\vee\neg p]$ and $[p\wedge\neg p]$ are pairwise distinct in $\LTCl$.
	(Hint: prove that the matrix $\booleTwo$ is a $\Cl$-model.)
	\item\label{EX:Int-implications} Complete the proof of Lemma~\ref{L:Int-implications}.
\end{enumerate}

\bibliographystyle{alpha}

\end{document}